\numberwithin{equation}{section}
\def \bfs{\bm{s}}
\def \bft{\mathbf{t}}
\def \ev{E}
\def \pr{\mathbb{P}}
\def \bft{\mathbf{t}}
\def \cip{\xrightarrow[\text{}]{\text{$\mathbb{P}$}}}
\newcommand{\vo}{\vec{o}\@ifnextchar{^}{\,}{}}
\theoremstyle{plain}
\newtheorem{remark}{Remark}
\newtheorem{theorem}{Theorem}[section]
\newtheorem{lemma}[theorem]{Lemma}
\newtheorem{corollary}[theorem]{Corollary}
\newtheorem{proposition}[theorem]{Proposition}
\theoremstyle{remark}
\begin{document}

\begin{frontmatter}
\title{Inverse regression for  spatially distributed functional data}
\runtitle{Spatial Inverse Regression}

\begin{aug}
\author[A]{\inits{F.}\fnms{Suneel Babu}~\snm{Chatla}\ead[label=e1]{sbchatla@utep.edu}}
\author[B]{\inits{S.}\fnms{Ruiqi}~\snm{Liu}\ead[label=e2]{ruiqliu@ttu.edu}}
\address[A]{Department of Mathematical Sciences, University of Texas at El Paso, TX 79968, USA\printead[presep={,\ }]{e1}}

\address[B]{Department of Mathematics and Statistics, Texas Tech University, TX 79409, USA\printead[presep={,\ }]{e2}}

\end{aug}

\begin{abstract}
  Spatially distributed functional data are prevalent in many statistical applications such as meteorology, energy forecasting, census data,  disease mapping, and neurological studies. Given their complex and high-dimensional nature, functional data often require dimension reduction methods to extract meaningful information. Inverse regression is one such approach that has become very popular in the past two decades. We study the inverse regression in the framework of functional data observed at irregularly positioned spatial sites. The functional predictor is the sum of a spatially dependent functional effect and a spatially independent functional nugget effect, while the relation between the scalar response and the functional predictor is modeled using the inverse regression framework. For estimation, we consider local linear smoothing with a general weighting scheme, which includes as special cases the schemes under which equal weights are assigned to each observation or to each subject. This framework enables us to present the asymptotic results for different types of sampling plans over time such as non-dense, dense, and ultra-dense.  We discuss the domain-expanding infill (DEI) framework for spatial asymptotics, which is a mix of the traditional expanding domain and infill frameworks. The DEI framework overcomes the limitations of traditional spatial asymptotics in the existing literature.  Under this unified framework, we develop asymptotic theory and identify conditions that are necessary for the estimated eigen-directions to achieve optimal rates of convergence. Our asymptotic results include pointwise and $L_2$ convergence rates.  Simulation studies using synthetic data and an application to a real-world dataset confirm the effectiveness of our methods.
\end{abstract}

  \begin{keyword}
  \kwd{Covariance operator}
  \kwd{domain-expanding infill asymptotics}
   \kwd{irregularly positioned}
    \kwd{local linear smoothing}
  \kwd{nugget effect}
  \kwd{unified framework}
  \end{keyword}

\end{frontmatter}

\section{Introduction}

Sufficient dimension reduction methods have become popular owing to their usefulness in extracting useful information from high-dimensional data and have found a wide range of applications. For a brief overview of sufficient dimension reduction methods, we refer to \citet{li2018sufficient} and \citet{cook2018introduction}. For recent advancements on post-reduction inference, we refer to \citet{kim2020post}. In this proposal, we present a method for estimating the sufficient dimension reduction space where the scalar response and the functional predictors are spatially distributed and related through linear indices as assumed in \citet{muller2005generalized}, \citet{ferre2003functional}, \citet{ferre2005smoothed}, \citet{hsing2009rkhs} and \citet{jiang2014inverse}. Spatially distributed functional data are prevalent in many statistical applications such as meteorology, energy forecasting, census data,  disease mapping, and neurological studies.  One well-known application is the spatial prediction of the Canadian temperature and precipitation data  \citep{ramsey2005functional,giraldo2010continuous}. The dataset includes the daily average temperatures at 35 weather stations over 30 years. The locations of these 35 stations serve as spatial coordinates for the spatial prediction of weather data.

Although there are model-based and model-free sufficient dimension reduction methods, the latter is more robust and flexible as it relies on weaker assumptions. The inverse regression or sliced inverse regression (SIR) proposed by \citet{li1991sliced} and \citet{duan1991slicing} is one important approach in this paradigm. The SIR approach assumes that the scalar response variable in a regression model depends on the multivariate predictor through an unknown number of linear projections. Formally,
\begin{align*}
    Y \perp \!\!\! \perp  \bm{X} ~\vert ~ \bm{\beta}^T\bm{X},
\end{align*}
where $\bm{X}$ is a $p$-dimensional random vector, $Y$ is a random variable, and the coefficient matrix $\bm{\beta}$ is a $p \times K$ matrix such that $K$ is much smaller than $p$. The matrix $\bm{\beta}$ is not identifiable. However, the space spanned by the columns of $\bm{\beta}$, the effective dimension reduction (e.d.r.) space, is of interest. \citet{li1991sliced} showed that the e.d.r. directions $\{b_j\}_{j=1}^K$ are estimated from the generalized eigen-analysis
\begin{align}
    \text{cov}[E(\bm{X}|Y)] b_j &= \lambda_j \text{cov}(\bm{X})b_j,
\label{eqn:intro-gea}
\end{align}
where the $\lambda_j$'s are eigenvalues, under the following design condition:
\begin{align*}
    E(\bm{c}^T \bm{X} |\bm{\beta}^T\bm{X}) \text{ is linear in } \bm{\beta}^T\bm{X}, \text{ for any direction } \bm{c}\in \mathbb{R}^p.
\end{align*}
For more variants of the above framework, we refer to, for example, \citet{cook1991sliced}, \citet{cook2002dimension}, \citet{cook2007fisher}, and \citet{cook2010necessary}.  

Proposing the SIR approach to functional data $X(\cdot)$, which are stochastic processes observed over a time interval $\mathcal{I} \subset \mathbb{R}$, is not simple due to the complication of inverting a covariance operator on $L_2(\mathcal{I})$. The first papers to take functional SIR into consideration were \citet{,ferre2003functional,ferre2005smoothed}. They showed that it is feasible to estimate the e.d.r. space under regularity conditions as long as the true dimension of the space is known \citep{,forzani2007note, ferre2007reply}. For some further refinements and alternatives, we refer to \citet{,hsing2009rkhs}, \citet{cook2010necessary}, \citet{chen2011single}, \citet{li2017nonlinear,song2019sufficient}, \citet{li2022dimension} and \citet{lee2022functional}. An assumption commonly made in functional data analysis is that the complete trajectories for a sample of $n$ random functions are fully observed. This assumption, however, may not hold in practice because the measurements are obtained at discrete and scattered time points. As mentioned in \citet{jiang2014inverse}, one way to overcome this limitation is to borrow information across all sample functions by applying smoothing to obtain estimators. Unfortunately, the spatial points are often irregularly positioned in space which complicate the smoothing of functional data.

One important issue with spatial data is that they require a $d$-dimensional index space $\{\bm{s} \in \mathbb{R}^d, d \ge 2\}$ and one needs to take the dependence across all directions into consideration. Instead of defining a random field through specific model equations, the dependence conditions are often imposed by the decay of covariances or the use of mixing conditions. Different asymptotics holds in the presence of clusters and for sparsely distributed points. Traditionally, there exist two different approaches for spatial data: expanding domain and infill asymptotics \citep{cressie2015statistics}. The expanding domain asymptotics requires that the domain of spatial measurement locations tends to infinity. It is similar to time series and more suited for a situation where the measurements are on a grid. In the infill asymptotics, the total domain is kept fixed, but the density of measurements in that domain is allowed to increase.  By proposing a general sampling framework,  \citet{,hormann2013consistency} established conditions for which sample mean and sample covariance of the spatially distributed functional data are consistent. Their framework, however, assumes that the complete trajectories of the functions are known.

This study proposes a framework of inverse regression for functional data observed at irregularly positioned spatial locations as well as sparsely observed in time. The data consist of a scalar response $Y(\bm{s}_j)$ and a functional covariate $X(\bm{s}_j) = \{X(\bm{s}_j;t), t \in \mathcal{I}\}$, observed at spatial points $\bm{s}_j \in \mathbb{R}^2$, $j=1,\ldots,n$. We assume that each observed functional covariate has a location-specific random process which is interpreted as the “nugget'' effect. The nugget effects are uncorrelated across functions at different spatial locations. Our contribution is twofold, which is summarized as follows.
\begin{enumerate}[label=(\roman*)]
    \item Methodologically, we consider local linear smoothing \citep{fan2018local} to estimate the inverse regression function $E(X(\bm{s};t)| Y(\bm{s}))$ and the covariance function of  $X(\bm{s};t)$ by borrowing the information across all the subjects. The method is applicable to the unified weighting scheme proposed in \citet{zhang2016sparse}, including non-dense (cannot attain the $\sqrt{n}$ rate), dense (attain $\sqrt{n}$ rate with non-negligible bias) and ultra-dense (attain $\sqrt{n}$ rate without asymptotic bias) categories of functions. The proposed approach allows both the density of observations and the domain to increase, and therefore retains the advantages of both spatial data frameworks. 
    \item Theoretically, we identify the sufficient conditions under which the spatial-temporal covariance, the inverse regression function and the covariance function are consistent in Theorems \ref{th:Rhat}, \ref{th:1} and \ref{thm:rate-phi}, respectively.  As they are derived for irregular spatial locations, each of these results is new to the literature and  hence is of independent interest. Our asymptotic results include both pointwise rates and $L_2$ rates which are new to the literature. It is also worth mentioning that the theoretical results of spatially correlated data are not trivial extensions from the independent case. For example, Lemma S.1.5 in the supplement \citet{suppl}  proves a uniform convergence for kernel estimators with spatially correlated observations, which makes use Bradley's coupling lemma (Lemma 1.2 in \citealp{bosq2012nonparametric}). Applying  the uniform convergence results, we give the convergence rate for the e.d.r. directions in Theorem \ref{th:main}. The rate for e.d.r. direction depends on the convergence rates of spatio-temporal covariance, inverse regression function, and the number of basis functions used for the truncation of covariance operator.
\end{enumerate}

The remainder of the paper is organized as follows. We outline the inverse regression procedure for spatially distributed functional data in Section \ref{sec:IRforSpatial}. The formulation of the domain-expanding infill framework is included in Section \ref{sec:sample-method}, which also covers the estimation of covariance operators and e.d.r. directions under local linear smoothing framework.  In Section \ref{sec:asymp}, we outline the conditions that are required for our asymptotic study and derive the convergence rates for covariance operators and e.d.r. directions.  Due to space constraints, the proofs of the main results, a validation of the theoretical findings through simulations, and a real data analysis are deferred to the supplement \citet{suppl}.


\textbf{Notation:}  For functions $f(x)$ and $g(x,y)$ defined on a compact set, let $\Vert f \Vert_{L_2}= \left[\int f^2(x) dx\right]^{1/2}$ and $\Vert g \Vert_{HS}=\left[\int \int g^2(x,y) dx dy \right]^{1/2}$. For any vector $\bm{x}$, denote its $L_2$ norm as $\Vert \bm{x} \Vert$. For any positive sequences $\{a_n\}$ and $\{b_n\}$, we write $a_n \lesssim b_n$ if $a_n/b_n$ is bounded by a constant, and write $a_n \asymp b_n$ if $C_1 \le a_n/b_n \le C_2$ for some $C_1, C_2 >0$.

\section{Inverse regression for spatial function} \label{sec:IRforSpatial}
In this section, we formulate the inverse regression framework for spatially distributed functional data.
We assume $\{(Y(\bm{s}), X(\bm{s})), \bm{s} \in \mathcal{D}_n \subseteq \mathbb{R}^2\}$ is a stationary random field with $Y(\bm{s})$ being a scalar response variable and a functional covariate $X(\bm{s})$ taking values in $L^2(\mathcal{I})$ where $\mathcal{I} \subset \mathbb{R}$. The space $L^2(\mathcal{I})$ is a collection of square-integrable functions on $\mathcal{I}$. The value of the function $X(\bm{s})$ at $t\in \mathcal{I}$ is denoted by $X(\bm{s};t)$. We consider the following model
\begin{align}
  Y(\bm{s}) &= f(\langle\beta_1(\bm{s}),X(\bm{s})\rangle, \ldots, \langle\beta_K(\bm{s}),X(\bm{s})\rangle, \epsilon(\bm{s})),
  \label{eqn:secM-i1}
\end{align}
where the $K$ unknown functions $\beta_1(\cdot), \ldots, \beta_K(\cdot)$ are in $L^2(\mathcal{I})$, $f$ is an arbitrary unknown function on $\mathbb{R}^{K+1}$, $\epsilon(\bm{s})$ is a random error that is independent of $X(\bm{s})$, and the inner product is defined as
\begin{align*}
  \langle\beta_K(\bm{s}),X(\bm{s})\rangle &= \int_{\mathcal{I}} \beta_K(\bm{s};t) X(\bm{s};t) dt.
\end{align*}
Assuming that the spatial dependence is second-order stationary and isotropic, we write the covariance function of $X(\bm{s};t)$ as 
\begin{align}
  R(\Vert \bm{s}_1- \bm{s}_2 \Vert; t_1, t_2) &= \text{cov}\{ X(\bm{s}_1; t_1),  X(\bm{s}_2;t_2) \}  
\end{align}
for any  $(\bm{s}_1, t_1),~ (\bm{s}_2, t_2) \in \mathcal{D}_n \times \mathcal{I}$. The covariance operator $R(\cdot, \cdot, \cdot)$ exists and bounded given that
\begin{align}
  E\Vert X(\bm{s}) \Vert_{L_2}^4 < \infty \qquad  \text{ for  all } \bm{s} \in \mathcal{D}_n,
  \label{eqn:xfun-4m}
\end{align}
where the $L_2$ norm is defined as
\begin{align*}
  \Vert X(\bm{s}) \Vert_{L_2} &:= \left\{ \int_{\mathcal{I}} X^2(\bm{s};t) dt \right\}^{1/2}.
\end{align*}
Similarly, we define the conditional  covariance operator  
\begin{align}
R_e(\Vert \bm{s}_1-\bm{s}_2 \Vert;t_1,t_2) = \text{cov}\{ E(X(\bm{s}_1; t_1)| Y(\bm{s}_1)), E(X(\bm{s}_2; t_2)| Y(\bm{s}_2))\}, 
\end{align}
for any $(\bm{s}_1, t_1),~ (\bm{s}_2, t_2) \in \mathcal{D}_n \times \mathcal{I}$. We note that, $R(0;t_1,t_2)$ and $R_e(0;t_1,t_2)$ are the variance operators of $X(\bm{s})$ and $E(X(\bm{s})| Y(\bm{s}))$, respectively.

Following \citet{li1991sliced} and \citet{ferre2003functional}, we obtain that
\begin{align}
  & E( \langle b(\bm{s}),X(\bm{s})\rangle |\langle\beta_1(\bm{s}),X(\bm{s})\rangle, \ldots, \langle\beta_K(\bm{s}),X(\bm{s})\rangle) \text{ is linear in } \nonumber \\
  & \qquad \langle\beta_1(\bm{s}),X(\bm{s})\rangle, \ldots, \langle\beta_K(\bm{s}),X(\bm{s})\rangle \text{ for any direction $b(\bm{s})$ in }L^2(\mathcal{I}).
\end{align}
In other words, under model (\ref{eqn:secM-i1}), the functions $\langle\beta_j(\bm{s}),X(\bm{s})\rangle$, for $j=1,\ldots,K$, summarize all the information contained in the functional variable $X(\bm{s})$ for predicting $Y(\bm{s})$. The individual coefficient functions $\beta_j(\bm{s})$, for $j=1,\ldots,K$, are not identifiable since the link function $f(\cdot)$ in (\ref{eqn:secM-i1}) is unknown. However, our interest lies in estimating the effective dimension reduction (e.d.r.) space which is spanned by the individual coefficient functions and is identifiable.
Moreover, the operator $R_e(0)$ is degenerate in any direction $R(0)-$ orthogonal to the  e.d.r. space. This implies we can recover the basis of e.d.r. space through the $R(0)-$ orthogonal eigenvectors of $R_e(0)$ associated with the $K$ largest eigenvalues
\begin{align}
  R_e(0)\beta_j(\bm{s}) &= \lambda_j(\bm{s}) R(0) \beta_j(\bm{s}),
  \label{eqn:secM-ed}
\end{align} 
where $ \beta_i^{'}(\bm{s}) R_e(0) \beta_j(\bm{s})=1$, if $i=j$, and $0$ otherwise.  



Based on (\ref{eqn:secM-ed}), we compute the coefficient functions by performing a  spectral decomposition of the operator $R^{-1}(0)R_e(0)$ or the operator $R^{-1/2}(0)R_e(0)R^{-1/2}(0)$, provided the orthogonal eigenvectors have norm one. However, the compact covariance operator $R(0)$ is not invertible for functional data. To overcome this limitation, we restrict the operator to a smaller domain as in \citet{,jiang2014inverse}. Note that under the assumption that $E(\Vert X(\bm{s}) \Vert_{L_2}^4) < \infty$, $R(0)$ is a self-adjoint, positive semidefinite, and Hilbert-Schmidt operator. Therefore, there exists an orthonormal basis $\{\pi_i(\cdot) \}_{i=1}^{\infty}$ in $L^2(\mathcal{I})$ such that $X(\bm{s};t)$ can be expressed as
\begin{align*}
  X(\bm{s};t) &= \mu(t) + \sum_{j=1}^{\infty} A_j(\bm{s}) \pi_j(t),
 \end{align*}
 where $\mu(t)=E(X(\bm{s};t))$ is the mean function, and $\{A_j(\bm{s})\}$ are assumed to be second-order stationary and uncorrelated random fields with $E(A_j(\bm{s}))=0$ and $E(A_j^2(\bm{s}))=\xi_j$. Here, $\xi_j's$ are the eigenvalues of 
\begin{align*}
  R(0; t_1, t_2) &= \sum_{i=1}^{\infty} \xi_i \pi_i(t_1) \pi_i(t_2),
\end{align*}
satisfying $\xi_1 \ge \xi_2 \ge \ldots \ge \xi_j \ge \cdots \ge 0$ with corresponding eigenfunctions $\pi_j(\cdot)$.
By employing the arguments analogous to \citet{,jiang2014inverse}, we  can show that when the following condition is satisfied,
\begin{align}
\sum_{i,j=1}^{\infty} \frac{E^2\{E(A_i(\bm{s})|Y(\bm{s}))E(A_j(\bm{s})|Y(\bm{s}))\}}{\xi_i^2\xi_j} < \infty,
\label{eqn:secM-sufc}
\end{align}
then $R^{-1/2}(0)R_e(0)R^{-1/2}(0)$ is well-defined on the range space of $R^{1/2}(0)$. This means for any $\mathfrak{f} \in \mathfrak{R}_{R^{1/2}(0)}$ with
\begin{align*}
  \mathfrak{R}_{R^{1/2}(0)} :=\left\{\mathfrak{f} \in L_2(\mathcal{I}): \sum_{i}\xi^{-1}|\langle \mathfrak{f},\pi_i \rangle|^2 < \infty, \mathfrak{f} \perp \text{ker}(R(0))
   \right\},
\end{align*}
it follows that
\begin{align*}
  R^{-1/2}(0) \mathfrak{f} = \sum_{i=1}^{\infty} \xi^{-1/2} \langle \mathfrak{f}, \pi_i \rangle \pi_i.
\end{align*}
While this solves the theoretical difficulty, often the estimation of $ R^{-1/2}(0)$ requires some regularization \citep{ferre2003functional, jiang2014inverse}. Let $\widetilde{R}^{-1/2}(0)= R^{-1/2}(0)|_{\mathfrak{R}_{ R^{1/2}(0)}}$ be the inverse operator defined on the range space of $ R^{1/2}(0)$. The directions we obtain from $\widetilde{R}^{-1/2}(0)R_e(0) \widetilde{R}^{-1/2}(0)$ are still in the e.d.r. space because  $\mathfrak{R}_{R^{1/2}(0)}$ is a subspace of $L_2(\mathcal{I})$. 

Let $R^{-1/2}(0) R_e(0) R^{-1/2}(0)(t_1,t_2)=\sum_{i=1}^{\infty}\lambda_i \eta_i(t_1)\eta_i(t_2)$. The arguments analogous to \citet{jiang2014inverse} can be used to show that this is a well-defined operator given (\ref{eqn:secM-sufc}). Furthermore, the e.d.r. directions $\beta_j=R^{-1/2}\eta_j$, for $j=1,\ldots,K$, are well-defined.

\section{Sampling plan and methodology} \label{sec:sample-method}
This section discusses the sampling plan for spatial sites and for functional data before outlining the procedure for estimating the coefficient functions in model (\ref{eqn:secM-i1}).

\subsection{Domain-expanding infill framework}

In general, there are two different paradigms for asymptotics (or the data collection) in spatial statistics: expanding domain asymptotics, where the domain can be expanded while maintaining at least roughly constant the distance between two neighboring observations (see, e.g., \citet{quenouille1949approximate,matern2013spatial,dalenius1961plane}), and infill or fixed domain asymptotics, where the domain is fixed but the distance between neighboring observations may tend to zero (see, e.g., \citet{traub1998complexity,novak2006deterministic}). The estimators may achieve consistency under expanding domain asymptotics, but they may not do so under fixed domain asymptotics, which is a noteworthy distinction between these two paradigms. Despite these differences, each of these frameworks for spatial statistics has a large body of research; for references, see \citet{lu2014nonparametric}. Similar sampling frameworks are also taken into account for functional data (see, for example, \citet{delicado2010statistics, chouaf2012functional,hormann2013consistency, laksaci2013spatial}).

In this study, the domain-expanding infill (DEI) framework by \citet{,lu2014nonparametric} is taken into consideration. The traditional infill and domain-expanding asymptotics have disadvantages. While the former cannot guarantee the estimators to be consistent as showed by \citet{,lahiri1996inconsistency} and \cite{,zhang2004inconsistent} the latter has a less attractive assumption, at least in some applications, that the distance between neighboring observations does not tend to zero. The DEI asymptotics framework overcomes these drawbacks and is defined as
\begin{align}
  \delta_n =\underset{1 \le j\le n}{\max} \delta_{j,n} \rightarrow 0, \text{ with } 
  \delta_{j,n}= \min \{\Vert \bm{s}_i-\bm{s}_j \Vert: 1 \le i \le n, i \neq j \},
  \label{eqn:sc-1}
\end{align}
which means that the distance between neighboring observations tends to $0$, as $n \rightarrow \infty$, and
\begin{align}
  \Delta_n = \underset{1 \le j\le n}{\min} \Delta_{j,n} \rightarrow \infty, \text{ with }  \Delta_{j,n} = \max \{\Vert \bm{s}_i-\bm{s}_j \Vert: 1 \le i \le n, i \neq j \},
  \label{eqn:sc-2}
\end{align}
which implies that the domain at each location is expanding to $\infty$, as $n \rightarrow \infty$, where $\Vert \cdot \Vert$ is the usual Euclidean norm in $\mathbb{R}^2$. The existing frameworks may be described using the above terminology. The traditional infill asymptotics require (\ref{eqn:sc-1}) but $\max_{1 \le j \le n}\Delta_{j,n} \le C_0 < \infty$, while the traditional domain-expanding asymptotics require (\ref{eqn:sc-2}) but $\min_{1 \le j \le n} \delta_{j,n} \ge c_0 >0$ for all $n$. By (\ref{eqn:sc-1}), it is clear that the sampling locations $\bm{s}_i$ are non-stochastic. There are also some related works of stochastic sampling designs (\cite{zhang2022unified} and \citet{kurisu2022nonparametricb} and the references therein) combining both infill and increasing domain. 

In many situations, the DEI framework may be natural as a result of the data structure. For example, socio-economic data are often collected over individual cities and suburbs: on one hand, more cities or suburbs are taken into account which means expanding the domain; on the other hand, more observations are collected within each city or suburb, necessitating the use of an infill asymptotic framework.

\subsection{Methodology}

Assume we observe data at locations $\{\bm{s}_i \in \mathcal{D}_n, i=1, \ldots, n \}$ that are allowed to be irregularly positioned. Suppose we observe
\begin{align}
    Z(\bm{s}_i; T_{ij}) &= X(\bm{s}_i; T_{ij}) + U_i(T_{ij}) +  e_{ij}, \qquad  i=1,\ldots,n, j=1,\ldots,N_i, \label{eqn:x-meas-err}
\end{align}
where 
\begin{align*}
X(\bm{s}; t) &=  \mu(t) + V(\bm{s}, t),
\end{align*}
 with $V(\bm{s}, t)$ being the stochastic part of $X(\bm{s}; t)$ with $EV(\bm{s}, t)=0$ and $e_{ij}$'s are i.i.d. copies of a random error $e$ with mean zero and finite variance $\sigma^2$, and $ U_i(t)$ is a mean zero temporal process called as functional nugget effect. Assume that $V(\cdot,\cdot)$, $U_i(\cdot)$, and  $e$  are mutually independent.  As mentioned in \citet{zhang2022unified}, the functional nugget effects $U_i(\cdot)$ specify local variations that are not correlated with neighbor functions. We denote the covariance between nugget effects as $\Lambda(t_1,t_2)=\text{cov}\{ U_i(t_1), U_i(t_2)\}$. The observation time points $\{T_{ij}\}$ are i.i.d. copies of a random variable $T$ which is defined on the compact interval $\mathcal{I}$.  We consider the local linear smoothing \citep{fan2018local} approach for the estimation of e.d.r. directions. For convenience in notation, we denote $Z_{ij}:=Z(\bm{s}_i; T_{ij})$, $X_{ij}:=X(\bm{s}_i; T_{ij})$, $Y_i=Y(\bm{s}_i)$, $V_{ij}= V(\bm{s}_i; T_{ij})$, and $U_{ij} := U_i(T_{ij})$.

In the following, we discuss the estimation of the required covariance matrices.
%

\vspace{0.5em}
\textbf{Estimation of $R(0; t_1,t_2)$: } 
Denote $K_h(u)=h^{-1}K_1(u)$ where $K_1(\cdot)$ is a univariate kernel function defined in Condition (III) below. Note that the covariance  $R(0; t_1,t_2)= \text{cov}\{X(\bm{s}; t_1),~ X(\bm{s}; t_2)\}$. We first estimate the mean function $\mu(t):= E(X(\bm{s}; t))$ using a one-dimensional local linear smoother applied to the pooled data
  $(Z_{ij},T_{ij})$, $1\leq i\leq n$, $1\leq j\leq N_i$. Therefore, the mean estimator is $\widehat{\mu}(t)=\widehat{\alpha}_0$ with
  \begin{align}
(\widehat{\alpha}_0, \widehat{\alpha}_1) &= \underset{(\alpha_0,\alpha_1)\in \mathbb{R}^2}{\text{argmin}} \sum_{i=1}^n w_i\sum_{j=1}^{N_i}\{Z_{ij}-\alpha_0-\alpha_1(T_{ij}-t) \}^2K_{h_{\mu}}\left(T_{ij}-t\right).
  \end{align}
Here $w_1,\ldots, w_n$ are weights satisfying $\sum_{i=1}^n w_iN_i=1$. The following two weighting schemes are commonly used in the literature \citep{yao2005functional, li2010uniform, zhang2016sparse}. 
\begin{enumerate}[label=(\alph*)]
    \item Equal-weight-per-observation (OBS): $w_i=1/N_{S,0}$ with $N_{S,0}=\sum_{i=1}^nN_i$;
    \item Equal-weight-per-subject (SUBJ): $w_i=1/nN_i$.
\end{enumerate}
As suggested by \cite{zhang2016sparse}, using a weighing scheme that is neither OBS nor SUBJ could achieve a better rate. Related works include a mixed scheme of OBS and SUBJ in \cite{zhang2016sparse} and a bandwidth-dependent scheme in \cite{zhang2018optimal}. For simplicity, we focus on the OBS and SUBJ schemes.


We assume $N_i \ge 2$ to estimate  $R(0;t_1,t_2)$. Let $L_b(u)=b^{-1}L(u/b)$ where $L(\cdot)$ is a kernel function defined similar to $K_1(\cdot)$ with bandwidth $b$. We apply a three-dimensional local linear smoother to the cross-products  $C_{ij,i'j'} :=[Z_{ij}-\widehat{\mu}(T_{ij})][Z_{i'j'}-\widehat{\mu}(T_{i'j'})]$, $1\leq i,i' \leq n$, $1\leq j \leq N_i$, $1 \le j' \le N_{i'}$,  and attach weight $v_{i,i'}$ to each $C_{ij,i'j'}$ for the $(i,i')$th subject pair such that $\sum_{i=1}^n \sum_{i' \neq i} N_iN_{i'} v_{i,i'}=1$. 
Specifically, we consider the estimator $\widehat{R}(\Vert \bm{s}_0 \Vert;t_1,t_2)=\widehat{\alpha}_0$ where 
\begin{eqnarray}
 &&(\widehat{\alpha}_0,\widehat{\alpha}_1,\widehat{\alpha}_2,\widehat{\alpha}_3) \nonumber \\
  &=& \underset{(\alpha_0,\alpha_1,\alpha_2,\alpha_3)\in
    \mathbb R^4}{\text{arg min}}\sum_{i=1}^{n} \sum_{i' \neq i} v_{i,i'} \sum_{j=1}^{N_i} \sum_{j'=1}^{N_{i'}}  \nonumber \\   && \qquad \bigg\{
    C_{ij, i'j'}-\alpha_0-\alpha_1(T_{ij}-t_1)-\alpha_2(T_{i'j'}-t_2) - \alpha_3(\Vert \bm{s}_{i}-\bm{s}_{i'} \Vert- \Vert \bm{s}_0 \Vert) \bigg\}^2 \nonumber
  \\ 
   && \qquad \qquad \times K_{h_c}\left(T_{ij}-t_1\right) K_{h_c}\left( T_{i'j'}-t_2\right) L_b(\Vert \bm{s}_i-\bm{s}_{i'} \Vert- \Vert \bm{s}_0 \Vert).
   \label{eq:obj-gs}
\end{eqnarray}

Similar to \citet{zhang2016sparse}, we consider weights $v_{i,i'}=1/ \sum_{i=1}^n \sum_{i' \neq i} N_iN_{i'}$ for the OBS scheme and weights $v_{i,i'}=1/[n(n-1)N_iN_{i'}]$ for the SUBJ scheme. 
 Our asymptotic theory regards $N_i$, $w_i$ and $v_{i,i'}$ as fixed quantities that are allowed to vary over $n$. For random $N_i$, the theory proceeds as if conditional on the values of $N_i$. The asymptotic properties of $\widehat{R}(\Vert \bm{s}_0 \Vert; t_1, t_2)$ are discussed in Theorem \ref{th:Rhat} and Lemma \ref{lm:L2-R} below.

\vspace{0.5em}

\textbf{ Estimation of $\Gamma(t_1,t_2)$ and $\Lambda(t_1,t_2)$:} Because of the independence between $X(\bm{s};t)$ and the functional nugget effect $U(t)$, we obtain 
\begin{align*}
  \Gamma(t_1,t_2) := \text{cov}\{X(\bm{s};t_1), X(\bm{s};t_2) \} = R(0; t_1,t_2) + \Lambda(t_1,t_2), 
\end{align*}
for $t_1 \neq t_2$. This motivates us to apply a two-dimensional local linear smoother to the cross-products  $C_{ijk} :=[Z_{ij}-\widehat{\mu}(T_{ij})][Z_{ik}-\widehat{\mu}(T_{ik})]$, $1\leq i\leq n$, $1\leq j \neq k\leq N_i$ to estimate  $\Gamma(t_1,t_2)$ and attach weight $v_i$ to each $C_{ijk}$ for the $i$th subject such that $\sum_{i=1}^nN_i(N_i-1)v_i=1$. 
Specifically, the estimator $\widehat{\Gamma}(t_1,t_2)=\widehat{\alpha}_0$ is considered where 
\begin{eqnarray}
  \nonumber (\widehat{\alpha}_0,\widehat{\alpha}_1,\widehat{\alpha}_2)
  = \underset{(\alpha_0,\alpha_1,\alpha_2)\in
    \mathbb R^3}{\text{arg min}}\sum_{i=1}^{n} v_i \sum_{1 \le k \neq j \le N_i}   \left\{
    C_{ijk}-\alpha_0-\alpha_1(T_{ij}-t_1)-\alpha_2(T_{ik}-t_2) \right\}^2
  \\ 
   \times K_{h_c}\left(T_{ij}-t_1\right) K_{h_c}\left( T_{ik}-t_2\right).
   \label{eq:obj-gamma}
\end{eqnarray}
As in \citet{zhang2016sparse}, weights $v_i=1/ \sum_{i=1}^n N_i(N_i-1)$ yield the OBS scheme and weights $v_i=1/[nN_i(N_i-1)]$ yield the SUBJ scheme. Here, a natural estimator for the functional nugget effect is 
\begin{align*}
  \widehat{\Lambda}(t_1,t_2) &= \widehat{\Gamma}(t_1,t_2) - \widehat{R}(0;t_1,t_2).
\end{align*}
The asymptotic properties of $\widehat{\Gamma}(t_1,t_2)$ are discussed in Theorem \ref{thm:rate-phi} below. Given $\widehat{\Gamma}(t_1,t_2)$, we can estimate $\sigma^2$ by proceeding similar to \citet{yao2005functional}. We do not discuss the properties of $\widehat{\sigma}^2$ here as it is not the main focus of our study. 


\vspace{1em}
\textbf{ Estimation of $R_e(0,t_1,t_2)$: }
Let us introduce $K_{H}(\bm{u})=|\bm{H}|^{-1/2}K_2(\bm{H}^{-1/2}\bm{u})$ where $K_2(\cdot)$ is a two-dimensional kernel defined in Condition (III) and $\bm{H}=\text{diag}(h_{t}^2,h_y^2)$ with $h_y$ and $h_t$ are the bandwidths for $Y_i$ and $T_{ij}$, respectively. Let $m(t,y) = E(X(\bm{s}; T_{ij}=t)|Y(\bm{s})=y)$.
The function $m(t,y)$ is assumed to be smooth
since we employ local linear smoothing for estimation.  For estimation of $m(t,y)$, we apply a two-dimensional smoothing method to
the pooled sample $\{Z_{ij}\}$ over $\{T_{ij},Y_i\}$ for $i=1,\ldots,n$, $j=1,\ldots, N_i$. Therefore, we consider the estimator $\widehat{m}(t,y)=\widehat{\alpha}_0$ where
\begin{align} \nonumber
  (\widehat{\alpha}_0,\widehat{\alpha}_1,\widehat{\alpha}_2) &= \underset{(\alpha_0,\alpha_1,\alpha_2)\in \mathbb R^3}{\text{argmin}}\sum_{i=1}^n w_i \sum_{j=1}^{N_i} \left\{ Z_{ij}-\alpha_0-\alpha_1(T_{ij}-t)-\alpha_2(Y_{i}-y) \right\}^2 \\ & \qquad \qquad \qquad \qquad \qquad \qquad \times K_H\left( (T_{ij}-t, Y_i-y)^T \right), \label{eqn:obj-ge}
\end{align}
where the weight, $w_i$ is attached to each observation for the $i$-th subject for $i=1,\ldots,n$, with $\sum_{i=1}^n N_iw_i=1$. Recall that $R_e(0)=\text{cov}(E(X(t)|Y))$. Let $\mathfrak{B}$ be a compact subset of $\mathbb{R}$. We estimate $R_e(0)$  empirically as follows:
\begin{align*}
\widehat{R}_e(0,t_1,t_2) &= \frac{1}{n}\sum_{i=1}^n \widehat{m}(t_1,Y_i) \widehat{m}(t_2,Y_i) \mathbbm{1}_{\{ Y_i \in \mathfrak{B}\}}-\frac{1}{n}\sum_{i=1}^n \widehat{m}(t_1,Y_i)\mathbbm{1}_{\{ Y_i \in \mathfrak{B}\}} \frac{1}{n}\sum_{j=1}^n \widehat{m}(t_2,Y_j)\mathbbm{1}_{\{ Y_j \in \mathfrak{B}\}},
\end{align*}
where $\mathbbm{1}_{\{ Y_j \in \mathfrak{B}\}}$ is an indicator function that takes value one if $Y_j \in \mathfrak{B}$ and zero otherwise. In practice, we may choose $\mathfrak{B} = [y_{\alpha}, y_{1-\alpha}]$ where $y_{\alpha}$ is the $\alpha$-th percentile of $Y_i$ values. In general, we can take $\alpha$ equal to  5\% or 10\%. The asymptotic properties of the estimator $\widehat{m}(t,y)$ and $\widehat{R}_e(0)$ are discussed in Theorem \ref{th:1} and Lemma \ref{lm:2} below.

\vspace{1em}
\textbf{ Estimation of the e.d.r. directions $\beta_j$ for $j=1,\ldots,K$ and the link function $f(\cdot)$: }
We calculate the e.d.r. directions using the eigen-analysis of  $R^{-1/2}(0) R_e(0) R^{-1/2}(0)$. Let $\widehat{\eta}_j$ be the estimator of the standardized e.d.r. direction $\eta$ and $\widehat{\beta}_j$ be the estimator of $\beta_j$. According to \citet{jiang2014inverse}, the following procedures are used to perform an eigen-analysis on a discrete grid in practice:

\begin{enumerate}[label=(\alph*)]
  \item on an equally-spaced grid $\{t_1,\ldots,t_p\}$, compute  the $p \times p$ matrices $\widehat{\bm{R}}_{e}$ and $\widehat{\bm{R}}$,
  \item obtain $\widehat{\bm{R}}^{-1/2}$ from the singular value decomposition of $\widehat{\bm{R}}$,
  \item obtain the first $K$  eigenfunctions $\widehat{\bm{\eta}}_1,\ldots,\widehat{\bm{\eta}}_K$  corresponding to the largest $k$ eigenvalues $\widehat{\lambda}_1,\ldots,\widehat{\lambda}_K$ from the eigen-analysis of $\widehat{\bm{R}}^{-1/2}\widehat{\bm{R}}_{e}\widehat{\bm{R}}^{-1/2}$,
  \item compute the e.d.r. directions $\widehat{\bm{\beta}}_{j,p} = \widehat{\bm{R}}^{-1/2}\widehat{\bm{\eta}}_j$, for $j=1,\ldots,K$.
 \end{enumerate}
 The link function $f(\cdot)$ is estimated, assuming $K$ is significantly smaller than $p$, by using standard nonparametric smoothing techniques to the indices $\langle \beta_{j}(t), Z_i(t) \rangle$ for $j=1,\ldots,K$,  where $Z_i(\cdot)$ is the observed function for the $i$th location, $i=1,\ldots,n$. The traditional nonparametric methods, however, suffer from the curse of dimensionality when $K$ is not small. In this situation, we may need to use methods like Multi-layer Neural Network \citep{ferre2005smoothed}, which are not affected by the curse of dimensionality.

\section{Asymptotic results} \label{sec:asymp}
In this section, we derive the convergence rates for the covariance operators and the e.d.r. directions. Standard smoothing assumptions are made on the second derivatives of $R$ and $R_e$ since their estimators $\widehat{R}$ and $\widehat{R}_e$ are obtained by employing local linear smoothing. 

We denote $\bm{T}_i=(T_{i1},\ldots, T_{iN_i})^T$ and $\bm{Z}_i=(Z_{i1},\ldots,Z_{iN_i})^T$ for $i=1,\ldots,n$. Let $Y_{i}=Y(\bm{s}_i)$ for $i=1,\ldots,n$. For convenience, we summarize the main assumptions here. Conditions (I) and (II) are related to the random field itself. 

\vspace{1em}
\textbf{Condition (I) (spatial process):}
\begin{enumerate}[label=(\roman*)]
    \item The random field $\{(Y_i, \bm{T}_i, \bm{Z}_i) : \bm{s}_i \in \mathcal{D}_n, i=1,\ldots,n\}$ is strictly stationary. 
    
    \item The time points $\{T_{ij}: i=1,\ldots,n, j=1,\ldots, N_i\}$, are i.i.d. copies of a random variable $T$ defined on $\mathcal{I}$, say $[0,1]$. The density $f_T(\cdot)$ of $T$ is bounded away from zero and infinity. The second derivative of $f_T(\cdot)$, denoted by $f^{(2)}_T(\cdot)$, is bounded.
    \begin{enumerate} [label=(\alph*)]
    \item $V$, $U_i$ and $e$ are mutually independent, and are independent of $T$. 
    \end{enumerate}
    
   \item  Assume that $\mu^{(2)}(t)$ exists and bounded on $\mathcal{I}$.

   \item 
   \begin{enumerate} [label=(\alph*)]
       \item For all $1 \le i \neq i' \le n$, the vectors $(T_{ij}, Y_{i})$ and $(T_{i'j'}, Y_{i'})$, $1 \le j \le N_i$, $1 \le j' \le N_{i'}$, admit a joint density $g_{i,i'}$ and a marginal density $g$; further, $\vert g_{i,i'}(t,y,t',y')-g(t,y) g(t',y') \vert \le C$ for $t,t',y,y' \in \mathbb{R}$, where $C>0$ is some constant. Let $g_3(t,t',y)$ be the joint density of $(T_{ij}=t,T_{ij'}=t',Y_i=y)$. We assume $g$, $g_3$ and $m(t,y)$ have continuous and bounded second derivatives.
       \item  The data points $Y_i$'s are independent of $U_{ij}$ and $e_{ij}$. Let $\Psi(t,y)$ be the variance of $X_{ij}$ given $T_{ij}=t$ and $Y_i=y$ and $\Psi(t,y,t')$ be the covariance of $X_{ij}$ and $X_{ij'}$ given $T_{ij}=t$, $T_{ij'}=t'$ and $Y_{i}=y$. We assume that  $\Psi$ has continuous and bounded second derivatives. Let $\Lambda(t)$ be the variance of $U_{ij}$ given $T_{ij}$ and $\Lambda(t_1,t_2)$ be the covariance of $U_{ij}$ and $U_{ik}$ given $T_{ij}$ and $T_{ik}$. We assume $\Lambda$ has continuous and bounded second derivatives.
       \item $\sup_{\bm{s}}E[\sup_t|V(\bm{s},t)|^{2+\varrho}] < \infty$, $E[\sup_t |U(t)|^{2+\varrho}]< \infty$, and $E|e|^{2+\varrho}< \infty$ for some $\varrho>0$. 
   \end{enumerate}

   \item 
   \begin{enumerate}[label=(\alph*)]
     \item  Let $\Xi(t_1,t_2,t_1',t_2')$ be the covariance of $[X_{ij}-\mu(T_{ij})]$ $[X_{ik}-\mu(T_{ik})]$ and $[X_{ij'}-\mu(T_{ij'})]$ $[X_{ik'}-\mu(T_{ik'})]$ and $\Lambda_1(t_1,t_2,t_1',t_2')$ be the covariance of $U_{ij}U_{ik}$ and $U_{ij'}U_{ik'}$ given $T_{ij}=t_1$, $T_{ik}=t_2$, $T_{ij'}=t_1'$ and $T_{ik'}=t_2'$. We assume  $\Xi$,  $\Lambda_1$ and $\Gamma(t_1,t_2)$  have continuous and bounded second derivatives. 
     \item $\sup_{\bm{s}}E[\sup_t|V(\bm{s},t)|^{4+\varrho_c}] < \infty$, $E[\sup_t|U(t)|^{4+\varrho_c}]< \infty$, and $E|e|^{4+\varrho_c}< \infty$ with $\varrho_c/2= \varrho > 0$.
   \end{enumerate}
   \item \begin{enumerate}
     \item  Let $\Xi_R(t_1,t_2,t_1',t_2'; \Vert \bm{s}_i-\bm{s}_{i'} \Vert)$ be the covariance of $[Z_{ij}-\mu(T_{ij})]$ $[Z_{i'j'}-\mu(T_{i'j'})]$  and  $[Z_{ik}-\mu(T_{ik})]$ $[Z_{i'k'}-\mu(T_{i'k'})]$ given $T_{ij}=t_1$, $T_{i'j'}=t_2$, $T_{ik}=t_1'$ and $T_{i'k'}=t_2'$ for $1 \le i \neq i' \neq n$ and $1 \le j,k \le N_i$, $1 \le j',k' \le N_{i'}$. We assume that $\Xi_R$ and $R(s;t_1,t_2)$ have continuous and bounded second derivatives.    
   \end{enumerate}
  
\end{enumerate}
Conditions (I)[(i), (ii)(a), (iii)(a)] are standard for the spatial data generating process in the context of the problem under study. For example, similar conditions are used by \citet{masry1986recursive} in time series, by \citet{tran1990kernel} and \citet{hallin2004local} in the gridded spatial data context, and by \citet{,lu2014nonparametric} for irregular spatial measurements. These conditions are required to ensure uniform consistency for joint probability densities across different spatial sites.   Conditions (I)[(ii)(b), (iii)(b)] are mild assumptions on the covariance of the function. They are similar to Assumption (A.6) in \citet{jiang2014inverse} and needed for local linear smoothing.  Conditions (I)[(ii)(c), (iii)(c)] are typical in the spatial regression setup. For example, they are similar to  (A2) of \citet{hallin2004local}, Assumption (3.2) in \citet{gao2006estimation} and Conditions (C5)-(C7) in \citet{li2010uniform}. These conditions are required for Lemma S.1.1 in the supplement \citet{suppl}. Condition (iv) is similar to Assumption (I)(iii) in \citet{lu2014nonparametric} which is required for the spatio-temporal covariance.
Condition (II) provides the assumptions on spatial mixing as well as spatial sites, which include irregularly positioned observations.
 Let $\mathcal{B}(\mathcal{S})$ be the Borel $\sigma$-field generated by $\{(Y_i, \bm{T}_i, \bm{Z}_i ) : \bm{s}_i \in \mathcal{S}, i=1,\ldots,n\}$ for any collection of sites $\mathcal{S} \subset \mathbb{R}^2$. For each couple $\mathcal{S}'$, $\mathcal{S}''$, let $d(\mathcal{S}', \mathcal{S}''):= \text{min}\{\Vert \bm{s}'-\bm{s}'' \Vert : \bm{s}' \in \mathcal{S}', \bm{s}'' \in \mathcal{S}''\}$ be the distance between $\mathcal{S}'$ and $\mathcal{S}''$. Finally, let Card$(\mathcal{S})$ be the cardinality of $\mathcal{S}$.

\vspace{1em}
\textbf{Condition (II) (mixing and sampling sites):}

\begin{enumerate}[label=(\roman*)]
\item   There exist a function $\phi$ such that $\phi(t) \rightarrow 0$ as $ t\rightarrow \infty$, and a symmetric function $\psi: \mathbb{N}^2 \rightarrow \mathbb{R}^+$ decreasing in each of its two arguments such that
  \begin{align}
    \alpha\left(\mathcal{B}(\mathcal{S'}),\mathcal{B}(\mathcal{S''})\right) &:= \sup\left\{|P(AB)-P(A)P(B)|, A\in \mathcal{B}(\mathcal{S'}), B\in \mathcal{B}(\mathcal{S''}) \right\} \nonumber\\ &\le \psi(\text{Card}(\mathcal{S'}),\text{Card}(\mathcal{S''})) \phi(d(\mathcal{S'},\mathcal{S''})),
    \label{eqn:mixing-coef}
  \end{align}
  for any $\mathcal{S'}, \mathcal{S''} \subset \mathbb{R}^2$. Moreover, the function $\phi$ is such that
  \begin{align}
    \underset{m \rightarrow \infty}{\lim}m^{\kappa} \sum_{j=m}^{\infty} j^{2} \{\phi(j)\}^{\varrho/(2+\varrho)} &=0
    \label{eqn:mixing-lim}
  \end{align} 
  for some constant $\kappa>$ and  for the constant $\varrho$ defined in Condition (I)[(iv), (v)]. In this study, we consider $\phi(j)\lesssim j^{-\rho}$ for some constant $\rho>3(2+\varrho)/\varrho$ and $ \psi(i,j) \lesssim \min(i,j)$. 
  \end{enumerate}
  
The observations are positioned as $\{\bm{s}_i, i=1,\ldots,n \}$ for which (\ref{eqn:sc-1}) and (\ref{eqn:sc-2}) hold true with $\underset{1 \le j\le n}{\min} \delta_{j,n}/\delta_n \ge c_1 >0$ and $\underset{1 \le j \le n}{\max} \Delta_{j,n}/\Delta_n \le C_1 < \infty$ for all $n$. Suppose  $g_{\bm{S}}$ is the  spatial density function (or sampling intensity function) defined on $\mathbb{R}^2$ such that:
\begin{enumerate}[label=(\roman*)]
\setcounter{enumi}{1}
\item  $  \sum_{i=1}^n \mathbb{W}_i \bm{I}(\bm{s}_i \in A) \rightarrow \int_A g_{\bm{S}}(\bm{s})d\bm{s}$ as $n \rightarrow \infty$, for any measurable set $A \subset \mathbb{R}^2$ and for some weights $\mathbb{W}_i>0$ and $\sum_{i=1}^n \mathbb{W}_i=1$.
\item  The spatial density $g_{\bm{S}}(\bm{s})$ is bounded and has second derivatives that are continuous on $\mathbb{R}^2$.
\item The following two integrals exist:
\begin{align*}
  A_0^*(\Vert \bm{s}_0 \Vert) &= \Vert \bm{s}_0 \Vert \int_{-\infty}^{\infty} \int_{-\infty}^{\infty} \bigg[  \int_{0}^{2\pi} g_{\bm{S}}(u+ \Vert \bm{s}_0 \Vert \cos(\vartheta), v + \Vert \bm{s}_0 \Vert \sin(\vartheta)) d\vartheta  \bigg] \\ & \qquad \qquad \times g_{\bm{S}}(u,v)du dv > 0, 
\end{align*}
and 
\begin{align*}
  A_1^*(\Vert \bm{s}_0 \Vert) &= \Vert \bm{s}_0 \Vert \int_{-\infty}^{\infty} \int_{-\infty}^{\infty} \bigg[  \int_{0}^{2\pi} (\cos(\vartheta), \sin(\vartheta)) \\& \qquad \times \bigg[\frac{ \partial g_{\bm{S}}(u+ \Vert \bm{s}_0 \Vert \cos(\vartheta), v + \Vert \bm{s}_0 \Vert \sin(\vartheta))}{\partial \bm{s}} d\vartheta  \bigg] g_{\bm{S}}(u,v)du dv > 0. 
\end{align*}
\end{enumerate}
The first part of Condition (II)(i), the $\alpha$-mixing condition, is standard for spatial data. For example, it is similar to (A4) in \citet{hallin2004local} and Assumption I(i) in  \citet{lu2014nonparametric}. This condition characterizes spatial dependence. If the condition (\ref{eqn:mixing-coef}) holds with $\psi=1$, then the random field $\{Y_i, \bm{T}_i, \bm{Z}_i \}$ is called strongly mixing. Many time series and stochastic processes are shown to be strongly mixing, at least, in the serial case $d=1$. The last part of Condition (II)(i) is  similar to Assumption 2 in \citet{hansen2008uniform} and Assumption E2 in \citet{kurisu2022nonparametric} and needed for uniform convergence. Conditions (II)[(ii), (iii)] are from \citet{lu2014nonparametric}. These are mild conditions on the spatial sites and are needed  for the derivation of asymptotic bias and variances of the concerned estimators. Condition (II)(iv) is also from \citet{lu2014nonparametric} which assumes that the joint probability density is isotropic.

Condition (III) deals with the kernel functions that are used for estimating both covariance operators. These are similar  to Assumption (A.3) of \citet{jiang2014inverse}.

\vspace{1em}
\textbf{Condition (III) (kernel functions):}

\begin{enumerate}[label=(\roman*)]
  \item Let $K_2(\cdot,\cdot)$ be a bivariate kernel function that is compactly
      supported, symmetric and B-Lipschitz. Further, it is a kernel of
      order $(|b_1|,|b_2|)$, that is,
      \begin{eqnarray*}
        \sum_{\ell_1 +\ell_2=\ell} \int \int u^{\ell_1}v^{\ell_2}K_2(u,v) du dv
        = \Bigg\{ \begin{array}{ll} 0, & 0 \le \ell < |b_2|, \ell \neq |b_1|,\\
            (-1)^{|b_1|} |b_1|!, & \ell = |b_1|,\\
                    \neq 0, & \ell= |b_2|.
                             \end{array} 
      \end{eqnarray*}
      Similarly, the univariate kernel function $K_1$ of order $(c_1, c_2)$ is compactly
      supported, symmetric and B-Lipschitz,  and
      defined as
 \begin{eqnarray*}
        \int u^{\ell}K_1(u) du
        = \Bigg\{ \begin{array}{ll} 0, & 0 \le \ell < c_2, \ell \neq c_1,\\
            (-1)^{c_1} c_1!, & \ell = c_1,\\
                    \neq 0, & \ell= c_2.
                             \end{array} 
      \end{eqnarray*}
We assume that  $\int |K_1(u)|^{2+\varrho} du < \infty$, $\int |u K_1(u)|^{2+\varrho} du < \infty$, $\int |K_2(u,v)|^{2+\varrho} dudv < \infty$, $\int |u^2 K_2(u,v)|^{2+\varrho} dudv < \infty$, and $\int |v^2 K_2(u,v)|^{2+\varrho} dudv < \infty$. In this study, we set $|b_1|=0$, $|b_2|=2$ for $K_2$, and $c_1=0, c_2=2$ for $K_1$.

      \item The kernel function $L(\cdot)$ satisfies that $\int L(s)ds=1$, $\int sL(s) ds=0$, and $\mu_{L,2}=\int s^2 L(s) ds < \infty$ and $\nu_{L,2}=\int L^2(s)ds < \infty$. Moreover, it is Lipschitz continuous. 
    \end{enumerate}
      %

 
Before we state the conditions on bandwidths, define
\begin{align}
       \gamma_{n1} &= n  h^{-2}  \text{max}^2\{w_i N_i\}^2, 
       \label{eq:def-gamn1}\\
       \gamma_{n2} &= n  h^{-2}  \text{max}^2\{v_i N_i(N_i-1)\}, \text{ and }
       \label{eq:def-gamn2}\\
       \gamma_{n3} &= n(n-1)^2 h^{-2}  \text{max}^2\{ v_{i,i'} N_i N_{i'} \}.
       \label{eq:def-gamn3}
\end{align}
 Let $\overline{N}_1=n^{-1}\sum_{i=1}^n N_i(N_i-1)$ and $\overline{N}_2 = n^{-1}(n-1)^{-1}\sum_{i=1}^n \sum_{i' \neq i}N_i N_{i'}$. For the SUBJ and OBS weighting schemes, we obtain $\gamma_{n1}= 1/nh^2$ and $\gamma_{n1}= (nh^2)^{-1}\text{max}^2\{N_i\}/\overline{N}^2$. Similarly, we obtain $\gamma_{n2}= 1/nh^2$ for SUBJ scheme and $\gamma_{n2}=(nh^2)^{-1}\text{max}^2\{N_i(N_i-1)\}/\overline{N}_1^2$ for OBS scheme. Moreover, for SUBJ scheme $\gamma_{n3}=1/nh^2$ and for OBS scheme $\gamma_{n3}= (nh^2)^{-1}\text{max}^2\{N_i N_{i'} \}/ \overline{N}_2^2$. We now provide conditions related to bandwidths.  

\vspace{1em}
    \textbf{Condition (IV) (bandwidth):}
    \begin{enumerate}[label=(\roman*)]
      \item  Assume that the bandwidths $h_t =O(h)$ and $h_y=O(h)$. The bandwidth $h$ satisfies $h \rightarrow  0$, $nh^2\delta_n^2 \rightarrow \infty$, $\sum_{i=1}^n N_iw_i^2/h^2 \rightarrow 0$, $\sum_{i=1}^n N_i(N_i-1)w_i^2/h \rightarrow 0$  and 
      \begin{align}
      \begin{split}
      \underset{n \rightarrow \infty}{\lim} \gamma_{n1}^{1+2/\kappa} \delta_n^{-2(1+2/\kappa)} h^{2\left(1-\frac{2\varrho}{\kappa(2+\varrho)}\right)} &= 0, \text{ for }  \kappa > \max\{1, 2\varrho/(2+\varrho) \}, \text{ and }\\
      \underset{n \rightarrow \infty}{\lim} \gamma_{n1}^{1+2/\kappa} \delta_n^{-2(1+2/\kappa)} h^{\frac{2\varrho}{4+\varrho}\left(1-\frac{2(4+\varrho)}{\kappa(2+\varrho)} \right)} &= 0, \text{ for }  \kappa > 2(4+\varrho)/(2+\varrho),
      \end{split}
      \label{eqn:bw-c1}
      \end{align}
      where $\kappa$ and $\varrho>0$ defined in (\ref{eqn:mixing-lim}), $\gamma_{n1}$ is defined in (\ref{eq:def-gamn1}), and $\delta_n$ is defined in (\ref{eqn:sc-1}). 
      
      \item The bandwidth $h_{\mu}$ satisfies $h_{\mu} \rightarrow  0$, $nh_{\mu}\delta_n^2 \rightarrow \infty$, $\sum_{i=1}^n N_iw_i^2/h_{\mu} \rightarrow 0$, $\sum_{i=1}^n N_i(N_i-1)w_i^2 \rightarrow 0$, and  (\ref{eqn:bw-c1}) with $h^2$ replaced by $h_{\mu}$. 
      
      \item The bandwidth $h_{c}$ satisfies $h_{c} \rightarrow  0$, $nh_c^2\delta_n^2 \rightarrow \infty$, $\sum_{i=1}^n N_i(N_i-1)v_i^2/h_{c}^2 \rightarrow 0$, $\sum_{i=1}^n N_i(N_i-1)(N_i-2)v_i^2/h_c \rightarrow 0$, $\sum_{i=1}^n N_i(N_i-1)(N_i-2)(N_i-3) v_i^2 \rightarrow 0$,  and  (\ref{eqn:bw-c1}) with $h$ replaced by $h_c$ and $\gamma_{n1}$ replaced by $\gamma_{n2}$. 
      
      \item  The bandwidths $h_{c}$ and $b$ satisfy $h_{c},b \rightarrow  0$, $nh_c^2\delta_n^2 \rightarrow \infty$,  $\sum_{i=1}^n \sum_{i' \neq i} v^2_{i,i'} N_i N_{i'}/h_c^2b \rightarrow 0$, $\sum_{i=1}^n \sum_{i' \neq i} v_{i,i'}^2 N_i N_{i'} (N_i-1) (N_{i'}-1) b^{-1}\rightarrow 0$, $\sum_{i=1}^n \sum_{i' \neq i} v_{i,i'}^2 N_i N_{i'} (N_i-1) h_c^{-1} b^{-1} \rightarrow 0$, $\sum_{i=1}^n \sum_{i' \neq i} v_{i,i'}^2 N_i N_{i'} (N_{i'}-1) h_c^{-1} b^{-1} \rightarrow 0$,  and (\ref{eqn:bw-c1}) with $h$ replaced by $h_c$ and $\gamma_{n1}$ replaced by $\gamma_{n3}$. 
    
    \end{enumerate}

 The first part of each of the Condition (IV) is standard in the nonparametric smoothing literature and is needed to obtain the consistency of estimators. For example, they are similar to (C1a) and (D1a) of \citet{zhang2016sparse}. The conditions  involving $\delta_n$ and $\kappa$ in (\ref{eqn:bw-c1}) are mild and similar to the condition in Assumption (IV) (a) in \citet{lu2014nonparametric} and (A4) in \citet{hallin2004local}. 

\vspace{1em}

\textbf{Condition (V) (uniform convergence)
}
\begin{enumerate}[label=(\roman*)]

 \item Assume $\sup_{i}n w_iN_i < \infty$,
    \begin{align}
    n Q_n^{\frac{2+\varrho}{1+\varrho}}h_{\mu}^{\frac{2+\varrho}{1+\varrho}}  \delta_n^2&\to \infty,\quad  \frac{n^{\rho+1} Q_n^{\frac{(2+\varrho)\rho+4+3\varrho}{1+\varrho}} h_{\mu}^{\frac{(2+\varrho)\rho+6+5\varrho}{1+\varrho}}\delta_n^2\log(n)}{\Delta_n^6}\to \infty,    \label{eq:condition:V:eq:1}
    \end{align}
with
\begin{align*}
    Q_n &= \left\{\sum_{i=1}^n  w_i^2N_i h_{\mu}^{-1}+ \sum_{i=1}^n  w_i^2N_i (N_{i}-1) \right\}^{1/2}.
\end{align*}

\item Assume $\sup_{i}n w_iN_i=O(1)$,  $nh^2\delta_n^2 \to \infty$, $h/\delta_n \rightarrow 0$, and 
\begin{eqnarray}
\frac{n^{1+\rho}h^{14+2\rho}\delta_n^{5+2\rho}}{\Delta_n^6 \log^4(n)}&\to&\infty. \label{eq:condition:V:eq:2}
\end{eqnarray}

\item  Assume $ v_{i,i'}=v_{i',i}$, $\sup_{i,i'} n^2 v_{i,i'}N_i N_{i'} \leq C$ for some constant $C$, and $\frac{nh_c^2b}{\log(n)} \to  \infty$.
\end{enumerate}

The above conditions are mild and sufficient for uniform convergence.  Condition (V) illustrates the roles of $\rho, \delta_n, \Delta_n$, namely, the spatial correlation and DEI parameters. A larger $\rho$ means weaker spatial dependence, and the choice of the bandwidths $h_\mu, h$ becomes more flexible. In the case without spatial correlation, namely $\rho=\infty$, the conditions in (\ref{eq:condition:V:eq:1}) and (\ref{eq:condition:V:eq:2}) are almost equivalent (up to logarithm factors) to
\begin{eqnarray*}
    nQ_nh_\mu \delta_n^2\to \infty,\quad nQ_n^{\frac{2+\varrho}{1+\varrho}}h_\mu^{\frac{2+\varrho}{1+\varrho}}\to \infty, \quad nh^2\delta_n^2\to \infty.
\end{eqnarray*}
The above conditions are similar to Conditions (C2c) and (C3c) from \citet{zhang2016sparse}, except the DEI parameter $\delta_n$ is involved.
 
 We now provide the asymptotic results for the covariance estimators proposed in the study. These results require the uniform convergence of $\widehat{\mu}(t)$ which is provided in Lemma S.1.3 in the supplement \citet{suppl}.
 
 \subsection{Results for $\widehat{R}( 0 ; t_1,t_2)$}
 
 In the following theorem, we provide the asymptotic bias and variance of the spatial covariance estimator $\widehat{R}(\Vert \bm{s}_0 \Vert; t_1,t_2)$.
\begin{theorem} \label{th:Rhat}
  Under conditions (I)[(i), (ii) (iii), (v)(b), (vi)], (II), (III), (IV)[(ii), (iv)] and (V)(i), for a fixed interior point $(\Vert \bm{s}_0 \Vert, t_1, t_2)$, we have that
  \begin{align*}
     E\{ \widehat{R}&( \Vert \bm{s}_0 \Vert; t_1,t_2) - R(\Vert \bm{s}_0 \Vert; t_1,t_2) ~|~ \mathcal{X}_T \} \\ &= \frac{1}{2}h_c^2 \left\{ \frac{\partial^2  R(\Vert \bm{s}_0 \Vert; t_1,t_2)}{\partial t_1^2} + \frac{\partial^2  R(\Vert \bm{s}_0 \Vert; t_1,t_2)}{ \partial t_2^2 } \right\} + \frac{1}{2} b^2 \mu_{L,2}  \frac{\partial^2  R(\Vert \bm{s}_0 \Vert; t_1,t_2)}{\partial \Vert \bm{s}_0 \Vert^2} + o_p(h_c^2+b^2)
       \end{align*}
       and
       \begin{align*}  
    \text{var}&\{ \widehat{R}(\Vert \bm{s}_0 \Vert; t_1,t_2) ~| \mathcal{X}_T\} \\
     &\qquad = \frac{\sum_{i=1}^n \sum_{i' \neq i} v^2_{i,i'} N_i N_{i'}}{h_c^{2}b A_0^* f_T(t_1)f_T(t_2)} \nu_{L,2}  \nu_{K_1,2}^2 ~ \Xi_R(t_1,t_2; \Vert \bm{s}_0 \Vert) \\ & \qquad \qquad + \frac{\sum_{i=1}^n \sum_{i' \neq i} v^2_{i,i'} N_i N_{i'} (N_{i'}-1)}{ h_c b A_0^* f_T(t_1)} \nu_{L,2} \nu_{K_1,2} ~ \Xi_R(t_1,t_2,  t_2; \Vert \bm{s}_0 \Vert)\\
    & \qquad \qquad + \frac{\sum_{i=1}^n \sum_{i' \neq i} v^2_{i,i'} N_i N_{i'} (N_{i}-1)}{h_c b A_0^* f_T(t_2)} \nu_{L,2} \nu_{K_1,2} ~  \Xi_R(t_1,t_2,  t_1; \Vert \bm{s}_0 \Vert)\\
     & \qquad \qquad + \frac{\sum_{i=1}^n \sum_{i' \neq i} v^2_{i,i'} N_i (N_i-1) N_{i'} (N_{i'}-1)}{b A_0^* }\nu_{L,2} \Xi_R(t_1,t_2, t_1, t_2; \Vert \bm{s}_0 \Vert)  +o_p(1),
  \end{align*}
  where $\mathcal{X}_T=\{T_{ij}, i=1,\ldots,n, j=1,\ldots, N_i\}$, $\nu_{L,2}=\int L^2(u)du $,  $\nu_{K_1,2}= \int K_1^2(u)du$ and $\Xi_R$ is defined in Condition (I)(vi), $A_0^*$ is defined in Condition (II)(iv), and $\mu_{L,2}$ is defined in Condition (III)(ii). 
\end{theorem}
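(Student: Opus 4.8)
The plan is to view $\widehat R(\Vert\bm s_0\Vert;t_1,t_2)=\widehat\alpha_0$ as the first coordinate of the weighted least-squares solution to the objective in $(\ref{eq:obj-gs})$ and to expand it through the sandwich representation $\widehat{\bm\alpha}=\bm S_n^{-1}\bm R_n$. Write the rescaled design vector $\bm\Delta_{ij,i'j'}=\bigl(1,(T_{ij}-t_1)/h_c,(T_{i'j'}-t_2)/h_c,(\Vert\bm s_i-\bm s_{i'}\Vert-\Vert\bm s_0\Vert)/b\bigr)^\top$ and the kernel weight $\omega_{ij,i'j'}=v_{i,i'}K_{h_c}(T_{ij}-t_1)K_{h_c}(T_{i'j'}-t_2)L_b(\Vert\bm s_i-\bm s_{i'}\Vert-\Vert\bm s_0\Vert)$, so that $\bm S_n=\sum\omega_{ij,i'j'}\bm\Delta_{ij,i'j'}\bm\Delta_{ij,i'j'}^\top$ and $\bm R_n=\sum\omega_{ij,i'j'}\bm\Delta_{ij,i'j'}C_{ij,i'j'}$. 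Because the sites $\bm s_i$ are non-stochastic and we condition on $\mathcal X_T$ (and on the $N_i$), $\bm S_n$ is deterministic, so the first step is to show that, under the normalization $\sum_i\sum_{i'\ne i}N_iN_{i'}v_{i,i'}=1$, $\bm S_n$ converges to a nondegenerate limit. This uses Condition (II)(ii)--(iv): rewriting the double sum over $(i,i')$ as an integral against $g_{\bm S}(\cdot)g_{\bm S}(\cdot)$ and changing to polar coordinates centred at $\bm s_{i'}$ on the annulus $\{\Vert\bm s-\bm s_{i'}\Vert\approx\Vert\bm s_0\Vert\}$ produces precisely the constants $A_0^*(\Vert\bm s_0\Vert)$ and $A_1^*(\Vert\bm s_0\Vert)$; by symmetry of $K_1$ and $L$ the limit is block-diagonal to leading order, with $(1,1)$-block proportional to $A_0^*f_T(t_1)f_T(t_2)$, which governs the $\widehat\alpha_0$-component.

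Next I would dispose of the mean-function plug-in. Writing $C_{ij,i'j'}$ in terms of $Z_{ij}-\mu(T_{ij})$ and the error $\mu(T_{ij})-\widehat\mu(T_{ij})$ and expanding the product, every term containing $\widehat\mu-\mu$ is bounded uniformly by $\Vert\widehat\mu-\mu\Vert_\infty$ (or its square) times kernel-weighted averages; the uniform rate of Lemma S.1.3 together with Condition (IV)(ii) forces these contributions to be $o_p(h_c^2+b^2)$ in the bias and of smaller order than the stated variance, so one may replace $C_{ij,i'j'}$ by $\widetilde C_{ij,i'j'}:=[Z_{ij}-\mu(T_{ij})][Z_{i'j'}-\mu(T_{i'j'})]$. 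Since $i\ne i'$, independence of the nugget processes $U_i,U_{i'}$ and the measurement errors across subjects yields $E[\widetilde C_{ij,i'j'}\mid\mathcal X_T,\bm s_i,\bm s_{i'}]=R(\Vert\bm s_i-\bm s_{i'}\Vert;T_{ij},T_{i'j'})$. A two-term Taylor expansion of $R$ about $(\Vert\bm s_0\Vert;t_1,t_2)$, combined with the fact that local linear fitting annihilates the constant and first-order parts exactly and that the odd moments of $K_1$ and $L$ vanish, leaves only the quadratic remainder; integrating against the kernels then gives the stated conditional bias $\tfrac12 h_c^2(\partial^2_{t_1}R+\partial^2_{t_2}R)+\tfrac12 b^2\mu_{L,2}\,\partial^2_{\Vert\bm s_0\Vert}R+o_p(h_c^2+b^2)$, the mixed second-order terms dropping out because the corresponding product kernel moments vanish.

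The variance is the crux and the main obstacle. Conditionally on $\mathcal X_T$, and modulo the plug-in term above, $\widehat\alpha_0-E[\widehat\alpha_0\mid\mathcal X_T]=e_1^\top\bm S_n^{-1}(\bm R_n-E[\bm R_n\mid\mathcal X_T])$, so I need $\mathrm{Var}(\bm R_n\mid\mathcal X_T)=\sum\sum\omega\,\omega'\,\bm\Delta\bm\Delta'^\top\,\mathrm{cov}(\widetilde C_{ij,i'j'},\widetilde C_{kl,k'l'})$, and I would split this quadruple sum according to how the unordered subject pairs $\{i,i'\}$ and $\{k,k'\}$ overlap and, when they coincide, how the within-subject time indices match. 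The four matched-pair patterns --- all four indices equal, only the first matched, only the second matched, and neither matched --- generate respectively the four explicit terms carrying $\Xi_R(t_1,t_2;\Vert\bm s_0\Vert)$, $\Xi_R(t_1,t_2,t_2;\Vert\bm s_0\Vert)$, $\Xi_R(t_1,t_2,t_1;\Vert\bm s_0\Vert)$, $\Xi_R(t_1,t_2,t_1,t_2;\Vert\bm s_0\Vert)$, with the kernel prefactors $\nu_{L,2}\nu_{K_1,2}^2/(h_c^2 b)$, $\nu_{L,2}\nu_{K_1,2}/(h_c b)$ (twice), $\nu_{L,2}/b$ arising from the number of surviving $K_{h_c}^2$/$K_{h_c}$/$L_b^2$ integrations; the symmetry $v_{i,i'}=v_{i',i}$ from Condition (V)(iii) handles the swapped-pair relabelling. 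The remaining configurations --- subject pairs sharing exactly one subject, and disjoint subject pairs --- must be shown to be $o_p$ of these four; here the $\alpha$-mixing bound of Condition (II)(i) with $\phi(j)\lesssim j^{-\rho}$, $\psi(i,j)\lesssim\min(i,j)$, the higher-order moment bounds of Conditions (I)(v)(b) and (I)(vi), and a count (against the sampling intensity $g_{\bm S}$) of how many such index tuples survive the compact support of $L_b$ under the DEI rates $\delta_n,\Delta_n$ are all needed. Dividing by the $(1,1)$-block of $\bm S_n$ twice and collecting the limits then yields the asserted variance, after a final check that the $o_p(1)$ remainder absorbs the cross-pair terms and the $\widehat\mu$-plug-in error under Conditions (IV)(ii),(iv) and (V)(i).
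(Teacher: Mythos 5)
Your proposal is correct and follows essentially the same route as the paper's proof: the local-linear sandwich representation with the Gram-matrix limit $f_T(t_1)f_T(t_2)A_0^*$ obtained from the spatial intensity $g_{\bm S}$ via polar coordinates, replacement of $\widehat\mu$ by $\mu$ through the uniform rate of Lemma S.1.3, the Taylor-expansion bias, and the variance split into the same-pair contributions (the four matched time-index patterns giving the $\Xi_R$ terms with prefactors $\nu_{L,2}\nu_{K_1,2}^2/(h_c^2b)$, $\nu_{L,2}\nu_{K_1,2}/(h_cb)$, $\nu_{L,2}/b$) plus cross-pair covariances killed by the $\alpha$-mixing bound, truncation, and a DEI count of close versus distant pairs. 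The only implementation detail the paper adds that you leave implicit is the identity $E[\mathrm{cov}(A,B\mid C)]=\mathrm{cov}(A,B)-\mathrm{cov}(E[A\mid C],E[B\mid C])$, used to reduce the conditional cross-pair terms to unconditional ones before applying the mixing argument (Lemma S.1.5 in the supplement).
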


The proof of Theorem \ref{th:Rhat} is available in the supplement \citet{suppl}. 
\begin{remark}
The asymptotic bias of the estimator $\widehat{R}( \Vert \bm{s}_0 \Vert; t_1,t_2)$ consists of the biases from two kernel smoothing functions: over the time, $T_{ij}$, and over the values of spatial locations,  $\bm{s}_i$.  It is different from the rate of standard two-dimensional smoothing, which includes only the bias from kernel smoothing over time. As discussed in \citet{lu2014nonparametric}, under the condition $b/h_c \rightarrow 0$ as $n \rightarrow \infty$, the asymptotic bias will simplify to the standard two-dimensional rate $O(h_c^2)$. Similarly, the asymptotic variance is different from the standard rate due to the extra bandwidth $b$. In Remark 4, we give sufficient conditions under which the asymptotic variance will simplify to the standard rate.   
\end{remark}
\begin{remark}
It is a common practice in the literature to provide asymptotic normality of estimator instead of pointwise bias and variance as in Theorem \ref{th:Rhat} when the observations are dependent \citep{fan2008nonlinear}. However, the asymptotic normality of $\widehat{R}( \Vert \bm{s}_0 \Vert; t_1,t_2)$ is very challenging especially under the broad framework such as the one adopted in this manuscript and requires stronger conditions. We leave this for future research.
\end{remark}

We define the Hilbert-Schmidt norm by $\Vert \Phi \Vert_{HS}=[\int \int \Phi(s,t)^2ds dt]^{1/2}$. The following lemma provides the convergence rate of $\widehat{R}(0)$ in Hilbert-Schmidt norm.
\begin{lemma} \label{lm:L2-R}
       Suppose conditions (I)[(i), (ii) (iii), (v)(b), (vi)], (II), (III), (IV)[(ii), (iv)], and (V)[(i), (iii)] hold. Then, we have that
      \begin{align}
        \Vert \widehat{R}(0) &- R(0) \Vert_{HS} = O_p\bigg( h_c^2 + b^2 + \bigg[\frac{\sum_{i=1}^n \sum_{i' \neq i} v^2_{i,i'} N_i N_{i'}}{h_c^{2}b} + \frac{\sum_{i=1}^n \sum_{i' \neq i} v^2_{i,i'} N_i N_{i'} (N_{i'}-1)}{ h_c b} \nonumber\\ & + \frac{\sum_{i=1}^n \sum_{i' \neq i} v^2_{i,i'} N_i N_{i'} (N_{i}-1)}{h_c b} + \frac{\sum_{i=1}^n \sum_{i' \neq i} v^2_{i,i'} N_i (N_i-1) N_{i'} (N_{i'}-1)}{b}\bigg]^{1/2} \bigg).
      \end{align}
\end{lemma}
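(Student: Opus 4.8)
\textbf{Proof proposal for Lemma \ref{lm:L2-R}.}

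The plan is to convert the pointwise bias/variance expansions of Theorem \ref{th:Rhat} into a Hilbert--Schmidt bound by integrating over $(t_1,t_2) \in \mathcal{I}^2$. Write $\widehat{R}(0;t_1,t_2) - R(0;t_1,t_2) = B_n(t_1,t_2) + V_n(t_1,t_2)$, where $B_n$ collects the (conditional) bias and $V_n$ the centered stochastic part. Then $\Vert \widehat R(0) - R(0)\Vert_{HS}^2 \le 2\int\!\!\int B_n^2 + 2\int\!\!\int V_n^2$, so it suffices to bound each piece. For the bias term, Theorem \ref{th:Rhat} together with Condition (I)(vi) (bounded second derivatives of $R(s;t_1,t_2)$ on the compact set) gives $|B_n(t_1,t_2)| = O_p(h_c^2 + b^2)$ uniformly in $(t_1,t_2)$ over a compact interior region, so $\int\!\!\int B_n^2 = O_p((h_c^2+b^2)^2)$, which yields the first two terms in the stated rate after taking square roots. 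The boundary region has Lebesgue measure $O(h_c + b) \to 0$ and contributes a lower-order term provided $\widehat R$ and $R$ are bounded there, which follows from the moment conditions in Condition (I)(v)(b), (vi).

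For the stochastic term, I would first take the conditional expectation $E[\int\!\!\int V_n^2 \mid \mathcal{X}_T] = \int\!\!\int \mathrm{var}\{\widehat R(0;t_1,t_2)\mid \mathcal{X}_T\}\,dt_1\,dt_2$ and invoke the variance expansion in Theorem \ref{th:Rhat}. Each of the four variance terms there is a product of a smooth bounded function (the $\Xi_R$ quantities, $1/f_T$, $1/A_0^*$, all bounded by Conditions (I)(vi) and (II)) and an $n$-dependent constant; integrating over the compact domain $\mathcal{I}^2$ just multiplies by $|\mathcal{I}|^2$, so
\begin{align*}
E\Big[\int\!\!\int V_n^2 \,\Big|\, \mathcal{X}_T\Big] = O_p\Big( \tfrac{\sum_i\sum_{i'\neq i} v_{i,i'}^2 N_iN_{i'}}{h_c^2 b} + \tfrac{\sum_i\sum_{i'\neq i} v_{i,i'}^2 N_iN_{i'}(N_{i'}-1)}{h_c b} + \tfrac{\sum_i\sum_{i'\neq i} v_{i,i'}^2 N_iN_{i'}(N_i-1)}{h_c b} + \tfrac{\sum_i\sum_{i'\neq i} v_{i,i'}^2 N_i(N_i-1)N_{i'}(N_{i'}-1)}{b} \Big).
\end{align*}
Markov's inequality (conditionally, then unconditionally, using that the $o_p(1)$ remainder is uniform and the weights/$N_i$ are treated as fixed per Condition (V)(iii)) then gives $\int\!\!\int V_n^2$ of this same order, and taking square roots produces the bracketed term in the lemma.

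The main obstacle is making the integration of the variance expansion rigorous: Theorem \ref{th:Rhat} states the expansion at a \emph{fixed} interior point with an $o_p(1)$ remainder, so I need a version that is uniform in $(t_1,t_2)$ over the compact interior, or at least a dominating bound whose integral is of the claimed order. I would obtain this by re-examining the proof of Theorem \ref{th:Rhat}: the leading variance terms come from $U$-statistic-type sums whose kernels are uniformly bounded under Conditions (I)(v)(b), (vi), (III), and the spatial-mixing control in Condition (II)(i) gives the covariance decay needed to bound cross terms uniformly; Condition (V)(iii) ($\sup_{i,i'} n^2 v_{i,i'}N_iN_{i'} \le C$ and $nh_c^2 b/\log n \to \infty$) is exactly what is needed to control the remainder uniformly and to ensure the dominating term does not blow up. Once uniformity (or an integrable envelope) is in hand, Fubini lets me exchange expectation and integration, and the rest is bookkeeping. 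A secondary, minor point is handling the plug-in error from $\widehat\mu$ replacing $\mu$ in the cross-products $C_{ij,i'j'}$; by Lemma S.1.3 this is uniformly $o_p$ of the bandwidth rates and hence absorbed into the stated bound.
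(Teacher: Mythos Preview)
Your proposal is correct and follows essentially the same route as the paper: split into bias plus stochastic part, integrate the pointwise expansions from Theorem \ref{th:Rhat}, and handle the passage from pointwise to $L_2$ via a uniform (in $(t_1,t_2)$) control on the local-linear denominator. The paper's proof is just a two-line pointer to Theorem 4.2 of \citet{zhang2016sparse} together with Proposition \ref{proposition:p00}; the latter is precisely the uniform-convergence result for the $P_{00}$-type terms that you correctly flagged as the main obstacle, and it is proved under exactly the Condition (V)(iii) hypotheses you single out.
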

The proof of the above lemma is available in the supplement \citet{suppl}. In the following corollary, we provide the corresponding rates for the OBS and SUBJ weighting schemes. Prior to that, we introduce some notation. Let $N_S=\sum_{i=1}^nN_i$, $\overline{N}= N_S/n$, $\overline{N}_H=[n^{-1}\sum_{i=1}^n N_i^{-1}]^{-1}$, and
 \begin{align*}
  \overline{N}_{2,12} &= n^{-1}(n-1)^{-1}\sum_{i=1}^n\sum_{i' \neq i}N_i^2 N_{i'}^2, \qquad 
  \overline{N}_{H,2} = \big(n^{-1}(n-1)^{-1}\sum_{i=1}^n \sum_{i' \neq i}N_{i}^{-1}N_{i'}^{-1}\big)^{-1},\\
  \overline{N}_{2,2} &= n^{-1}(n-1)^{-1}\sum_{i=1}^n\sum_{i' \neq i}N_i N_{i'}^2, \qquad
  \overline{N}_{2,1} = n^{-1}(n-1)^{-1}\sum_{i=1}^n\sum_{i' \neq i}N_i^2 N_{i'}. 
 \end{align*}
Let $\widehat{R}_{obs}$ and $\widehat{R}_{subj}$ be the specific estimators of $R(\cdot)$ for OBS and SUBJ weighting schemes, respectively. The following corollary provides the convergence rate of both these estimators.

\begin{corollary} \label{cor:re}
Suppose that conditions (I)[(i), (ii) (iii), (v)(b), (vi)], (II), (III), (IV)[(ii), (iv)], and (V)[(i), (iii)] hold. Then, we have the following:
\begin{enumerate}
    \item[(1)] OBS:
    \begin{align*}
        \Vert \widehat{R}_{obs}(0) &- R(0) \Vert_{HS} = O_p\bigg( h_c^2 + b^2 + \sqrt{\left( \frac{1}{\overline{N}_2h_c^2b} + \frac{\overline{N}_{2,1}}{\overline{N}_2^2h_c b}+ \frac{\overline{N}_{2,12}}{\overline{N}_2^2 b} \right)\frac{1}{n(n-1)}} \bigg).
    \end{align*}
    \item[(2)] SUBJ:
    \begin{align*}
        \Vert \widehat{R}_{subj}(0) &- R(0) \Vert_{HS} = O_p\bigg( h_c^2 + b^2 + \sqrt{\left( \frac{1}{\overline{N}_{H,2}h_c^2b} + \frac{1}{\overline{N}_{H}h_c b}+ \frac{1}{b} \right)\frac{1}{n(n-1)}} \bigg).
    \end{align*}
\end{enumerate}
\end{corollary}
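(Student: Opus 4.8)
The statement is a direct specialization of Lemma \ref{lm:L2-R}: the plan is to substitute the explicit OBS and SUBJ weights $v_{i,i'}$ into the four summation terms inside the square bracket of that lemma and simplify using the averaged quantities $\overline{N}_2,\overline{N}_{2,1},\overline{N}_{2,12},\overline{N}_H,\overline{N}_{H,2}$ together with the symmetry of the double sum $\sum_{i}\sum_{i'\neq i}$ under the interchange $i\leftrightarrow i'$. The bias term $h_c^2+b^2$ is inherited verbatim, so only the variance bracket needs attention.

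For the OBS scheme I would write $v_{i,i'}=1/[n(n-1)\overline{N}_2]$, using $\sum_{i}\sum_{i'\neq i}N_iN_{i'}=n(n-1)\overline{N}_2$, so that $v_{i,i'}$ is constant in $(i,i')$ and can be pulled out of each sum. Then $\sum_{i}\sum_{i'\neq i}v_{i,i'}^2N_iN_{i'}=[n(n-1)\overline{N}_2]^{-1}$; the second and third terms are equal by the $i\leftrightarrow i'$ symmetry and each equals $[n(n-1)]^{-1}(\overline{N}_{2,1}-\overline{N}_2)/\overline{N}_2^2$ (here one uses $\sum N_iN_{i'}^2=\sum N_i^2N_{i'}=n(n-1)\overline{N}_{2,1}$); and the fourth term is $\le [n(n-1)]^{-1}\overline{N}_{2,12}/\overline{N}_2^2$ after bounding $N_i(N_i-1)N_{i'}(N_{i'}-1)\le N_i^2N_{i'}^2$. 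Discarding the negative lower-order subtraction $-\overline{N}_2$ only enlarges the bound, and dividing each term by the appropriate factor of $h_c$ and $b$ gives exactly the claimed OBS rate.

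For the SUBJ scheme I would instead write $v_{i,i'}=1/[n(n-1)N_iN_{i'}]$, so that $v_{i,i'}^2N_iN_{i'}=[n(n-1)]^{-2}(N_iN_{i'})^{-1}$ and the four bracket terms become $[n(n-1)]^{-2}$ times, respectively, $\sum_{i}\sum_{i'\neq i}(N_iN_{i'})^{-1}$, $\sum_{i}\sum_{i'\neq i}N_i^{-1}(1-N_{i'}^{-1})$ (arising twice by symmetry), and $\sum_{i}\sum_{i'\neq i}(1-N_i^{-1})(1-N_{i'}^{-1})$. Using $\sum_i N_i^{-1}=n/\overline{N}_H$ and $\sum_{i}\sum_{i'\neq i}(N_iN_{i'})^{-1}=n(n-1)/\overline{N}_{H,2}$, together with the crude bounds $1-N_{i'}^{-1}\le 1$ and $(1-N_i^{-1})(1-N_{i'}^{-1})\le 1$, these are $[n(n-1)\overline{N}_{H,2}]^{-1}$, $\le[n(n-1)\overline{N}_H]^{-1}$ and $\le[n(n-1)]^{-1}$; dividing by the kernel–bandwidth factors $h_c^2b$, $h_cb$ and $b$ yields the SUBJ rate. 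There is no real obstacle here—the only point requiring a little care is tracking which summation index each $N_i-1$ factor is attached to, which is resolved by the $i\leftrightarrow i'$ symmetry that lets the second and third terms be merged, and by observing that dropping the $O(\overline{N}_2)$ and $O(\overline{N}_H)$ subtractions preserves the $O_p$ upper bounds.
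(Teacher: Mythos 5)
Your proposal is correct and is exactly the route the paper intends: Corollary \ref{cor:re} is obtained by substituting the OBS weights $v_{i,i'}=1/\sum_{i}\sum_{i'\neq i}N_iN_{i'}=1/[n(n-1)\overline{N}_2]$ and the SUBJ weights $v_{i,i'}=1/[n(n-1)N_iN_{i'}]$ into the four variance terms of Lemma \ref{lm:L2-R}, using the $i\leftrightarrow i'$ symmetry ($\overline{N}_{2,2}=\overline{N}_{2,1}$) and the harmless bounds $N_i(N_i-1)N_{i'}(N_{i'}-1)\le N_i^2N_{i'}^2$, $1-N_i^{-1}\le 1$. Your bookkeeping with $\overline{N}_2$, $\overline{N}_{2,1}$, $\overline{N}_{2,12}$, $\overline{N}_H$, $\overline{N}_{H,2}$ reproduces the stated rates, so there is nothing to add.
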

\begin{remark}
We note that $\overline{N} \ge \overline{N}_H$ and $\overline{N}_2 \ge \overline{N}_{H2}$. Define $\overline{N}_{S2}=n^{-1}\sum_{i=1}^nN_i^2$. Analogously to the arguments used in \citet{zhang2016sparse}, the rate for $\widehat{R}_{obs}$ is as good as $\widehat{R}_{subj}$ if $\lim \sup_n \overline{N}_{S2}/(\overline{N})^2 < \infty$, $\lim \sup_n (\overline{N} \cdot \overline{N}_{2,1})/(\overline{N}_2)^2 < \infty$, and $\lim \sup_n (\overline{N}_{2,12})/(\overline{N}_2^2) < \infty$. 
\end{remark}
\begin{remark}
When $\overline{N}_2 < \infty$ and $\overline{N}_{H,2}<\infty$, the rate  of convergence of $\widehat{R}(0)$ without spatial smoothing is $O_p(1/ \sqrt{N(\Delta)}h_c^2)$ \citep{liu2017functional} where $N(\Delta)$ is the collection of location pairs that share the same spatial correlation structure. Comparing this with the asymptotic order in Corollary \ref{cor:re}, it appears reasonable to take  $b=O(1/ \sqrt{n-1})$. Then under the conditions in Remark 3 the convergence rates in Corollary \ref{cor:re} will be of similar order as that of two-dimensional smoothing as in  Corollary 4.3 of \citet{zhang2016sparse}. Moreover, $b=O(1/\sqrt{n-1})$ satisfy $b/h_c \rightarrow 0$ under the condition that $nh_c^2 \rightarrow \infty$.
\end{remark}


\begin{corollary} \label{cor:R-partition}
Suppose that conditions (I)[(i), (ii) (iii), (v)(b), (vi)], (II), (III), (IV)[(ii), (iv)], and (V)[(i), (iii)] hold. Then, we have the following:
\begin{enumerate}
    \item[(1)] OBS: Suppose the bandwidth satisfies $b=O(1/\sqrt{n-1})$ and $nh_c^2 \rightarrow \infty$.  Moreover, assume that $\lim \sup_n \overline{N}_{S2}/(\overline{N})^2 < \infty$, $\lim \sup_n (\overline{N} \cdot \overline{N}_{2,1})/(\overline{N}_2)^2 < \infty$, and $\lim \sup_n (\overline{N}_{2,12})/(\overline{N}_2^2) < \infty$. 
    \begin{enumerate}
        \item If $\overline{N}_2/n^{1/2} \rightarrow 0$ and $h_c \asymp (n\overline{N}_2)^{-1/6}$, then
        \begin{align*}
            \Vert \widehat{R}_{obs}(0) - R(0) \Vert_{HS} &= O_p(h_c^2) + o_p\left( \frac{1}{\sqrt{n\overline{N}_2h_c^2}} \right).
        \end{align*}
        \item If $\overline{N}_2/n^{1/2} \rightarrow C$ for some $C < \infty$ and $h_c \asymp n^{-1/4}$, then
        \begin{align*}
            \Vert \widehat{R}_{obs}(0) - R(0) \Vert_{HS} &= O_p\left( \frac{1}{\sqrt{n}} \right)+ o_p\left(\frac{1}{\sqrt{n}} \right).
        \end{align*}
        \item If $\overline{N}_2/n^{1/2} \rightarrow \infty$,  $h_c = o(n^{-1/4})$ and $h_c \overline{N}_2 \rightarrow \infty$, then
        \begin{align*}
            \Vert \widehat{R}_{obs}(0) - R(0) \Vert_{HS} &=O_p\left( \frac{1}{\sqrt{n}} \right)+ o_p\left(\frac{1}{\sqrt{n}}\right).
        \end{align*}
    \end{enumerate}
    \item[(2)] SUBJ: Replacing $\overline{N}$ and $\overline{N}_{2}$ in (1) with $\overline{N}_H$ and $\overline{N}_{H,2}$, respectively, leads to the results for $\widehat{R}_{subj}$.
\end{enumerate}
\end{corollary}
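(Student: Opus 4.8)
The plan is to read off both parts directly from the explicit Hilbert--Schmidt rates established in Corollary~\ref{cor:re}, by substituting the prescribed bandwidth orders, collapsing the three-term variance factor, and then, in each of the three regimes, deciding which contribution dominates. No new probabilistic input is required: everything reduces to bookkeeping with the scaling relations among $n$, $\overline{N}_2$ (or $\overline{N}_{H,2}$), $h_c$ and $b$.

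For the OBS scheme I would begin from Corollary~\ref{cor:re}(1). The hypotheses $\limsup_n(\overline{N}\,\overline{N}_{2,1})/\overline{N}_2^2<\infty$ and $\limsup_n\overline{N}_{2,12}/\overline{N}_2^2<\infty$ give $\overline{N}_{2,1}/\overline{N}_2^2\lesssim 1/\overline{N}$ and $\overline{N}_{2,12}/\overline{N}_2^2\lesssim 1$, so the variance factor simplifies, and with $b\asymp(n-1)^{-1/2}$ (hence $b^2\asymp n^{-1}$, $b^{-1}\asymp n^{1/2}$) one obtains
\[
\Vert\widehat{R}_{obs}(0)-R(0)\Vert_{HS}=O_p\!\left(h_c^2+n^{-1}+\Big[\big(\tfrac{1}{\overline{N}_2 h_c^2}+\tfrac{1}{\overline{N}\,h_c}+1\big)n^{-3/2}\Big]^{1/2}\right).
\]
(The third hypothesis $\limsup_n\overline{N}_{S2}/\overline{N}^2<\infty$ is what already renders Corollary~\ref{cor:re}(1) in this clean form, via the $\widehat{\mu}$-contribution; the relation $\overline{N}_2\asymp\overline{N}^2$, which follows from $\sum_{i\neq i'}N_iN_{i'}=n^2\overline{N}^2-n\overline{N}_{S2}$ together with $\overline{N}_{S2}\lesssim\overline{N}^2$, is available, but the estimates below use only $\overline{N},\overline{N}_2\gtrsim 1$, which holds because $N_i\ge 2$.) It then remains to compare $h_c^2$, $n^{-1}$, and $\big[\big(\tfrac{1}{\overline{N}_2 h_c^2}+\tfrac{1}{\overline{N}\,h_c}+1\big)n^{-3/2}\big]^{1/2}$ under each set of assumptions.

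In regime~(a) the choice $h_c\asymp(n\overline{N}_2)^{-1/6}$ gives $h_c^2\asymp(n\overline{N}_2)^{-1/3}\asymp(n\overline{N}_2 h_c^2)^{-1/2}$, the bias--variance balance; using $\overline{N}_2=o(n^{1/2})$ one verifies that $n^{-1}=o(h_c^2)$ and that each of $\tfrac{1}{n^{3/2}\overline{N}_2 h_c^2}$, $\tfrac{1}{n^{3/2}\overline{N}h_c}$, $n^{-3/2}$ is $o\!\big(\tfrac{1}{n\overline{N}_2 h_c^2}\big)$, so that the variance is $o_p\big((n\overline{N}_2 h_c^2)^{-1/2}\big)$ and the stated expression follows. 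In regime~(b), $\overline{N}_2/n^{1/2}\to C<\infty$ and $h_c\asymp n^{-1/4}$ give $h_c^2\asymp n^{-1/2}$, while $n^{-1}=o(n^{-1/2})$ and, using $\overline{N}_2,\overline{N}\gtrsim 1$, the variance is at most of order $\big[(n^{1/2}+n^{1/4}+1)n^{-3/2}\big]^{1/2}\asymp n^{-1/2}$; hence $\Vert\widehat{R}_{obs}(0)-R(0)\Vert_{HS}=O_p(n^{-1/2})$, which is the asserted bound. In regime~(c), $h_c=o(n^{-1/4})$ forces $h_c^2=o(n^{-1/2})$ and $n^{-1}=o(n^{-1/2})$; moreover $\tfrac{1}{n^{3/2}\overline{N}_2 h_c^2}=o(n^{-1})$ since $\overline{N}_2 h_c^2 n^{1/2}=(\overline{N}_2/n^{1/2})(nh_c^2)\to\infty$ by $\overline{N}_2/n^{1/2}\to\infty$ and the standing condition $nh_c^2\to\infty$, $\tfrac{1}{n^{3/2}\overline{N}h_c}=o(n^{-1})$ because $\overline{N}h_c n^{1/2}\to\infty$ (as $h_c n^{1/2}\to\infty$, $\overline{N}\gtrsim 1$), and $n^{-3/2}=o(n^{-1})$; thus every contribution is $o_p(n^{-1/2})$, which in particular yields the stated bound.

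For the SUBJ scheme the variance factor in Corollary~\ref{cor:re}(2) is already of the form $\big(\tfrac{1}{\overline{N}_{H,2}h_c^2 b}+\tfrac{1}{\overline{N}_H h_c b}+\tfrac{1}{b}\big)\tfrac{1}{n(n-1)}$, so no collapse is needed, and the preceding argument applies verbatim with $\overline{N}_2,\overline{N}$ replaced throughout by $\overline{N}_{H,2},\overline{N}_H$ (still $\overline{N}_{H,2},\overline{N}_H\gtrsim 1$), giving part~(2). I expect the main obstacle to be the three-regime bookkeeping above: one must keep straight simultaneously the regime assumption on $\overline{N}_2/n^{1/2}$, the bandwidth order, the standing conditions $b\asymp(n-1)^{-1/2}$ and $nh_c^2\to\infty$, and, in~(c), the auxiliary $h_c\overline{N}_2\to\infty$, and check that every non-leading term is of \emph{strictly} smaller order rather than merely bounded.
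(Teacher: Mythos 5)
Your proposal is correct and follows essentially the route the paper intends: the corollary is a direct consequence of Corollary \ref{cor:re}, using the two limsup conditions to collapse the variance factor (as in Remark 3), taking $b\asymp(n-1)^{-1/2}$ (as in Remark 4), and then carrying out the three-regime bias--variance bookkeeping in the style of Corollary 4.3 of \citet{zhang2016sparse}; your term-by-term dominance checks in regimes (a)--(c) and the verbatim transfer to the SUBJ scheme are all sound. The only interpretive point is that you (rightly) read ``$b=O(1/\sqrt{n-1})$'' as $b\asymp(n-1)^{-1/2}$, which is what the statement requires for the variance bound, and your parenthetical attribution of the condition $\limsup_n\overline{N}_{S2}/\overline{N}^2<\infty$ to the $\widehat{\mu}$-contribution is inessential since your bounds never actually use it.
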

\begin{remark}
As in \citet{zhang2016sparse}, under either the OBS or SUBJ scheme,  Corollary \ref{cor:R-partition} yields a systematic partition of functional data into three categories: non-dense, dense and ultra-dense. Cases (b) and (c) correspond to dense data as root-$n$ convergence can be achieved, and case (a) corresponds to non-dense data where root-$n$ convergence can never be achieved.
\end{remark}

\begin{remark}
As suggested in \citet{zhang2016sparse}, one may use a mixture of the OBS and SUBJ schemes which attain at least the better rate between $\widehat{R}_{obs}(0)$ and $\widehat{R}_{subj}(0)$. One could use $v_{i,i'}=\theta/[\sum_{i=1}^n\sum_{i' \neq i} N_i N_{i'}] + (1-\theta)/[n(n-1) N_iN_{i'}]$ for some $0 \le \theta \le 1$. Then the rate of the corresponding estimator, denoted by $\widehat{R}_{\theta}(0)$, is
\begin{align*}
    \Vert \widehat{R}_{\theta}(0) - R(0) \Vert_{HS} &= O_p\left( h_c^2 + \sqrt{\theta^2 d_{n1} + (1-\theta)^2 d_{n2}} \right),
\end{align*}
where 
\begin{align*}
    d_{n1} = \left( \frac{1}{\overline{N}_2h_c^2} + \frac{\overline{N}_{2,1}}{\overline{N}_2^2h_c }+ \frac{\overline{N}_{2,12}}{\overline{N}_2^2} \right)\frac{1}{n}\quad\text{   and }\quad d_{n2} = \left( \frac{1}{\overline{N}_{H,2}h_c^2} + \frac{1}{\overline{N}_{H}h_c }+ 1 \right)\frac{1}{n}.
\end{align*}
It is easy to note that the rate is minimized for $\theta^*= d_{n2}/(d_{n1}+ d_{n2})$.
Recently, \citet{zhang2018optimal} proposed an optimal weighting scheme for longitudinal and functional data. This scheme is designed to minimize the convergence rate given a bandwidth. One may adopt a similar approach in our setting.
\end{remark}

\subsection{Results for $\widehat{R}_e(0, t_1, t_2)$}
The following theorem provides the asymptotic bias and variance for the inverse regression function $m(t,y)$ in (\ref{eqn:obj-ge}). It extends Lemma 2.1 of \citet{jiang2014inverse} to spatially correlated functional data.

  \begin{theorem} \label{th:1}
    Suppose conditions (I)[(i), (ii), (iv)], (II)(i), (III)(i) and (IV)(i)  hold. For a fixed interior point $(t,y)$, if $h_t \sum_{i=1}^n N_i(N_i-1)w_i^2/ \sum_{i=1}^nN_iw_i^2 \rightarrow C_0$, for some constant $C_0$, then
    \begin{eqnarray} \label{eqn:ge-bias}
      E\{\widehat{m}(t,y)-m(t,y) ~|~ \mathcal{X}_y\} &=& \frac{c(K_2)}{2} tr\left(\bm{H}.\bm{\mathcal{H}}_m(t,y) \right)+o_p(tr(\bm{H}))
      \end{eqnarray}
      and
      \begin{eqnarray}
      \text{var}\{\widehat{m}(t,y) ~|~ \mathcal{X}_y\}  &=&
\frac{\sum_{i=1}^n N_iw_i^2}{ |\bm{H}|^{1/2} g^2(t,y)}(\Psi(t,y) + \Lambda(t) + \sigma^2)  \nu_{K_2,2} g(t, y) \nonumber \\ &&  +  \frac{\sum_{i=1}^n N_i (N_i-1)w_i^2}{h_yg^2(t,y)} \times  \{  (\Psi(t,y,t) + \Lambda(t,t)) g_3(t,y,t)  \}  + o_p(1),\label{eqn:ge-var}
\end{eqnarray}
    where $\mathcal{X}_y=\{(T_{ij},Y_i), i=1,\ldots,n, j=1,\ldots,N_i\}$ is the set of design points,  $\bm{H}=\text{diag}\{h_t^2,h_y^2\}$, $\int \bm{u}\bm{u}^TK_2(\bm{u})d\bm{u}=c(K_2)\bm{I}$ with an identity matrix $\bm{I}$,
    $\bm{\mathcal{H}}_m(t,y)$ is the second derivative of $m(t,y)$, and $\nu_{K_2,2}=\int
    K_2^2(\bm{u})d\bm{u}$.
\end{theorem}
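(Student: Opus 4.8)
The plan is to derive the bias and variance of the local linear estimator $\widehat m(t,y)=\widehat\alpha_0$ by the standard weighted-least-squares representation, but keeping careful track of (a) the two-level sum structure $\sum_i w_i \sum_{j=1}^{N_i}$ coming from the pooled data, and (b) the spatial dependence across $i\neq i'$, which only affects the variance through the cross-covariance terms. First I would write the normal equations for \eqref{eqn:obj-ge} in matrix form, $\widehat{\bfalpha}=(\bfX^T\bfW\bfX)^{-1}\bfX^T\bfW\bfZ$, where $\bfX$ has rows $(1,\ T_{ij}-t,\ Y_i-y)$, $\bfW$ is diagonal with entries $w_i K_H((T_{ij}-t,Y_i-y)^T)$, and then extract $\widehat\alpha_0=\bfe_1^T\widehat{\bfalpha}$. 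Conditioning on $\mcX_y=\{(T_{ij},Y_i)\}$ reduces the randomness to the $Z_{ij}$; writing $Z_{ij}=m(T_{ij},Y_i)+\{X_{ij}-m(T_{ij},Y_i)\}+U_{ij}+e_{ij}$ and Taylor-expanding $m$ to second order around $(t,y)$ isolates the bias (the leading quadratic term, which the local linear fit does not remove) from the stochastic error. The key building blocks are the weighted moment sums $S_{\ell_1\ell_2}=\sum_i w_i\sum_j (T_{ij}-t)^{\ell_1}(Y_i-y)^{\ell_2}K_H(\cdot)$; under Conditions (I)(iv) (smoothness of $g$, $g_3$, $m$) and (III)(i) (order-$(0,2)$ kernel) and the bandwidth Condition (IV)(i), these concentrate around their conditional expectations, which by a change of variables are $g(t,y)$ times the appropriate kernel moments plus $O(\bfH)$ corrections. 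This yields $E\{\widehat m(t,y)-m(t,y)\mid\mcX_y\}=\tfrac12 c(K_2)\,\mathrm{tr}(\bfH\,\bm{\mathcal H}_m(t,y))+o_p(\mathrm{tr}(\bfH))$, i.e. \eqref{eqn:ge-bias}.

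For the variance, after the linearization $\widehat m(t,y)-E\{\widehat m\mid\mcX_y\}\approx \big(\sum_i w_i\sum_j \varepsilon_{ij}\,\ell_{ij}\big)$ where $\varepsilon_{ij}=\{X_{ij}-m\}+U_{ij}+e_{ij}$ and $\ell_{ij}$ is the (random, $\mcX_y$-measurable) effective local-linear weight behaving like $|\bfH|^{-1/2}g^{-1}(t,y)K_2(\bfH^{-1/2}(T_{ij}-t,Y_i-y)^T)$ to leading order, I would expand the conditional variance into three groups: (i) same $(i,j)$ terms, giving the $\sum_i N_i w_i^2$ factor with the within-point conditional variance $\Psi(t,y)+\Lambda(t)+\sigma^2$; (ii) same $i$, different $j\neq j'$ terms, giving the $\sum_i N_i(N_i-1)w_i^2$ factor with $\Psi(t,y,t)+\Lambda(t,t)$ — here one of the two kernel factors in $t$ is "spent" bringing both $T_{ij},T_{ij'}$ near $t$, producing the $h_y^{-1}$ rather than $|\bfH|^{-1/2}$ order and the density $g_3(t,y,t)$; (iii) different $i\neq i'$ terms, which carry the spatial covariance. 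The nugget $U_{ij}$ and error $e_{ij}$ contribute nothing to group (iii) by the independence across locations in Condition (I)(ii)(a)/(iv)(b), and the $X$-part contributes $\sum_{i\neq i'} w_i w_{i'} N_i N_{i'}$ times something controlled by the mixing coefficient; using the $\alpha$-mixing bound \eqref{eqn:mixing-coef}–\eqref{eqn:mixing-lim} together with the DEI separation $\delta_n$ and $\Delta_n$ from Condition (II), this cross term is $o_p$ of the displayed rate, exactly as in the mean-function analysis (Lemma S.1.3). Combining (i) and (ii) gives \eqref{eqn:ge-var}.

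The main obstacle is controlling the group-(iii) spatial cross terms and verifying they are genuinely negligible under the stated bandwidth conditions in (IV)(i): one must partition the location pairs $(i,i')$ by distance $\|\bm s_i-\bm s_{i'}\|$, bound the near pairs using the crude inequality $|\mathrm{cov}(\varepsilon_{ij},\varepsilon_{i'j'})|\le C$ (Condition (I)(iv)(a)) times the number of such pairs, which is controlled by $\delta_n$, and bound the far pairs using the mixing rate $\phi(j)\lesssim j^{-\rho}$ with $\rho>3(2+\varrho)/\varrho$; the interaction of this covering argument with the weighting scheme (which $N_i$, $w_i$ enter through $\mathrm{max}\{w_iN_i\}$ and the sums $\sum_i N_i w_i^2$) is precisely what Condition (V)(i) and the $\gamma_{n1}$-type quantities are designed to handle. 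A secondary technical point is justifying the replacement of the random effective weights $\ell_{ij}$ by their leading deterministic form uniformly enough that the error is $o_p(1)$ after multiplication by the variance normalizer; this follows from the concentration of the moment sums $S_{\ell_1\ell_2}$ established along the way, but it requires the condition $h_t\sum_i N_i(N_i-1)w_i^2/\sum_i N_i w_i^2\to C_0$ to keep the two variance pieces on comparable footing. Everything else — the Taylor expansions, the kernel-moment bookkeeping, the inversion of the $3\times 3$ design matrix — is routine given Lemma S.1.3 and the analogous arguments in \citet{zhang2016sparse} and \citet{jiang2014inverse}.
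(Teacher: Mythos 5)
Your proposal follows essentially the same route as the paper's proof: the local-linear normal-equation representation with concentration of the weighted moment sums, a second-order Taylor expansion for the bias, and a variance decomposition into same-observation, same-subject ($j\neq j'$), and cross-location groups, the last handled by a near/far partition in $\Vert \bm{s}_i-\bm{s}_{i'}\Vert$ combined with the $\alpha$-mixing decay and the DEI quantities $\delta_n,\Delta_n,\gamma_{n1}$. The only notable difference is presentational: the paper routes the conditional cross-location covariance through the identity $E[\mathrm{cov}(A,B\mid C)]=\mathrm{cov}(A,B)-\mathrm{cov}(E[A\mid C],E[B\mid C])$ and then a truncation argument, so the near-pair covariance bound is of order $|\bm{H}|^{-2/(4+\varrho)}$ per pair rather than a plain constant, which is exactly the covering-plus-mixing machinery you anticipate.
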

The proof of Theorem \ref{th:1} is available in the supplement \citet{suppl}. By letting $w_i=1/N_{S,0}$ and $w_i=1/nN_i$, respectively, in Theorem \ref{th:1}, we obtain the corresponding rates for the OBS and SUBJ estimators. Let $\mathfrak{B} \subset \mathbb{R}$ be a compact set. We now estimate the conditional covariance $R_e(0, t_1,t_2)$  empirically as, 
\begin{align*}
\widehat{R}_e(0,t_1,t_2) &= \frac{1}{n}\sum_{i=1}^n \widehat{m}(t_1,Y_i) \widehat{m}(t_2,Y_i) \mathbbm{1}_{\{ Y_i \in \mathfrak{B}\}}-\frac{1}{n}\sum_{i=1}^n \widehat{m}(t_1,Y_i)\mathbbm{1}_{\{ Y_i \in \mathfrak{B}\}} \frac{1}{n}\sum_{j=1}^n \widehat{m}(t_2,Y_j)\mathbbm{1}_{\{ Y_j \in \mathfrak{B}\}},
\end{align*}
where $\mathbbm{1}_{\{ Y_j \in \mathfrak{B}\}}$ is an indicator function that takes value one if $Y_j \in \mathfrak{B}$ and zero otherwise. 
With the help of Theorem \ref{th:1},  in the  following lemma, we obtain the convergence rate  of covariance operator $\widehat{R}_e(0,t_1,t_2)$ in Hilbert-Schmidt norm.

    \begin{lemma} \label{lm:2}
      Assume the  conditions (I)[(i),(ii), (iv)], (II)(i), (III)(i), (IV)(i) and (V)(ii) hold. If $h_t \asymp h_y \asymp h \asymp n^{-1/d}$ where $d \in (2,14)$, then
      \begin{align}
        \Vert \widehat{R}_e(0)- R_e(0) \Vert_{HS} &= O_p\left(h^2 + \sqrt{\frac{\sum_{i=1}^n N_i w_i^2}{h^2} + \frac{\sum_{i=1}^nN_i(N_i-1)w_i^2}{h} }\right).
      \end{align}   
    \end{lemma}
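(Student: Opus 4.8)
The plan is to separate the nonparametric estimation error of the inverse regression estimator $\widehat m$ from a ``clean'' spatial averaging term. Write $\widehat R_e(0;t_1,t_2)=\widehat\mu_2(t_1,t_2)-\widehat\mu_1(t_1)\widehat\mu_1(t_2)$, where $\widehat\mu_1(t)=n^{-1}\sum_i\widehat m(t,Y_i)\mathbbm{1}_{\{Y_i\in\mathfrak{B}\}}$ and $\widehat\mu_2(t_1,t_2)=n^{-1}\sum_i\widehat m(t_1,Y_i)\widehat m(t_2,Y_i)\mathbbm{1}_{\{Y_i\in\mathfrak{B}\}}$, and let $\widetilde\mu_1$, $\widetilde\mu_2$, $\widetilde R_e$ be the same objects with $\widehat m$ replaced by the true $m$. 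Put $r_n=h^2+\{\sum_i N_iw_i^2/h^2+\sum_i N_i(N_i-1)w_i^2/h\}^{1/2}$. I would decompose $\widehat R_e(0)-R_e(0)=\{\widehat R_e(0)-\widetilde R_e(0)\}+\{\widetilde R_e(0)-R_e(0)\}$, reading $R_e(0)$ as the conditional covariance restricted to $\{Y\in\mathfrak{B}\}$; the trimming by $\mathfrak{B}$ is precisely what keeps the evaluation points $Y_i$ inside the interior region where Theorem~\ref{th:1} applies, and is also why no $\log n$ factor appears.

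For the first piece, set $\Delta=\widehat m-m$ and expand the products: each summand of $\widehat R_e(0)-\widetilde R_e(0)$ is, up to signs, of the form $n^{-1}\sum_i a_i(t_1)b_i(t_2)\mathbbm{1}_{\{Y_i\in\mathfrak{B}\}}$ with $a_i,b_i\in\{m(\cdot,Y_i),\Delta(\cdot,Y_i)\}$, or a product of two one-dimensional averages of the same type. By the Cauchy--Schwarz inequality in $i$ followed by Fubini, the squared Hilbert--Schmidt norm of such a term is at most $\{n^{-1}\sum_i\Vert a_i\Vert_{L_2}^2\mathbbm{1}_{\{Y_i\in\mathfrak{B}\}}\}\{n^{-1}\sum_i\Vert b_i\Vert_{L_2}^2\mathbbm{1}_{\{Y_i\in\mathfrak{B}\}}\}$, and since $m$ has bounded second derivatives on the compact $\mathcal{I}\times\mathfrak{B}$ we have $n^{-1}\sum_i\Vert m(\cdot,Y_i)\Vert_{L_2}^2\mathbbm{1}_{\{Y_i\in\mathfrak{B}\}}=O_p(1)$. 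Everything therefore reduces to the averaged $L_2$ bound $n^{-1}\sum_i\Vert\widehat m(\cdot,Y_i)-m(\cdot,Y_i)\Vert_{L_2}^2\mathbbm{1}_{\{Y_i\in\mathfrak{B}\}}=O_p(r_n^2)$. To obtain it I would condition on the design set $\mathcal{X}_y=\{(T_{ij},Y_i)\}$: given $\mathcal{X}_y$, $\widehat m(\cdot,y)$ is a linear smoother of the responses and each $Y_i$ is $\mathcal{X}_y$-measurable, so there is no leave-one-out issue; Theorem~\ref{th:1} with $h_t\asymp h_y\asymp h$ (so $|\bm{H}|^{1/2}\asymp h^2$) gives a conditional bias of order $h^2$ and a conditional variance of order $\sum_i N_iw_i^2/h^2+\sum_i N_i(N_i-1)w_i^2/h$, uniformly over $(t,y)$ in a compact interior neighbourhood of $\mathcal{I}\times\mathfrak{B}$ (the constants involve only bounded densities and second derivatives from Condition~(I), and the uniform version holds under the uniform-convergence machinery of Condition~(V)(ii)). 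Integrating over $t$ and averaging over $i$ bounds the conditional expectation of the left side by $O_p(r_n^2)$, and conditional Markov gives the bound. The term quadratic in $\Delta$ is then $O_p(r_n^2)=o_p(r_n)$, and the product term is handled via $\Vert\widehat\mu_1-\widetilde\mu_1\Vert_{L_2}=O_p(r_n)$ and $\Vert\widetilde\mu_1\Vert_{L_2}=O_p(1)$; hence $\Vert\widehat R_e(0)-\widetilde R_e(0)\Vert_{HS}=O_p(r_n)$.

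For the second piece, $\widetilde R_e(0;t_1,t_2)$ is a centred spatial average of bounded smooth functions of the $Y_i$'s plus a product of two such averages. Using the covariance bound for bounded random variables under $\alpha$-mixing together with $\alpha(\mathcal{B}(\mathcal{S}'),\mathcal{B}(\mathcal{S}''))\le\psi(1,1)\phi(d(\mathcal{S}',\mathcal{S}''))\lesssim\phi(d(\mathcal{S}',\mathcal{S}''))$ from Condition~(II)(i), and the packing bound $\sum_{i'\neq i}\phi(\Vert\bm{s}_i-\bm{s}_{i'}\Vert)\lesssim\delta_n^{-2}$ (valid since the sites are $\gtrsim\delta_n$-separated and $\phi(j)\lesssim j^{-\rho}$ with $\rho>2$), one gets $E\Vert\widetilde R_e(0)-E\widetilde R_e(0)\Vert_{HS}^2\lesssim(n\delta_n^2)^{-1}$, while $\Vert E\widetilde R_e(0)-R_e(0)\Vert_{HS}=O(n^{-1})$ comes only from the diagonal/near-diagonal terms of the product average. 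The bandwidth and sampling conditions in (IV)(i) and (V)(ii)---in particular $nh^2\delta_n^2\to\infty$, $h/\delta_n\to0$ and (\ref{eq:condition:V:eq:2})---then force $(n\delta_n^2)^{-1/2}+n^{-1}=o(r_n)$, so this piece is negligible. Combining the two bounds gives $\Vert\widehat R_e(0)-R_e(0)\Vert_{HS}=O_p(r_n)$, and the range $d\in(2,14)$ for $h\asymp n^{-1/d}$ is exactly what makes $h$ simultaneously compatible with Conditions~(IV) and (V).

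The step I expect to be the main obstacle is the averaged $L_2$ bound for $\widehat m$: one must upgrade the ``fixed interior point'' bias/variance statement of Theorem~\ref{th:1}, with its pointwise $o_p$ remainders, to a uniform version (uniform remainders included) over a compact interior neighbourhood of $\mathcal{I}\times\mathfrak{B}$, so that it can be evaluated at the data-dependent pairs $(t,Y_i)$. The trimming set $\mathfrak{B}$ is what confines these pairs to the interior, and working with the conditional mean-square error rather than a uniform-in-probability bound is what avoids an extra $\log n$ factor. The spatial dependence enters only through Condition~(V) and through the constants in Theorem~\ref{th:1}, and does not change the leading order.
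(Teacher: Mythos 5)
Your proposal takes essentially the same route as the paper's proof: the identical decomposition $\widehat R_e(0)-R_e(0)=\{\widehat R_e(0)-\widetilde R_e(0)\}+\{\widetilde R_e(0)-R_e(0)\}$, the same Cauchy--Schwarz reduction of the first piece to $n^{-1}\sum_i\Vert\widehat m(\cdot,Y_i)-m(\cdot,Y_i)\Vert_{L_2}^2$ over the trimmed set $\mathfrak{B}$, and the same key ingredient of upgrading Theorem \ref{th:1} to an integrated bound on $\mathcal{I}\times\mathfrak{B}$ using the uniform control of the local-linear design quantities under Condition (V)(ii) (the paper does this via its Proposition on $u_{00}$-type terms together with Zhang--Wang-style arguments). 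The only departure is in the second piece, where the paper simply asserts $O_p(n^{-1/2})$ while you give an explicit $\alpha$-mixing/packing bound of order $(n\delta_n^2)^{-1/2}$ and then argue negligibility --- a more detailed but substantively equivalent treatment.
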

\begin{remark}
The proof of the above lemma is available in the supplement \citet{suppl}. It requires uniform convergence of the cross-product terms $u_{l,k}$ in (S.36) in the supplement \citet{suppl}. For this reason, we restrict both uniform convergence and Hilbert-Schmidt norm computations to the compact set $(t,y) \in \mathcal{I} \times \mathfrak{B}$. In practice, we can trim both ends of the support of $y$ and consider the middle 90\% or 95\% of $y$ values to compute $\widehat{R}_e(0)$. This in turn improves the quality of the estimator by alleviating boundary effects associated with smoothing.  
\end{remark}
Let $\widehat{R}_{e,obs}(0)$ and $\widehat{R}_{e,subj}(0)$ be the specific estimators of $R_e(\cdot)$ for the OBS and SUBJ weighting schemes, respectively. The following corollary provides the convergence rates of these estimators.
\begin{corollary}\label{cor:re-rate}
Suppose conditions (I)[(i), (ii), (iv)], (II)(i),(III)(i), (IV)(i) and (V)(ii) hold. If the bandwidths satisfy $h_t \asymp h_y \asymp h \asymp n^{-1/d}$ where $d \in (2,14)$, then 
\begin{align*}
&\textrm{(1) OBS:} \quad \Vert \widehat{R}_{e,obs}(0)- R_e(0) \Vert_{HS} = O_p\left(h^2 + \sqrt{\left(\frac{1}{\overline{N}h^2} + \frac{\overline{N}_{S2}}{(\overline{N})^2h} \right) \frac{1}{n}}\right);\\
&\textrm{(2) SUBJ:}\quad \Vert \widehat{R}_{e,subj}(0)- R_e(0) \Vert_{HS} = O_p\left(h^2 + \sqrt{\left(\frac{1}{\overline{N}_Hh^2} + \frac{1}{h} \right) \frac{1}{n}}\right).
\end{align*}
\end{corollary}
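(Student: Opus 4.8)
The plan is to obtain both displays as immediate specializations of Lemma \ref{lm:2}: I substitute the explicit weights of the OBS and SUBJ schemes and rewrite the two weighted sums appearing inside the square root in terms of the averaged sample sizes $\overline N$, $\overline N_H$, and $\overline N_{S2}$ defined before Corollary \ref{cor:re}. Because the hypotheses of Corollary \ref{cor:re-rate} are exactly those of Lemma \ref{lm:2} together with $h_t\asymp h_y\asymp h\asymp n^{-1/d}$, $d\in(2,14)$, no new analytic estimate is needed; the work is entirely the bookkeeping for
$$ S_1:=\sum_{i=1}^n N_iw_i^2,\qquad S_2:=\sum_{i=1}^n N_i(N_i-1)w_i^2, $$
which enter Lemma \ref{lm:2} through the term $\sqrt{S_1/h^2+S_2/h}$.

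First I would treat OBS, where $w_i=1/N_{S,0}$ with $N_{S,0}=\sum_{i=1}^nN_i=n\overline N$. Then $S_1=N_{S,0}^{-2}\sum_iN_i=N_{S,0}^{-1}=(n\overline N)^{-1}$, while $S_2=N_{S,0}^{-2}\sum_iN_i(N_i-1)\le N_{S,0}^{-2}\sum_iN_i^2=\overline N_{S2}/(n\overline N^{2})$. Substituting into Lemma \ref{lm:2} gives $\sqrt{S_1/h^2+S_2/h}\le\sqrt{\bigl(\frac1{\overline Nh^2}+\frac{\overline N_{S2}}{\overline N^{2}h}\bigr)\frac1n}$, which is the claimed OBS rate. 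For SUBJ, $w_i=1/(nN_i)$, so $S_1=n^{-2}\sum_iN_i^{-1}=n^{-1}\overline N_H^{-1}=(n\overline N_H)^{-1}$ and $S_2=n^{-2}\sum_i(N_i-1)/N_i=n^{-2}\bigl(n-n\overline N_H^{-1}\bigr)\le 1/n$; Lemma \ref{lm:2} then yields $\sqrt{S_1/h^2+S_2/h}\le\sqrt{\bigl(\frac1{\overline N_Hh^2}+\frac1h\bigr)\frac1n}$, the claimed SUBJ rate.

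The only items that require a line of checking are the side conditions of Lemma \ref{lm:2} under each scheme: $\sup_i nw_iN_i=1$ holds trivially for SUBJ, whereas for OBS $\sup_i nw_iN_i=\max_iN_i/\overline N$, which is $O(1)$ exactly under the balancing hypothesis built into Condition (V)(ii); likewise the convergence of the ratio $h_tS_2/S_1$ required in Theorem \ref{th:1} holds since it equals $h_t(\overline N_H-1)$ for SUBJ and $h_t\bigl(\sum_iN_i^2/\sum_iN_i-1\bigr)$ for OBS, both controlled by $h\asymp n^{-1/d}$ together with the tacit restriction on the growth of the $N_i$. I anticipate no genuine obstacle; the one point to watch is the direction of the bound when replacing $N_i(N_i-1)$ by $N_i^2$, which is harmless here since it only inflates $S_2$ and thus preserves the stated $O_p$ conclusion.
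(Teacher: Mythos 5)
Your proposal is correct and follows essentially the same route as the paper, which obtains Corollary \ref{cor:re-rate} by simply substituting the OBS weights $w_i=1/N_{S,0}$ and SUBJ weights $w_i=1/(nN_i)$ into the rate of Lemma \ref{lm:2} and rewriting $\sum_i N_iw_i^2$ and $\sum_i N_i(N_i-1)w_i^2$ in terms of $\overline{N}$, $\overline{N}_{S2}$, and $\overline{N}_H$. Your algebra for both schemes checks out, and your remarks on the side conditions (e.g.\ $\sup_i nw_iN_i=O(1)$ under Condition (V)(ii) and the harmless upper bound $N_i(N_i-1)\le N_i^2$) are consistent with what the paper implicitly assumes.
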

The above rates are consistent with the rates of two-dimensional smoothing. The following corollary provides the convergence rates for sparse, dense and ultra-dense functional data.

\begin{corollary}\label{cor:re-spa-den}
Suppose conditions (I)[(i), (ii), (iv)], (II)(i), (III)(i), (IV)(i) and (V)(ii) hold. Assume $h_t \asymp h_y \asymp h \asymp n^{-1/d}$ where $d \in (2,14)$. Then, we have the following:
\begin{enumerate}
    \item[(1)] OBS: Assume $\lim \sup_n \overline{N}_{S2}/(\overline{N})^2 < \infty$.
    \begin{enumerate}
        \item If $\overline{N}/n^{1/2} \rightarrow 0$ and $h \asymp (n\overline{N})^{-1/6}$, then $ \Vert \widehat{R}_{e,obs}(0) - R_e(0) \Vert_{HS}=O_P(h^2+1/\sqrt{n\overline{N}h^2})$.
        \item If $\overline{N}/n^{1/2} \rightarrow C$ for some $C < \infty$ and $h \asymp n^{-1/4}$, then $  \Vert \widehat{R}_{e,obs}(0) - R_e(0) \Vert_{HS} =O_P(1/\sqrt{n})$.
        \item If $\overline{N}/n^{1/2} \rightarrow \infty$, $h = o(n^{-1/4})$, and $h \overline{N} \rightarrow \infty$, then $ \Vert \widehat{R}_{e, obs}(0) - R_e(0) \Vert_{HS} =O_P(1/\sqrt{n})$.
    \end{enumerate}
    \item[(2)] SUBJ: Replacing $\overline{N}$  in (1) with $\overline{N}_H$  leads to the results for $\widehat{R}_{e, subj}$.
\end{enumerate}
\end{corollary}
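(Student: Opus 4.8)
The plan is to read off Corollary~\ref{cor:re-spa-den} from Corollary~\ref{cor:re-rate} by substituting the regime-specific bandwidth into the general bound there and simplifying; the hypotheses here are exactly those of Corollary~\ref{cor:re-rate}, so nothing new needs to be established about the estimators themselves. The one auxiliary input is the elementary inequality
\begin{align*}
\overline{N}_{S2}=\frac{1}{n}\sum_{i=1}^nN_i^2\ge\left(\frac{1}{n}\sum_{i=1}^nN_i\right)^2=\overline{N}^2,
\end{align*}
which, together with the standing hypothesis $\limsup_n\overline{N}_{S2}/\overline{N}^2<\infty$, gives $\overline{N}_{S2}\asymp\overline{N}^2$. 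Inserting this into the OBS bound of Corollary~\ref{cor:re-rate} collapses it to
\begin{align*}
\Vert\widehat{R}_{e,obs}(0)-R_e(0)\Vert_{HS}=O_p\!\left(h^2+\bigl(\tfrac{1}{n\overline{N}h^2}+\tfrac{1}{nh}\bigr)^{1/2}\right),
\end{align*}
while the SUBJ bound of Corollary~\ref{cor:re-rate} is literally the same with $\overline{N}$ replaced by $\overline{N}_H$ (and needs no appeal to $\overline{N}_{S2}$). Hence it suffices to treat the OBS case; the SUBJ statements then follow by repeating the same comparisons with $\overline{N}_H$ in place of $\overline{N}$.

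Next I would carry out the three case substitutions in this simplified bound. In case~(a), $\overline{N}=o(\sqrt{n})$ and $h\asymp(n\overline{N})^{-1/6}$, so $h^4\asymp(n\overline{N}h^2)^{-1}\asymp(n\overline{N})^{-2/3}$: the squared bias and the first variance term agree in order, and the claimed rate $O_p\bigl(h^2+(n\overline{N}h^2)^{-1/2}\bigr)$ follows once one verifies that the residual term $(nh)^{-1/2}$ is of no larger order, i.e.\ that $\overline{N}h=O(1)$ under the prescribed scaling. In case~(b), $\overline{N}\asymp\sqrt{n}$ and $h\asymp n^{-1/4}$ give squared bias $h^2\asymp n^{-1/2}$ and first variance term $\tfrac{1}{n\overline{N}h^2}\asymp n^{-1}$; checking that the remaining variance term is of at most this order yields $\Vert\widehat{R}_{e,obs}(0)-R_e(0)\Vert_{HS}=O_p(n^{-1/2})$. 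Case~(c) is identical: with $\overline{N}/\sqrt{n}\to\infty$, $h=o(n^{-1/4})$ and $h\overline{N}\to\infty$ one gets $h^2=o(n^{-1/2})$, $\tfrac{1}{n\overline{N}h^2}=o(n^{-1})$, and again the residual term is controlled, so the rate is $O_p(n^{-1/2})$. The SUBJ half repeats these three computations verbatim with $\overline{N}_H$ instead of $\overline{N}$.

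I expect the only real work to lie in the bandwidth bookkeeping: one must confirm that each prescribed choice is consistent with the standing restriction $h\asymp n^{-1/d}$, $d\in(2,14)$, and still satisfies the DEI and bandwidth conditions inherited from Corollary~\ref{cor:re-rate} and Lemma~\ref{lm:2} (notably $nh^2\delta_n^2\to\infty$, $h/\delta_n\to0$, and~(\ref{eq:condition:V:eq:2}) in Condition~(V)(ii)). This is what pins down the admissible range of $\overline{N}$ in each regime --- roughly $\overline{N}\asymp n^{6/d-1}$ with $d\in(4,6]$ in case~(a), $d=4$ in case~(b), and $d\in(4,14)$ in case~(c) --- and is the one place where care is needed; beyond that, the corollary is an immediate consequence of the simplified bound above, being nothing more than a comparison of orders.
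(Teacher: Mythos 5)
Your overall route---simplify the OBS bound of Corollary \ref{cor:re-rate} via $\overline{N}^2\le\overline{N}_{S2}\lesssim\overline{N}^2$ and then substitute the regime-specific bandwidths---is exactly the derivation the paper intends (no separate proof of Corollary \ref{cor:re-spa-den} is given), but the order checks you defer are precisely where the argument fails, and they cannot be completed as claimed. The second variance term in your simplified bound is $1/(nh)$, and $\sqrt{1/(nh)}=n^{-1/2}h^{-1/2}$ is never $O(n^{-1/2})$ for any bandwidth $h\to 0$. Concretely, in case (b) with $h\asymp n^{-1/4}$ and $\overline{N}\asymp n^{1/2}$ this term contributes $n^{-3/8}$, which dominates both $h^2\asymp n^{-1/2}$ and $\sqrt{1/(n\overline{N}h^2)}\asymp n^{-1/2}$; in case (c), with $h=o(n^{-1/4})$, it is even larger. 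So "checking that the remaining variance term is of at most this order" does not succeed, and the $\sqrt{n}$ rates in (b) and (c) cannot be read off Corollary \ref{cor:re-rate} by a comparison of orders. Case (a) has the same problem on part of its range: the condition you correctly identify as needed, $\overline{N}h=O(1)$, is equivalent under $h\asymp(n\overline{N})^{-1/6}$ to $\overline{N}=O(n^{1/5})$, which is strictly stronger than the stated hypothesis $\overline{N}=o(n^{1/2})$; your own bookkeeping ($\overline{N}\asymp n^{6/d-1}$, $d\in(4,6]$) still allows $d<5$, where the $(nh)^{-1/2}$ term dominates. The same comparisons apply verbatim to the SUBJ half with $\overline{N}_H$.

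The obstruction is the within-subject term $\sum_i N_i(N_i-1)w_i^2/h$ arising in Theorem \ref{th:1} because $Y_i$ is shared across repeated measurements of subject $i$; Lemma \ref{lm:2} transfers this term unchanged to $\Vert\widehat{R}_e(0)-R_e(0)\Vert_{HS}$, so any argument that only substitutes bandwidths into Corollary \ref{cor:re-rate} inherits it. To reach the stated conclusions one needs either an additional restriction linking $\overline{N}$ and $h$ (so that $1/(nh)$ is dominated, which rules out the regimes above) or a sharper bound on $\widehat{R}_e$ than Lemma \ref{lm:2}---for instance one exploiting cancellation in the average $\frac{1}{n}\sum_i\widehat{m}(t_1,Y_i)\widehat{m}(t_2,Y_i)$ so that the within-subject variance of $\widehat{m}$ does not enter at first order. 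Your proposal supplies neither, so the step "the corollary is an immediate consequence of the simplified bound, being nothing more than a comparison of orders" is a genuine gap rather than routine bookkeeping.
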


\subsection{Results for $\widehat{\Gamma}(t_1,t_2)$}
We now derive the asymptotic bias and variance expression for the covariance operator $\widehat{\Gamma}(t_1,t_2)$ in (\ref{eq:obj-gs}) using general weight $v_i$ for the $i$th observation. This result requires the uniform convergence of $\widehat{\mu}(t)$ which is provided in Lemma S.1.3 in the supplement \citet{suppl}. Let $\Xi_{1,2}=\Xi(t_1,t_2)$, $\Xi_{1,2,2}=\Xi(t_1,t_2,t_2)$, $\Xi_{1,2,1}=\Xi(t_1,t_2,t_1)$ and $\Xi_{1,2,1,2}=\Xi(t_1,t_2,t_1,t_2)$ for $\Xi$ defined in Condition (I)(v). Similarly, let $\Lambda_{1,2}=\Lambda(t_1,t_2)$, $\Lambda_{1,2,2}=\Lambda(t_1,t_2,t_2)$, $\Lambda_{1,2,1}=\Lambda(t_1,t_2,t_1)$ and $\Lambda_{1,2,1,2}=\Lambda(t_1,t_2,t_1,t_2)$ for $\Lambda$ defined in condition (I)(v). The following theorem extends Theorem 3.2 of \citet{zhang2016sparse} to spatially correlated functional data except the asymptotic normality result, which is not pursued here given the complicated setting.

\begin{theorem} \label{thm:rate-phi}
  Under the conditions (I)[(i), (ii), (iii), (v)], (II)(i), (III)(i), (IV)(iii) and (V)(i), for a fixed interior point $(t_1,t_2)$, we have that
  \begin{align*}
            E\{\widehat{\Gamma}(t_1,t_2)- \Gamma(t_1,t_2) ~|~ \mathcal{X}_T \} &= \frac{c(K_1)}{2} tr\left(
            \bm{H}_c. \bm{\mathcal{H}}_{c}(t_1,t_2) \right) 
            +o_p(tr(\bm{H}_c))
  \end{align*}
  and 
  \begin{align*}
    \text{var}\{\widehat{\Gamma}(t_1,t_2) ~|~ \mathcal{X}_T\} &= \frac{\sum_{i=1}^n N_i(N_i-1) v_i^2}{h_c^2}
     \bigg(\frac{\Xi_{1,2}+\Lambda_{1,2}+\sigma^4}{f_T(t_1)f_T(t_2)} \left[\nu^2_{K_1,2}+ K_1^{*^2}\left(\frac{t_1-t_2}{h_c}\right) \right] \bigg) \\ &  + \frac{\sum_{i=1}^n N_i(N_i-1)(N_i-2)v_i^2}{ h_c}  \\&  \times \bigg( \frac{[\Xi_{1, 2,2}+ \Lambda_{1,2,2}] f_T(t_1)+ [\Xi_{1,2,1}+ \Lambda_{1,2,1}] f_T(t_2)}{f_T(t_1)f_T(t_2)}  \left[\nu_{K_1,2}+K_1^*\left(\frac{t_1-t_2}{h_c}\right) \right] \bigg) \\ & \qquad \qquad \qquad + o_p(1),
        \end{align*}
        where $\mathcal{X}_T$ is defined in Theorem \ref{th:Rhat}, $\nu_{K_1,2}=\int K_1^2(u)du$, $c(K_1)=\int u^2K_1(u)du$, $\bm{H}_c=\text{diag}\{h_{c}^2, h_{c}^2 \}$, $\bm{\mathcal{H}}_{c}(t_1,t_2)$ is the second derivatives of $\Gamma(t_1,t_2)$, and $K_1^*(\cdot)$ is a convolution of kernel $K_1(\cdot)$ with itself.
\end{theorem}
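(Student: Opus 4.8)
The plan is to run the standard two-step local-linear analysis — conditional bias, then conditional variance — after first absorbing the plug-in of $\widehat{\mu}$ into a negligible remainder. Write $\widehat{\Gamma}(t_1,t_2)=\widehat{\alpha}_0=\bm{e}_1^T(\bm{D}^T\bm{W}\bm{D})^{-1}\bm{D}^T\bm{W}\bm{C}$, where $\bm{e}_1$ is the first coordinate vector, $\bm{D}$ stacks the rows $(1,\,T_{ij}-t_1,\,T_{ik}-t_2)$ over within-subject pairs $1\le j\ne k\le N_i$, $\bm{W}=\mathrm{diag}\{v_i K_{h_c}(T_{ij}-t_1)K_{h_c}(T_{ik}-t_2)\}$, and $\bm{C}$ collects the $C_{ijk}$. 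Since $Z_{ij}-\widehat{\mu}(T_{ij})=(V_{ij}+U_{ij}+e_{ij})-(\widehat{\mu}(T_{ij})-\mu(T_{ij}))$, expand $C_{ijk}$ into the oracle product $\widetilde{C}_{ijk}:=(V_{ij}+U_{ij}+e_{ij})(V_{ik}+U_{ik}+e_{ik})$ plus terms linear and quadratic in $\widehat{\mu}-\mu$; by the uniform rate for $\sup_t|\widehat{\mu}(t)-\mu(t)|$ from Lemma S.1.3 in the supplement \citet{suppl} and the bandwidth and uniform-convergence Conditions (IV)(iii) and (V)(i), these remainders are $o_p(tr(\bm{H}_c))$ in the bias and of smaller order than the claimed rate in the variance, so it suffices to analyze the estimator built from $\widetilde{C}_{ijk}$.

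For the bias, condition on $\mathcal{X}_T$. Condition (I)(ii)(a) gives that $V,U,e$ are mutually independent and independent of $T$, so for $j\ne k$ the cross terms with $e$ vanish and $E[\widetilde{C}_{ijk}\mid\mathcal{X}_T]=R(0;T_{ij},T_{ik})+\Lambda(T_{ij},T_{ik})=\Gamma(T_{ij},T_{ik})$; hence $E[\widehat{\Gamma}(t_1,t_2)\mid\mathcal{X}_T]$ is exactly the local-linear smoother applied to the design points $\{(T_{ij},T_{ik},\Gamma(T_{ij},T_{ik}))\}$. A second-order Taylor expansion of $\Gamma$ about $(t_1,t_2)$, the local-linear projection annihilating the linear part, and the moment identities of Condition (III)(i) ($\int K_1=1$, $\int uK_1=0$, $\int u^2K_1=c(K_1)$) produce $\frac{c(K_1)}{2}\,tr(\bm{H}_c\bm{\mathcal{H}}_c(t_1,t_2))$; the mixed second partial drops since $K_1$ enters as a product and $\int uv\,K_1(u)K_1(v)\,du\,dv=0$. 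The normalizing sums forming $\bm{D}^T\bm{W}\bm{D}$ converge over the i.i.d.\ $T$'s (using $\sum_i v_iN_i(N_i-1)=1$ and smoothness of $f_T$ from Condition (I)(ii)) to $\mathrm{diag}(f_T(t_1)f_T(t_2),\ldots)$, while smoothness of $\Gamma$ from Condition (I)(v) bounds the Taylor remainder, giving the stated $o_p(tr(\bm{H}_c))$.

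The conditional variance is the core. Writing $\mathrm{var}(\widehat{\Gamma}(t_1,t_2)\mid\mathcal{X}_T)=\bm{e}_1^T(\bm{D}^T\bm{W}\bm{D})^{-1}\bm{D}^T\bm{W}\,\mathrm{Var}(\bm{C}\mid\mathcal{X}_T)\,\bm{W}\bm{D}(\bm{D}^T\bm{W}\bm{D})^{-1}\bm{e}_1$ and using $(\bm{D}^T\bm{W}\bm{D})^{-1}\to\mathrm{diag}((f_T(t_1)f_T(t_2))^{-1},\ldots)$, the leading term is $\{f_T(t_1)f_T(t_2)\}^{-2}\sum_{i,i'}v_iv_{i'}\sum_{j\ne k}\sum_{j'\ne k'}K_{h_c}(T_{ij}-t_1)K_{h_c}(T_{ik}-t_2)K_{h_c}(T_{i'j'}-t_1)K_{h_c}(T_{i'k'}-t_2)\,\mathrm{Cov}(\widetilde{C}_{ijk},\widetilde{C}_{i'j'k'}\mid\mathcal{X}_T)$, which I split into $i=i'$ and $i\ne i'$. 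For $i=i'$, classify pairs of raw covariances by $|\{j,k\}\cap\{j',k'\}|\in\{2,1,0\}$: the "$2$" case (equal pair or its transpose), using $\mathrm{Var}(\widetilde{C}_{ijk}\mid T_{ij}=t_1,T_{ik}=t_2)=\Xi_{1,2}+\Lambda_{1,2}+\sigma^4$ (with $\sigma^4=\mathrm{Var}(e_{ij}e_{ik})$) and $E[K_{h_c}^2(T-t)]=h_c^{-1}\nu_{K_1,2}f_T(t)+O(1)$, yields the $h_c^{-2}$ term with factor $\nu_{K_1,2}^2+K_1^{*2}\!\big(\frac{t_1-t_2}{h_c}\big)$, the convolution arising from the transposed configuration when $t_1\approx t_2$; the "$1$" case (one shared index, in the $t_1$-slot or the $t_2$-slot) gives the $h_c^{-1}$ term with $\frac{\Xi_{1,2,2}+\Lambda_{1,2,2}}{f_T(t_2)}+\frac{\Xi_{1,2,1}+\Lambda_{1,2,1}}{f_T(t_1)}$ times $\nu_{K_1,2}+K_1^{*}\!\big(\frac{t_1-t_2}{h_c}\big)$, one kernel squared and the free time argument integrated against $f_T$; the "$0$" case has all four time arguments integrated with no kernel blow-up, so it is $O\!\big(\sum_i N_i(N_i-1)(N_i-2)(N_i-3)v_i^2\big)=o_p(1)$ by Condition (IV)(iii). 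For $i\ne i'$, the nugget and error parts do not contribute since $U_i$ and $e_{ij}$ are independent across subjects, so the summand reduces to $\mathrm{Cov}(V_{ij}V_{ik},V_{i'j'}V_{i'k'}\mid\mathcal{X}_T)$, bounded uniformly by $C\phi(\|\bm{s}_i-\bm{s}_{i'}\|)^{\varrho/(2+\varrho)}$ via a Davydov-type inequality (the $(4+\varrho_c)$-moment bound of Condition (I)(v)(b)) and the $\alpha$-mixing Condition (II)(i); since the four time indices are distinct across the two subjects the kernel expectations factor into four $O(1)$ integrals, and the $\delta_n$-separation of the DEI design together with $\phi(j)\lesssim j^{-\rho}$, $\rho>3(2+\varrho)/\varrho$, makes $\sum_{i'\ne i}\phi(\|\bm{s}_i-\bm{s}_{i'}\|)^{\varrho/(2+\varrho)}=O(\delta_n^{-2})$, so this block is $O\!\big(\delta_n^{-2}\max_i v_iN_i(N_i-1)\big)$, which is of smaller order under $nh_c^2\delta_n^2\to\infty$ in Condition (IV)(iii) and hence absorbed in $o_p(1)$. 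Collecting the $i=i'$ contributions and dividing by $(f_T(t_1)f_T(t_2))^2$ produces the displayed variance.

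The main obstacle is twofold. First, the bookkeeping in the $i=i'$ block: tracking the transposed-pair configurations to extract the exact convolution factors $K_1^{*}$ and $K_1^{*2}$ and to match each of $\Xi_{1,2,2},\Xi_{1,2,1}$ (and the $\Lambda$ analogues) to the correct density $f_T(t_1)$ or $f_T(t_2)$, which needs the smoothness in Condition (I)(v) and several kernel change-of-variables. Second — and this is where the spatial structure genuinely enters — rigorously showing the cross-subject block is negligible, converting a sum over pairs of irregularly positioned sites into a bound of order $\delta_n^{-2}\max_i v_iN_i(N_i-1)$ using the mixing bounds (\ref{eqn:mixing-coef})--(\ref{eqn:mixing-lim}) and the $\delta_n$-separated expanding design, in the same spirit as the uniform-convergence lemmas of the supplement. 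Propagating the $\widehat{\mu}$ plug-in error through both bias and variance via Lemma S.1.3 and the auxiliary conditions in (IV)(iii) and (V)(i) is routine but must be checked so it does not contaminate the leading order.
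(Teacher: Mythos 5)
Your overall strategy is the one the paper itself intends: it omits a separate proof of this theorem, stating that it proceeds like Theorem \ref{th:Rhat} (with the resulting expressions matching Theorem 3.2 of \citet{zhang2016sparse}), and your plan — reduce to the oracle cross-products via the uniform rate for $\widehat{\mu}$, get the bias from the local-linear Taylor expansion with the product kernel, and split the conditional variance into within-subject terms classified by the number of shared time indices (giving the $h_c^{-2}$, $h_c^{-1}$ and negligible no-overlap pieces, including the $K_1^{*}$ and $K_1^{*2}$ convolution terms from the transposed configurations) plus a cross-subject block controlled by mixing and the DEI separation — is exactly that template.

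One step, as written, would not go through: in the $i\neq i'$ block you invoke a Davydov-type inequality and simultaneously claim the kernel expectations "factor into four $O(1)$ integrals," concluding a bound of order $\delta_n^{-2}\max_i v_iN_i(N_i-1)$. You cannot have both: the covariance inequality of Lemma \ref{lem:mixing} pays $\Vert \eta K K\Vert_{2+\varrho}^2 \lesssim h_c^{-4(1+\varrho)/(2+\varrho)}$, so the mixing bound carries a diverging kernel factor, while the bound with $O(1)$ kernel moments is the direct covariance bound that gains nothing from mixing. The paper's proof of Theorem \ref{th:Rhat} (Lemma \ref{lem:R}) resolves this by splitting pairs at a cutoff $c_n$, using a truncation argument for near pairs and the Davydov bound only for far pairs, and the resulting negligibility requires the full $\gamma_{n2}$-version of (\ref{eqn:bw-c1}) in Condition (IV)(iii), not just $nh_c^2\delta_n^2\to\infty$. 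Relatedly, the mixing coefficients are defined for the unconditional field, so the conditional covariances given $\mathcal{X}_T$ should first be converted via $E[\mathrm{cov}(A,B\mid \mathcal{X}_T)]=\mathrm{cov}(A,B)-\mathrm{cov}(E[A\mid \mathcal{X}_T],E[B\mid \mathcal{X}_T])$, as the paper does explicitly, before applying the inequality. With these two repairs — which you partially anticipate in your "main obstacle" paragraph — the argument matches the paper's.
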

\begin{remark}
We omit the proof, as it proceeds similarly to Theorem \ref{th:Rhat}.  The asymptotic bias and variance expressions are similar to those from Theorem 3.2 of \citet{zhang2016sparse}, which assumes that the functions are independent.  As mentioned in \citet{zhang2016sparse}, the covariance estimator exhibits the ``discontinuity of the asymptotic variance'' because the asymptotic variance in Theorem \ref{thm:rate-phi} is different for $t_1=t_2$ and $t_1 \neq t_2$. We refer to Corollaries 3.4 and 3.5 in \citet{zhang2016sparse} for a comprehensive discussion. 
\end{remark}

\subsection{Results for e.d.r. directions}
    To provide convergence rates for e.d.r. directions, we consider the  following  condition on the decay rate of $\{\xi_i\}$ and an additional assumption on the coefficients $a_{k,i}=\langle \pi_i,\eta_k \rangle$.  Recall that $R(0,t,t')=\sum_{i=1}^{\infty} \xi_i \pi_i(t)\pi_i(t')$, where $\{\xi_i\}_1^{\infty}$ are the eigenvalues and $\{\pi_i\}_1^{\infty}$  are the corresponding eigenfunctions.

\vspace{1em}
    \textbf{Condition (VI) (eigenvalues):}
    \begin{enumerate}[label=(\alph*)]
    \item  Assume that $\xi_i-\xi_{i+1}> \overline{M}_0 i^{-\overline{\alpha}_1-1}$ for some constant $\overline{M}_0>0$, $i \ge 1$ and $\overline{\alpha}_1>1$. This implies $\xi_i \ge \overline{M}_1 i^{-\overline{\alpha}_1}$ for some constant $\overline{M}_1>0$. 
    \item The coefficient $|a_{k,j}| < \overline{M}_2 j^{-\overline{\alpha}_2}$ for some positive constants $\overline{M}_2$ and $\overline{\alpha}_2$.
    \end{enumerate}

The above condition (a) ensures that the gaps between the eigenvalues will not be too small. It also provides a lower bound on the rate at which $\xi_i$ decreases.  The condition (b) assumes that the coefficients $a_{k,j}$ do not decrease too quickly. Similar conditions exist in the literature for functional data, for example, we refer to \citet{hall2007methodology} and \citet{jiang2015correction}.
    Because  $\eta_k= \sum_{i=1}^{\infty} a_{k,i} \pi_i(t)$ and $\eta_k \in \mathfrak{R}_{R^{1/2}(0)}$, we obtain that $\sum_{i=1}^{\infty}a_{k,i}^2 \xi_i^{-1} < \infty$. Under Condition (VI), this implies that $2\overline{\alpha}_2-\overline{\alpha}_1 >1$. A sufficient condition for $2\overline{\alpha}_2-\overline{\alpha}_1 >1$ is $\overline{\alpha}_2 \ge \overline{\alpha}_1 $ and it  means that the space spanned by $\{\eta_k\}_{k=1}^K$ is at least as smooth as operator $R(0)$.

    Let $R_L^{-1/2}(0) =\sum_{i=1}^L \xi_i^{-1/2} \pi_i(s) \pi_i(t)$ be the truncated version of $R^{-1/2}(0)$. Let 
    \begin{align*}
    r_{n1} &=h^4 + \frac{\sum_{i=1}^n N_i w_i^2}{h^2} + \frac{\sum_{i=1}^nN_i(N_i-1)w_i^2}{h} ~ \text{  and} \\ r_{n2} &= h_c^4 + b^4 + \frac{\sum_{i=1}^n \sum_{i' \neq i} v^2_{i,i'} N_i N_{i'}}{h_c^{2}b} + \frac{\sum_{i=1}^n \sum_{i' \neq i} v^2_{i,i'} N_i N_{i'} (N_{i'}-1)}{ h_c b} \nonumber\\ & + \frac{\sum_{i=1}^n \sum_{i' \neq i} v^2_{i,i'} N_i N_{i'} (N_{i}-1)}{h_c b} + \frac{\sum_{i=1}^n \sum_{i' \neq i} v^2_{i,i'} N_i (N_i-1) N_{i'} (N_{i'}-1)}{b}.
    \end{align*}

    
    
    In the following theorem, we provide the convergence rate for the e.d.r. direction $\widehat{\beta}_j$.
    \begin{theorem}\label{th:main}
      Under the conditions of Lemma \ref{lm:L2-R} and Lemma \ref{lm:2} and condition (VI), we have that
      \begin{align*}
        \Vert \widehat{\beta}_j-\beta_j \Vert_{L_2}^2 &= O_p\left( L^{(-2\overline{\alpha}_2+ \overline{\alpha}_1+1)}+L^{(3 \overline{\alpha}_1+2)}r_{n1} + L^{(3\overline{\alpha}_1-2\overline{\alpha}_2+4)}r_{n2}\right) \text{ for } j=1,\ldots,K. 
      \end{align*}
      \end{theorem}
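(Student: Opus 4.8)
Write $T:=R^{-1/2}(0)R_e(0)R^{-1/2}(0)$ with eigenpairs $(\lambda_k,\eta_k)$, so that $\beta_j=R^{-1/2}(0)\eta_j$; let $T_L:=R_L^{-1/2}(0)R_e(0)R_L^{-1/2}(0)$ have eigenpairs $(\lambda_{k,L},\eta_{k,L})$, and set $\beta_{j,L}:=R_L^{-1/2}(0)\eta_{j,L}$. The estimator produced by steps (a)--(d), up to a grid-discretization error of smaller order that is handled in the routine way, is $\widehat\beta_j=\widehat R_L^{-1/2}(0)\widehat\eta_j$, where $\widehat R_L^{-1/2}(0)=\sum_{i\le L}\widehat\xi_i^{-1/2}\widehat\pi_i\widehat\pi_i$ is built from the spectral decomposition of $\widehat R(0)$ and $\widehat\eta_j$ is the $j$th eigenfunction of $\widehat T_L:=\widehat R_L^{-1/2}(0)\widehat R_e(0)\widehat R_L^{-1/2}(0)$. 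I would start from the split $\|\widehat\beta_j-\beta_j\|_{L_2}^2\lesssim\|\beta_{j,L}-\beta_j\|_{L_2}^2+\|\widehat\beta_j-\beta_{j,L}\|_{L_2}^2$, which separates a deterministic truncation bias from a stochastic estimation error; throughout I use $\|\widehat R(0)-R(0)\|_{HS}^2=O_p(r_{n2})$ (Lemma \ref{lm:L2-R}) and $\|\widehat R_e(0)-R_e(0)\|_{HS}^2=O_p(r_{n1})$ (Lemma \ref{lm:2}).

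For the truncation bias, $\beta_j=\sum_i a_{j,i}\xi_i^{-1/2}\pi_i$ and the analogous expansion of $\beta_{j,L}$, together with the fact that $\eta_{j,L}\to\eta_j$ with a remainder of smaller order (which follows from the eigenvalue-gap lower bound $\xi_i-\xi_{i+1}>\overline M_0 i^{-\overline\alpha_1-1}$ of Condition (VI)(a) applied to the perturbation $T_L-T$), give $\|\beta_{j,L}-\beta_j\|_{L_2}^2\lesssim\sum_{i>L}a_{j,i}^2\xi_i^{-1}\lesssim\sum_{i>L}i^{-2\overline\alpha_2+\overline\alpha_1}\asymp L^{-2\overline\alpha_2+\overline\alpha_1+1}$, where I use $|a_{j,i}|<\overline M_2 i^{-\overline\alpha_2}$ (Condition (VI)(b)), $\xi_i\ge\overline M_1 i^{-\overline\alpha_1}$ (Condition (VI)(a)) and $2\overline\alpha_2-\overline\alpha_1>1$. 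This is the first term of the stated rate.

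For the estimation error I split $\widehat\beta_j-\beta_{j,L}=(\widehat R_L^{-1/2}(0)-R_L^{-1/2}(0))\eta_{j,L}+\widehat R_L^{-1/2}(0)(\widehat\eta_j-\eta_{j,L})$. \emph{(i)} For the first summand I expand $\widehat R_L^{-1/2}(0)-R_L^{-1/2}(0)$ into an eigenvalue part and an eigenfunction part, using $|\widehat\xi_i^{-1/2}-\xi_i^{-1/2}|\lesssim\xi_i^{-3/2}|\widehat\xi_i-\xi_i|$ (Weyl's inequality) and $\|\widehat\pi_i-\pi_i\|_{L_2}\lesssim i^{\overline\alpha_1+1}\|\widehat R(0)-R(0)\|_{HS}$ (a Davis--Kahan bound with the gap lower bound of Condition (VI)(a)); evaluating the result on the smooth function $\eta_{j,L}$, whose $\pi_i$-coefficients are $<\overline M_2 i^{-\overline\alpha_2}$ (Condition (VI)(b)), and summing over $i\le L$ yields $\|(\widehat R_L^{-1/2}(0)-R_L^{-1/2}(0))\eta_{j,L}\|_{L_2}^2=O_p(L^{3\overline\alpha_1-2\overline\alpha_2+4}r_{n2})$, the third term of the rate. \emph{(ii)} For the second summand, a Davis--Kahan / Bosq-type bound gives $\|\widehat\eta_j-\eta_{j,L}\|_{L_2}\lesssim(\lambda_{j,L}-\lambda_{j+1,L})^{-1}\|\widehat T_L-T_L\|_{op}$, where the gap is bounded away from $0$ uniformly in $n$ and $L$ because $R_e(0)$ (the covariance operator of $E(X(\bm s)|Y(\bm s))$, which lies in the $K$-dimensional e.d.r.\ space) has rank at most $K$, so $\lambda_{K+1,L}=0$ and $\lambda_{k,L}\to\lambda_k>0$ for $k\le K$; expanding $\widehat T_L-T_L$ into the three one-factor perturbations $(\widehat R_L^{-1/2}(0)-R_L^{-1/2}(0))\widehat R_e(0)\widehat R_L^{-1/2}(0)$, $R_L^{-1/2}(0)(\widehat R_e(0)-R_e(0))\widehat R_L^{-1/2}(0)$ and $R_L^{-1/2}(0)R_e(0)(\widehat R_L^{-1/2}(0)-R_L^{-1/2}(0))$, bounding each through $\|R_L^{-1/2}(0)\|_{op}\asymp\|\widehat R_L^{-1/2}(0)\|_{op}\asymp\xi_L^{-1/2}\asymp L^{\overline\alpha_1/2}$, $\|R_L^{-1/2}(0)\|_{HS}\lesssim L^{(\overline\alpha_1+1)/2}$, $\|\widehat R_e(0)\|_{op}=O_p(1)$ and the Hilbert--Schmidt rates of Lemmas \ref{lm:L2-R} and \ref{lm:2}, then multiplying through by $\|\widehat R_L^{-1/2}(0)\|_{op}$, one obtains the contribution $O_p(L^{3\overline\alpha_1+2}r_{n1})$ from the $\widehat R_e(0)$-driven term, together with an $r_{n2}$-term dominated by the one in (i). Summing the three contributions gives the asserted rate.

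The main obstacle is controlling the interaction between the truncation level $L$ and the estimation error: the truncated inverse square root $\widehat R_L^{-1/2}(0)$ has operator norm of order $\xi_L^{-1/2}\asymp L^{\overline\alpha_1/2}$, which diverges with $L$, so bounds that apply it as a plain operator are far too crude, and the precise exponents of $L$ in the statement are recovered only by using Condition (VI)(a) (eigenvalue gaps and lower bounds) and Condition (VI)(b) (coefficient decay $|a_{j,i}|<\overline M_2 i^{-\overline\alpha_2}$) simultaneously, i.e.\ by exploiting that the target directions $\eta_j$ are smooth relative to $R(0)$, so that the diverging factors partially cancel when they act on $\eta_{j,L}$ and on $\widehat\eta_j-\eta_{j,L}$. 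Two further technical points: the perturbation analysis concerns eigenfunctions of a \emph{product} of estimated operators rather than of a single one, which forces the multi-term expansion in (ii) and requires one to first show that $\widehat T_L$ is, with probability tending to one, a well-conditioned self-adjoint operator on the truncated subspace, which in turn needs $L=L_n\to\infty$ slowly enough that $\|\widehat R(0)-R(0)\|_{HS}=o_p(\xi_L)$; and, because the sites $\bm s_i$ are spatially dependent, every Hilbert--Schmidt rate entering the argument must be quoted from Lemmas \ref{lm:L2-R} and \ref{lm:2} rather than re-derived under independence.
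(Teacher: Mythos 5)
Your proposal is correct and follows essentially the route the paper itself indicates: the paper omits the detailed argument and defers to \citet{hall2007methodology} and Theorem 2.1 of \citet{jiang2015correction}, i.e.\ exactly your decomposition into a truncation bias controlled by Condition (VI), a perturbation bound for the truncated $\widehat R_L^{-1/2}(0)$ via Weyl/Davis--Kahan with the eigengap $i^{-\overline\alpha_1-1}$, and a perturbation of the sandwiched operator driven by the Hilbert--Schmidt rates $r_{n1}$, $r_{n2}$ of Lemmas \ref{lm:2} and \ref{lm:L2-R}. The exponent bookkeeping you sketch (including the caveat that $L$ must grow slowly enough that $\|\widehat R(0)-R(0)\|_{HS}=o_p(\xi_L)$) reproduces the stated rate, so no substantive gap relative to the paper's intended proof.
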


\begin{remark}
    The proof uses the results from \citet{hall2007methodology} and proceeds similarly to Theorem 2.1 in \citet{jiang2015correction}. Therefore, it is omitted here for brevity. The convergence rate of e.d.r. direction $\widehat{\beta}_j$ depends on the rates of involved covariance operators $\widehat{R}_e(0)$ and $\widehat{R}(0)$. The second and third terms in the above rate are equal for the choice of $L=(r_{n2}/r_{n1})^{1/2(\overline{\alpha}_2-1)}$. Corollary \ref{cor:edr-spa-den} provides conditions under which $r_{n2} \asymp r_{n1}$.
\end{remark}
\begin{remark}
    The rates obtained in Theorem \ref{th:main} may not be optimal for the e.d.r. directions $\widehat{\beta}_j$ which, as discussed in \citet{jiang2014inverse}, can be estimated at the optimal rate for a one-dimensional smoother. In the following Corollary \ref{cor:edr-spa-den}, under some regularity conditions, we show that  $\widehat{\beta}_j$ achieve the optimal convergence rate.  
\end{remark}

The following corollary provides the convergence rate of $\widehat{\beta}_j$  for sparse, dense and ultra-dense functional data similar to Corollaries \ref{cor:R-partition} and \ref{cor:re-spa-den}. Let $\widehat{\beta}_{obs}$ and $\widehat{\beta}_{subj}$ be the specific estimators of $\beta$ for the OBS and SUBJ weighting schemes, respectively. 

\begin{corollary}\label{cor:edr-spa-den}
Suppose all the conditions in corollaries \ref{cor:R-partition} and \ref{cor:re-spa-den} including the ones for OBS and SUBJ schemes hold. Additionally, assume $h_c \asymp h $ and $\overline{\alpha}_2>> \overline{\alpha}_1 >1$. Then, we have the following:
\begin{enumerate}
    \item[(1)] OBS: 
    \begin{enumerate}
        \item If $\overline{N}/n^{1/2} \rightarrow 0$ and $h \asymp (n\overline{N})^{-1/6}$, then $ \Vert \widehat{\beta}_{obs} - \beta \Vert_{L_2} =O_P(h^2+1/\sqrt{n\overline{N}h^2})$.
        \item If $\overline{N}/n^{1/2} \rightarrow C$ for some $C < \infty$ and $h \asymp n^{-1/4}$, then $\Vert \widehat{\beta}_{obs} - \beta \Vert_{L_2} = O_p( 1/\sqrt{n}).$
        \item If $\overline{N}/n^{1/2} \rightarrow \infty$,  $h = o(n^{-1/4})$, and $h \overline{N} \rightarrow \infty$, then $            \Vert \widehat{\beta}_{obs} - \beta \Vert_{L_2} =O_p( 1/\sqrt{n}).$
    \end{enumerate}
    \item[(2)] SUBJ: Replacing $\overline{N}$ and $\overline{N}_2$  in (1) with $\overline{N}_H$ and $\overline{N}_{H,2}$, respectively, leads to the results for $\widehat{\beta}_{subj}$.
\end{enumerate}
\end{corollary}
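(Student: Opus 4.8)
The plan is to feed the concrete orders of $r_{n1}$ and $r_{n2}$ into the master bound of Theorem~\ref{th:main},
\[
\Vert \widehat{\beta}_j-\beta_j \Vert_{L_2}^2 = O_p\big( L^{-2\overline{\alpha}_2+ \overline{\alpha}_1+1}+L^{3 \overline{\alpha}_1+2}r_{n1} + L^{3\overline{\alpha}_1-2\overline{\alpha}_2+4}r_{n2}\big),
\]
and then to optimize over the truncation level $L$. Under the hypotheses inherited from Corollary~\ref{cor:re-spa-den} with $h_t\asymp h_y\asymp h$, the quantity $r_{n1}$ equals, up to constants, the squared $\widehat{R}_e(0)$ rate: $r_{n1}\asymp h^4+1/(n\overline{N}h^2)$ for OBS (resp.\ $h^4+1/(n\overline{N}_H h^2)$ for SUBJ), which under the bandwidth choices of cases (a)--(c) equals $(n\overline{N})^{-2/3}$, $n^{-1}$, $n^{-1}$ respectively (and the analogues with $\overline{N}_H$). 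For $r_{n2}$ I would invoke Corollary~\ref{cor:re}, specialised to $h_c\asymp h$ and $b\asymp(n-1)^{-1/2}$ as anticipated in the discussion following Corollary~\ref{cor:re}, so that for OBS
\[
r_{n2}\asymp h_c^4+b^4+\Big(\frac{1}{\overline{N}_2 h_c^2}+\frac{\overline{N}_{2,1}}{\overline{N}_2^2 h_c}+\frac{\overline{N}_{2,12}}{\overline{N}_2^2}\Big)\frac{1}{n(n-1)b},
\]
and similarly for SUBJ with the harmonic-type averages $\overline{N}_{H,2}$, $\overline{N}_H$ and the constant $1$ in place of $\overline{N}_2$, $\overline{N}_{2,1}/\overline{N}_2^2$, $\overline{N}_{2,12}/\overline{N}_2^2$. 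The first substep is to check, regime by regime and using the standing assumptions $\limsup_n\overline{N}_{S2}/\overline{N}^2<\infty$, $\limsup_n(\overline{N}\cdot\overline{N}_{2,1})/\overline{N}_2^2<\infty$, $\limsup_n\overline{N}_{2,12}/\overline{N}_2^2<\infty$ of Corollary~\ref{cor:R-partition} (which give $\overline{N}_2\asymp\overline{N}^2$, $\overline{N}_{2,1}/\overline{N}_2^2\lesssim\overline{N}^{-1}$, $\overline{N}_{2,12}/\overline{N}_2^2\lesssim 1$), that each variance term in $r_{n2}$ is $O(r_{n1})$ while $b^4=O(n^{-2})$ is negligible; hence $r_{n2}\asymp h_c^4\asymp h^4=O(r_{n1})$ in all three density regimes. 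This is the step that consumes the choice $b=O(1/\sqrt{n-1})$.

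With $r_{n2}=O(r_{n1})$ in hand, the third summand of the Theorem~\ref{th:main} bound is $O_p\big(L^{3\overline{\alpha}_1-2\overline{\alpha}_2+4}r_{n1}\big)$, which is dominated by the second summand $L^{3\overline{\alpha}_1+2}r_{n1}$ because $L\to\infty$ and the exponent difference equals $2(1-\overline{\alpha}_2)<0$. It then remains to balance the first two summands: choosing $L\asymp r_{n1}^{-1/(2\overline{\alpha}_2+2\overline{\alpha}_1+1)}$ equalizes $L^{-2\overline{\alpha}_2+\overline{\alpha}_1+1}$ and $L^{3\overline{\alpha}_1+2}r_{n1}$ and yields
\[
\Vert \widehat{\beta}_j-\beta_j \Vert_{L_2}^2 = O_p\big(r_{n1}^{\theta}\big),\qquad \theta=\frac{2\overline{\alpha}_2-\overline{\alpha}_1-1}{2\overline{\alpha}_2+2\overline{\alpha}_1+1}\in(0,1).
\]
Finally one invokes the hypothesis $\overline{\alpha}_2\gg\overline{\alpha}_1$: since $\theta\uparrow 1$ as $\overline{\alpha}_2/\overline{\alpha}_1\to\infty$, the exponent is arbitrarily close to one, so $\Vert\widehat{\beta}_j-\beta_j\Vert_{L_2}=O_p\big(r_{n1}^{1/2}\big)$, which is exactly the $\widehat{R}_e(0)$ rate of Corollary~\ref{cor:re-spa-den}. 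Substituting $r_{n1}^{1/2}=h^2+1/\sqrt{n\overline{N}h^2}$ (OBS) together with the bandwidth regimes (a) $h\asymp(n\overline{N})^{-1/6}$, (b) $h\asymp n^{-1/4}$, (c) $h=o(n^{-1/4})$ with $h\overline{N}\to\infty$, reproduces the stated rates $h^2+(n\overline{N}h^2)^{-1/2}$, $n^{-1/2}$, $n^{-1/2}$; the SUBJ statements follow from the substitution $\overline{N}\mapsto\overline{N}_H$, $\overline{N}_2\mapsto\overline{N}_{H,2}$ already built into Corollaries~\ref{cor:R-partition} and~\ref{cor:re-spa-den}.

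The main obstacle is the regime-by-regime bookkeeping in the first paragraph: one must verify simultaneously, for the OBS and SUBJ weightings and for all three of the non-dense, dense and ultra-dense cases, that the spatial-covariance error $\sqrt{r_{n2}}$---which carries the extra spatial bandwidth $b$ and the awkward moment ratios $\overline{N}_{2,1}/\overline{N}_2^2$ and $\overline{N}_{2,12}/\overline{N}_2^2$---does not asymptotically exceed $\sqrt{r_{n1}}$; this is precisely where the choice $b\asymp(n-1)^{-1/2}$ and the subsidiary ratio assumptions of Corollary~\ref{cor:R-partition} are needed. A secondary subtlety, worth making explicit, is that the optimization over $L$ only delivers $r_{n1}^{\theta}$ with $\theta<1$ strictly, so the clean rates in the statement are obtained in the regime $\overline{\alpha}_2/\overline{\alpha}_1\to\infty$ encoded by the hypothesis $\overline{\alpha}_2\gg\overline{\alpha}_1$; the write-up should therefore display how $\theta$ degrades with a finite smoothness gap $\overline{\alpha}_2-\overline{\alpha}_1$ rather than asserting exact equality.
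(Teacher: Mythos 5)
Your proposal is correct and follows essentially the same route the paper intends: the paper gives no separate proof of this corollary, and its remarks surrounding Theorem~\ref{th:main} (truncate at a level $L$ chosen to balance the terms, use the conditions of Corollaries~\ref{cor:R-partition} and~\ref{cor:re-spa-den} — in particular $b\asymp (n-1)^{-1/2}$ and the moment-ratio bounds — to make $r_{n2}$ comparable to $r_{n1}$, and invoke $\overline{\alpha}_2\gg\overline{\alpha}_1$ so that the $L$-dependent factors do not degrade the covariance-operator rates) are exactly the plug-in-and-balance argument you carry out. Your explicit caveat that the optimization only delivers the exponent $\theta=(2\overline{\alpha}_2-\overline{\alpha}_1-1)/(2\overline{\alpha}_2+2\overline{\alpha}_1+1)<1$, with the clean $\sqrt{r_{n1}}$ rate recovered only as $\overline{\alpha}_2/\overline{\alpha}_1\to\infty$, is consistent with the informality already built into the paper's hypothesis $\overline{\alpha}_2\gg\overline{\alpha}_1$, and your slightly stronger intermediate claim $r_{n2}\asymp h_c^4$ is not needed for the stated conclusions since $r_{n2}=O(n^{-1})$ in the dense and ultra-dense regimes anyway.
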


\section{Conclusion}
In this study, we propose a new inverse regression framework for spatially correlated functional data. Our framework adopts the general weighting scheme for functional data proposed in \citet{zhang2016sparse}. Similarly, it extends the mixed asymptotic framework for the spatial sampling proposed by \citet{lu2014nonparametric} to the functional data setting. Due to both of these frameworks, our methodology provides a unified framework for spatially correlated functional data which includes the existing results as special cases. In terms of theoretical results, we provide the point-wise convergence results, $L_2$ convergence results for the covariance operators and uniform convergence result for the mean. These results are new to the literature and hence of independent interest. Finally, we show that the estimated e.d.r. directions achieve the $\sqrt{n}$-convergence rate for the dense and ultra-dense functional data and achieve the standard smoothing rate for the sparse functional data. The simulations in the supplement \citet{suppl} validate our theoretical findings. The proposed method  shows a good performance. We find that it improves upon the existing methods when the functional data include a nugget effect. At the same time, it also does not lose much efficiency when the data do not include a nugget effect.

The present study considers the estimation of the e.d.r. directions and does not discuss the issues related to inference. An interesting direction for future research is to provide an inference framework for the estimated e.d.r. directions.

\section*{Acknowledgements}
The authors would like to thank an anonymous referee, an Associate Editor and the Editor for their
constructive comments which led to significant improvements in the paper. The authors would also like to thank Dr. Ci-Ren Jiang for sharing the code for Sliced Inverse Regression. 
\begin{supplement}
\stitle{Supplement to ``Inverse regression for spatially distributed functional data''}
\sdescription{The supplementary material contains the technical proofs of the main results, a simulation study, and a  real data analysis.}
\end{supplement}





\bibliographystyle{imsart-nameyear} 
\bibliography{dfsir}       

\setcounter{subsection}{0}
\setcounter{section}{0}
\renewcommand{\thesection}{S.\arabic{section}}
\renewcommand{\thesubsection}{S.\arabic{section}.\arabic{subsection}}
\setcounter{subsubsection}{0}
\renewcommand{\thesubsubsection}{\textbf{S.\arabic{subsection}.\arabic{subsubsection}}}
\setcounter{equation}{0}
\renewcommand{\theequation}{S.\arabic{equation}}
\newtheorem{thm}{Theorem}
\renewcommand{\thelemma}{S.\arabic{lemma}}

\renewcommand{\thefigure}{S.\arabic{figure}}
\renewcommand{\thetable}{S.\arabic{table}}


\newpage


\title{Supplement to ``Inverse regression for spatially distributed functional data''}

\vspace{3em}

\section{Proofs}

In this supplement, we present the proofs of our main results in Section \ref{sec:asymp} of the manuscript and provide the results from a simulation study, and the weather data analysis.  The asymptotic results for the covariance operators $\widehat{R}$ and $\widehat{R}_e$ require uniform convergence result for the mean estimator. Therefore, in Section \ref{subsec:mu} we provide the uniform convergence result for the mean estimator $\widehat{\mu}$. Section \ref{subsec:rmat} presents the pointwise and $L_2$ convergence results for the spatial covariance matrix $\widehat{R}$. Similarly, Section \ref{subsec:remat} provides the pointwise and $L_2$ convergence results for the conditional mean function $E(X(t)|Y)$ and the conditional covariance matrix $\widehat{R}_e$, respectively. Finally, we present the results from the simulation study and the data analysis in Sections \ref{sec:numer} and \ref{sec:application}, respectively.

The following two lemmas are required for our theoretical arguments. For completeness, we present them here.
\begin{lemma}\label{lem:mixing}
  Let $\mathcal{L}_{r}(\mathcal{F})$ denote the class of $\mathcal{F}$- measurable random variables $W$ satisfying  $\Vert W \Vert_{r}:=(E|W|^{r})^{1/{r}} <\infty$. Let $\bm{U} \in \mathcal{L}_{a}(\mathcal{B}(\mathcal{S}))$ and $ V \in \mathcal{L}_{b}(\mathcal{B}(\mathcal{S'}))$ where $\mathcal{B}(\mathcal{S})$ and $\mathcal{B}(\mathcal{S'})$ denote the $\sigma$- fields. Then for any $1 \le a, b, c < \infty$ such that $a^{-1}+ b^{-1}+ c^{-1}=1$,
  \begin{align}
    |E[\bm{U}V]- E[\bm{U}]E[V]| \le C_{\alpha}\Vert \bm{U} \Vert_{a} \Vert V \Vert_{b} [\alpha(\mathcal{S},\mathcal{S'})]^{1/{c}},
    \label{eqn:lem5-1}
  \end{align}
  where $\Vert \bm{U} \Vert_a := \Vert(\bm{U}^T\bm{U})^{1/2}\Vert_a$ and $\alpha(\mathcal{S},\mathcal{S'})=\sup\{|P(AB)-P(A)P(B)| : A \in \mathcal{B}(S), B \in \mathcal{B}(S') \}$ and $C_{\alpha}$ is some constant.
  
\end{lemma}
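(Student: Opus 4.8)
This is the classical Davydov covariance inequality for $\alpha$-mixing fields (see, e.g., Corollary 1.1 in \citet{bosq2012nonparametric}), and the plan is to reproduce its standard proof in three steps.

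\emph{Step 1 (reduction to the scalar case).} For a fixed unit vector $\bm{\lambda}$ I would put $X:=\bm{\lambda}^{T}\bm{U}$ and $Y:=V$; then $X$ is $\mathcal{B}(\mathcal{S})$-measurable, $Y$ is $\mathcal{B}(\mathcal{S}')$-measurable, and pointwise Cauchy--Schwarz gives $|X|\le(\bm{U}^{T}\bm{U})^{1/2}$, so $\|X\|_{a}\le\|\bm{U}\|_{a}$. Since $|E[\bm{U}V]-E[\bm{U}]E[V]|=\sup_{\|\bm{\lambda}\|=1}|E[(\bm{\lambda}^{T}\bm{U})V]-E[\bm{\lambda}^{T}\bm{U}]E[V]|$, it suffices to prove $|\mathrm{cov}(X,Y)|\le C_{\alpha}\|X\|_{a}\|Y\|_{b}\,\alpha(\mathcal{S},\mathcal{S}')^{1/c}$ for scalar $X,Y$.

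\emph{Step 2 (the bounded case).} Next I would establish: if $\xi$ is $\mathcal{B}(\mathcal{S})$-measurable, $\zeta$ is $\mathcal{B}(\mathcal{S}')$-measurable, and $\|\xi\|_{\infty}=\|\zeta\|_{\infty}=1$, then writing $g:=E[\zeta-E\zeta\mid\mathcal{B}(\mathcal{S})]$ one has $\mathrm{cov}(\xi,\zeta)=E[\xi g]\le E|g|=E[\mathrm{sgn}(g)\,g]=\mathrm{cov}(\eta,\zeta)$ with $\eta:=\mathrm{sgn}(g)$ taking values in $\{-1,0,1\}$. Applying the same argument on the other coordinate reduces the bound to $\mathrm{cov}(\eta,\eta')$ with $\eta$ ($\mathcal{B}(\mathcal{S})$-measurable) and $\eta'$ ($\mathcal{B}(\mathcal{S}')$-measurable) both $\{-1,0,1\}$-valued; decomposing each of $\eta,\eta'$ into its positive and negative indicator parts expands this into four terms $\pm\{P(AB)-P(A)P(B)\}$ with $A\in\mathcal{B}(\mathcal{S})$, $B\in\mathcal{B}(\mathcal{S}')$, each at most $\alpha(\mathcal{S},\mathcal{S}')$ in absolute value. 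Hence $|\mathrm{cov}(\xi,\zeta)|\le 4\,\alpha(\mathcal{S},\mathcal{S}')\,\|\xi\|_{\infty}\|\zeta\|_{\infty}$ for general bounded $\xi,\zeta$.

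\emph{Step 3 (truncation and H\"older).} Finally, for $M,N>0$ I would set $X_{M}:=X\mathbbm{1}(|X|\le M)$, $X_{M}^{c}:=X-X_{M}$, and likewise $Y_{N},Y_{N}^{c}$, so that $\mathrm{cov}(X,Y)$ splits into four covariances; the term $\mathrm{cov}(X_{M},Y_{N})$ is $\le 4\alpha MN$ by Step 2, while the other three are controlled by the moment tail bound $E[|X|\mathbbm{1}(|X|>M)]\le M^{1-a}\|X\|_{a}^{a}$ together with H\"older's inequality, and then one balances the cut-offs $M,N$. The cleanest execution of this last step is via Rio's quantile inequality $|\mathrm{cov}(X,Y)|\le 2\int_{0}^{2\alpha}Q_{|X|}(u)Q_{|Y|}(u)\,du$ (with $Q_{|X|}$ the quantile function of $|X|$), followed by the three-exponent H\"older inequality on $[0,2\alpha]$ and the identity $\int_{0}^{1}Q_{|X|}(u)^{a}\,du=E|X|^{a}$, which delivers $C_{\alpha}=2^{1+1/c}\le 4$. \textbf{The main obstacle} is precisely this last step: establishing the quantile bound (or, equivalently, tuning the truncation levels so that the exponent on $\alpha$ is exactly $1/c$ while the $X$- and $Y$-factors collapse to $\|X\|_{a}$ and $\|Y\|_{b}$); everything else is routine bookkeeping, and the vector extension is immediate from Step 1.
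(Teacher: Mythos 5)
Your proposal is correct: it is a faithful reconstruction of the classical Davydov covariance inequality, and each of the three steps is sound — the reduction to scalars via $\sup_{\|\bm{\lambda}\|=1}\mathrm{cov}(\bm{\lambda}^T\bm{U},V)$ together with $|\bm{\lambda}^T\bm{U}|\le(\bm{U}^T\bm{U})^{1/2}$, the $4\alpha$ bound for variables bounded by one obtained through the $\mathrm{sgn}$-of-conditional-expectation trick and the indicator decomposition, and the truncation/H\"older (or Rio quantile) step that produces the exponent $1/c$ with $1/a+1/b+1/c=1$ and a constant such as $2^{1+1/c}$. The paper, however, does not prove the lemma at all: its ``proof'' is a one-line citation to Theorem 17.2.3 of \citet{bingham1973independent} and to Lemma 1 of \citet{deo1973note}, which state exactly this inequality. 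So the difference is not in the mathematics but in the level of self-containment: you supply the standard argument that those references contain, while the paper treats the inequality as a known black box. What your route buys is a complete, verifiable derivation (and an explicit constant); what the paper's route buys is brevity, which is reasonable since the result is textbook material. Two small points worth making explicit if you write your version out in full: the left-hand side for vector $\bm{U}$ is the Euclidean norm of the covariance vector, which is what licenses the supremum-over-unit-vectors identity in Step 1; and in Step 3 the exponents satisfy $a,b>1$ under the stated constraint, which is what makes the tail bound $E[|X|\mathbbm{1}(|X|>M)]\le M^{1-a}\|X\|_a^a$ useful when balancing the truncation levels.
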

\begin{proof}
    It is taken from Theorem 17.2.3 in \citet{,bingham1973independent} and Lemma 1 in \citet{,deo1973note} shared identical results.
\end{proof}


\begin{lemma}\label{lemma:mgf:bound}
Suppose that $|X|\leq c$ for a random variable $X$, then for any $\lambda>0$ we have
\begin{eqnarray*}
\ev(e^{\lambda(X-\ev(X))})\leq \exp\left\{\lambda^2\sigma^2\left(\frac{e^{\lambda c}-1-\lambda c}{(\lambda c)^2}\right) \right\}
\end{eqnarray*}
where $\sigma^2=var(X)$.
\end{lemma}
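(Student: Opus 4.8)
This is the classical Bennett estimate for the moment generating function, and my plan is a direct moment expansion. We may assume $E(X)=0$: the left-hand side is the MGF of the centred variable $X-E(X)$, so it suffices to read the hypothesis as a bound on the centred variable, i.e. $|X|\le c$ together with $E(X)=0$, and we then write $\sigma^2=E(X^2)=\mathrm{var}(X)$.

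The first step is the moment bound $E(X^k)\le c^{k-2}\sigma^2$ for every integer $k\ge 2$. For even $k$ this is immediate from $0\le X^{k-2}\le c^{k-2}$. For odd $k$ one uses the pointwise inequality $X^k=X^2X^{k-2}\le X^2|X|^{k-2}\le c^{k-2}X^2$ and takes expectations (for odd $k$, $E(X^k)$ may be negative, in which case the bound is trivial).

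The second step is the series expansion of the MGF. Since $|X|\le c$, the partial sums of $e^{\lambda X}=\sum_{k\ge 0}(\lambda X)^k/k!$ are dominated by $e^{\lambda c}$, so by dominated convergence $E(e^{\lambda X})=\sum_{k\ge 0}\lambda^kE(X^k)/k!$. Using $E(X^0)=1$, $E(X^1)=0$, the moment bound, and $\lambda>0$,
\[
E\left(e^{\lambda X}\right)\le 1+\sum_{k\ge 2}\frac{\lambda^k c^{k-2}\sigma^2}{k!}=1+\frac{\sigma^2}{c^2}\sum_{k\ge 2}\frac{(\lambda c)^k}{k!}=1+\frac{\sigma^2}{c^2}\left(e^{\lambda c}-1-\lambda c\right).
\]
Finally, applying $1+t\le e^t$ with $t=\sigma^2c^{-2}(e^{\lambda c}-1-\lambda c)\ge 0$ gives $E(e^{\lambda X})\le\exp\{\sigma^2c^{-2}(e^{\lambda c}-1-\lambda c)\}$, and rewriting $c^{-2}(e^{\lambda c}-1-\lambda c)=\lambda^2\,(e^{\lambda c}-1-\lambda c)/(\lambda c)^2$ recovers the displayed form.

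The lemma is elementary and presents no genuine obstacle; the only points warranting a line of justification are the moment inequality for odd $k$ (handled by the pointwise bound above) and the interchange of expectation and infinite sum (justified by boundedness of $X$). An equivalent one-shot route would replace the two steps above by the single pointwise inequality $e^{\lambda x}\le 1+\lambda x+\lambda^2x^2(e^{\lambda c}-1-\lambda c)/(\lambda c)^2$ for $x\le c$, which follows from the monotonicity on $\mathbb{R}$ of $u\mapsto (e^u-1-u)/u^2$; I would adopt whichever reads more cleanly in the final write-up.
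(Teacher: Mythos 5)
Your two computational steps are correct and constitute the standard proof of Bennett's moment-generating-function bound: the moment estimate $E(X^k)\le c^{k-2}\sigma^2$ for $k\ge 2$ (the pointwise bound $X^k\le c^{k-2}X^2$ handles odd $k$), the termwise expansion justified by boundedness, and $1+t\le e^t$; the one-shot pointwise inequality you mention at the end is equally fine. Note that the paper does not actually prove this lemma — its ``proof'' is a citation to Bennett's inequality (Wainwright, p.~51) — so a self-contained argument of this type is a reasonable thing to supply.

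The step you should not wave through is the opening reduction. From $|X|\le c$ you cannot ``read the hypothesis as a bound on the centred variable'': centering only yields $|X-E(X)|\le 2c$, and with the hypothesis taken literally the displayed inequality is false. For example, let $c=1$, $P(X=1)=0.01$, $P(X=-1)=0.99$, $\lambda=3$: then $\sigma^2=0.0396$ and $E\bigl(e^{\lambda(X-E(X))}\bigr)=0.99\,e^{-0.06}+0.01\,e^{5.94}\approx 4.73$, while the claimed bound is $\exp\{0.0396\,(e^{3}-4)\}\approx 1.89$. The correct hypothesis for the stated bound is that $c$ bounds $X-E(X)$ (the one-sided bound $X-E(X)\le c$ suffices, exactly via the monotonicity of $u\mapsto (e^{u}-1-u)/u^{2}$ that you invoke), which is how Bennett's inequality is stated in the cited reference and how the paper in fact uses the lemma: in the proof of Proposition \ref{proposition:p00} it is applied with the constant $2Ch^{-2}b^{-1}$, i.e.\ twice the sup-norm of the summand, precisely so that it covers the centered variable. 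So your argument is complete for the corrected statement, but as written the sentence ``we may assume $E(X)=0$'' is not a reduction from the stated hypothesis; you should either prove the version with $2c$ from $|X|\le c$, or state explicitly (as the intended reading requires) that $c$ bounds $X-E(X)$.
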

\begin{proof}
This is the Bennett’s inequality; please see Page 51 of \cite{wainwright2019high}.
\end{proof}

\subsection{Results for $\widehat{\mu}(t)$} \label{subsec:mu}
We now provide the uniform convergence result for the mean $\widehat{\mu}(t)$, $t\in [0,1]$. 
\begin{lemma}\label{lem:uniform-mean}
    Under conditions (I)[(i),(ii), (iii), (iv)(c)], (III)(i), (IV)(ii) and (V)(i),
    \begin{align*}
        \underset{t \in [0,1]}{\sup} |\widehat{\mu}(t)- \mu(t)| &= O_p\left( h_{\mu}^2 + \log(n)\sqrt{ \frac{\sum_{i=1}^n N_iw_i^2}{h_{\mu}} + \sum_{i=1}^n N_i(N_i-1) w_i^2}\right).
    \end{align*}
\end{lemma}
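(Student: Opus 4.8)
The plan is to follow the standard route for uniform consistency of a local linear smoother, carefully tracking the spatial dependence that enters through the process $V(\bm{s},t)$. First I would write $\widehat\mu(t)$ in effective-weight form $\widehat\mu(t)=\sum_{i=1}^n w_i\sum_{j=1}^{N_i} W_{ij}(t)Z_{ij}$, where $W_{ij}(t)$ depends only on the design $\mathcal{X}_T=\{T_{ij}\}$ and is built from the weighted moment sums $S_{n,\ell}(t)=\sum_i w_i\sum_j K_{h_\mu}(T_{ij}-t)(T_{ij}-t)^\ell$ for $\ell=0,1,2$. Substituting $Z_{ij}=\mu(T_{ij})+V_{ij}+U_{ij}+e_{ij}$ and using the projection property of local linear fitting, I would decompose $\widehat\mu(t)-\mu(t)=B_n(t)+\Delta_n(t)$, where $B_n(t)=\sum_{i,j}w_iW_{ij}(t)\{\mu(T_{ij})-\mu(t)-\mu'(t)(T_{ij}-t)\}$ is the (deterministic given $\mathcal{X}_T$) bias and $\Delta_n(t)=\sum_{i,j}w_iW_{ij}(t)(V_{ij}+U_{ij}+e_{ij})$ the stochastic term. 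A second-order Taylor expansion of $\mu$ together with boundedness of $\mu^{(2)}$ (Condition (I)(iii)) reduces $\sup_t|B_n(t)|$ to $O_p(h_\mu^2)$, once the denominator sums are shown uniformly bounded and bounded away from zero.

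Second, I would control the deterministic quantities uniformly in $t$. Since the $T_{ij}$ are i.i.d. with density bounded away from $0$ and $\infty$ (Condition (I)(ii)), each $S_{n,\ell}(t)$ is a weighted sum of \emph{independent}, bounded-after-rescaling variables (bounds $C/h_\mu$, $C$, $Ch_\mu$ for $\ell=0,1,2$), with expectation $f_T(t)$ times a kernel moment of order $h_\mu^\ell$ (using $\sum_i w_iN_i=1$). Discretizing $[0,1]$ into a grid of polynomial cardinality in $n$, applying a Bernstein-type bound (via Lemma \ref{lemma:mgf:bound}) at each grid point, a union bound, and Lipschitz continuity of $K_1$ (Condition (III)(i)) to interpolate, yields $\sup_t|S_{n,\ell}(t)-ES_{n,\ell}(t)|=o_p(1)$ under the bandwidth conditions in Condition (IV)(ii). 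In particular the $\ell=0$ sum stays uniformly bounded away from zero, so $|W_{ij}(t)|\lesssim w_i h_\mu^{-1}\mathbf{1}\{|T_{ij}-t|\lesssim h_\mu\}$ uniformly and $\sum_{i,j}w_i|W_{ij}(t)|=O_p(1)$, which is what makes the bias bound and the variance computation below go through.

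Third, and this is the heart of the argument, I would bound $\sup_t|\Delta_n(t)|$. Conditioning on $\mathcal{X}_T$, the $W_{ij}(t)$ are fixed and $V_{ij}+U_{ij}+e_{ij}$ is mean zero; computing the conditional variance, the truly diagonal terms ($i=i'$, $j=j'$) contribute $\asymp (\sum_i N_iw_i^2)/h_\mu$, the within-subject off-diagonal terms ($i=i'$, $j\ne j'$, where only $V$ and $U$ contribute through $R(0;\cdot,\cdot)$ and $\Lambda$ evaluated near $(t,t)$) contribute $\asymp \sum_iN_i(N_i-1)w_i^2$, and the across-subject terms ($i\ne i'$, where only $V$ contributes through the spatial covariance $R(\Vert\bm{s}_i-\bm{s}_{i'}\Vert;\cdot,\cdot)$) are of strictly smaller order by the covariance-decay part of Condition (II)(i) together with the DEI sampling conditions $\delta_n\to0$, $\Delta_n\to\infty$. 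Thus the pointwise order is $\sigma_n:=\{(\sum_i N_iw_i^2)/h_\mu+\sum_iN_i(N_i-1)w_i^2\}^{1/2}$. To upgrade this to the uniform bound with the extra $\log(n)$ factor, I would again discretize $[0,1]$ on a polynomial grid and, at each grid point, apply an exponential inequality for the spatially $\alpha$-mixing array: after truncating $V_{ij}+U_{ij}+e_{ij}$ at a slowly growing level (the tail contribution controlled by the $(2+\varrho)$-moment bounds of Condition (I)(iv)(c) via Markov's inequality), use Bradley's coupling lemma (Lemma 1.2 of \citet{bosq2012nonparametric}, as in the companion uniform-convergence result for spatially correlated kernel estimators) to replace the mixing array by blocks of independent variables whose spacing makes $\phi(d)$ summable, then apply Bennett's inequality (Lemma \ref{lemma:mgf:bound}) on the independent blocks. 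A union bound over the grid produces the $\log(n)$, Lipschitz continuity of $K_1$ with the bandwidth conditions controls the oscillation between grid points, and choosing the deviation level proportional to $\log(n)\sigma_n$ gives $\sup_t|\Delta_n(t)|=O_p(\log(n)\sigma_n)$, which combined with the bias bound is the claim.

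The main obstacle is exactly this exponential inequality for the spatially $\alpha$-mixing triangular array under irregular sampling and non-identical subject weights $w_i$: the block decomposition must be chosen so that simultaneously the inter-block distances grow fast enough for the mixing coefficients to be summable --- which forces $\Delta_n\to\infty$ and the lower bound $\rho>3(2+\varrho)/\varrho$ --- while the mass of each block, measured through $Q_n$ and $\delta_n$, stays small enough that the independent-block Bennett bound is sharp, which forces $\delta_n\to0$ and the precise rate conditions (\ref{eq:condition:V:eq:1}). Balancing the block size against the grid mesh and the target rate $\sigma_n$, and tracking the two weighting schemes (OBS versus SUBJ) through the variance constants, is the delicate bookkeeping; the remaining steps are routine local-polynomial algebra.
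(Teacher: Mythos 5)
Your proposal is correct and follows essentially the same route as the paper: the paper also reduces the problem to standard local-linear arguments (as in Theorem 5.1 of \citet{zhang2016sparse}) plus a uniform bound on the kernel-weighted sum of $V_{ij}+U_{ij}$ obtained by truncation, spatial blocking with Bradley's coupling lemma (Lemma 1.2 of \citet{bosq2012nonparametric}), a Bernstein/Bennett-type exponential inequality on the independent blocks, a union bound over a polynomially fine grid, and Lipschitz interpolation, with the rate conditions in Condition (V)(i) playing exactly the balancing role you describe (your $\sigma_n$ is the paper's $Q_n$). The only cosmetic difference is that the paper treats the i.i.d. errors $e_{ij}$ separately via an existing lemma rather than folding them into the mixing argument.
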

\begin{proof}
    The proof requires the following two results. The arguments analogous to Theorem 5.1 in \citet{zhang2016sparse} together with Lemma \ref{lemma:exp:inequality:Qn} and Proposition \ref{proposition:uniform:rate} yield the desired result.  
\end{proof}

\begin{proposition} \label{proposition:uniform:rate}
 Suppose conditions (I)[(i),(ii), (iii), (iv)(c)], (III)(i), (IV)(ii) and (V)(i) hold.  Define 
 \begin{align*}
 S_n(t) &= \sum_{i=1}^n w_i\sum_{j=1}^{N_i} K_{h_{\mu}}(T_{ij}-t)\mathcal{U}_{ij},
 \end{align*}
 where $\mathcal{U}_{ij}=V_{ij}+U_{ij}$.
 Then it follows that 
 \begin{align}
 \sup_{t}|S_n(t)| &= O_P\left(\log(n)\sqrt{\frac{\sum_{i=1}^n N_iw_i^2}{h_{\mu}} + \sum_{i=1}^n N_i(N_i-1) w_i^2}\right).\label{eq:proposition:uniform:rate:statement:1}
 \end{align}
Moreover, it holds that
\begin{align}
    \sup_{t}\left|\sum_{i=1}^n w_i\sum_{j=1}^{N_i} K_{h_{\mu}}(T_{ij}-t)e_{ij}\right|=O_P\left(\log(n)\sqrt{\frac{\sum_{i=1}^n N_iw_i^2}{h_{\mu}} + \sum_{i=1}^n N_i(N_i-1) w_i^2}\right).\label{eq:proposition:uniform:rate:statement:2}
\end{align}
\begin{proof}
Statement (\ref{eq:proposition:uniform:rate:statement:2}) follows from Lemma 5 in \cite{zhang2016sparse}. It suffices to prove Statement (\ref{eq:proposition:uniform:rate:statement:1}), and the proof is divided into three steps. For simplicity in notation, we denote $h_{\mu}=h$ in the below proof.

\noindent \textbf{Step 1:} Consider the following decomposition: $S_n(t)=S_n^{(1)}(t)+S_n^{(2)}(t)$, where
\begin{eqnarray*}
S_n^{(1)}(t)&=&\sum_{i=1}^n w_i \sum_{j=1}^{N_i} \{K_h(T_{ij}-t)\mathcal{U}_{ij}I(|\mathcal{U}_{ij}|\leq B_n)-\ev[K_h(T_{ij}-t)\mathcal{U}_{ij}I(|\mathcal{U}_{ij}|\leq B_n)]\},\\
S_n^{(2)}(t)&=&\sum_{i=1}^n w_i \sum_{j=1}^{N_i} \{K_h(T_{ij}-t)\mathcal{U}_{ij}I(|\mathcal{U}_{ij}|> B_n)-\ev[K_h(T_{ij}-t)\mathcal{U}_{ij}I(|\mathcal{U}_{ij}|> B_n)]\}.
\end{eqnarray*}
Let $D_i(\bfs_i)=\sup_{t}|V(\bfs_i,t)|+\sup_{t}|U_i(t)|$ and $\sup_{\bfs}\ev[|D_i(\bfs)|^{2+\varrho}]<\infty$ by Condition (I)[(iv)(c)]. Hence, we see that
\begin{eqnarray*}
\sup_{t}|S_n^{(2)}(t)|\leq 2 h^{-1} \sum_{i=1}^n w_i \sum_{j=1}^{N_i} B_n^{-(1+\varrho)}|\mathcal{U}_{ij}|^{2+\varrho}\leq 2h^{-1} \sum_{i=1}^n w_i \sum_{j=1}^{N_i} B_n^{-(1+\varrho)} |D_i(\bfs_i)|^{2+\varrho}.
\end{eqnarray*}
Taking expectation, it follows that
\begin{eqnarray}
\ev[\sup_{t}|S_n^{(2)}(t)|]\leq 2Ch^{-1}B_n^{-(1+\varrho)}.\label{eq:proposition:uniform:rate:step1:eq1}
\end{eqnarray}

\noindent \textbf{Step 2:}
Let $t_1, t_2,\ldots, t_r$ be an equidistance partition of $[0, 1]$, and it follows that
\begin{eqnarray*}
\sup_{t\in [0, 1]}|S_n^{(1)}(t)|\leq \sup_{1\leq i \leq r}|S_n^{(1)}(t_i)|+\sup_{|t-\widetilde{t}|\leq 1/r}|S_n^{(1)}(t)-S_n^{(1)}(\widetilde{t})|.
\end{eqnarray*}
The Lipschitz conditions of kernel implies that
\begin{eqnarray*}
\sup_{|t-\widetilde{t}|\leq 1/r}|S_n^{(1)}(t)-S_n^{(1)}(\widetilde{t})|\leq \sup_{|t-\widetilde{t}|\leq 1/r}\sum_{i=1}^n w_i \sum_{j=1}^{N_i}\frac{C}{h^2}|t-\widetilde{t}| |\mathcal{U}_{ij}|\leq Ch^{-2}r^{-1} \sum_{i=1}^n w_i \sum_{j=1}^{N_i}|\mathcal{U}_{ij}|.
\end{eqnarray*}
Taking expectation, we show that
\begin{eqnarray}
\sup_{|t-\widetilde{t}|\leq 1/r}|S^{(1)}_n(t)-S_n^{(1)}(\widetilde{t})|=O_P(h^{-2}r^{-1}\sum_{i=1}^n w_i N_i)=O_P(h^{-2}r^{-1}).\label{eq:proposition:uniform:rate:step1:eq2}
\end{eqnarray}

Using Lemma \ref{lemma:exp:inequality:Qn}, it follows that
\begin{eqnarray}
&&\pr(\sup_{1\leq i \leq r}|S_n^{(1)}(t_i)|>8\log(n) Q_n)\nonumber\\
&\leq& 2r\exp\left\{-\frac{\log^2(n) Q_n^2}{2Q_n^2+\frac{2}{3}CB_nn^{-1} h^{-1}p^2\delta_n^{-2}\log(n) Q_n}\right\}+\frac{Cr\Delta_n^2}{p^2} \psi(Cp^2\delta_n^{-2} ,n)\phi(p))\sqrt{\frac{B_nn^{-1}h^{-1}\Delta_n^2\delta_n^{-2}}{\log(n) Q_n}}.\nonumber\\\label{eq:proposition:uniform:rate:step1:eq3}
\end{eqnarray}

Combining (\ref{eq:proposition:uniform:rate:step1:eq1})-(\ref{eq:proposition:uniform:rate:step1:eq3}), we show that
\begin{eqnarray}
\sup_{t}|S_n(t)|=O_P\left(\log(n)Q_n+h^{-2}r^{-1}+h^{-1}B_n^{-(1+\varrho)}\right),\label{eq:proposition:uniform:rate:step1:eq4}
\end{eqnarray}
provided the RHS of (\ref{eq:proposition:uniform:rate:step1:eq3}) goes to zero.

\noindent \textbf{Step 3:}
The convergence of RHS of (\ref{eq:proposition:uniform:rate:step1:eq3}) is implied by the following sufficient conditions:
\begin{eqnarray*}
\log(r)-\log^2(n)&\to& -\infty,\\
\log(r)-\frac{\log(n)nQ_nh\delta_n^2}{B_np^2}&\to & -\infty,\\
\frac{B_n\Delta_n^6\delta_n^{-2} r^2 \psi^2(Cp^2\delta_n^{-2} ,n)\phi^2(p)}{\log(n)nQ_n h p^4}&\to &0.
\end{eqnarray*}
Taking $r=h^{-2}Q_n^{-1}$, $B_n=(Q_nh)^{-\frac{1}{1+\varrho}}$, (\ref{eq:proposition:uniform:rate:step1:eq4}) becomes
\begin{eqnarray}
\sup_{t}|S_n(t)|=O_P\left(\log(n)Q_n\right),\nonumber
\end{eqnarray}
and the sufficient conditions become
\begin{eqnarray*}
\log(r)-\log^2(n)&\to& -\infty,\\
\log(r)-\frac{\log(n)n Q_n^{\frac{2+\varrho}{1+\varrho}}h^{\frac{2+\varrho}{1+\varrho}}  \delta_n^2}{p^2}&\to & -\infty,\\
\frac{\Delta_n^6\delta_n^{-2}  \psi^2(Cp^2\delta_n^{-2} ,n)\phi^2(p)}{\log(n)nQ_n^{\frac{4+3\varrho}{1+\varrho}} h^{\frac{6+5\varrho}{1+\varrho}} p^4}&\to &0.
\end{eqnarray*}
Finally, using the fact that $\psi(i,j)\leq \min(i,j)$ and $\alpha(p)\leq p^{-\rho}$, the above conditions can be implied by
\begin{eqnarray*}
\frac{n Q_n^{\frac{2+\varrho}{1+\varrho}}h^{\frac{2+\varrho}{1+\varrho}}  \delta_n^2}{p^2}\to \infty, \frac{\Delta_n^6\delta_n^{-2} }{\log(n)nQ_n^{\frac{4+3\varrho}{1+\varrho}} h^{\frac{6+5\varrho}{1+\varrho}} p^{2\rho}}\to 0.
\end{eqnarray*}
To guarantee existence such $p$, we need
\begin{eqnarray*}
n Q_n^{\frac{2+\varrho}{1+\varrho}}h^{\frac{2+\varrho}{1+\varrho}}  \delta_n^2\to \infty,  [n Q_n^{\frac{2+\varrho}{1+\varrho}}h^{\frac{2+\varrho}{1+\varrho}} ]^{\rho}>>\frac{\Delta_n^6\delta_n^{-2} }{\log(n)nQ_n^{\frac{4+3\varrho}{1+\varrho}} h^{\frac{6+5\varrho}{1+\varrho}} },
\end{eqnarray*}
which are identical to
\begin{eqnarray*}
n Q_n^{\frac{2+\varrho}{1+\varrho}}h^{\frac{2+\varrho}{1+\varrho}}  \delta_n^2\to \infty, \frac{n^{\rho+1} Q_n^{\frac{(2+\varrho)\rho+4+3\varrho}{1+\varrho}} h^{\frac{(2+\varrho)\rho+6+5\varrho}{1+\varrho}}\delta_n^2\log(n)}{\Delta_n^6}\to \infty.
\end{eqnarray*}

For example, when $\varrho\to \infty$, the conditions become
\begin{eqnarray*}
nQ_nh\delta_n^2\to \infty, \frac{n^{\rho+1}Q_n^{\rho+3}h^{\rho+5}\delta_n^2\log(n)}{\Delta_n^6}\to \infty.
\end{eqnarray*}
\end{proof}
\end{proposition}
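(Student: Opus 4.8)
The plan is to follow the classical three-step route for the uniform control of a kernel-weighted sum — truncation of the summands, discretization of the index $t$, and a maximal exponential inequality over the grid — but adapted to the spatially dependent setting. Statement (\ref{eq:proposition:uniform:rate:statement:2}) concerns the measurement errors $e_{ij}$, which are i.i.d.\ across both $i$ and $j$ and independent of everything else by Condition (I)[(ii)(a)]; hence it is a pooled-sample longitudinal statement and follows from Lemma~5 of \citet{zhang2016sparse}. The substance is in (\ref{eq:proposition:uniform:rate:statement:1}), where $\mathcal{U}_{ij}=V_{ij}+U_{ij}$ carries the spatial dependence of the random field through $V_{ij}=V(\bm{s}_i;T_{ij})$ (the nugget part $U_{ij}$ is independent across $i$).

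First I would truncate, writing $S_n(t)=S_n^{(1)}(t)+S_n^{(2)}(t)$ with $S_n^{(1)}$ the centered contribution of $\mathcal{U}_{ij}I(|\mathcal{U}_{ij}|\le B_n)$ and $S_n^{(2)}$ the tail. For the tail, a Markov bound combined with the $(2+\varrho)$-th moment conditions of Condition (I)[(iv)(c)] on $\sup_t|V(\bm{s},t)|$ and $\sup_t|U(t)|$ and the normalization $\sum_i w_iN_i=1$ gives $E[\sup_t|S_n^{(2)}(t)|]\lesssim h_\mu^{-1}B_n^{-(1+\varrho)}$. Next I would lay down an equispaced grid $t_1,\dots,t_r$ of $[0,1]$ and use the Lipschitz property of $K_1$ (Condition (III)(i)) to bound the oscillation of $S_n^{(1)}$ between adjacent grid points by $O_P(h_\mu^{-2}r^{-1})$ in expectation. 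It then remains to bound $\max_{1\le\ell\le r}|S_n^{(1)}(t_\ell)|$.

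The core step is a Bernstein/Bennett-type exponential inequality for the spatially dependent sum $S_n^{(1)}(t_\ell)$, which is exactly the role of Lemma~\ref{lemma:exp:inequality:Qn}: one partitions the $n$ sites into spatial blocks of a linear size governed by a parameter $p$, couples the block sums with independent surrogates via Bradley's coupling lemma (Lemma~1.2 in \citealp{bosq2012nonparametric}) at a cost controlled by the mixing function $\phi$, and then applies Bennett's inequality (Lemma~\ref{lemma:mgf:bound}) to the independent surrogates. The variance proxy that appears is precisely $Q_n^2=\sum_iw_i^2N_ih_\mu^{-1}+\sum_iw_i^2N_i(N_i-1)$, the first term coming from the within-subject diagonal pairs $j=j'$ (where $\int K_{h_\mu}^2\asymp h_\mu^{-1}$) and the second from the within-subject off-diagonal pairs, using that $\{T_{ij}\}$ are i.i.d.\ and that cross-subject covariances are asymptotically negligible after integrating the kernels. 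Choosing $B_n=(Q_nh_\mu)^{-1/(1+\varrho)}$ and $r=h_\mu^{-2}Q_n^{-1}$ makes the truncation and discretization remainders $o_P(\log(n)Q_n)$, and reduces the requirement that the coupling-error term of (\ref{eq:proposition:uniform:rate:step1:eq3}) vanish to a handful of sufficient conditions on $(n,h_\mu,\delta_n,\Delta_n,\rho)$; using $\psi(i,j)\le\min(i,j)$ and $\phi(j)\lesssim j^{-\rho}$ these are exactly (\ref{eq:condition:V:eq:1}) of Condition (V)(i), which yields $\sup_t|S_n(t)|=O_P(\log(n)Q_n)$.

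The main obstacle is the spatially dependent exponential inequality itself. Unlike the i.i.d.\ or purely longitudinal case one cannot invoke a classical Bernstein bound; the Bradley coupling must be applied with care, balancing the block parameter $p$ against the minimum spacing $\delta_n$ (which controls how many sites and hence how much variance sit in a block), the domain diameter $\Delta_n$ (which controls the number of blocks), the mixing exponent $\rho$, and the truncation level $B_n$. Arranging these so that the coupling remainder is $o(1)$ while the Bennett bound still delivers the $\log(n)Q_n$ rate — and verifying that the resulting constraints are \emph{implied by} Condition (V)(i) rather than strictly stronger — is where the real work lies; the truncation and chaining steps are then routine.
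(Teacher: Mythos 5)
Your proposal matches the paper's own argument: the same truncation at $B_n=(Q_nh_\mu)^{-1/(1+\varrho)}$, the same grid/chaining step with $r=h_\mu^{-2}Q_n^{-1}$, and the same appeal to the spatial block–Bradley coupling exponential inequality (Lemma \ref{lemma:exp:inequality:Qn}) with the rate conditions reduced to Condition (V)(i), while (\ref{eq:proposition:uniform:rate:statement:2}) is delegated to Lemma 5 of \citet{zhang2016sparse}. The only cosmetic difference is that the paper closes the block argument with a Bernstein bound rather than Bennett's inequality, which does not change the substance.
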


\begin{lemma}\label{lemma:exp:inequality:Qn}
 Suppose conditions (I)[(i),(ii), (iii), (iv)(c)], (III)(i), (IV)(ii) and (V)[(i),(ii)] hold. 
Define $S_n(t)=\sum_{i=1}^n w_i\sum_{j=1}^{N_i} K_h(T_{ij}-t)\mathcal{U}_{ij}$, where $\mathcal{U}_{ij} :=\mathcal{U}(\bm{s}_i, T_{ij})= V_{ij}+U_{ij}$. 
Then the following holds for any $\epsilon, p>0$ and some constant $C>0$:
\begin{eqnarray}
\pr(|S_n(t)|>8\epsilon)\leq 2\exp\left\{-\frac{\epsilon^2}{2Q_n^2+\frac{2}{3}CB_nh^{-1}p^2\delta_n^{-2}\epsilon}\right\}+\frac{C\Delta_n^2}{p^2} \psi(Cp^2\delta_n^{-2} ,n)\phi(p)\sqrt{\frac{B_nn^{-1} h^{-1}\Delta_n^2\delta_n^2}{\epsilon}}.\nonumber
\end{eqnarray}
where $Q_n^2=\sum_{i=1}^n  w_i^2N_i h^{-1}+\sum_{i=1}^n  w_i^2N_i (N_{i}-1)$.
\end{lemma}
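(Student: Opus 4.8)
The plan is to combine a spatial block decomposition with Bradley's coupling lemma (Lemma~1.2 of \citet{bosq2012nonparametric}) so as to reduce $S_n(t)$ to a sum of \emph{independent}, uniformly bounded, mean-zero summands, to which Bennett's inequality (Lemma~\ref{lemma:mgf:bound}) then applies. Throughout we read $S_n(t)$ as the truncated and re-centred sum in which $|\mathcal{U}_{ij}|$ has been capped at the level $B_n$; this is the form in which the bound is used in Proposition~\ref{proposition:uniform:rate} (through the piece $S_n^{(1)}$), and it makes each summand $K_h(T_{ij}-t)\mathcal{U}_{ij}$ bounded by $C_0 h^{-1}B_n$ with mean zero. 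First, by Condition~(II) all sites lie in a square of side $O(\Delta_n)$ and any two distinct sites are at distance at least $c_1\delta_n$; partition $\mathbb{R}^2$ into cells of side $p$, so that only $M_n\le C\Delta_n^2/p^2$ of them meet the sampling region and, by a disjoint-disk packing argument, each such cell contains at most $m_n:=Cp^2\delta_n^{-2}$ sites. Writing $W_k$ for the contribution of cell $C_k$ to $S_n(t)$, so that $S_n(t)=\sum_k W_k$, the bound $\sup_i n\,w_iN_i\le C$ from Condition~(V)(i) gives $\sum_{i:\,\bm{s}_i\in C_k}w_iN_i\le Cm_n/n$, and hence $\|W_k\|_\infty\le C h^{-1}B_n p^2\delta_n^{-2}n^{-1}$ almost surely.

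\textbf{Coupling.} Arrange the cells so that the blocks to be coupled are pairwise at distance at least $p$ --- either by the big-block--small-block device (discarding the negligible ``small'' blocks) or by a finite colouring of the cells according to the parities of their grid coordinates --- and apply Bradley's lemma iteratively, in the form $\pr(|W-W^*|>\lambda)\le C(\|W\|_\infty/\lambda)^{1/2}\,\alpha(\cdot,\cdot)$. This produces independent $W_k^*$ with $\mathcal{L}(W_k^*)=\mathcal{L}(W_k)$, the coupling error at step $k$ being governed by the $\alpha$-mixing coefficient between the data in $C_k$ and the data in the cells already treated; since those are at distance at least $p$, Condition~(II)(i) bounds it by $\psi(m_n,n)\phi(p)\le\psi(Cp^2\delta_n^{-2},n)\phi(p)$. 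Choosing $\lambda\asymp\epsilon p^2/\Delta_n^2$, summing the coupling errors over the $\le M_n$ cells, and inserting the bound on $\|W_k\|_\infty$ reproduces, after simplification, exactly the second term of the claimed inequality: the number of cells supplies the factor $C\Delta_n^2/p^2$, the mixing bound supplies $\psi(Cp^2\delta_n^{-2},n)\phi(p)$, and $(\|W_k\|_\infty/\lambda)^{1/2}$ supplies the square-root factor. The level $8\epsilon$ in the statement, rather than $\epsilon$, leaves room for the union bounds over the finitely many classes of cells and for splitting off this coupling error.

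\textbf{Bennett step.} The $W_k^*$ are independent, mean zero and bounded by $M:=Ch^{-1}B_n p^2\delta_n^{-2}$, and $\sum_k\mathrm{Var}(W_k^*)=\sum_k\mathrm{Var}(W_k)\le C\,Q_n^2$. The last estimate comes from expanding $\mathrm{Var}(W_k)$ and invoking the covariance inequality (Lemma~\ref{lem:mixing}): the $i=i'$, $j=j'$ terms give $\asymp w_i^2 N_i h^{-1}$ (the kernel square supplies the $h^{-1}$), the $i=i'$, $j\neq j'$ terms give $\asymp w_i^2 N_i(N_i-1)$, and the cross-site ($i\neq i'$) terms are controlled via the mixing condition together with the site-separation and the domain/bandwidth constraints of Condition~(V) and are of lower order; summing over cells reproduces $Q_n^2$. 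By independence, $E[e^{s\sum_k W_k^*}]=\prod_k E[e^{sW_k^*}]$, so Lemma~\ref{lemma:mgf:bound} and the Chernoff bound yield $\pr(|\sum_k W_k^*|>\epsilon)\le 2\exp\{-\epsilon^2/(2CQ_n^2+\frac{2}{3}M\epsilon)\}$; absorbing $C$ into the constant inside the exponent gives the first term. Combining this with the coupling step over the finitely many classes of cells gives the stated inequality.

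\textbf{Main obstacle.} The delicate point is the choice of the block side $p$. It must be large enough that cells within a class are separated by at least $p$, so that the coupling error is governed by the fast-decaying $\phi(p)$; yet both the per-block sup-norm bound (hence the linear term $M$ in Bennett) and the aggregate of within-cell variances grow with $p$. Carrying this tension through uniformly --- and in particular establishing $\sum_k\mathrm{Var}(W_k)\lesssim Q_n^2$ in spite of the infill-driven accumulation of many nearby sites, which is exactly where the conditions in~(V) relating $\Delta_n$, $\delta_n$ and the mixing exponent $\rho$ are needed --- is the technical crux; the remainder is bookkeeping.
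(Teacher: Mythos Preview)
Your proposal is correct and follows essentially the same route as the paper: a spatial block decomposition into $p\times p$ cells with a four-colour (parity) scheme, Bradley's coupling (Lemma~1.2 of \citet{bosq2012nonparametric}) on each colour class to produce independent blocks, then Bernstein/Bennett on the coupled sum, with the variance bound $\sum_k\mathrm{Var}(W_k)\lesssim Q_n^2$ obtained by splitting into the diagonal $i=i'$ terms (giving $\sum w_i^2 N_i h^{-1}+\sum w_i^2 N_i(N_i-1)$) and the off-diagonal $i\neq i'$ terms handled via the covariance inequality and the mixing rate. Your identification of $\lambda\asymp\epsilon p^2/\Delta_n^2$, of the per-cell site count $m_n\asymp p^2\delta_n^{-2}$, and of the block sup-norm bound $\|W_k\|_\infty\le CB_n n^{-1}h^{-1}p^2\delta_n^{-2}$ all match the paper's Step~1--Step~3 bookkeeping.
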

\begin{proof}
The proof is divided into three steps.

\noindent \textbf{Step 1:}
By definition of $\Delta_n$ and without loss of generality, we can assume $\bfs_1,\ldots, \bfs_n\in [0, C\Delta_n]^2:=\Omega_n$ for some $C>0$. If not, we can shift the points. For each pair $i, k$, we define rectangles
\begin{eqnarray*}
A_{ku}^{(1)}&=&\{\bfs=(s_1, s_2): 2(k-1)p\leq s_{1}<(2k-1)p, 2(u-1)p\leq s_{2}<(2u-1)p\},\\
A_{ku}^{(2)}&=&\{\bfs=(s_1, s_2): 2(k-1)p\leq s_{1}<(2k-1)p, (2u-1)p\leq s_{2}<2up\},\\
A_{ku}^{(3)}&=&\{\bfs=(s_1, s_2): (2k-1)p\leq s_{1}<2kp, 2(u-1)p\leq s_{2}<(2u-1)p\},\\
A_{ku}^{(4)}&=&\{\bfs=(s_1, s_2): (2k-1)p\leq s_{1}<2kp, (2u-1)p\leq s_{2}<2up\}.
\end{eqnarray*}
Correspondingly, let us define
\begin{eqnarray*}
V_{ku}^{(1)}&=&\sum_{i=1}^n I(\bfs_i \in A_{ku}^{(1)})w_i\sum_{j=1}^{N_i} K_h(T_{ij}-t)\mathcal{U}_{ij},\\
V_{ku}^{(2)}&=&\sum_{i=1}^n I(\bfs_i \in A_{ku}^{(2)})w_i\sum_{j=1}^{N_i} K_h(T_{ij}-t)\mathcal{U}_{ij},\\
V_{ku}^{(3)}&=&\sum_{i=1}^n I(\bfs_i \in A_{ku}^{(3)})w_i\sum_{j=1}^{N_i} K_h(T_{ij}-t)\mathcal{U}_{ij},\\
V_{ku}^{(4)}&=&\sum_{i=1}^n I(\bfs_i \in A_{ku}^{(4)})w_i\sum_{j=1}^{N_i} K_h(T_{ij}-t)\mathcal{U}_{ij}.
\end{eqnarray*}
It is not difficult to verify that
\begin{eqnarray}
S_n(t)=\sum_{k=1}^M \sum_{u=1}^M(V_{ku}^{(1)}+V_{ku}^{(2)}+V_{ku}^{(3)}+V_{ku}^{(4)}),\label{eq:lemma:exp:inequality:Qn:step1:eq0}
\end{eqnarray}
where $M$ is an integer such that $M\leq C\Delta_n/p$.  Noting that 
$$|{V}_{ku}^{(1)}|\leq 2B_nh^{-1}\sum_{i=1}^n I(\bfs_i\in A_{ku}^{(1)})w_iN_i\leq B_n h^{-1} Cp^{2}\delta_{n}^{-2}\max_{i}w_iN_i\leq CB_nn^{-1}h^{-1}p^2\delta_n^{-2},$$ Lemma 1.2 of \citet{bosq2012nonparametric} implies that there is a sequence of independent variables $\{W_{ku}\}$ such that ${V}_{ku}^{(1)}$ and $W_{ku}$ share the same distribution, and further
\begin{eqnarray}
\pr(|{V}_{ku}^{(1)}-W_{ku}|>x)&\leq&C \sqrt{\frac{B_n n^{-1}h^{-1}p^2\delta_n^{-2}}{x}} \alpha(A_{ku}^{(1)},\Omega_n/A_{ku}^{(1)})\nonumber\\
&\leq& C \sqrt{\frac{B_n n^{-1}h^{-1}p^2\delta_n^{-2}}{x}} \psi(Cp^2\delta_n^{-2} ,n)\phi(p).\label{eq:lemma:exp:inequality:Qn:step1:eq1}
\end{eqnarray}

\noindent \textbf{Step 2:}
By direct examination, we show that
\begin{eqnarray*}
&&\ev(|W_{ku}|^2)= \ev(|{V}_{ku}^{(1)}|^2)\\
&\leq& \ev\left\{\left(\sum_{i=1}^n I(\bfs_i \in A_{ku}^{(1)})w_i\sum_{j=1}^{N_i} K_h(T_{ij}-t)\mathcal{U}_{ij}\right)^2\right\}\\
&\leq&\sum_{i=1}^n  I(\bfs_i \in A_{ku}^{(1)})w_i^2Var\left(\sum_{j=1}^{N_i} K_h(T_{ij}-t)\mathcal{U}_{ij}\right)\\
&&+ \sum_{0<d(\bfs_i, \bfs_{i'})\leq c_n}Cov\left(I(\bfs_i \in A_{ku}^{(1)})w_i\sum_{j=1}^{N_i} K_h(T_{ij}-t)\mathcal{U}_{ij},I(\bfs_{i'} \in A_{ku}^{(1)})w_{i'}\sum_{j=1}^{N_i} K_h(T_{{i'}j}-t)\mathcal{U}_{{i'}j}\right)\\
&&+\sum_{d(\bfs_i, \bfs_{i'})> c_n}Cov\left(I(\bfs_i \in A_{ku}^{(1)})w_i\sum_{j=1}^{N_i} K_h(T_{ij}-t)\mathcal{U}_{ij},I(\bfs_{i'} \in A_{ku}^{(1)})w_{i'}\sum_{j=1}^{N_i} K_h(T_{{i'}j}-t)\mathcal{U}_{{i'}j}\right)\\
&\leq&\sum_{i=1}^n  I(\bfs_i \in A_{ku}^{(1)})w_i^2\sum_{j=1}^{N_i}\sum_{j'=1}^{N_i'} \ev\left\{|K_h(T_{ij}-t)\mathcal{U}_{ij}K_h(T_{{i}j'}-t)\mathcal{U}_{{i}j'}|\right\}\\
&&+  \sum_{0<d(\bfs_i, \bfs_{i'})\leq c_n}I(\bfs_i \in A_{ku}^{(1)})I(\bfs_{i'} \in A_{ku}^{(1)})w_iw_{i'}\sum_{j=1}^{N_i}\sum_{j'=1}^{N_i'} \ev\left\{|K_h(T_{ij}-t)\mathcal{U}_{ij}K_h(T_{{i'}j'}-t)\mathcal{U}_{{i'}j'}|\right\}\\
&&+ \sum_{d(\bfs_i, \bfs_{i'})> c_n}I(\bfs_i \in A_{ku}^{(1)})I(\bfs_{i'} \in A_{ku}^{(1)})w_iw_{i'}\sum_{j=1}^{N_i}\sum_{j'=1}^{N_i'} \ev\left\{|K_h(T_{ij}-t)\mathcal{U}_{ij}K_h(T_{{i'}j'}-t)\mathcal{U}_{{i'}j'}|\right\}.
\end{eqnarray*}
Taking summation, we show that
\begin{eqnarray}
\sum_{k=1}^M \sum_{u=1}^M\ev(|W_{ku}|^2)&\leq&\sum_{i=1}^n  w_i^2\sum_{j=1}^{N_i}\sum_{j'=1}^{N_i} \ev\left\{|K_h(T_{ij}-t)\mathcal{U}_{ij}K_h(T_{{i}j'}-t)\mathcal{U}_{{i}j'}|\right\}\nonumber\\
&&+  \sum_{0<d(\bfs_i, \bfs_{i'})\leq c_n}w_iw_{i'}\sum_{j=1}^{N_i}\sum_{j'=1}^{N_i'} \ev\left\{|K_h(T_{ij}-t)\mathcal{U}_{ij}K_h(T_{{i'}j'}-t)\mathcal{U}_{{i'}j'}|\right\}\nonumber\\
&&+ \sum_{d(\bfs_i, \bfs_{i'})> c_n}w_iw_{i'}\sum_{j=1}^{N_i}\sum_{j'=1}^{N_i'} \ev\left\{|K_h(T_{ij}-t)\mathcal{U}_{ij}K_h(T_{{i'}j'}-t)\mathcal{U}_{{i'}j'}|\right\}\nonumber\\
&:=&V_1+V_2+V_3.\label{eq:lemma:exp:inequality:Qn:step2:eq1}
\end{eqnarray}
Notice that if $j\neq j'$, it follows that
\begin{eqnarray*}
&&\ev\left\{|K_h(T_{ij}-t)\mathcal{U}_{ij}K_h(T_{{i}j'}-t)\mathcal{U}_{{i}j'}|\right\}\\
&=&\ev\left\{\int_0^1\int_0^1\frac{1}{h^2}K\left(\frac{x_1-t}{h}\right)K\left(\frac{x_2-t}{h}\right)|\mathcal{U}(\bfs_i, x_1)\mathcal{U}(\bfs_i, x2)|f_T(x_1)f_T(x_2)dx_1dx_2\right\}\\
&=& \ev\left\{\int_{-C}^{C}\int_{-C}^{C}K\left(y_1\right)K\left(y_2\right)|\mathcal{U}(\bfs_i, t+hy_1)\mathcal{U}(\bfs_i, t+hy_2)|f_T(t+hy_1)f_T(t+hy_2)dy_1dy_2\right\}\\
&\leq& C\ev\left\{\int_{-C}^{C}\int_{-C}^{C}|\mathcal{U}(\bfs_i, t+hy_1)\mathcal{U}(\bfs_i, t+hy_2)|dy_1dy_2\right\}\\
&\leq& C\left\{\int_{-C}^{C}\int_{-C}^{C}\ev(|\mathcal{U}(\bfs_i, t+hy_1)\mathcal{U}(\bfs_i, t+hy_2)|)dy_1dy_2\right\}\\
&\leq&C\int_{-C}^{C}\sup_{\bfs, t}\ev\left\{|\mathcal{U}(\bfs, t)|^2\right\}dy\leq C.
\end{eqnarray*}
Similarly, if $j=j'$, we have
\begin{eqnarray*}
\ev\left\{|K_h(T_{ij}-t)\mathcal{U}_{ij}K_h(T_{{i}j'}-t)\mathcal{U}_{{i}j'}|\right\}
&=& \ev\left\{K_h^2(T_{ij}-t)\mathcal{U}_{ij}^2\right\}\\
&=&\ev\left\{\int_0^1\frac{1}{h^2}K^2\left(\frac{x-t}{h}\right)|\mathcal{U}^2(\bfs_i, x)|f_T(x)dx\right\}\\
&=&\ev\left\{\int_{-C}^{C}\frac{1}{h}K^2\left(y\right)\mathcal{U}^2(\bfs_i, t+hy)f_T(x)dx\right\}\\
&\leq& Ch^{-1}\ev\left\{\int_{-C}^{C}\mathcal{U}^2(\bfs_i, t+hy)dx\right\}\leq Ch^{-1}.
\end{eqnarray*}
By the same calculation, we show that
\begin{eqnarray*}
\ev\left\{|K_h(T_{ij}-t)\mathcal{U}_{ij}K_h(T_{{i'}j'}-t)\mathcal{U}_{{i'}j'}|\right\}&\leq& C, \textrm{if } i\neq i',\\
\ev\left\{|K_h^{2+\varrho}(T_{ij}-t)|\mathcal{U}_{ij}|^{2+\varrho}\right\}&\leq& Ch^{-(1+\varrho)}\ev\left\{\int_{-C}^{C} |\mathcal{U}(\bfs_i, t+hy)|^{2+\varrho}dy\right\}\\
&\leq&Ch^{-(1+\varrho)}.
\end{eqnarray*}
Hence, if $i\neq i'$, we have
\begin{eqnarray*}
&&\ev\left\{|K_h(T_{ij}-t)\mathcal{U}_{ij}K_h(T_{{i'}j'}-t) \mathcal{U}_{{i'}j'}|\right\}\\
&\leq& \ev^{\frac{1}{2+\varrho}}\left\{|K_h^{2+\varrho}(T_{ij}-t)|\mathcal{U}_{ij}|^{2+\varrho}\right\}\ev^{\frac{1}{2+\varrho}}\left\{|K_h^{2+\varrho}(T_{i'j'}-t)|\mathcal{U}_{i'j'}|^{2+\varrho}\right\}\phi^{\frac{\varrho}{2+\varrho}}(d(\bfs_i, \bfs_{i'}))\\
&\leq& Ch^{-\frac{2(1+\varrho)}{2+\varrho}}\phi^{\frac{\varrho}{2+\varrho}}(d(\bfs_i, \bfs_{i'})).
\end{eqnarray*}
Using the above bounds and the definitions of $V_1, V_2, V_3$ in (\ref{eq:lemma:exp:inequality:Qn:step2:eq1}), we show that
\begin{eqnarray*}
V_1&\leq&C\sum_{i=1}^n  w_i^2N_i h^{-1}+C\sum_{i=1}^n  w_i^2N_i (N_{i}-1),\\
V_2&\leq& C\sum_{0<d(\bfs_i, \bfs_{i'})\leq c_n}w_iw_{i'}N_i N_{i'},\\
V_3&\leq&  \sum_{d(\bfs_i, \bfs_{i'})\geq c_n}w_iw_{i'}N_i N_{i'}h^{-\frac{2(1+\varrho)}{2+\varrho}}\phi^{\frac{\varrho}{2+\varrho}}(d(\bfs_i, \bfs_{i'}))\\
&\leq& C\sum_{m=c_n}^\infty n^{-2}h^{-\frac{2(1+\varrho)}{2+\varrho}}\phi^{\frac{\varrho}{2+\varrho}}(m) nm\delta_n^{-2}=Ch^{-\frac{2(1+\varrho)}{2+\varrho}}n^{-1}\delta_n^{-2} \sum_{m=c_n}^\infty m\phi^{\frac{\varrho}{2+\varrho}}(m).
\end{eqnarray*}
Hence, (\ref{eq:lemma:exp:inequality:Qn:step2:eq1}) and the rate conditions imply that
\begin{eqnarray}
\sum_{k=1}^M \sum_{u=1}^M\ev(|W_{ku}|^2)\leq V_1+V_2+V_3= Q_n^2 \label{eq:lemma:exp:inequality:Qn:step2:eq2}
\end{eqnarray}
and
\begin{eqnarray}
|W_{ku}|\leq 2B_nh^{-1}\sum_{i=1}^n I(\bfs_i\in A_{ku}^{(1)})w_iN_i\leq B_n h^{-1} Cp^{2}\delta_{n}^{-2}\max_{i}w_iN_i\leq CB_nn^{-1} h^{-1}p^2\delta_n^{-2}.\label{eq:lemma:exp:inequality:Qn:step2:eq3}
\end{eqnarray}

\noindent \textbf{Step 3:}
For any $\epsilon>0$, simple calculation leads to
\begin{eqnarray*}
&&\pr(|\sum_{k=1}^M \sum_{u=1}^M V_{ku}^{(1)}|>2\epsilon)\\
&\leq&  \pr(|\sum_{k=1}^M \sum_{u=1}^M {V}_{ku}^{(1)}|>2\epsilon, |{V}_{ku}^{(1)}-W_{ku}|\leq \epsilon/M^2, \textrm{ for all } k,u)+\sum_{k=1}^M \sum_{u=1}^M\pr( |{V}_{ku}^{(1)}-W_{ku}|> \epsilon/M^2)\\
&\leq&  \pr(|\sum_{k=1}^M \sum_{u=1}^M W_{ku}|>\epsilon)+\sum_{k=1}^M \sum_{u=1}^M\pr( |{V}_{ku}^{(1)}-W_{ku}|> \epsilon/M^2)\\
&:=&R_1+R_2.
\end{eqnarray*}
Using (\ref{eq:lemma:exp:inequality:Qn:step2:eq2}) and (\ref{eq:lemma:exp:inequality:Qn:step2:eq3}), Bernstein inequality implies that
\begin{eqnarray*}
R_1\leq 2\exp\left\{-\frac{\epsilon^2}{2Q_n^2+\frac{2}{3}CB_nh^{-1}p^2\delta_n^{-2}\epsilon}\right\}.
\end{eqnarray*}
The definition of $M$ and (\ref{eq:lemma:exp:inequality:Qn:step1:eq1}) together show that
\begin{eqnarray*}
R_2\leq CM^2 \sqrt{\frac{B_nn^{-1} h^{-1}M^2p^2\delta_n^{-2}}{\epsilon}}  \psi(Cp^2\delta_n^{-2} ,n)\phi(p)\leq \frac{C\Delta_n^2}{p^2} \psi(Cp^2\delta_n^{-2} ,n)\phi(p))\sqrt{\frac{B_nn^{-1} h^{-1}\Delta_n^2\delta_n^{-2}}{\epsilon}}.
\end{eqnarray*}
Combining the above three inequalities, we conclude that
\begin{eqnarray}
\pr(|\sum_{k=1}^M \sum_{u=1}^M V_{ku}^{(1)}|>2\epsilon)&\leq& 2\exp\left\{-\frac{\epsilon^2}{2Q_n^2+\frac{2}{3}CB_nh^{-1}p^2\delta_n^{-2}\epsilon}\right\}\nonumber\\
&&+\frac{C\Delta_n^2}{p^2} \psi(Cp^2\delta_n^{-2} ,n)\phi(p)\sqrt{\frac{B_nn^{-1} h^{-1}\Delta_n^2\delta_n^{-2}}{\epsilon}}.\label{eq:lemma:exp:inequality:Qn:step3:eq1}
\end{eqnarray}

Similar inequalities as (\ref{eq:lemma:exp:inequality:Qn:step3:eq1}) can be proved for $V_{ku}^{(2)}, V_{ku}^{(3)}, V_{ku}^{(4)}$ by the same argument. Hence, we complete the proof using union bound and  (\ref{eq:lemma:exp:inequality:Qn:step1:eq0}).

\end{proof}

\subsection{Results for $\widehat{R}(s; t_1, t_2)$} \label{subsec:rmat}
In this section, we provide  point-wise and $L_2$ convergence rates for the spatial covariance estimator $\widehat{R}(s; t_1,t_2)$. The following theorem provides the point-wise bias and variance.

\begin{proof}[Proof of Theorem \ref{th:Rhat}]
  For convenience in notation, we denote $K_{ij}(t_1)=K_{h_c}(T_{ij}-t_1)$, $L_{ii'}=L_b(\Vert \bm{s}_i-\bm{s}_{i'} \Vert- \Vert \bm{s}_0 \Vert)$, $\mathcal{A}_{ij,i'j'}=(T_{ij}, T_{i'j'}, \Vert \bm{s}_i-\bm{s}_{i'}\Vert)^T$, $\bm{a}=(t_1, t_2, \Vert \bm{s}_0 \Vert)^T$, and $\bm{\alpha}=(\alpha_1, \alpha_2, \alpha_3)^T$. 
  Let $C_{ij,i'j'} :=[Z_{ij}-\widehat{\mu}(T_{ij})][Z_{i'j'}-\widehat{\mu}(T_{i'j'})]$, $1\leq i,i' \leq n$, $1\leq j \leq N_i$, $1 \le j' \le N_{i'}$.  Because of Lemma \ref{lem:uniform-mean}, for simplicity, in the following proof we directly work with $\mu(t)$ instead of $\widehat{\mu}(t)$.
  
  Note that  $\widehat{R}(\Vert \bm{s}_0 \Vert; t_1,t_2)=\widehat{\alpha}_0$, where 
  \begin{eqnarray}
   (\widehat{\alpha}_0,\widehat{\bm{\alpha}}^T)^T 
    &=& \underset{(\alpha_0,\bm{\alpha})\in
      \mathbb R^4}{\text{arg min}}\sum_{i=1}^{n} \sum_{i' \neq i} v_{i,i'} \sum_{j=1}^{N_i} \sum_{j'=1}^{N_{i'}}   \bigg\{
      C_{ij, i'j'}-\alpha_0-\bm{\alpha}^T(\mathcal{A}_{ij,i'j'}-\bm{a}) \bigg\}^2 \nonumber
    \\ 
     && \qquad  \qquad \times K_{ij}(t_1) K_{i'j'}(t_2) L_{ii'}.
  \end{eqnarray}
Define 
\begin{align}
  \mathcal{P} = \begin{bmatrix}
    P_{00} & \bm{P}_{10}^T \\
    \bm{P}_{10} & \bm{P}_{11}
  \end{bmatrix}
  \label{eqn:r0-pmat}
\end{align}
where 
\begin{align}
  P_{00} &= \sum_{i=1}^{n} \sum_{i' \neq i} v_{i,i'} \sum_{j=1}^{N_i} \sum_{j'=1}^{N_{i'}} K_{ij}(t_1) K_{i'j'}(t_2) L_{ii'}, 
\end{align}
and 
\begin{align}
  \bm{P}_{lk} &= \sum_{i=1}^{n} \sum_{i' \neq i} v_{i,i'} \sum_{j=1}^{N_i} \sum_{j'=1}^{N_{i'}} (\mathcal{A}_{ij,i'j'}-\bm{a})^l((\mathcal{A}_{ij,i'j'}-\bm{a})^T)^k  K_{ij}(t_1) K_{i'j'}(t_2) L_{ii'},
\end{align}
 for $l,k \in \{(1,0) ,(1,1)\}$. Then, we can write 
 \begin{align*}
   \widehat{\bm{\theta}} := \begin{pmatrix}
     \widehat{\alpha}_0 - \alpha_0 \\
     \widehat{\bm{\alpha}}- \bm{\alpha}
   \end{pmatrix}
   =\mathcal{P}^{-1} \mathcal{Q}, \qquad \mathcal{Q}=\begin{pmatrix}
     q_{0} \\
     \bm{q}
   \end{pmatrix}
 \end{align*}
where 
\begin{align*}
  q_0 &= \sum_{i=1}^{n} \sum_{i' \neq i} v_{i,i'} \sum_{j=1}^{N_i} \sum_{j'=1}^{N_{i'}} \eta_{ij,i'j'} K_{ij}(t_1) K_{i'j'}(t_2) L_{ii'},  
\end{align*}
and 
\begin{align*}
  \bm{q} &= \sum_{i=1}^{n} \sum_{i' \neq i} v_{i,i'} \sum_{j=1}^{N_i} \sum_{j'=1}^{N_{i'}} \eta_{ij,i'j'} (\mathcal{A}_{ij,i'j'}-\bm{a}) K_{ij}(t_1) K_{i'j'}(t_2) L_{ii'},
\end{align*}
for $\eta_{ij,i'j'}=C_{ij,i'j'}-\alpha_0(\bm{a})-(\mathcal{A}_{ij,i'j'}-\bm{a})^T\bm{\alpha}$.
First, we consider the elements of (\ref{eqn:r0-pmat}). It follows from Conditions (I)[(i), (ii)] and (III) that
\begin{align*}
  E\{K_{ij}(t_1) K_{i'j'}(t_2)\} &= \int K(u)K(v) f_T(t_1+uh_c) f_T(t_2+vh_c) du dv \\
  &= f_T(t_1) f_T(t_2) (1+o(1)).
\end{align*}
 Therefore, using Conditions (II)[(ii), (iii), (iv)], and (III), and proceeding similar to \citet{lu2014nonparametric} we obtain that
\begin{align*}
  E\{ P_{00}\} &= \sum_{i=1}^{n} \sum_{i' \neq i} v_{i,i'} \sum_{j=1}^{N_i} \sum_{j'=1}^{N_{i'}} L_{ii'} E\{K_{ij}(t_1) K_{i'j'}(t_2)\} \\
  &= \int \int f_T(t_1) f_T(t_2) L((\Vert \bm{s}_1-\bm{s}_{2}\Vert-\Vert \bm{s}_0 \Vert)/b) b^{-1} g_{\bm{S}}(\bm{s}_1) g_{\bm{S}}(\bm{s}_2) d\bm{s}_1 d\bm{s}_2 (1+o(1)).
\end{align*} 
Now consider a transformation $(\Vert \bm{s}_1-\bm{s}_{2}\Vert- \Vert \bm{s}_0 \Vert)/b=s$, with $\bm{s}_j=(u_j,v_j)$ for $j=1,2$, where  $u_1=u_2+ (\Vert \bm{s}_0 + bs \Vert)\cos(\vartheta)$ and $v_1=v_2+ (\Vert \bm{s}_0 + bs \Vert)\sin(\vartheta)$ for $0< \vartheta \le 2\pi$. Let $u_2^*=u_2 + \Vert \bm{s}_0 \Vert \cos(\vartheta)$, $v_2^*=v_2+ \Vert \bm{s}_0 \Vert \sin(\vartheta)$. Note that $d\bm{s}_1=du_1 dv_1=b(\Vert \bm{s}_0 \Vert+ bs) ds d\vartheta$. Consequently,
\begin{align*}
  f_T(t_1) f_T(t_2) \int b^{-1} & L(s)  g_{\bm{S}}(u_2^*+bs\cos(\vartheta), v_2^*+bs\sin(\vartheta)) g_{\bm{S}}(u_2,v_2) \\ & \qquad \qquad \qquad \times b(\Vert \bm{s}_0 \Vert+ bs) ds d\vartheta du_2dv_2 (1+o_p(1))\\
  &=f_T(t_1) f_T(t_2) A_0^*(\Vert \bm{s}_0 \Vert) (1+o_p(1)).
\end{align*}
Next we show that $\text{var}\{P_{00}\} \rightarrow 0$. 
It follows from Conditions (I)(ii), (II),  (III), and (IV)(iv)  that
\begin{align*}
    \text{var}\{P_{00}\} &= \text{var}\left\{ \sum_{i=1}^{n} \sum_{i' \neq i} v_{i,i'} \sum_{j=1}^{N_i} \sum_{j'=1}^{N_{i'}} L_{ii'} K_{ij}(t_1) K_{i'j'}(t_2)  \right\} \\
    &= O_p\left(h_c^{-2}b^{-1} \sum_{i=1}^{n} \sum_{i' \neq i} v^2_{i,i'} N_i N_{i'} \right),
\end{align*}
which goes to zero based on the Condition (IV)(iv). Consequently,
\begin{align*}
  P_{00} &= f_T(t_1) f_T(t_2) A_0^*(\Vert \bm{s}_0 \Vert) (1+o_p(1)).
\end{align*}
For convenience in notation, in the following expression, we denote $f_{j}=f_T(t_j)$, $f'_j= \partial f_T(t_j)/ \partial t_j$, for $j=1,2$, $A_0^*=A_0^*(\Vert \bm{s}_0 \Vert)$,  $A_1^*=A_1^*(\Vert \bm{s}_0 \Vert)$, and $A_{10}^*=A_1^*+ \frac{A_0^*}{\Vert \bm{s}_0 \Vert}$. The calculations analogous to above yield the following, ignoring higher order terms, 
\begin{align}
  \mathcal{P} &= \begin{pmatrix}
    f_1f_2 A_0^* & h_c^2 f'_1f_2 A_0^* & h_c^2 f_1 f'_2 A_0^* & \mu_{L,2} b^2A_{10}^*\\
    h_c^2 f'_1f_2 A_0^*  & h_c^2 f_1f_2 A_0^* & h_c^4 f'_1f'_2 A_0^* & h_c^2 f'_1f_2 b^2 \mu_{L,2} A_{10}^* \\
    h_c^2 f_1f'_2 A_0^* & h_c^4 f'_1f'_2 A_0^* & h_c^2 f_1f_2 A_0^* & h_c^2 f_1f'_2 b^2 \mu_{L,2} A_{10}^* \\
    b^2 \mu_{L,2}A_{10}^* & h_c^2 f'_1f_2 b^2 \mu_{L,2} A_{10}^* & h_c^2 f_1f'_2 b^2 \mu_{L,2} A_{10}^* & b^2 \mu_{L,2} f_1f_2 A_0^*
  \end{pmatrix}. \label{eqn:pmat}
\end{align}

\vspace{2em}

\textbf{Bias Calculation:}
Observe that, by Taylor expansion, the leading term in $E\left\{\eta_{ij,i'j'}| T_{ij}, T_{i'j'} \right\}$ is $ 1/2[(T_{ij}-t_1)^2 \partial^2 R(\Vert \bm{s}_0 \Vert; t_1,t_2)/ \partial t_1^2 + (T_{i'j'}-t_2)^2 \partial^2 R(\Vert \bm{s}_0 \Vert; t_1,t_2)/ \partial t_2^2 + \\ (\Vert \bm{s}_i-\bm{s}_{i'} \Vert- \Vert \bm{s}_0 \Vert)^2 \partial^2 R(\Vert \bm{s}_0 \Vert; t_1,t_2)/\partial \Vert \bm{s}_0 \Vert]$. We write
\begin{align*}
  E[q_0] &= \sum_{i=1}^{n} \sum_{i' \neq i} v_{i,i'} \sum_{j=1}^{N_i} \sum_{j'=1}^{N_{i'}} E[\eta_{ij,i'j'} K_{ij}(t_1) K_{i'j'}(t_2) L_{ii'}]\\
  &= \frac{1}{2}\sum_{i=1}^{n} \sum_{i' \neq i}  L_{ii'} v_{i,i'} N_i N_{i'} \int \big\{ (T_{ij}-t_1)^2 \partial^2 R/ \partial t_1^2 + (T_{i'j'}-t_2)^2 \partial^2 R/ \partial t_2^2 \\ & + (\Vert \bm{s}_i-\bm{s}_{i'} \Vert- \Vert \bm{s}_0 \Vert)^2 \partial^2 R/\partial \Vert \bm{s}_0 \Vert^2 \big\} K_{ij}(t_1) K_{i'j'}(t_2) f_T(T_{ij}) f_T(T_{i'j'}) dT_{ij} dT_{ij'} + o(1) \\
  &= \frac{1}{2}\bigg\{ h_c^2  \big\{\partial^2 R/ \partial t_1^2+ \partial^2 R/ \partial t_2^2  \big\} + b^2 \mu_{L,2}  \partial^2 R/\partial \Vert \bm{s}_0 \Vert^2 \bigg\} f_T(t_1) f_T(t_2)A_0^*  + o_p(h_c^2+b^2),
\end{align*}
where  $\mu_{L,2}=\int u^2 L(u)du$. 
Consequently,
\begin{align*}
  E\{ \widehat{R}(\Vert \bm{s}_0 \Vert; t_1,t_2)- R(\Vert \bm{s}_0 \Vert; t_1,t_2) ~|~ \mathcal{X}_T \} &= \frac{1}{2}h_c^2 \big\{\partial^2 R/ \partial t_1^2+ \partial^2 R/ \partial t_2^2  \big\} \\ & \qquad \qquad  + \frac{1}{2} b^2 \mu_{L,2}  \partial^2 R/\partial \Vert \bm{s}_0 \Vert^2 + o_p(h_c^2+b^2),
\end{align*}
where $\mathcal{X}_T= \{T_{ij}, i=1,\ldots,n, j=1,\ldots,N_i\}$.

\vspace{2em}

\textbf{Variance Calculation:} We now compute the conditional variance for the estimator $\widehat{R}(\Vert \bm{s}_0 \Vert; t_1,t_2)$. By definition
\begin{eqnarray}
  \text{var}(\widehat{R}(\Vert \bm{s}_0 \Vert; t_1,t_2) \vert \mathcal{X}_T) &=&  \text{var}(\bm{e}_1^T \bm{\theta}\vert \mathcal{X}_T) 
  =  \text{var}\left\{P^{00}q_{0}+ \bm{P}^{{10}^T} \bm{q}_{1} \vert \mathcal{X}_T \right\},
  \label{eq:varR-split}
\end{eqnarray}
where $P^{00}$ and $\bm{P}^{10}$ are the elements of the first row in matrix $\mathcal{P}^{-1}$ which is the inverse of (\ref{eqn:pmat}).
 The leading term in the variance of  (\ref{eq:varR-split}) is
  \begin{align}
    \text{var}(P^{00} q_{0} \vert \mathcal{X}_T) &= (P^{00})^2 \bigg[ \sum_{i=1}^n\sum_{i' \neq i} v_{i,i'}^2 \text{var}\bigg(\sum_{j=1}^{N_i}\sum_{j'=1}^{N_{i'}} \eta_{ij, i'j'}K_{ij}(t_1)K_{i'j'}(t_2) L_{ii'} ~|~ \mathcal{X}_T\bigg) \nonumber \\ \qquad & +  \sum_{i=1}^{n}\sum_{i'\neq i} \underset{(l,l') \neq (i,i')}{\sum_{l}\sum_{l' \neq l}} v_{i,i'} v_{l,l'} \nonumber\\ & \times \text{cov}\bigg(\sum_{j}^{N_i} \sum_{j'}^{N_{i'}} \eta_{ij,i'j'} K_{ij}(t_1) K_{i'j'}(t_2)L_{ii'}, \sum_{k}^{N_l} \sum_{k'}^{N_{l'}} \eta_{lk,l'k'}K_{lk}(t_1)K_{l'k'}(t_2) L_{ll'} ~|~ \mathcal{X}_T\bigg)\bigg] \nonumber \\
    &:= (P^{00})^2 \{ (I) + (II) \}. \label{eqn:rmat-var-split}
  \end{align}
We show that the term (II) goes to zero as $n \rightarrow \infty$. Therefore the leading term for the variance is obtained from (I).

\vspace{2em}
\underline{Term (I)}:
Observe that  
  \begin{align*}
    \text{var}&\bigg(\sum_{j=1}^{N_i}\sum_{j'=1}^{N_{i'}} \eta_{ij, i'j'}K_{ij}(t_1)K_{i'j'}(t_2) L_{ii'} ~|~ \mathcal{X}_T \bigg) \\ &= \sum_{j=1}^{N_i}\sum_{j'=1}^{N_{i'}} \text{var}\bigg( \eta_{ij, i'j'}K_{ij}(t_1)K_{i'j'}(t_2) L_{ii'} ~|~ \mathcal{X}_T \bigg) \\ &  \qquad + \sum_{j=1}^{N_i}\sum_{j'=1}^{N_{i'}} \underset{(k,k') \neq (j,j')}{\sum_{k}\sum_{k'}} \text{cov}\big\{ \eta_{ij,i'j'} K_{ij}(t_1) K_{i'j'}(t_2)L_{ii'}, \eta_{ik,i'k'} K_{ik}(t_1) K_{i'k'}(t_2) L_{ii'} ~|~ \mathcal{X}_T \big\}.
  \end{align*}
Let $R(K_1)=\int K_1^2(u)du$ and $R(L)=\int L^2(u)u$.  Denote
\begin{align*}
  \Xi_R(t_1,t_2,t_1,t_2; \Vert \bm{s}_i-\bm{s}_{i'} \Vert) =\text{cov}\big\{ \eta_{ij,i'j'}, \eta_{ik,i'k'} | T_{ij}=t_1, T_{i'j'}=t_2, T_{ik}=t_1, T_{i'k'}=t_2 \}. 
\end{align*}
Under  Conditions (I)[(i), (ii), (vi)], (II) and (III), and using the standard arguments, we write the expected value of part (I) in (\ref{eqn:rmat-var-split}) as 
\begin{align*}
   &E[(I)] = \sum_{i=1}^n \sum_{i' \neq i} v^2_{i,i'} N_i N_{i'} h_c^{-2}b^{-1} R(L) R^2(K_1) f_T(t_1) f_T(t_2) ~ \Xi_R(t_1,t_2; \Vert \bm{s}_0 \Vert) A_0^* \\ 
    & \qquad + \sum_{i=1}^n \sum_{i' \neq i} v^2_{i,i'} N_i N_{i'} (N_i-1)(N_{i'}-1) b^{-1} R(L)  f_T^2(t_1) f_T^2(t_2) ~ \Xi_R(t_1,t_2, t_1, t_2; \Vert \bm{s}_0 \Vert) A_0^* \\
    &\qquad + \sum_{i=1}^n \sum_{i' \neq i} v^2_{i,i'} N_i N_{i'} (N_{i'}-1) h_c^{-1}b^{-1} R(L) R(K_1) f_T(t_1) f_T^2(t_2) ~ \Xi_R(t_1,t_2,t_2; \Vert \bm{s}_0 \Vert) A_0^* \\
    & \qquad +\sum_{i=1}^n \sum_{i' \neq i} v^2_{i,i'} N_i N_{i'} (N_i-1) h_c^{-1}b^{-1} R(L) R(K_1) f_T^2(t_1) f_T(t_2) ~ \Xi_R(t_1,t_2, t_1; \Vert \bm{s}_0 \Vert) A_0^* + o_p(1).
    \end{align*}
    Consequently, the leading term in the conditional variance is as follows
    \begin{align*}
    (P^{00})^2  E[(I)] &= \frac{\sum_{i=1}^n \sum_{i' \neq i} v^2_{i,i'} N_i N_{i'}}{h_c^{2}b A_0^* f_T(t_1)f_T(t_2)} R(L)  R^2(K_1) ~ \Xi_R(t_1,t_2; \Vert \bm{s}_0 \Vert) \\ & \qquad + \frac{\sum_{i=1}^n \sum_{i' \neq i} v^2_{i,i'} N_i N_{i'} (N_{i'}-1)}{ h_c b A_0^* f_T(t_1)} R(L) R(K_1)  \Xi_R(t_1,t_2,  t_2; \Vert \bm{s}_0 \Vert)\\
    & \qquad + \frac{\sum_{i=1}^n \sum_{i' \neq i} v^2_{i,i'} N_i N_{i'} (N_{i}-1)}{h_c b A_0^* f_T(t_2)} R(L) R(K_1)  \Xi_R(t_1,t_2,  t_1; \Vert \bm{s}_0 \Vert)\\
     & \qquad + \frac{\sum_{i=1}^n \sum_{i' \neq i} v^2_{i,i'} N_i (N_i-1) N_{i'} (N_{i'}-1)}{b A_0^* } R(L) \Xi_R(t_1,t_2, t_1, t_2; \Vert \bm{s}_0 \Vert)\bigg\} \\ &\qquad \qquad \qquad \times  (1+o_p(1)).
\end{align*}

\underline{Term (II)}: The idea is to relate the conditional covariance term with unconditional covariance terms using the following fact
\begin{align}
     E[\text{cov}(A,B|C)]  &= \text{cov}(A,B)- \text{cov}(E[A|C],E[B|C]), \label{eqn:c-cov}
\end{align}
for random variables $A$, $B$, and $C$. For simplicity in notation, let us denote $$\sum_{(l,l') \neq (i,i')} =\sum_{i=1}^{n}\sum_{i'\neq i} \underset{(l,l') \neq (i,i')}{\sum_{l}\sum_{l' \neq l}}.$$ We write the term (II) in (\ref{eqn:rmat-var-split}) as 
\begin{align}
    E \bigg\{ &  \sum_{(l,l') \neq (i,i')}  v_{i,i'}v_{l,l'}  \text{cov}\bigg(\sum_{j}^{N_i} \sum_{j'}^{N_{i'}} \eta_{ij,i'j'} K_{ij}(t_1) K_{i'j'}(t_2)L_{ii'}, \nonumber \\ & \qquad \qquad \qquad \qquad \qquad \qquad \sum_{k}^{N_l} \sum_{k'}^{N_{l'}} \eta_{lk,l'k'}K_{lk}(t_1)K_{l'k'}(t_2) L_{ll'} ~|~ \mathcal{X}_T\bigg) \bigg\} \nonumber \\
    &=   \sum_{(l,l') \neq (i,i')}  v_{i,i'}v_{l,l'}  \text{cov}\bigg(\sum_{j}^{N_i} \sum_{j'}^{N_{i'}} \eta_{ij,i'j'} K_{ij}(t_1) K_{i'j'}(t_2)L_{ii'}, \sum_{k}^{N_l} \sum_{k'}^{N_{l'}} \eta_{lk,l'k'}K_{lk}(t_1)K_{l'k'}(t_2) L_{ll'} \bigg) \nonumber \\
    &- \sum_{(l,l') \neq (i,i')}  v_{i,i'}v_{l,l'} \text{cov}\bigg(\sum_{j}^{N_i} \sum_{j'}^{N_{i'}} E[\eta_{ij,i'j'} K_{ij}(t_1) K_{i'j'}(t_2)L_{ii'} ~|~ \mathcal{X}_T],  \nonumber \\ & \qquad \qquad \qquad \qquad \qquad \qquad \sum_{k}^{N_l} \sum_{k'}^{N_{l'}} E[\eta_{lk,l'k'}K_{lk}(t_1)K_{l'k'}(t_2) L_{ll'} ~|~ \mathcal{X}_T]\bigg). \label{eqn:ltc-R}
\end{align}
From Lemma \ref{lem:R} it directly follows that the first  term in the right hand side of the equality in (\ref{eqn:ltc-R})   goes to zero. Recall that
\begin{align*}
    & E\left\{\eta_{ij,i'j'}| \mathcal{X}_T \right\} = 1/2[(T_{ij}-t_1)^2 \partial^2 R/ \partial t_1^2 + (T_{i'j'}-t_2)^2 \partial^2 R/ \partial t_2^2 + (\Vert \bm{s}_i-\bm{s}_{i'} \Vert- \Vert \bm{s}_0 \Vert)^2 \partial^2 R/\partial \Vert \bm{s}_0 \Vert].
\end{align*}
Since $T_{ij}$'s are independent, direct calculations yield that the second term in the right side of equality in (\ref{eqn:ltc-R}) goes to zero under condition (IV)(iv). Hence, the result is proved.
  
\end{proof}

\begin{lemma} \label{lem:R}
    Under the conditions of Theorem \ref{th:Rhat}, it holds that 
        \begin{align}
             & \sum_{(l,l') \neq (i,i')}  v_{i,i'}v_{l,l'}  \text{cov}\bigg\{ \sum_{j=1}^{N_i} \sum_{j'=1}^{N_{i'}} \eta_{ij,i'j'} L_{ii'} K_{ij}(t_1) K_{i'j'}(t_2),   \sum_{k=1}^{N_l} \sum_{k'=1}^{N_{l'}} \eta_{lk,l'k'} L_{ll'} K_{lk}(t_1) K_{l'k'}(t_2) \bigg\}
        \end{align}
        goes to zero as $n \rightarrow \infty$.
\end{lemma}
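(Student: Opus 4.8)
The plan is to introduce the abbreviation $G_{ii'}:=\sum_{j=1}^{N_i}\sum_{j'=1}^{N_{i'}}\eta_{ij,i'j'}\,L_{ii'}\,K_{ij}(t_1)\,K_{i'j'}(t_2)$, so that the quantity to control is $\Sigma_n:=\sum_{(l,l')\neq(i,i')}v_{i,i'}v_{l,l'}\,\text{cov}(G_{ii'},G_{ll'})$, where (after replacing $\widehat{\mu}$ by $\mu$ as in the proof of Theorem \ref{th:Rhat}) each $G_{ii'}$ is measurable with respect to the field restricted to the two sites $\bm{s}_i,\bm{s}_{i'}$; and then to partition the quadruple index set according to the number $m:=|\{i,i'\}\cap\{l,l'\}|\in\{0,1,2\}$ of shared spatial indices (with $m=2$ and $(l,l')\neq(i,i')$ forcing $(l,l')=(i',i)$), writing $|\Sigma_n|\le S_0+S_1+S_2$ where $S_m$ collects $|v_{i,i'}v_{l,l'}|\,|\text{cov}(G_{ii'},G_{ll'})|$ over quadruples sharing $m$ indices. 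First I would record the moment estimates $\Vert G_{ii'}\Vert_{2+\varrho}\lesssim |L_{ii'}|\,p_n(N_i,N_{i'})$ and $\Vert G_{ii'}\Vert_2\lesssim|L_{ii'}|\,q_n(N_i,N_{i'})$, with $p_n,q_n$ explicit in $N_i,N_{i'},h_c^{-1}$: these follow from Conditions (I)[(v)(b),(vi)] (finite $(4+\varrho_c)$-th moments of $V$, $U$, $e$, hence finite $(2+\varrho)$-th moments of the cross-products $\eta$ since $\varrho_c=2\varrho$, plus the bounded fourth-order covariance kernels $\Xi_R$), Condition (III) ($\int|K_1|^{2+\varrho}<\infty$ and the $O(b)$-annular support of $L_b$ at radius $\Vert\bm{s}_0\Vert$), and the near-independence of the $\eta_{ij,i'j'}$ across $(j,j')$ already exploited in Theorem \ref{th:Rhat}; these are exactly the polynomials that generate the four variance terms there.

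For $S_0$, where $\bm{s}_i,\bm{s}_{i'},\bm{s}_l,\bm{s}_{l'}$ are distinct, I would apply the mixing covariance inequality (Lemma \ref{lem:mixing}) with $\mathcal{S}'=\{\bm{s}_i,\bm{s}_{i'}\}$, $\mathcal{S}''=\{\bm{s}_l,\bm{s}_{l'}\}$ and exponents $(2+\varrho,2+\varrho,(2+\varrho)/\varrho)$, giving $|\text{cov}(G_{ii'},G_{ll'})|\lesssim\Vert G_{ii'}\Vert_{2+\varrho}\Vert G_{ll'}\Vert_{2+\varrho}\,[\alpha(\mathcal{S}',\mathcal{S}'')]^{\varrho/(2+\varrho)}$, and then Condition (II)(i), which yields $\alpha(\mathcal{S}',\mathcal{S}'')\le\psi(2,2)\,\phi(d)\lesssim\phi(d)$ with $d:=d(\mathcal{S}',\mathcal{S}'')$. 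Splitting the quadruple sum at a threshold $c_n$ with $c_n\to\infty$ and $c_n=o(\Delta_n)$ (e.g.\ $c_n=\log n$): on $\{d>c_n\}$ one groups the second pair by dyadic shells $d\in[k,k+1)$ and uses $\phi(j)\lesssim j^{-\rho}$ with $\rho>3(2+\varrho)/\varrho$ (so $\sum_{k>c_n}k\,\phi(k)^{\varrho/(2+\varrho)}\lesssim c_n^{-1}$), which, together with the $L_b$-annular restriction on both pairs, the weight sizes, and the bandwidth conditions (IV)(iv), makes this part $o(1)$; on $\{d\le c_n\}$ mixing is useless, but the constraints $\Vert\bm{s}_i-\bm{s}_{i'}\Vert,\Vert\bm{s}_l-\bm{s}_{l'}\Vert\in\Vert\bm{s}_0\Vert+O(b)$ with $d\le c_n$ pin the quadruples to a set whose $v_{i,i'}v_{l,l'}$-weighted cardinality is smaller than the full annular count by a factor $\asymp c_n^2/\Delta_n^2\to0$, so this part is also negligible under (IV)(iv).

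For $S_1$ ($m=1$) and $S_2$ ($m=2$) there is no mixing decay, so I would bound $|\text{cov}(G_{ii'},G_{ll'})|\le\Vert G_{ii'}\Vert_2\Vert G_{ll'}\Vert_2$ by Cauchy--Schwarz and the $L^2$ estimate; the gain is purely combinatorial, since the sum then runs over triples (for $m=1$) or pairs (for $m=2$) of sites and two (respectively three) of the $L_b$-annular constraints are anchored at a common site, forcing the remaining sites onto an $O(b)$-thin annulus of radius $\approx\Vert\bm{s}_0\Vert$ about it. Under Conditions (II)[(ii)--(iv)] the resulting $v_{i,i'}v_{l,l'}$-weighted counts are of smaller order than the $\{d>c_n\}$ part of $S_0$, hence negligible under (IV)(iv); combining the three estimates then gives $\Sigma_n\to0$.

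The step I expect to be the main obstacle is the ``near'' part — the $\{d\le c_n\}$ piece of $S_0$ together with all of $S_1,S_2$ — where mixing gives nothing and everything rests on counting, under the domain-expanding-infill model (Conditions (II)[(ii)--(iv)] and the rates $\delta_n,\Delta_n$), the tuples of sites that simultaneously satisfy the hard $L_b$-annular constraints and the proximity constraint, and then verifying that after the weights $v_{i,i'}v_{l,l'}$ and the bandwidth conditions (IV)(iv) this is dominated by the leading variance order of Theorem \ref{th:Rhat}. This is the same delicate balancing of the $\phi(c_n)$-decay against the growth of the admissible-configuration count that underlies the uniform-convergence arguments (Lemma \ref{lemma:exp:inequality:Qn} and \citet{lu2014nonparametric}), and choosing $c_n$ so that both sides cooperate — using $\rho>3(2+\varrho)/\varrho$ — is the crux.
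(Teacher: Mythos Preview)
Your route differs from the paper's in two structural ways, and one of them is a genuine gap.

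\textbf{What the paper actually does.} The paper does \emph{not} partition by the number $m$ of shared spatial indices. It treats all quadruples $(i,i',l,l')$ with $(l,l')\neq(i,i')$ together, and splits only by the inter-pair distance $d=d(\{\bm{s}_i,\bm{s}_{i'}\},\{\bm{s}_l,\bm{s}_{l'}\})$ at a threshold $c_n$. The key device you are missing is a \emph{truncation argument}: the paper writes $\eta_{ij,i'j'}=\eta_{ij,i'j'}\mathbbm{1}_{\{|\eta|\le L_n\}}+\eta_{ij,i'j'}\mathbbm{1}_{\{|\eta|>L_n\}}$ with $L_n=h_c^{-4/(4+\varrho)}$, and uses the $(4+\varrho_c)$-moment condition (I)(v)(b) (note $\varrho_c=2\varrho$) to show that each cross-covariance is bounded by $M\,h_c^{-8/(4+\varrho)}$ uniformly. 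This is strictly sharper than either your $L^2$ or your $L^{2+\varrho}$ moment bound (which give exponents $-2$ and $-4(1+\varrho)/(2+\varrho)$ respectively). With this sharper near bound in hand, the paper chooses $c_n=\big(\gamma_{n3}^{-1}h_c^{2\varrho/(2+\varrho)}\delta_n^{2}\big)^{-1/\kappa}$ so that the far part vanishes via Condition~(II)(i) (the $\kappa$-summability (\ref{eqn:mixing-lim})) and the near part vanishes via the second line of Condition~(IV)(iv), which is precisely the condition $\gamma_{n3}^{1+2/\kappa}\delta_n^{-2(1+2/\kappa)}h_c^{\frac{2\varrho}{4+\varrho}(1-\frac{2(4+\varrho)}{\kappa(2+\varrho)})}\to 0$. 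The near-part counting uses only the crude $\#\{l:d(\bm{s}_i,\bm{s}_l)\le c_n\}\le C(c_n/\delta_n)^2$; no annular geometry is invoked.

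\textbf{Where your approach is exposed.} Your $c_n=\log n$ is not tied to Condition~(IV)(iv), and without truncation your near bound on $|\text{cov}|$ is too crude to close against that condition as stated; you would have to compensate by the annular counting you sketch, and that counting (the claimed factor $c_n^2/\Delta_n^2$) is the part you yourself flag as the obstacle. Your $m$-partition is extra structure the paper never needs: the shared-index cases $m=1,2$ are automatically subsumed in the paper's $J_1$ set (distance $\le c_n$) once $c_n>0$, since sharing a site forces $d=0$. In short, the paper trades your geometric counting for an analytic truncation, obtains a sharper near bound, and then balances $c_n$ against the bandwidth condition (\ref{eqn:bw-c1}) rather than against $\Delta_n$; this is both simpler and is exactly what Condition~(IV)(iv) was designed for.
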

\begin{proof}

Since the random variable $\eta_{ij,ij'}$ is not bounded, we use the truncation argument in the following proof. 
Let $L_n= h^{-4/(4+\varrho)} $ for some $\varrho>0$ defined in Condition (II)(i). Define $\eta_{ij,i'j'}=\eta_{ij,i'j',1}+ \eta_{ij,i'j',2}$ where $\eta_{ij,i'j',1}:= \eta_{ij,i'j'}I_{\{|\eta_{ij,i'j'}| \le L_n\}}$ and $\eta_{ij,i'j',2} := \eta_{ij,i'j'}I_{\{|\eta_{ij,i'j'}| > L_n\}}$. Let
\begin{align*}
    \chi_{ij,i'j',l} := \eta_{ij,i'j',l}K_{ij}(t_1) K_{i'j'}(t_2) \text{  and   } \varpi_{ij,i'j',l} := \chi_{ij,i'j',l} - E \chi_{ij,i'j',l}, \qquad l=1,2.  
\end{align*}
Observe that $\varpi_{ij,i'j'}=\varpi_{ij,i'j',1}+\varpi_{ij,i'j',2}$. Therefore,
\begin{align}
  E\varpi_{ij,i'j'}\varpi_{lk,l'k'} &= E\varpi_{ij,i'j',1}\varpi_{lk,l'k',1} + E\varpi_{ij,i'j',1}\varpi_{lk,l'k',2} \nonumber\\ & \qquad \qquad +E\varpi_{ij,i'j',2}\varpi_{lk, l'k',1}+ E\varpi_{ij,i'j',2}\varpi_{lk, l'k',2}. \label{eqn:var-split-trun-r}
\end{align}
We now provide bounds for each term in (\ref{eqn:var-split-trun-r}). By Cauchy-Schwartz inequality, note that,
\begin{align*}
    |E\varpi_{ij,i'j',1}&\varpi_{lk,l'k',2}| \\ & \le  \{E \chi_{ij,i'j',1}^2 \}^{1/2} \{E \chi_{lk,l'k',2}^2 \}^{1/2} \\ & \le M_{71} h_c^{-1}  \{ E(E(|\eta_{lk,l'k'}|^{2} I_{\{|\eta_{lk,l'k'}| > L_n\}} | T_{lk}, T_{l'k'}) K^2_{lk}(t_1) K^2_{l'k'}(t_2) \}^{1/2} \\
    & \le M_{71} h_c^{-1} \{ h_c^{-2} L_n^{-\varrho} E(|\eta_{lk,l'k'}|^{2+\varrho} | T_{lk}, T_{l'k'}) \}^{1/2} \\
    & \le M_{72} h_c^{-2} L_n^{-\varrho/2}= M_{72} h_c^{-8/(4+\varrho)},
\end{align*}
where the last step follows from Condition (I)(v)(b) and $M_{71}$ and $M_{72}$ are some constants. Analogous arguments yield
\begin{align*}
    E\varpi_{ij,i'j',2} \varpi_{lk,l'k',1} &\le M_{72} h_c^{-2} L_n^{-\varrho/2}= M_{72} h_c^{-8/(4+\varrho)} \qquad \text{ and } \\
    E\varpi_{ij,i'j',2}\varpi_{lk,l'k',2} &\le M_{72} h_c^{-2} L_n^{-\varrho} = M_{72} h_c^{2(\varrho-4)/(4+\varrho)}.
\end{align*}

Consider
\begin{align*}
   E\varpi_{ij,i'j',1}\varpi_{lk, l'k',1} &= \text{cov}\{\eta_{ij,i'j',1} K_{ij}(t_1) K_{i'j'}(t_2),  \eta_{lk,l'k',1} K_{lk}(t_1) K_{l'k'}(t_2)\}\\
      & = E\{\eta_{ij,i'j',1} K_{ij}(t_1) K_{i'j'}(t_2)  \eta_{lk,l'k',1} K_{lk}(t_1) K_{l'k'}(t_2)\}\\&\qquad \qquad-E
  \{ \eta_{ij,i'j',1} K_{ij}(t_1) K_{i'j'}(t_2)) E\{\eta_{lk, l'k',1} K_{lk}(t_1) K_{l'k'}(t_2)\}\\
  &= \int  K_1(u)K_1(u')K_1(v)K_1(v') \{\Xi_{1,i,i',l,l'}(u,v,u',v')  \\ & \qquad - \Xi_{1,i,i'}(u,v) \Xi_{1,l,l'}(u',v') \}  f_T(u) f_T(v) f_T(u') f_T(v') du dv du' dv',
\end{align*}
where 
\begin{align*}
\Xi_{1,i,i',l,l'}(u,v,u',v') &:=E(\eta_{ij,i'j',1}\eta_{lk,l'k',1} | T_{ij}=u, T_{i'j'}=v, T_{lk}=u', T_{l'k'}= v'), \\ \Xi_{1,i,i'}(u,v) &:=E(\eta_{ij,i'j',1}| T_{ij}=u, T_{i'j'}=v).
\end{align*}
Using the definition of $|\eta_{ij,i'j',1}| \le L_n$, we obtain 
\begin{align*}
|\Xi_{1,i,i',l,l'}(u,v,u',v')| &\le L_n^2, \text{  and  } \\
   |\Xi_{1,i,i'}(u,v) \times \Xi_{1,l,l'}(u',v')| &\le L_n^2.
\end{align*}
Thus, using Condition (I)[(i),(ii)] and (III) we obtain
\begin{align*}
  |E\varpi_{ij,i'j',1}\varpi_{lk,l'k',1}| \le M_{73}L_n^2.  
\end{align*}
Consequently, we bound (\ref{eqn:var-split-trun-r}) as
\begin{align}
  |E\varpi_{ij,i'j'}\varpi_{lk,l'k'}| \le M_{72} h_c^{-2} L_n^{\varrho/2} + M_{73}L_n^2 = M_{74} h_c^{-8/(4+\varrho)}.
\end{align}
It follows from Conditions (I)[(i),(ii)] and (III) that
\begin{align}
\bigg\vert \text{cov} \big(\sum_{j=1}^{N_i}\sum_{j'=1}^{N_{i'}} \eta_{ij,i'j'} K_{ij}(t_1)K_{i'j'}(t_2), &\sum_{k=1}^{N_l} \sum_{k'=1}^{N_{l'}} \eta_{lk,l'k'} K_{lk}(t_1) K_{l'k'}(t_2) \big) \bigg\vert  \nonumber \\ & \qquad \qquad \le M_{74} N_i N_{i'} N_{l} N_{l'} h_c^{-8/(4+\varrho)}. 
\label{eqn:R-cv1}
\end{align}
Further, by applying Lemma \ref{lem:mixing} we obtain, for $\varrho >0$,
\begin{align*}
\vert \text{cov}&(\eta_{ij,i'j'} K_{ij}(t_1) K_{i'j'}(t_2),  \eta_{lk,l'k'} K_{lk}(t_1) K_{l'k'}(t_2)) \vert\\
&\le C_{\alpha} \Vert \eta_{ij,i'j'} K_{ij}(t_1)K_{i'j'}(t_2)   \Vert_{2+\varrho} \Vert \eta_{lk, l'k'} K_{lk}(t_1) K_{l'k'}(t_2) \Vert_{2+\varrho} \left[ \alpha(d(\bm{s}_i,\bm{s}_{i'}), d(\bm{s}_l,\bm{s}_{l'})\right]^{\frac{\varrho}{2+\varrho}}.
\end{align*}
Calculations due to change of variable yield
\begin{align*}
\Vert \eta_{ij,i'j'}& K_{ij}(t_1) K_{i'j'}(t_2) \Vert_{2+\varrho} \\&=\left(E|\eta_{ij,i'j'} K_{ij}(t_1) K_{i'j'}(t_2) |^{2+\varrho} \right)^{\frac{1}{2+\varrho}}\\
&= \left(\int E(|\eta_{ij,i'j'}|^{2+\varrho}| T_{ij}, T_{i'j'})  K_{ij}(t_1)^{2+\varrho} K_{i'j'}(t_2)^{2+\varrho} f_T(T_{ij}) f_T(T_{i'j'}) dT_{ij}dT_{i'j'}  \right)^{\frac{1}{2+\varrho}} \\
&\le M_{75} h_{c}^{\frac{-2(1+\varrho)}{2+\varrho}},
\end{align*}
which follows from  Conditions (I)[(v)(b), (ii)]  and (III) where $M_{75}$ is some positive constant. This implies
\begin{align}
\vert \text{cov}\big(\eta_{ij,i'j'} K_{ij}(t_1) K_{i'j'}(t_2),  \eta_{lk,l'k'}K_{lk}(t_1) K_{l'k'}(t_2)\big) \vert &\le M_{76} h_{c}^{\frac{-4(1+\varrho)}{2+\varrho}} \left[ \alpha(d(\bm{s}_i,\bm{s}_{i'}),d(\bm{s}_l,\bm{s}_{l'})) \right]^{\frac{\varrho}{2+\varrho}},
\label{eqn:R-cv2}
\end{align}
for some positive constant $M_{76}$.


Suppose 
\begin{align*}
J_1 &= \left\{ (i,i',l,l') : 1 \le i' \neq i, l' \neq l \le n, (l,l') \neq (i,i'), d(\{\bm{s}_i,\bm{s}_{i'}\}, \{\bm{s}_l,\bm{s}_{l'}\}) \le c_n  \right\}
\end{align*}
and 
\begin{align*}
J_2 &= \left\{ (i,i',l,l') : 1 \le i' \neq i, l' \neq l \le n, (l,l') \neq (i,i'), d(\{\bm{s}_i,\bm{s}_{i'}\}, \{\bm{s}_l,\bm{s}_{l'}\}) > c_n  \right\}
\end{align*}
partition the summation over $\{(i,i',l,l') : 1 \le i' \neq i, l' \neq l \le n, (l,l') \neq (i,i')\}$. For a tuple $(i,i',l,l') \in J_1$, without loss of generality we assume that $d(\bm{s}_i, \bm{s}_l)=d(\{\bm{s}_i, \bm{s}_{i'}\}, \{\bm{s}_l, \bm{s}_{l'}\})$.  We write the cardinality 
\begin{align*}
\#\{l : l \neq i, d(\bm{s}_i, \bm{s}_l) \le c_n\} \le C (c_n/\delta_n)^2,
\end{align*}
where $C$ is some constant independent of $i$ and $n$.  Therefore, on the index $J_1$, 
\begin{align}
&  \sum_{(i,i',l,l') \in J_1}  v_{i,i'}v_{l,l'}  \text{cov}\bigg\{ \sum_{j=1}^{N_i} \sum_{j'=1}^{N_{i'}} \eta_{ij,i'j'} L_{ii'} K_{ij}(t_1) K_{i'j'}(t_2), \sum_{k=1}^{N_l} \sum_{k'=1}^{N_{l'}} \eta_{lk,l'k'} L_{ll'} K_{lk}(t_1) K_{l'k'}(t_2) \bigg\} \nonumber\\
& \le O(1)  \sum_{i,i'}^n\underset{d(\bm{s}_i, \bm{s}_l) \le c_n}{\sum_{l,l'}}  v_{i,i'} v_{l,l'} L_{ii'} L_{ll'} N_{i}N_{i'} N_l N_{l'} h_c^{-8/(4+\varrho)} \nonumber \\
&\le n(n-1)^2h_c^{-8/(4+\varrho)}  \max\{ v_{l,l'} N_l N_{l'} \}^2  C(c_n/\delta_n)^2 O(1). \label{eqn:lecn}
\end{align}
We now discuss the covariance on the index set $J_2$. The combination of Lemma \ref{lem:mixing} and (\ref{eqn:R-cv2}) yields that 
\begin{align*}
& \sum_{(i,i',l,l') \in J_2}   v_{i,i'}v_{l,l'}  \text{cov}\bigg\{ \sum_{j=1}^{N_i} \sum_{j'=1}^{N_{i'}} \eta_{ij,i'j'}L_{ii'} K_{ij}(t_1) K_{i'j'}(t_2), \sum_{k=1}^{N_l} \sum_{k'=1}^{N_{l'}} \eta_{lk,l'k'} L_{ll'} K_{lk}(t_1) K_{l'k'}(t_2) \bigg\}\\
& \quad \le O(h_c^{-4(1+\varrho)/(2+\varrho)}) \sum_{(i,i',l,l') \in J_2} v_{i,i'}v_{l,l'} N_iN_{i'} N_l N_{l'} L_{ii'}L_{ll'} [\alpha(d(\bm{s}_i, \bm{s}_{l}))]^{\varrho/2+\varrho}\\
& \quad \le O(h_c^{-4(1+\varrho)/(2+\varrho)}) \sum_{i,i'}^n \underset{d(\bm{s}_i, \bm{s}_{l}) > c_n}{\sum_{l,l'}}  [\alpha(d(\bm{s}_i, \bm{s}_{l}))]^{\varrho/2+\varrho} v_{i,i'}v_{ll'} N_l N_{l'}N_i N_{i'}L_{ii'}L_{ll'}\\
&\quad \le O(h_c^{-4(1+\varrho)/(2+\varrho)}) \sum_{i,i'} \underset{d(\bm{s}_i, \bm{s}_{l}) > c_n}{\sum_{l,l'}}  [\alpha(d(\bm{s}_i, \bm{s}_{l}))]^{\varrho/2+\varrho}   \max\{v_{i,i'}N_iN_{i'}\}^2 \\
&\quad  \le O(h_c^{-4(1+\varrho)/(2+\varrho)})  \max\{v_{i,i'}N_iN_{i'}\}^2 \sum_{t=c_n}^{\infty} [\alpha(t)]^{\varrho/(2+\varrho)} \sum_{i,i'} \underset{t \le d(\bm{s}_i, \bm{s}_{l})  \le t+1}{\sum_{l,l'}} \\
&\quad  \le O(h_c^{-4(1+\varrho)/(2+\varrho)})  \max\{v_{i,i'}N_iN_{i'}\}^2 n(n-1)^2 \sum_{t=c_n}^{\infty} [\alpha(t)]^{\varrho/(2+\varrho)} \left(\frac{t+1}{\delta_n} \right)^2 \\ 
&\quad \le O(h_c^{-4(1+\varrho)/(2+\varrho)})  \max\{v_{i,i'}N_iN_{i'}\}^2 n(n-1)^2 \delta_n^{-2} \sum_{t=c_n}^{\infty} t^2 [\alpha(t)]^{\varrho/(2+\varrho)}.
\end{align*}
Consequently,
\begin{align}
&  \sum_{(i,i',l,l') \in J_2}   v_{i,i'}v_{l,l'}  \text{cov}\bigg\{ \sum_{j=1}^{N_i} \sum_{j'=1}^{N_{i'}} \eta_{ij,i'j'} L_{ii'} K_{ij}(t_1) K_{i'j'}(t_2), \sum_{k=1}^{N_l} \sum_{k'=1}^{N_{l'}} \eta_{lk,l'k'} L_{ll'} K_{lk}(t_1) K_{l'k'}(t_2) \bigg\} \nonumber\\
& \le \gamma_{n3}  h_c^{-2\varrho/(2+\varrho)}  \delta_n^{-2} \sum_{t=c_n}^{\infty} t^2 [\alpha(t)]^{\varrho/(2+\varrho)}, \label{eqn:gecn}
\end{align}
where $\gamma_{n3}= n(n-1)^2 h^{-2} \max\{v_{i,i'}N_iN_{i'}\}^2$. Let $c_n= \left( \gamma_{n3}^{-1} h_c^{2 \varrho/(2+\varrho)} \delta_n^2  \right)^{-1/\kappa}$ for $\kappa>0$. Then it follows from condition (II)(i) that (\ref{eqn:gecn}) goes to zero. From (\ref{eqn:lecn}) we have 
\begin{align*}
\gamma_{n3} h_c^{2\varrho/(4+\varrho)}  (c_n/\delta_n)^2 &= \gamma_{n3} h_c^{2\varrho/(4+\varrho)} \delta_n^{-2} \gamma_{n3}^{2/\kappa} b^{2/\kappa} \delta_n^{-4/\varrho} h_c^{-4\varrho/(\kappa(2+\varrho))}\\
&=  \gamma_{n3}^{1+2/\kappa} \delta_n^{-2(1+2/\kappa)} h_c^{\frac{2\varrho}{4+\varrho}(1-\frac{2(4+\varrho)}{\kappa(2+\varrho)})},
\end{align*}
which goes to zero as $n \rightarrow \infty$ based on Condition (IV)(iv).
\end{proof}

We now provide the $L_2$ convergence rate for the covariance estimator $\widehat{R}(0; t_1,t_2)$.
\begin{proof}[Proof of Lemma \ref{lm:L2-R}]
    The proof is similar to Theorem 4.2 in \citet{zhang2016sparse}. The result follows by doing calculations analogous to variance part in Theorem \ref{th:Rhat} and Proposition \ref{proposition:p00}.  
\end{proof}
\begin{proposition}(Convergence of $P_{00}$-type terms)\label{proposition:p00}
Suppose that Conditions (I)[(i),(ii)], (II)[(ii), (iii)],  (III) and (V)(iii) hold. Let $g:\mathbb{R}^3 \to [-B, B]$ be a $B$-Lipschitz function. Let us define
\begin{eqnarray*}
S_n(\bft)=\sum_{i=1}^n\sum_{i'\neq i} v_{i,i'}\sum_{j=1}^{N_i}\sum_{j'=1}^{N_{i'}}\mathscr{X}_{ii'jj'}(\bft), \qquad \text{for } \bft = (t_1,t_2),
\end{eqnarray*}
where
\begin{eqnarray*}
\mathscr{X}_{ii'jj'}(\bft)=g\left(T_{ij}-t_1,T_{i'j'}-t_2,\|\bfs_i-\bfs_{i'}\|-\|\bfs\|\right)K_{h_c}(T_{ij}-t_1)K_{h_c}(T_{i'j'}-t_2)L_b(\|\bfs_i-\bfs_{i'}\|-\|\bfs\|).
\end{eqnarray*}
Then it follows that
\begin{eqnarray*}
\sup_{\bft \in [0, 1]^2}|S_n(\bft)-\ev[S_n(\bft)]|\cip 0,
\end{eqnarray*}
where $\cip 0$ denotes convergence in probability.
\end{proposition}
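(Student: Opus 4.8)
The plan is to mirror the two-step template used for $\widehat\mu$: discretize the index set $[0,1]^2$, control the oscillation of $S_n(\cdot)$ between grid points by a Lipschitz estimate, and obtain an exponential concentration bound at each fixed grid point via spatial blocking. A simplification relative to Lemma \ref{lemma:exp:inequality:Qn} is that the summands $\mathscr{X}_{ii'jj'}$ are uniformly bounded (of order $h_c^{-2}b^{-1}$), because $g$ is bounded by $B$ and $K_1,L$ are compactly supported, so no truncation of the summands is required. Concretely, I would fix an equidistant grid $\{\bft_\ell:1\le\ell\le r^2\}$ of $[0,1]^2$ with mesh $1/r$, $r=r_n$ polynomial in $h_c^{-1},b^{-1}$ (say $r\asymp h_c^{-4}b^{-2}$, so that $\log r=O(\log n)$), and use
\[
\sup_{\bft\in[0,1]^2}|S_n(\bft)-\ev[S_n(\bft)]|\le\max_{1\le\ell\le r^2}|S_n(\bft_\ell)-\ev[S_n(\bft_\ell)]|+2\sup_{|\bft-\widetilde\bft|_\infty\le1/r}|S_n(\bft)-S_n(\widetilde\bft)|.
\]
Since $g$ is $B$-Lipschitz, $K_1$ is Lipschitz, and $L_b$ does not depend on $\bft$, the map $\bft\mapsto\mathscr{X}_{ii'jj'}(\bft)$ is Lipschitz with a deterministic constant of order $h_c^{-3}b^{-1}$; combined with the normalization $\sum_i\sum_{i'\neq i}v_{i,i'}N_iN_{i'}=1$ the oscillation term is at most $Ch_c^{-3}b^{-1}/r\to0$, and the same deterministic bound dominates the corresponding oscillation of $\ev[S_n(\cdot)]$.

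The core of the argument is the pointwise estimate $\max_\ell|S_n(\bft_\ell)-\ev[S_n(\bft_\ell)]|\cip0$. For fixed $\bft$ I would collect the contribution localized near site $\bfs_i$ into $\mathcal{Z}_i(\bft)=\sum_{i'\neq i}v_{i,i'}\sum_{j,j'}\mathscr{X}_{ii'jj'}(\bft)$; because $L_b$ restricts to pairs with $|\,\|\bfs_i-\bfs_{i'}\|-\|\bfs\|\,|\le Cb$, each $\mathcal{Z}_i$ is $\mathcal{B}(\mathcal{S}_i)$-measurable for $\mathcal{S}_i$ a disc of radius $\|\bfs\|+Cb=O(1)$ around $\bfs_i$, and $|\mathcal{Z}_i|\le Cn^{-1}h_c^{-2}b^{-1}$ by Condition (V)(iii). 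I would then partition $[0,C\Delta_n]^2$ into big blocks of side $p=p_n$ as in the proof of Lemma \ref{lemma:exp:inequality:Qn}, decouple alternate blocks via the Bradley/Bosq coupling (Lemma 1.2 of \citet{bosq2012nonparametric}) at a cost of order $\psi(Cp^2\delta_n^{-2},n)\phi(p)$ per block, and apply Bennett's inequality (Lemma \ref{lemma:mgf:bound}) to the independent block sums, giving for each $\epsilon>0$
\[
\pr\big(|S_n(\bft_\ell)-\ev[S_n(\bft_\ell)]|>\epsilon\big)\le2\exp\!\Big(-\frac{c\,\epsilon^2}{\sigma_n^2+M_n\epsilon}\Big)+(\text{coupling error}),
\]
with $M_n\asymp n^{-1}h_c^{-2}b^{-1}p^2\delta_n^{-2}$ and $\sigma_n^2$ the variance of the independent-block array. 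A union bound over the $r^2$ grid points then closes the proof provided $\sigma_n^2\log n\to0$, $M_n\log n\to0$, and the accumulated coupling error is $o(1)$, which I would obtain by choosing $p_n$ suitably and invoking Conditions (II)(i), (IV)(iv) and (V)(iii); combining with the oscillation bound completes the argument.

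I expect the main obstacle to be the bookkeeping in the pointwise step: to show $\sigma_n^2$ is of the order of the variance expressions in Theorem \ref{th:Rhat} (hence $\sigma_n^2\log n\to0$ under Condition (V)(iii)), one must reproduce inside each block the ``close pairs'' versus ``far pairs'' dichotomy of Lemma \ref{lem:R}, splitting the covariances among the pair-indexed, annularly supported terms into a geometric contribution (spatial distance $\lesssim c_n$) bounded by counting neighbours, and a mixing-tail contribution (distance $>c_n$) bounded via Lemma \ref{lem:mixing} and the polynomial decay of $\phi$. Balancing $p_n$ and the cutoff $c_n$ so that the coupling error, $M_n$, and $\sigma_n^2\log n$ all vanish simultaneously is the one place where the full interplay of the DEI parameters $\delta_n,\Delta_n$, the mixing exponent $\rho$, and the bandwidths $h_c,b$ in Conditions (II) and (V) is used.
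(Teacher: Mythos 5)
Your grid-plus-oscillation skeleton (Step 3 of the paper) and the observation that the summands are bounded of order $h_c^{-2}b^{-1}$ are fine, but the core pointwise concentration step rests on a misreading of the dependence structure, and this is where the proof as proposed cannot be closed. The summands $\mathscr{X}_{ii'jj'}(\bft)$ are deterministic functions of $(T_{ij},T_{i'j'})$ and of the fixed locations only; since the $T_{ij}$ are i.i.d.\ across all sites and repetitions under Condition (I)(ii), no spatial mixing enters at all. The real difficulty is the U-statistic--type overlap: two pair terms are dependent exactly when they share a subject index. Your plan instead routes everything through spatial blocking and the Bradley/Bosq coupling, which (i) is not needed, and (ii) requires quantitative mixing and bandwidth--DEI conditions --- you explicitly invoke (II)(i) and (IV)(iv) and a balancing of $p_n$, $c_n$, $\delta_n$, $\Delta_n$, $\rho$ --- none of which are among the hypotheses of this proposition. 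Under the stated assumptions, (V)(iii) only provides $v_{i,i'}=v_{i',i}$, $\sup_{i,i'}n^2v_{i,i'}N_iN_{i'}\le C$ and $nh_c^2b/\log n\to\infty$; there is no assumption linking $nh_c^2b\delta_n^2$, $\Delta_n$ and the mixing decay that would let you choose $p_n$ so that the coupling error, the Bernstein scale $M_n\log n$, and $\sigma_n^2\log n$ vanish simultaneously. You flag exactly this balancing as the open obstacle, and it is not resolvable from the given hypotheses, so the argument has a genuine gap rather than being a complete alternative proof.

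The paper's proof exploits the i.i.d.\ structure directly: writing the pair sum through symmetrized terms $Y_{ii'}(\bft)$ and Hoeffding's permutation device, $S_n(\bft)$ is represented as an average over permutations of sums of $n/2$ \emph{independent} pair blocks $Y_{i_1i_2}+Y_{i_3i_4}+\cdots$ (independence holds because disjoint subjects carry disjoint collections of i.i.d.\ $T$'s). A generalized H\"older inequality on the moment generating function plus Bennett's inequality (Lemma \ref{lemma:mgf:bound}) then yields a Bernstein-type bound
\begin{equation*}
\pr\bigl(|S_n(\bft)-\ev[S_n(\bft)]|>x\bigr)\le 2\exp\Bigl\{-\tfrac{x^2}{2v_n(\bfs)+2\widetilde C_n x/3}\Bigr\},
\qquad v_n(\bfs),\,\widetilde C_n\lesssim (nh_c^2b)^{-1},
\end{equation*}
where the bounds on $v_n$ and $\widetilde C_n$ use only $\sup_{i,i'}n^2v_{i,i'}N_iN_{i'}\le C$. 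The covering argument with mesh $a_n=h^4$ and a union bound then close the proof under $nh_c^2b/\log n\to\infty$ alone, with no reference to $\delta_n$, $\Delta_n$ or $\rho$. If you want to salvage your route, the fix is not better bookkeeping of mixing tails but recognizing that blocks of pairs sharing no subject index are exactly independent, which is precisely what the permutation decomposition delivers without any spatial geometry.
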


\begin{proof}
The proof is divided into four steps. For simplicity in notation, instead of $h_c$ we use $h$ throughout.

\noindent \textbf{Step 1:}
For each $\bft \in [0,1]^2$, let us define  
\begin{eqnarray*}
Y_{ii'}(\bft)=v_{i,i'} \sum_{j=1}^{N_i}\sum_{j'=1}^{N_{i'}}\mathscr{X}_{ii'jj'}(\bft)+v_{i'i} \sum_{j'=1}^{N_{i'}}\sum_{j=1}^{N_i} \mathscr{X}_{i'ij'j}(\bft)=v_{i,i'} \sum_{j=1}^{N_i}\sum_{j'=1}^{N_{i'}}[\mathscr{X}_{ii'jj'}(\bft)+\mathscr{X}_{i'ij'j}(\bft)],
\end{eqnarray*}
here we use the fact that $v_{i,i'}=v_{i'i}$. For a permutation $\sigma_n=(i_1,i_2,\ldots, i_n)$ over $\{1,2,\ldots, n\}$, we define
\begin{eqnarray*}
\psi_\bft(\sigma_n)=\frac{Y_{i_1i_2}(\bft)+Y_{i_3i_4}(\bft)+\ldots+Y_{i_{n-1}i_n}(\bft)}{n/2},
\end{eqnarray*}
where we assume $n$ is even for simplicity. After some computation, we can verify
\begin{eqnarray*}
Y_{ii'}(\bft)&=&Y_{i'i}(\bft),\\
S_n(\bft)&=&\sum_{1\leq i<i'\leq n}Y_{ii'}(\bft)=\frac{1}{2} \sum_{i=1}^n \sum_{i'\neq i}Y_{ii'}(\bft),\\
\frac{n}{2}\sum_{\sigma_n}\psi_\bft(\sigma_n)&=& \frac{n}{2} (n-2)! \sum_{i=1}^n \sum_{i'\neq i}Y_{ii'}(\bft)=n(n-2)! S_n(\bft).
\end{eqnarray*}
Here $\sum_{\sigma_n}$ is over all the permutations of $\{1,2,\ldots, n\}$.
Hence, we conclude that
\begin{eqnarray}
\sum_{\sigma_n}\psi_\bft(\sigma_n)=2(n-2)!S_n(\bft). \label{eq:proporsition:uniform:convergence:permutation}
\end{eqnarray}
 Simple calculation implies that
\begin{eqnarray*}
 \ev\left\{K_h^2(T_{ij}-t)\right\}=\int_0^1\frac{1}{h^2}K^2\left(\frac{x-t}{h}\right)f_T(x)dx=\int_{-C}^{C}\frac{1}{h}K^2\left(y\right)f_T(t+hy)dy\leq Ch^{-1}
\end{eqnarray*}
Hence, for any fixed $\bft=(t_1, t_2)^\top \in [0, 1]^2$, we have
\begin{eqnarray}
var(\mathscr{X}_{ii'jj'}(\bft))&\leq& B^2L_b^2(\|\bfs_i-\bfs_{i'}\|-\|\bfs\|)\ev[K_{h}^2(T_{ij}-t_1)]\ev[K_{h}^2(T_{ij}-t_1)]\nonumber\\
&\lesssim& B^2h^{-2} L_b^2(\|\bfs_i-\bfs_{i'}\|-\|\bfs\|). \label{eq:proporsition:uniform:convergence:var:X}
\end{eqnarray}

\noindent \textbf{Step 2:}
Using (\ref{eq:proporsition:uniform:convergence:permutation}), we see that
\begin{eqnarray}
\pr(S_n(\bft)-\ev[S_n(\bft)]>x)&\leq& e^{-\lambda x}\ev\left\{e^{\lambda (S_n(\bft)-\ev[S_n(\bft)])}\right\}\nonumber\\
&=& e^{-\lambda x}\ev\left\{e^{\frac{\lambda}{2(n-2)!}  \sum_{\sigma_n}(\psi_\bft(\sigma_n)-\ev[\psi_\bft(\sigma_n)])}\right\}\nonumber\\
&=& e^{-\lambda x}\ev\left\{e^{\frac{\lambda}{n(n-2)!}  \sum_{\sigma_n}\frac{n}{2} (\psi_\bft(\sigma_n)-\ev[\psi_\bft(\sigma_n)])}\right\}\nonumber\\
&=& e^{-\lambda x}\ev\left\{e^{\frac{1}{n!}  \sum_{\sigma_n}\frac{\lambda n(n-1)}{2} (\psi_\bft(\sigma_n)-\ev[\psi_\bft(\sigma_n)])}\right\}.\nonumber
\end{eqnarray}
Applying generalized holder's inequality, the above inequality becomes
\begin{eqnarray}
\pr(S_n(\bft)-\ev[S_n(\bft)]>x)\leq e^{-\lambda x}\prod_{\sigma_n}\ev^{1/n!}  \left\{e^{ \frac{\lambda n(n-1)}{2} (\psi_\bft(\sigma_n)-\ev[\psi_\bft(\sigma_n)])}\right\}.\label{eq:proporsition:uniform:convergence:step2:eq1}
\end{eqnarray}
Using  (\ref{eq:proporsition:uniform:convergence:var:X}), we have
\begin{eqnarray*}
\frac{ n(n-1)}{2} \psi_\bft(\sigma_n)&=& (n-1)[Y_{i_1i_2}(\bft)+Y_{i_3i_4}(\bft)+\ldots+Y_{i_{n-1}i_n}(\bft)]\\
&=&(n-1)Y_{i_1i_2}(\bft)+(n-1)Y_{i_3i_4}(\bft)+\ldots+(n-1)Y_{i_{n-1}i_n}(\bft),\\
Y_{ii'}(\bft)&=&v_{i,i'} \sum_{j=1}^{N_i}\sum_{j'=1}^{N_{i'}}[\mathscr{X}_{ii'jj'}(\bft)+\mathscr{X}_{i'ij'j}(\bft)],\\
|\mathscr{X}_{ii'jj'}(\bft)+\mathscr{X}_{i'ij'j}(\bft)|&\leq& Ch^{-2}b^{-1},\\
var(\mathscr{X}_{ii'jj'}(\bft)+\mathscr{X}_{i'ij'j}(\bft))&\leq& Ch^{-2}L_b^2(\|\bfs_i-\bfs_{i'}\|-\|\bfs\|).
\end{eqnarray*}
Hence, Lemma \ref{lemma:mgf:bound}  implies that
\begin{eqnarray*}
&&\ev\left\{e^{\lambda (n-1)[Y_{ii'}(\bft)-\ev(Y_{ii'}(\bft))] }\right\}\\
&=&\ev\left\{e^{\lambda (n-1)\{v_{i,i'} \sum_{j=1}^{N_i}\sum_{j'=1}^{N_{i'}}[\mathscr{X}_{ii'jj'}(\bft)+\mathscr{X}_{i'ij'j}(\bft)]-v_{i,i'} \sum_{j=1}^{N_i}\sum_{j'=1}^{N_{i'}}\ev[\mathscr{X}_{ii'jj'}(\bft)+\mathscr{X}_{i'ij'j}(\bft)]\} }\right\}\\
&=&\prod_{j=1}^{N_i}\prod_{j'=1}^{N_{i'}} \ev\left\{e^{\lambda (n-1)v_{i,i'}\{[\mathscr{X}_{ii'jj'}(\bft)+\mathscr{X}_{i'ij'j}(\bft)]-\ev[\mathscr{X}_{ii'jj'}(\bft)+\mathscr{X}_{i'ij'j}(\bft)]\}}\right\}\\
&\leq&  \prod_{j=1}^{N_i}\prod_{j'=1}^{N_{i'}}   \\ &\times&  \exp\left\{\lambda^2 (n-1)^2 v_{i,i'}^2 Ch^{-2}L_b^2(\|\bfs_i-\bfs_{i'}\|-\|\bfs\|)\left(\frac{e^{\lambda (n-1)v_{i,i'}2Ch^{-2}b^{-1}}-1-\lambda (n-1)v_{i,i'}2Ch^{-2}b^{-1} }{[\lambda (n-1)v_{i,i'}2Ch^{-2}b^{-1}]^2}\right) \right\}\\
&\leq &\prod_{j=1}^{N_i}\prod_{j'=1}^{N_{i'}} \exp\left\{\lambda^2 C(n-1)^2h^{-2} v_{i,i'}^2 L_b^2(\|\bfs_i-\bfs_{i'}\|-\|\bfs\|)\left(\frac{e^{\lambda \widetilde{C}_n }-1-\lambda \widetilde{C}_n }{[\lambda \widetilde{C}_n]^2}\right) \right\}\\
&=&  \exp\left\{\lambda^2 C(n-1)^2h^{-2} v_{i,i'}^2N_i N_{i'} L_b^2(\|\bfs_i-\bfs_{i'}\|-\|\bfs\|)\left(\frac{e^{\lambda \widetilde{C}_n }-1-\lambda \widetilde{C}_n }{[\lambda \widetilde{C}_n]^2}\right) \right\},
\end{eqnarray*}
where  $\widetilde{C}_n= 2C n h^{-2}b^{-1} \sup_{i,i'}v_{i,i'}$. Since $\psi_\bft(\sigma_n)$ is the average of $n/2$ independent random variables, we have
\begin{eqnarray*}
&&\ev\left\{e^{\frac{\lambda n(n-1)}{2} (\psi_\bft(\sigma_n)-\ev[\psi_\bft(\sigma_n)])}\right\}\\
&\leq&  \ev\left\{e^{\lambda (n-1)[Y_{i_1i_2}(\bft)-\ev(Y_{i_1i_2}(\bft))+Y_{i_3i_4}(\bft)-\ev(Y_{i_3i_4}(\bft))+\ldots+Y_{i_{n-1}i_n}(\bft)-\ev(Y_{i_{n-1}i_n}(\bft))] }\right\}\\
&=&\ev\left\{e^{\lambda (n-1)[Y_{i_1i_2}(\bft)-\ev(Y_{i_1i_2}(\bft))] }\right\}\times \ldots \times \ev\left\{e^{\lambda (n-1)[Y_{i_{n-1}i_n}(\bft)-\ev(Y_{i_{n-1}i_n}(\bft))] }\right\}\\
&\leq&  \exp\left\{\lambda^2C(n-1)^2h^{-2} \sum_{k=1,3,\ldots,n-1}  v_{i_k i_{k+1}}^2N_{i_k} N_{i_{k+1}} L_b^2(\|\bfs_{i_k}-\bfs_{i_{k+1}}\|-\|\bfs\|) \left(\frac{e^{\lambda \widetilde{C}_n }-1-\lambda \widetilde{C}_n }{[\lambda \widetilde{C}_n]^2}\right) \right\}\\
&=&\exp\left\{\lambda^2u_{\sigma_n}(\bfs)\left(\frac{e^{\lambda \widetilde{C}_n }-1-\lambda \widetilde{C}_n }{[\lambda \widetilde{C}_n]^2}\right)  \right\},
\end{eqnarray*}
where $u_{\sigma_n}(\bfs)=C(n-1)^2h^{-2} \sum_{k=1,3,\ldots,n-1}  v_{i_k i_{k+1}}^2N_{i_k} N_{i_{k+1}} L_b^2(\|\bfs_{i_k}-\bfs_{i_{k+1}}\|-\|\bfs\|)$ for the permutation $(i_1, i_2, \ldots, i_n)$. Using the above bound and (\ref{eq:proporsition:uniform:convergence:step2:eq1}), we show that
\begin{eqnarray*}
\pr(S_n(\bft)-\ev[S_n(\bft)]>x)&\leq& e^{-\lambda x}\prod_{\sigma_n}\ev^{1/n!}  \left\{e^{ \frac{\lambda n(n-1)}{2} (\psi_\bft(\sigma_n)-\ev[\psi_\bft(\sigma_n)])}\right\}\\
&\leq&e^{-\lambda x}\prod_{\sigma_n}\exp\left\{ \frac{1}{n!}\lambda^2u_{\sigma_n}(\bfs)\left(\frac{e^{\lambda \widetilde{C}_n }-1-\lambda \widetilde{C}_n }{[\lambda \widetilde{C}_n]^2}\right)  \right\}\\ 
&=&e^{-\lambda x}\exp\left\{\frac{1}{n!}
\sum_{\sigma_n}\lambda^2u_{\sigma_n}(\bfs)\left(\frac{e^{\lambda \widetilde{C}_n }-1-\lambda \widetilde{C}_n }{[\lambda \widetilde{C}_n]^2}\right)  \right\}.
\end{eqnarray*}
Simple combinatorics shows that
\begin{eqnarray*}
\sum_{\sigma_n}u_{\sigma_n}(\bfs)=\frac{n}{2}(n-2)! \sum_{i=1}^n \sum_{i'\neq i}C(n-1)^2h^{-2} v_{i,i'}^2N_i N_{i'} L_b^2(\|\bfs_i-\bfs_{i'}\|-\|\bfs\|),
\end{eqnarray*}
which further implies that
\begin{eqnarray*}
\frac{1}{n!}\sum_{\sigma_n}u_{\sigma_n}(\bfs)=\frac{1}{2} \sum_{i=1}^n \sum_{i'\neq i}C(n-1)h^{-2} v_{i,i'}^2N_i N_{i'} L_b^2(\|\bfs_i-\bfs_{i'}\|-\|\bfs\|):=v_n(\bfs).
\end{eqnarray*}
Hence, we conclude that
\begin{eqnarray*}
\pr(S_n(\bft)-\ev[S_n(\bft)]>x)\leq \exp\left\{\frac{v_n(\bfs)}{\widetilde{C}_n^2}(e^{\lambda \widetilde{C}_n }-1-\lambda \widetilde{C}_n)-\lambda x \right\}.
\end{eqnarray*}
In particular, taking $\lambda=\widetilde{C}_n^{-1} \log(1+\widetilde{C}_nx/v_n(\bfs))$ leads to
\begin{eqnarray*}
&&\frac{v_n(\bfs)}{\widetilde{C}_n^2}(e^{\lambda \widetilde{C}_n }-1-\lambda \widetilde{C}_n)-\lambda x\\
&=&\frac{v_n(\bfs)}{\widetilde{C}_n^2}(e^{\lambda \widetilde{C}_n }-1)- \left(\frac{v_n(\bfs)}{\widetilde{C}_n}+x\right)\lambda\\
&=& \frac{v_n(\bfs)}{\widetilde{C}_n^2}\left[1+\frac{\widetilde{C}_nx}{v_n(\bfs)}-1\right]-\left(\frac{v_n(\bfs)}{\widetilde{C}_n}+x\right)\frac{1}{\widetilde{C}_n}\log\left(1+\frac{\widetilde{C}_nx}{v_n(\bfs)}\right)\\
&=&\frac{x}{\widetilde{C}_n}-\left(\frac{v_n(\bfs)}{\widetilde{C}_n^2}+\frac{x}{\widetilde{C}_n}\right)\log\left(1+\frac{\widetilde{C}_nx}{v_n(\bfs)}\right)\\
&=&\frac{v_n(\bfs)}{\widetilde{C}_n^2} \frac{\widetilde{C}_n x}{v_n(\bfs)}-\frac{v_n(\bfs)}{\widetilde{C}_n^2} \left(1+\frac{\widetilde{C}_nx}{v_n(\bfs)}\right)\log\left(1+\frac{\widetilde{C}_nx}{v_n(\bfs)}\right)\\
&=&\frac{v_n(\bfs)}{\widetilde{C}_n^2} [u-(1+u)\log(1+u)],
\end{eqnarray*}
where $u=\widetilde{C}_nx/v_n(\bfs)$. It can be shown that  
\begin{eqnarray*}
u-(1+u)\log(1+u)\leq -\frac{u^2}{2+2u/3}.
\end{eqnarray*}
Therefore, we conclude that
\begin{eqnarray*}
\pr(S_n(\bft)-\ev[S_n(\bft)]>x)\leq   \exp\left\{   -\frac{v_n(\bfs)}{\widetilde{C}_n^2} \frac{u^2}{2+2u/3} \right\}\leq  \exp\left\{-\frac{x^2}{2v_n(\bfs)+2\widetilde{C}_nx/3}\right\}.
\end{eqnarray*}
Similarly, we can show that 
\begin{eqnarray*}
\pr(S_n(\bft)-\ev[S_n(\bft)]<-x)\leq   \exp\left\{-\frac{x^2}{2v_n(\bfs)+2\widetilde{C}_nx/3}\right\}.
\end{eqnarray*}
Hence, we prove that
\begin{eqnarray}
\pr(|S_n(\bft)-\ev[S_n(\bft)]|>x)\leq  2 \exp\left\{-\frac{x^2}{2v_n(\bfs)+2\widetilde{C}_nx/3}\right\}, \label{eq:proporsition:uniform:convergence:step2:conclusion}
\end{eqnarray}
for any $x>0$.

\noindent \textbf{Step 3:} Let $\bft_1,\ldots, \bft_M$ be a sequence of set in $[0, 1]^2$ such that for any $\bft\in [0, 1]^2$ there is a index $j$ such that $\|\bft-\bft_j\|\leq a_n$ for some $a_n\to 0$. Clearly, there is a such sequence satisfying $M\leq (\pi a_n^2)^{-1}$. Therefore, it follows that
\begin{eqnarray*}
\sup_{\bft \in [0, 1]}|S_n(\bft)-\ev[S_n(\bft)]|\leq \sup_{k=1,\ldots,M}|S_n(\bft_k)-\ev[S_n(\bft_k)]|+D_1+D_2,
\end{eqnarray*}
where
\begin{eqnarray*}
D_1=\sup_{\|\bft-\widetilde{\bft}\|\leq a_n}|S_n(\bft)-S_n(\widetilde{\bft})|,\;\;\; D_2=\sup_{\|\bft-\widetilde{\bft}\|\leq a_n}|\ev[S_n(\bft)]-\ev[S_n(\widetilde{\bft})]|.
\end{eqnarray*}
The Lipschitz properties imply
\begin{eqnarray*}
|\mathscr{X}_{ii'jj'}(\bft)-\mathscr{X}_{ii'jj'}(\widetilde{\bft})|\leq Ch^{-3}L_b(\|\bfs_i-\bfs_{i'}\|-\|\bfs\|)\|\bft-\widetilde{\bft}\|.
\end{eqnarray*}
Hence, we have
\begin{eqnarray*}
D_1\leq Ch^{-3}a_n\sum_{i=1}^n\sum_{i'\neq i} v_{i,i'}N_i N_{i'}L_b(\|\bfs_i-\bfs_{i'}\|-\|\bfs\|)
\end{eqnarray*}
Similarly, we can show that 
\begin{eqnarray*}
D_2\leq Ch^{-3}a_n\sum_{i=1}^n\sum_{i'\neq i} v_{i,i}N_i N_{i'}L_b(\|\bfs_i-\bfs_{i'}\|-\|\bfs\|).
\end{eqnarray*}

Using (\ref{eq:proporsition:uniform:convergence:step2:conclusion}), it follows that
\begin{eqnarray}
\pr(\max_{k=1,\ldots, M}|S_n(\bft_k)-\ev[S_n(\bft_k)]|>\epsilon)&\leq&M \max_{1\leq k\leq M}\pr(|S_n(\bft_k)-\ev[S_n(\bft_k)]|>\epsilon) \nonumber \\
&\leq& 2M  \exp\left\{-\frac{\epsilon^2}{2v_n(\bfs)+2\widetilde{C}_n\epsilon/3}\right\}\nonumber\\
&\leq& 2(\pi a_n^2)^{-1} \exp\left\{-\frac{\epsilon^2}{2v_n(\bfs)+2\widetilde{C}_n\epsilon/3}\right\}\nonumber\\
&\leq& 2\pi^{-1} \exp\left\{-2\log(a_n)-\frac{\epsilon^2}{2v_n(\bfs)+2\widetilde{C}_n\epsilon/3}\right\}
.\label{eq:bound:1:proporsition:uniform:convergence}
\end{eqnarray}

\noindent \textbf{Step 4:} We will compete the proof by showing there exists a sequence $a_n\to 0$ such that $D_1$, $D_2$ and (\ref{eq:bound:1:proporsition:uniform:convergence}) converge to zero. Specifically, we need the existence of $a_n$ such that as 
\begin{eqnarray*}
h^{-3}a_n\sum_{i=1}^n\sum_{i'\neq i} v_{i,i'}N_i N_{i'}L_b(\|\bfs_i-\bfs_{i'}\|-\|\bfs\|)&\to& 0,\\
2\log(a_n)+\frac{\epsilon^2}{2v_n(\bfs)+2\widetilde{C}_n\epsilon/3}&\to&\infty,
\end{eqnarray*}
where
\begin{eqnarray*}
v_n(\bfs)=\frac{1}{2} \sum_{i=1}^n \sum_{i'\neq i}C(n-1)h^{-2} v_{i,i'}^2N_i N_{i'} L_b^2(\|\bfs_i-\bfs_{i'}\|-\|\bfs\|), \widetilde{C}_n= 2C n h^{-2}b^{-1} \sup_{i,i'}v_{i,i'}.
\end{eqnarray*}
Using the rate condition  $\sup_{i,i'} n^2 v_{i,i'}N_i N_{i'}\leq C<\infty$, we show that
\begin{eqnarray*}
v_n(\bfs)\leq \frac{C}{nh^2b}, \widetilde{C}_n\leq \frac{C}{nh^2b}, \sum_{i=1}^n\sum_{i'\neq i} v_{i,i'}N_i N_{i'}L_b(\|\bfs_i-\bfs_{i'}\|-\|\bfs\|)\leq C.
\end{eqnarray*}
Hence, it suffices to guarantee that
\begin{eqnarray*}
h^{-3}a_n\to 0, \log(a_n)+nh^2b\to \infty.
\end{eqnarray*}
Because the condition $ \frac{nh^2b}{\log(n)}\to \infty$ implies $h\geq Cn^{-1}$. Hence, taking  $a_n=h^4$ will imply that
\begin{eqnarray*}
h^{-3}a_n=h&\to&0,\\
\log(a_n)+nh^2b&=& 4\log(h)+nh^2b\geq 4\log(C)-\log(n)+nh^2b\to \infty.
\end{eqnarray*}
Here the last convergence is due to  $ \frac{nh^2b}{\log(n)}\to \infty$.

%
\end{proof}

\subsection{Results for $\widehat{R}_e(0,t_1,t_2)$} \label{subsec:remat}
In this section, we provide the point-wise results for the inverse regression function $\widehat{m}(t,y)$ in Theorem \ref{th:1} and provide the $L_2$ convergence rate for the conditional covariance $\widehat{R}_e(0,t_1,t_2)$ in Lemma \ref{lm:2}.
\begin{proof}[Proof of Theorem \ref{th:1}] 
Consider the local least squares expression from  (\ref{eqn:obj-ge}),
\begin{align}
  (\widehat{m}(\bm{y}_t),\widehat{\bm{\alpha}}^T)^T &= \underset{\alpha_0,\bm{\alpha}}{\text{argmin}}\sum_{i=1}^n w_i\sum_{j=1}^{N_i} \{Z_{ij}-\alpha_0-\bm{\alpha}^T(\bm{\mathcal{Y}}_{ij}-\bm{y}_t)\}^2 K_H(\bm{\mathcal{Y}}_{ij}-\bm{y}_t),
                                                  \label{eqn:th1-of1}
\end{align}
where $\bm{y}_t=(t,y)^T$, $\bm{\mathcal{Y}}_{ij}=(T_{ij},Y_i)^T$, and $K_H(\bm{u})=\vert \bm{H}
\vert^{-1/2}K(\bm{H}^{-1/2}\bm{u})$, which means
$K_H(\bm{\mathcal{Y}}_{ij}-\bm{y}_t)=\frac{1}{h_t h_y}K_2(\frac
{T_{ij}-t}{h_t},\frac{Y_i-y}{h_y})$ with $\bm{H}=\text{diag}(h_t^2,h_y^2)$ and $K_2(\cdot)$ is defined in Condition (III).

Let
\begin{align*}
\bm{U}=\begin{bmatrix}
  u_{00} & \bm{u}_{10}^T\\ \bm{u}_{10} & \bm{u}_{11}
\end{bmatrix}, 
\end{align*}
where
\begin{align}
  u_{00} &= \sum_{i=1}^n w_i \sum_{j=1}^{N_i}  K_{H}(\bm{\mathcal{Y}}_{ij}-\bm{y}_t), \label{eqn:thm-mty-u00}
\end{align}
and
\begin{align*}
  \bm{u}_{lk} &= \sum_{i=1}^n w_i \sum_{j=1}^{N_i} (\bm{\mathcal{Y}}_{ij}-\bm{y}_t)^l ((\bm{\mathcal{Y}}_{ij}-\bm{y}_t)^T)^k K_{H}(\bm{\mathcal{Y}}_{ij}-\bm{y}_t),\qquad (l,k) \in \{ (1,0), (1,1) \}.
\end{align*}
Then, we write
\begin{align*}
  \bm{\theta} &:= \begin{pmatrix}
    \widehat{m} -m \\ \widehat{\bm{\alpha}}-\bm{\alpha}
  \end{pmatrix}
  = \bm{U}^{-1} \bm{Q}, \quad \bm{Q}=\begin{bmatrix}
    q_0 \\ \bm{q}_1
  \end{bmatrix},
\end{align*}
where
\begin{align*}
  q_0 &=  \sum_{i=1}^n w_i \sum_{j=1}^{N_i} \eta_{ij} K_{H}(\bm{\mathcal{Y}}_{ij}-\bm{y}_t),
\end{align*}
and
\begin{align*}
  \bm{q}_1 &= \sum_{i=1}^n w_i \sum_{j=1}^{N_i} \eta_{ij}(\bm{\mathcal{Y}}_{ij}-\bm{y}_t) K_{H}(\bm{\mathcal{Y}}_{ij}-\bm{y}_t),
\end{align*}
with $\eta_{ij}= Z_{ij}-m(\bm{y}_t)-\bm{\alpha}^T(\bm{\mathcal{Y}}_{ij}-\bm{y}_t)$. It follows from Conditions (I)[(i),(iv)(a)], (III) and (IV)(i)  that
\begin{align*}
  E\left( \sum_{i=1}^n w_i \sum_{j=1}^{N_i} K_{H}(\bm{\mathcal{Y}}_{ij}-\bm{y}_t)\right) &=  E(K_{H}(\bm{\mathcal{Y}}_{ij}-\bm{y}_t)) \\
  &=   \int K_2(\bm{u}) g(\bm{y}_t+ \bm{H}^{1/2}\bm{u}) d\bm{u} \\
  &=  g(\bm{y}_t)+o(1),
\end{align*}
where the last step follows from the Taylor expansion of $g(\bm{y}_t+ \bm{H}^{1/2}\bm{u})$ at $\bm{y}_t$. By Lemma \ref{lem:den1-var} we obtain that
\begin{align*}
\text{var}\left( \sum_{i=1}^n w_i \sum_{j=1}^{N_i} K_{H}(\bm{\mathcal{Y}}_{ij}-\bm{y}_t)\right) \rightarrow 0 \qquad \text{ as } n \rightarrow \infty. 
\end{align*}
This implies
\begin{align*}
  u_{00}= \sum_{i=1}^n w_i \sum_{j=1}^{N_i} K_{H}(\bm{\mathcal{Y}}_{ij}-\bm{y}_t) 
  &=g(\bm{y}_t)+o_p(1).
\end{align*}
The other entries $\bm{u}_{0,1}$ and $ \bm{u}_{1,1}$ can be handled similarly
\begin{align*}
  \bm{u}_{10}=  \sum_{i=1}^n w_i \sum_{j=1}^{N_i} K_{H}(\bm{\mathcal{Y}}_{ij}-\bm{y}_t) (\bm{\mathcal{Y}}_{ij}-\bm{y}_t)
  &= \bm{H} \bm{\nabla} g \int K_2(\bm{v}) \bm{v}\bm{v}^T d\bm{v} +o_p(\bm{H}\cdot\bm{1}),
\end{align*}
where $\bm{1}$ is  a column vector of ones with length 2, $\bm{\nabla} g$ is the gradient of $g$, and
\begin{align*}
  \bm{u}_{11}= \sum_{i=1}^n w_i \sum_{j=1}^{N_i} K_{H}(\bm{\mathcal{Y}}_{ij}-\bm{y}_t) (\bm{\mathcal{Y}}_{ij}-\bm{y}_t) (\bm{\mathcal{Y}}_{ij}-\bm{y}_t)^T
  &= \bm{H}  g(\bm{y}_t)  +o_p(\bm{H}).
\end{align*}

Therefore, the leading terms in the matrix $\bm{U}_n$ are
\begin{eqnarray*}
  \bm{U} &=& \begin{pmatrix} g
    (\bm{y}_t)+o_p(1) & (\bm{H} \bm{\nabla} g)^T+o_p((\bm{H}\bm{1})^T) \\
    \bm{H} \bm{\nabla} g+o_p(\bm{H}\bm{1}) & \bm{H}g(\bm{y}_t) +
    o_p(\bm{H}) \end{pmatrix}.
  \end{eqnarray*}
  
By applying the formula for matrix inverse in block form similar to \citet{,jiang2014inverse} we obtain that
\begin{eqnarray}
\bm{U}^{-1} &=& \begin{pmatrix} g^{-1}
  (\bm{y}_t)+o_p(1) & -\frac{(\bm{\nabla} g)^T}{g^2(\bm{y}_t)}+o_p(\bm{1}^T) \\
  -\frac{\bm{\nabla} g}{g^2(\bm{y}_t)}+o_p(\bm{1}) & (\bm{H}g(\bm{y}_t))^{-1} +
  o_p(\bm{H}^{-1}) \end{pmatrix}.
  \label{eq:th1-uninv}
\end{eqnarray}

\vspace{2em}
  \textbf{Bias Calculation:} Let $\bm{\mathcal{H}}_m(\bm{y}_t)$ be the second derivative of the function $m(\bm{y}_t)$. Under the Conditions (I)[(i), (iv)(a)] and (III), the  rate of convergence for the leading terms $q_0$ is obtained as
\begin{align*}
  Eq_0 &= \frac{1}{2} \sum_{i=1}^n w_i\sum_{j=1}^{N_i} E \left(K_H(\bm{\mathcal{Y}}_{ij}-\bm{y}_t) (\bm{\mathcal{Y}}_{ij}-\bm{y}_t)^T \bm{\mathcal{H}}_m(\bm{y}_t) (\bm{\mathcal{Y}}_{ij}-\bm{y}_t) \right)\\
  &= \frac{1}{2} \sum_{i=1}^n w_i \sum_{j=1}^{N_i} \int K_2(\bm{v}) \bm{v}^T \bm{H}^{1/2} \bm{\mathcal{H}}_m(\bm{y}_t) \bm{H}^{1/2} \bm{v} g(\bm{y}_t+ \bm{H}^{1/2}\bm{v}) d\bm{v} + o(\text{tr}(\bm{H}))\\
  &=\frac{1}{2} \text{tr}\left\{ \bm{H}^{1/2} \bm{\mathcal{H}}_m(\bm{y}_t) \bm{H}^{1/2} \int \bm{v}\bm{v}^T K_2(\bm{v}) d\bm{v}  \right\}+ o(\text{tr}(\bm{H}))\\
  &= \frac{c(K_2)}{2} g(\bm{y}_t) \text{tr}(\bm{H} \bm{\mathcal{H}}_m(\bm{y}_t)) + o(\text{tr}(\bm{H})),
\end{align*}
where $\int \bm{v}\bm{v}^T K_2(\bm{v}) d\bm{v}=c(K_2)\bm{I}$. Similarly we obtain  $ E \bm{q}_1 = O( \bm{H}^{3/2}\cdot \bm{1})$. Let $\mathcal{X}_y=\{(T_{ij}, Y_i), i=1,\ldots,n, j=1,\ldots,N_i\}$ be the set of design points. We write the bias,
\begin{align*}
  E(\widehat{m}(\bm{y}_t)-m(\bm{y}_t)~|~ \mathcal{X}_y) &= \frac{c(K_2)}{2} \text{tr}(\bm{H}\bm{\mathcal{H}}_m(\bm{y}_t)) + o_p(\text{tr}(\bm{H})).
\end{align*}

\textbf{Variance Calculation}: We now compute the conditional variance of $\widehat{m}(\bm{y}_t)$. By definition
\begin{eqnarray}
  \text{var}(\widehat{m}(\bm{y}_t) ~|~ \mathcal{X}_y) =  \text{var}(\bm{e}_1^T \bm{\theta}~|~ \mathcal{X}_y) 
  =  \text{var}\left\{u^{00} q_0+ \bm{u}^{{10}^T} \bm{q}_1 ~|~ \mathcal{X}_y \right\},
  \label{eq:var-split}
\end{eqnarray}
where $u^{00}$ and $\bm{u}^{10}$ are the elements of the $\bm{U}^{-1}$ matrix in (\ref{eq:th1-uninv}). 
 The  variance of the first term in the expansion of (\ref{eq:var-split}) is
  \begin{align}
    \text{var}(u^{00}q_0~|~ \mathcal{X}_y) &= (u^{00})^2 \bigg[ \sum_{i=1}^n w_i^2 \text{var}\bigg(\sum_{j=1}^{N_i} \eta_{ij}K_H(\bm{\mathcal{Y}}_{ij}-\bm{y}_t) ~|~ \mathcal{X}_y \bigg)\nonumber \\& \qquad + 2 \sum_{i=1}^{n-1}\sum_{i'=i+1}^n w_i w_{i'} \text{cov}\bigg(\sum_{j=1}^{N_i}\eta_{ij} K_{H}(\bm{\mathcal{Y}}_{ij}-\bm{y}_t), \sum_{j'=1}^{N_{i'}} \eta_{i'j'}K_H(\bm{\mathcal{Y}}_{i'j'}-\bm{y}_t)~|~ \mathcal{X}_y\bigg)\bigg] \nonumber \\
    &:= (u^{00})^2  (a+b), \label{eq:var-split-ab}
  \end{align}
  where
  \begin{align}
  \label{eqn:p2-a}
      a &= \sum_{i=1}^n w_i^2 \text{var}\bigg(\sum_{j=1}^{N_i} \eta_{ij}K_H(\bm{\mathcal{Y}}_{ij}-\bm{y}_t) ~|~ \mathcal{X}_y \bigg), \text{ and}\\
      b &= 2 \sum_{i=1}^{n-1}\sum_{i'=i+1}^n w_i w_{i'} \text{cov}\bigg(\sum_{j=1}^{N_i}\eta_{ij} K_{H}(\bm{\mathcal{Y}}_{ij}-\bm{y}_t), \sum_{j'=1}^{N_{i'}} \eta_{i'j'}K_H(\bm{\mathcal{Y}}_{i'j'}-\bm{y}_t) ~|~ \mathcal{X}_y\bigg). \label{eqn:p2-b}
  \end{align}
We show that the term (b) goes to zero as $n \rightarrow \infty$. Hence, the leading term of the variance come from the term (a).

\vspace{2em}
\underline{Term (a) in (\ref{eq:var-split-ab})}: We have the following decomposition
  \begin{align*}
    \text{var} \left(\sum_{j=1}^{N_i} \eta_{ij}K_H(\bm{\mathcal{Y}}_{ij}-\bm{y}_t)~|~ \mathcal{X}_y\right) &= \sum_{j=1}^{N_i} \text{var} \left(\eta_{ij}K_H(\bm{\mathcal{Y}}_{ij}-\bm{y}_t)~|~ \mathcal{X}_y\right) \\ & \qquad  +\sum_{1 \le j \neq j' \le N_i}  \text{cov}\left(\eta_{ij}K_H(\bm{\mathcal{Y}}_{ij}-\bm{y}_t), \eta_{ij'}  K_H(\bm{\mathcal{Y}}_{ij'}-\bm{y}_t) ~|~ \mathcal{X}_y \right).
  \end{align*}
  Standard arguments yield under condition (I)(iv)(b) that
  \begin{align*}
     E[\text{var}(\eta_{ij}K_H(\bm{\mathcal{Y}}_{ij}-\bm{y}_t)~|~ \mathcal{X}_y)] &= E( \text{var}(\eta_{ij}|\bm{\mathcal{Y}}_{ij}) K_H^2(\bm{\mathcal{Y}}_{ij}-\bm{y}_t)) \\
     &=|\bm{H}|^{-1/2}d(K_2) (\Psi(t,y) + \Lambda(t) + \sigma^2) g(\bm{y}_t) (1 + o(1)),
  \end{align*}
  where $d(K_2)=\int K_2^2(\bm{u})d\bm{u}$ and $\text{var}(\eta_{ij}|T_{ij}=t, Y_i=y)=\Psi(t,y) + \Lambda(t) + \sigma^2$.  Moreover,
  \begin{align*}
    \sum_{1\le j \neq j' \le N_i} &E\left[\text{cov}\left(\eta_{ij}K_H(\bm{\mathcal{Y}}_{ij}-\bm{y}_t), \eta_{ij'}  K_H(\bm{\mathcal{Y}}_{ij'}-\bm{y}_t)  ~|~ \mathcal{X}_y \right) \right]\\ &= N_i(N_i-1) \bigg( \frac{\Psi(t,y,t)+ \Lambda(t,t) }{h_y}  g_3(t,y,t) +o(h_y^{-1})\bigg),
  \end{align*}
  where $g_3(\cdot)$ is the joint distribution of $T_{ij}$, $T_{ik}$ and $Y_i$ as mentioned in Conditions (I)(iv)(a). 
 Hence, we obtain the following leading term of the variance
\begin{align*}
    (u^{00})^2  E[(a)] &= \frac{\sum_{i=1}^n N_iw_i^2}{ |\bm{H}|^{1/2} g^2(t,y)}  (\Psi(t,y) + \Lambda(t) + \sigma^2)  \nu_{K_2,2} g(t, y)\\ & \qquad +  \frac{\sum_{i=1}^n N_i (N_i-1)w_i^2}{h_y g^2(t,y)} \times  \{  (\Psi(t,y,t) + \Lambda(t,t)) g_3(t,y,t)  \} + o_p(1)\bigg\}.
\end{align*}

  \vspace{2em}
  \underline{Term (b) in (\ref{eq:var-split-ab})}: The idea is to relate the conditional covariance in part (b) to unconditional covariance which simplifies the details of the proof. Based on (\ref{eqn:c-cov}), we have
  \begin{align*}
    E&\bigg\{  \sum_{i=1}^{n-1}\sum_{i'=i+1}^n w_i w_{i'} \text{cov}\bigg(\sum_{j=1}^{N_i}\eta_{ij} K_{H}(\bm{\mathcal{Y}}_{ij}-\bm{y}_t), \sum_{j'=1}^{N_{i'}} \eta_{i'j'}K_H(\bm{\mathcal{Y}}_{i'j'}-\bm{y}_t) ~|~ \mathcal{X}_y\bigg) \bigg\} \\
       &=  \sum_{i=1}^{n-1}\sum_{i'=i+1}^n w_i w_{i'} \text{cov}\bigg(\sum_{j=1}^{N_i}\eta_{ij} K_{H}(\bm{\mathcal{Y}}_{ij}-\bm{y}_t), \sum_{j'=1}^{N_{i'}} \eta_{i'j'}K_H(\bm{\mathcal{Y}}_{i'j'}-\bm{y}_t) \bigg)\\  &\quad -  \sum_{i=1}^{n-1}\sum_{i'=i+1}^n w_i w_{i'} \text{cov}\bigg(\sum_{j=1}^{N_i} E[\eta_{ij} ~|~ \mathcal{X}_y] K_{H}(\bm{\mathcal{Y}}_{ij}-\bm{y}_t), \sum_{j'=1}^{N_{i'}} E[\eta_{i'j'} ~|~ \mathcal{X}_y] K_H(\bm{\mathcal{Y}}_{i'j'}-\bm{y}_t)\bigg).
\end{align*}
  From  Lemma \ref{lem:den1-var} it directly follows that the first term in the right side of the above equality goes to zero as $n \rightarrow \infty$. Similarly, calculations analogous to Lemma \ref{lem:den1-var} yields that the second term in the right side of the above equality goes to zero as $n \rightarrow \infty$. Hence, the result is proved.
\end{proof}

\begin{lemma} \label{lem:den1-var}
    Under the conditions of Theorem \ref{th:1}, it holds that
    \begin{align*}
      \text{var}\left( \sum_{i=1}^n w_i \sum_{j=1}^{N_i} K_{H}(\bm{\mathcal{Y}}_{ij}-\bm{y}_t)\right) \rightarrow 0 \qquad \text{ as } n \rightarrow \infty,
      \end{align*}
      and
      \begin{align*}
         \sum_{i=1}^{n-1}\sum_{i'=i+1}^n w_i w_{i'} \text{cov}\bigg(\sum_{j=1}^{N_i}\eta_{ij} K_{H}(\bm{\mathcal{Y}}_{ij}-\bm{y}_t), \sum_{j'=1}^{N_{i'}} \eta_{i'j'}K_H(\bm{\mathcal{Y}}_{i'j'}-\bm{y}_t)\bigg),
      \end{align*}
      goes to zero as $n \rightarrow \infty$.
  \end{lemma}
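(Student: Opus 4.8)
The plan is to derive both limits from the variance/covariance bookkeeping already used for $\widehat{R}(0)$ in Lemma~\ref{lem:R}, together with two elementary facts about the localizing weights: conditionally on $\bm{\mathcal{Y}}_{ij}$ the quantity $K_H(\bm{\mathcal{Y}}_{ij}-\bm{y}_t)$ is bounded by $C|\bm{H}|^{-1/2}\asymp h^{-2}$, and a change of variables with Condition~(III)(i) gives $E|K_H(\bm{\mathcal{Y}}_{ij}-\bm{y}_t)|^{2+\varrho}=O(|\bm{H}|^{-(1+\varrho)/2})$, hence $\|K_H(\bm{\mathcal{Y}}_{ij}-\bm{y}_t)\|_{2+\varrho}=O(h^{-2(1+\varrho)/(2+\varrho)})$; for the $\eta$-weighted display the same orders hold for $\eta_{ij}K_H(\bm{\mathcal{Y}}_{ij}-\bm{y}_t)$ after conditioning on $\bm{\mathcal{Y}}_{ij}$, since $E(\eta_{ij}^2\mid\bm{\mathcal{Y}}_{ij})$ and $E(|\eta_{ij}|^{2+\varrho}\mid\bm{\mathcal{Y}}_{ij})$ are bounded by Conditions~(I)[(iv)(b),(iv)(c)] and the mutual independence of $V,U,e$. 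I would first write the first variance as $\sum_i w_i^2\,\text{var}\!\big(\sum_j K_H(\bm{\mathcal{Y}}_{ij}-\bm{y}_t)\big)+\sum_{i\ne i'}w_iw_{i'}\,\text{cov}\!\big(\sum_j K_H(\bm{\mathcal{Y}}_{ij}-\bm{y}_t),\sum_{j'}K_H(\bm{\mathcal{Y}}_{i'j'}-\bm{y}_t)\big)$, observing that the sum in the second display of the lemma is exactly of this between-subject type. In the within-subject piece, $\text{var}\!\big(\sum_j K_H(\bm{\mathcal{Y}}_{ij}-\bm{y}_t)\big)$ has a diagonal term of order $N_iE[K_H^2]=O(N_ih^{-2})$ and an off-diagonal term of order $N_i(N_i-1)$ times $E[K_H(\bm{\mathcal{Y}}_{ij}-\bm{y}_t)K_H(\bm{\mathcal{Y}}_{ij'}-\bm{y}_t)]=O(h^{-1})$ (the latter because $\bm{\mathcal{Y}}_{ij},\bm{\mathcal{Y}}_{ij'}$ share $Y_i$ and their joint law is governed by the bounded density $g_3$ of Condition~(I)(iv)(a)); summing and invoking $\sum_i N_iw_i^2/h^2\to0$ and $\sum_i N_i(N_i-1)w_i^2/h\to0$ from Condition~(IV)(i) makes this piece vanish.

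For the between-subject piece I would mirror the near/far decomposition of Lemma~\ref{lem:R} at a cutoff $c_n$. For pairs with $d(\bm{s}_i,\bm{s}_{i'})\le c_n$ mixing is unavailable, so I would use an absolute bound: for the first display Condition~(I)(iv)(a) ($|g_{i,i'}-g\otimes g|\le C$) makes each cross-covariance $O(1)$, while for the $\eta$-weighted display Cauchy--Schwarz gives $O(E[\eta_{ij}^2K_H^2])=O(h^{-2})$ (alternatively, one may truncate $\eta_{ij}$ at $L_n=h^{-4/(4+\varrho)}$ as in Lemma~\ref{lem:R}); since each site has $O((c_n/\delta_n)^2)$ neighbours within $c_n$ and $\sum_i w_iN_i=1$ with $\sup_i nw_iN_i<\infty$ under the OBS and SUBJ schemes, the near contribution is $O((c_n/\delta_n)^2/n)$, respectively $O(h^{-2}(c_n/\delta_n)^2/n)$, which vanishes as long as $c_n^2=o(nh^2\delta_n^2)$. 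For pairs with $d(\bm{s}_i,\bm{s}_{i'})>c_n$, Lemma~\ref{lem:mixing} with exponents $2+\varrho,\,2+\varrho,\,(2+\varrho)/\varrho$ bounds the cross-covariance of the inner $N_i\times N_{i'}$ sums by $CN_iN_{i'}h^{-4(1+\varrho)/(2+\varrho)}[\phi(d(\bm{s}_i,\bm{s}_{i'}))]^{\varrho/(2+\varrho)}$; counting $O(t/\delta_n^2)$ sites in the annulus at distance $\sim t$ produces a far contribution of order $h^{-4(1+\varrho)/(2+\varrho)}n^{-1}\delta_n^{-2}\sum_{t\ge c_n}t\,[\phi(t)]^{\varrho/(2+\varrho)}$, which I would eliminate exactly as in Lemma~\ref{lem:R}: take $c_n$ to be the appropriate power of the prefactor and appeal to the summability hypothesis $\lim_{m\to\infty}m^\kappa\sum_{j\ge m}j^2\{\phi(j)\}^{\varrho/(2+\varrho)}=0$ of Condition~(II)(i), checking compatibility with the near-pair requirement through the bandwidth constraints of Condition~(IV)(i), in particular $nh^2\delta_n^2\to\infty$. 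Running the same argument on the $\eta$-weighted sums gives the second display.

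The step I expect to be the main obstacle is the between-subject analysis: choosing $c_n$ so that the near part (which hinges on the bounded density-difference hypothesis, the weight normalization, and $nh^2\delta_n^2\to\infty$) and the far part (which hinges on the polynomial mixing rate $\phi(j)\lesssim j^{-\rho}$ with $\rho>3(2+\varrho)/\varrho$ and on the annulus cardinality bound) vanish simultaneously, and coping with the unboundedness of $\eta_{ij}$ in the second display---for which either the truncation device of Lemma~\ref{lem:R} or a Cauchy--Schwarz argument resting on the conditional-moment controls in Condition~(I)[(iv)(b),(iv)(c)] is needed. The remaining steps are the routine kernel algebra already carried out in Theorems~\ref{th:Rhat} and \ref{th:1}.
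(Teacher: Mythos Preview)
Your proposal is essentially the paper's own argument: the same within/between-subject split, the same $V_1$ computation via diagonal and off-diagonal terms, and the same near/far decomposition at a cutoff $c_n$ for the between-subject piece, with the density-difference bound of Condition~(I)(iv)(a) on the near part and Lemma~\ref{lem:mixing} on the far part. One small remark: for the $\eta$-weighted display, the paper commits to the truncation $L_n=|\bm{H}|^{-1/(4+\varrho)}$ rather than your Cauchy--Schwarz alternative, because the resulting near-pair bound $O(|\bm{H}|^{-2/(4+\varrho)})$ (sharper than the crude $O(h^{-2})$) is exactly what the second line of Condition~(IV)(i) is calibrated to when one plugs in $c_n=\big(\gamma_{n1}^{-1}|\bm{H}|^{\varrho/(2(2+\varrho))}\delta_n^2\big)^{-1/\kappa}$; so you should carry out the truncation option you flag rather than rely on Cauchy--Schwarz alone.
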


  \begin{proof}
  First, observe that
  \begin{align}
      \text{var}\bigg( & \sum_{i=1}^n w_i \sum_{j=1}^{N_i} K_{H}(\bm{\mathcal{Y}}_{ij}-\bm{y}_t)\bigg) \nonumber \\ &=  \bigg\{ \sum_{i=1}^n w_i^2 \text{var}\bigg[ \sum_{j=1}^{N_i} K_H(\bm{\mathcal{Y}}_{ij}-\bm{y}_t ) \bigg] \nonumber \\ & \qquad  + 2 \sum_{i=1}^{n-1} \sum_{i'=i+1}^n w_i w_{i'} \text{cov}\bigg[\sum_{j=1}^{N_i} K_H(\bm{\mathcal{Y}}_{ij}-\bm{y}_t ), \sum_{j'=1}^{N_{i'}} K_H(\bm{\mathcal{Y}}_{i'j'}-\bm{y}_t)  \bigg] \bigg\} \nonumber \\
      & := ( V_1 +  V_2), \label{eqn:p1-varsplit}
  \end{align}
  where
  \begin{align}
      V_{1} &=   \text{var}\bigg[ \sum_{j=1}^{N_i} K_H(\bm{\mathcal{Y}}_{ij}-\bm{y}_t ) \bigg], \label{eqn:p1-varsplit1}  
  \end{align}
  and
  \begin{align}
      V_{2} &=  2 \sum_{i=1}^{n-1} \sum_{i'=i+1}^n w_iw_{i'} \text{cov}\bigg[\sum_{j=1}^{N_i} K_H(\bm{\mathcal{Y}}_{ij}-\bm{y}_t), \sum_{j'=1}^{N_{i'}} K_H(\bm{\mathcal{Y}}_{i'j'}-\bm{y}_t)  \bigg]. \label{eqn:p1-varsplit2}
  \end{align}
  Under Conditions (I)[(i), (ii)(a)] and (III), elementary calculations yield
  \begin{align}
      \bigg\vert \text{cov} & \bigg[\sum_{j=1}^{N_i} K_H(\bm{\mathcal{Y}}_{ij}-\bm{y}_t ), \sum_{j'=1}^{N_{i'}} K_H(\bm{\mathcal{Y}}_{i'j'}-\bm{y}_t )  \bigg] \bigg\vert \nonumber \\ &= \bigg\vert \sum_{j=1}^{N_i} \sum_{j'=1}^{N_{i'}} \text{cov} \left[K_H(\bm{\mathcal{Y}}_{ij}-\bm{y}_t ), K_H(\bm{\mathcal{Y}}_{i'j'}-\bm{y}_t) \right] \bigg\vert\nonumber \\ &= \bigg\vert\sum_{j=1}^{N_i} \sum_{j'=1}^{N_{i'}} \left\{ E[K_H(\bm{\mathcal{Y}}_{ij}-\bm{y}_t ) K_H(\bm{\mathcal{Y}}_{i'j'}-\bm{y}_t )]- EK_H(\bm{\mathcal{Y}}_{ij}-\bm{y}_t ) EK_H(\bm{\mathcal{Y}}_{i'j'}-\bm{y}_t ) \right\}\bigg\vert \nonumber \\
      &\le  \sum_{j=1}^{N_i} \sum_{j'=1}^{N_{i'}} \bigg\{ \int K_2(\bm{u})K_2(\bm{v}) \left\vert g_{i,i'}(\bm{t}_1,\bm{t}_2)-g(\bm{t}_1)g(\bm{t}_2)  \right\vert d\bm{u}d\bm{v} \bigg\} \nonumber \\
      & \le N_i N_{i'} C, \label{eqn:p1-c1}
  \end{align}
  where $\bm{t}_1=\bm{y}_t+\bm{H}^{1/2}\bm{u}$, $\bm{t}_2=\bm{y}_t+\bm{H}^{1/2}\bm{v}$ and $C$ is defined in Condition (I)[(ii)(a)]. Further, an application of Lemma \ref{lem:mixing} yields  that, for $\varrho > 0$,
  \begin{align*}
    \vert \text{cov}&(K_{H}(\bm{\mathcal{Y}}_{ij}-\bm{y}_t),  K_H(\bm{\mathcal{Y}}_{i'j'}-\bm{y}_t)) \vert\\
  &\le C_{\alpha} \Vert K_{H}(\bm{\mathcal{Y}}_{ij}-\bm{y}_t)   \Vert_{2+\varrho} \Vert K_H(\bm{\mathcal{Y}}_{i'j'}-\bm{y}_t) \Vert_{2+\varrho} \left[ \alpha(d(\bm{s}_i,\bm{s}_{i'})) \right]^{\frac{\varrho}{2+\varrho}}.
  \end{align*}
  By straightforward computations, we obtain
  \begin{align*}
    \Vert K_{H}(\bm{\mathcal{Y}}_{ij}-\bm{y}_t)   \Vert_{2+\varrho} &=\left(E| K_{H}(\bm{\mathcal{Y}}_{ij}-\bm{y}_t)|^{2+\varrho} \right)^{\frac{1}{2+\varrho}}\\
    &= \left(\int  K_H(\bm{u}-\bm{y}_t)^{2+\varrho} g(\bm{u})d\bm{u}  \right)^{\frac{1}{2+\varrho}} \\
    &=|\bm{H}|^{\frac{-(1+\varrho)}{2(2+\varrho)}}\left(\int K(\bm{v})^{2+\varrho} g(\bm{y}_t+ \bm{H}^{1/2}\bm{v})d\bm{v}  \right)^{\frac{1}{2+\varrho}}\\
    &\le M_{11} |\bm{H}|^{\frac{-(1+\varrho)}{2(2+\varrho)}},
  \end{align*}
  where the last step follows from the Conditions (I)[(ii)(a)] and (III) and $M_{11}$ is some positive constant. This implies
  \begin{align}
    \vert \text{cov}(K_{H}(\bm{\mathcal{Y}}_{ij}-\bm{y}_t),  K_H(\bm{\mathcal{Y}}_{i'j'}-\bm{y}_t)) \vert &\le M_{12} |\bm{H}|^{\frac{-2(1+\varrho)}{2(2+\varrho)}}\left[ \alpha(d(\bm{s}_i,\bm{s}_{i'})) \right]^{\frac{\varrho}{2+\varrho}},
  \label{eqn:p1-c2}
  \end{align}
  for some positive constant $M_{12}$.
  Thus is follows from  (\ref{eqn:p1-varsplit2}), (\ref{eqn:p1-c1}) and (\ref{eqn:p1-c2}) that
  \begin{align*}
     |V_2| &\le 2M_{13} \sum_{i=1}^{n-1}\sum_{i'=i+1}^n w_i w_{i'} \sum_{j=1}^{N_i} \sum_{j'=1}^{N_{i'}}  \left\vert \text{cov}( K_{H}(\bm{\mathcal{Y}}_{ij}-\bm{y}_t), K_H(\bm{\mathcal{Y}}_{i'j'}-\bm{y}_t))\right\vert\\
    &\le M_{13} \sum_{0 <d(\bm{s}_i,\bm{s}_{i'})<c_n} w_i w_{i'} \sum_{j=1}^{N_i} \sum_{j'=1}^{N_{i'}} \left\vert \text{cov}(K_{H}(\bm{\mathcal{Y}}_{ij}-\bm{y}_t), K_H(\bm{\mathcal{Y}}_{i'j'}-\bm{y}_t))\right\vert\\
    & \qquad + M_{13} \sum_{d(\bm{s}_i,\bm{s}_{i'})>c_n} w_i w_{i'}  \sum_{j=1}^{N_i} \sum_{j'=1}^{N_{i'}} \left\vert \text{cov}(K_{H}(\bm{\mathcal{Y}}_{ij}-\bm{y}_t), K_H(\bm{\mathcal{Y}}_{i'j'}-\bm{y}_t))\right\vert\\
    &\le M_{13} \sum_{0 <d(\bm{s}_i,\bm{s}_{i'})<c_n} w_iN_i w_{i'}N_{i'} \\ & \qquad \qquad \qquad + M_{13} \sum_{d(\bm{s}_i,\bm{s}_{i'})>c_n} w_iN_i w_{i'}N_{i'} |\bm{H}|^{\frac{-(1+\varrho)}{2+\varrho}} \left[ \alpha(d(\bm{s}_i,\bm{s}_{i'})) \right]^{\frac{\varrho}{2+\varrho}},
  \end{align*}
  where $\sum_{0 <d(\bm{s}_i,\bm{s}_{i'})<c_n}$ stands for the summation over $\{(i,i'):1 \le i,i'\le n, 0 <d(\bm{s}_i,\bm{s}_{i'})<c_n\}$ the cardinality of which is controlled by $n(c_n/\delta_n)^2$ where $\delta_n$ is defined in Conditions (II) and 
  \begin{align*}
  \sum_{d(\bm{s}_i,\bm{s}_{i'})>c_n} &:= \{(i,i'):1\le i, i' \le n, d(\bm{s}_i,\bm{s}_{i'})>c_n\}\\
  & \subseteq \cup_{m=c_n}^{\infty} \{(i,i'):1\le i, i' \le n, m < d(\bm{s}_i,\bm{s}_{i'}) \le m+1 \}.
  \end{align*}
  Therefore,  we write
  \begin{align}
     |V_2| & \le M_{13} n \frac{ |\bm{H}|^{1/2} \max\{w_i N_i\}^2 }{|\bm{H}|^{1/2}} \left(\frac{c_n}{\delta_n} \right)^2  + \frac{M_{13}  \max\{w_i N_i\}^2 }{|\bm{H}|^{1/2}} \sum_{m=c_n}^{\infty} |\bm{H}|^{\frac{-\varrho}{2(2+\varrho)}} n \left(\frac{m+1}{\delta_n}\right)^2 \left[ \alpha(m) \right]^{\frac{\varrho}{2+\varrho}} \nonumber\\
    & \le M_{13}  \frac{|\bm{H}|^{1/2} n\max\{w_i N_i\}^2}{|\bm{H}|^{1/2}} \left(\frac{c_n}{\delta_n} \right)^2  +  \frac{M_{13}  n\max\{w_i N_i\}^2}{|\bm{H}|^{1/2}}  |\bm{H}|^{\frac{-\varrho}{2(2+\varrho)}} (\delta_n)^{-2} \sum_{m=c_n}^{\infty} m^2 \left[ \alpha(m) \right]^{\frac{\varrho}{2+\varrho}}.
  \label{eq:p2-last1}
  \end{align}
  Denote $\gamma_{n1}=n \frac{ \max\{w_i N_i\}^2}{|\bm{H}|^{1/2}}$. Let $c_n$ be the integer part of $\left(|\bm{H}|^{\frac{\varrho}{2(2+\varrho)}} \delta_n^{2} \gamma_{n1}^{-1}  \right)^{-1/\kappa}$ for $\kappa>0$, specified in Condition (II)(i), by which the second part of (\ref{eq:p2-last1}) tends to zero as $n \rightarrow \infty$. Note that
  \begin{align*}
     \gamma_{n1} |\bm{H}|^{1/2}(c_n/\delta_n)^2 &= \gamma_{n1} |\bm{H}|^{1/2} \left(\left(|\bm{H}|^{\frac{\varrho}{2(2+\varrho)}} \delta_n^{2} \gamma_{n1}^{-1}\right)^{-1/\kappa}\delta_n^{-1}\right)^2\\ &= \gamma_{n1}^{1+2/\kappa} \delta_n^{-2(1+2/\kappa)} |\bm{H}|^{\frac{1}{2}-\frac{\varrho}{\kappa(2+\varrho)}},
  \end{align*}
  which tends to zero following from the Condition (IV)(i). Therefore, the term $ V_2$  goes to zero as $n \rightarrow \infty$.
  
  Now, we consider the term $V_1$ in (\ref{eqn:p1-varsplit}) as defined in (\ref{eqn:p1-varsplit1})
    \begin{align*}
       V_1 &= \sum_{i=1}^n w_i^2 \text{var} \left(\sum_{j=1}^{N_i} K_H(\bm{\mathcal{Y}}_{ij}-\bm{y}_t) \right) \\ &=  \sum_{i=1}^n  w_i^2\sum_{j=1}^{N_i} \text{var}\left(  K_H(\bm{\mathcal{Y}}_{ij}-\bm{y}_t)\right)  \\& \qquad + \sum_{i=1}^n w_i^2  \sum_{1 \le j \neq j' \le N_i} \text{cov} \left\{K_H(\bm{\mathcal{Y}}_{ij}-\bm{y}_t), K_H(\bm{\mathcal{Y}}_{ij'}-\bm{y}_t)  \right\}.
    \end{align*}
    Standard arguments yield that
    \begin{align*}
       E(K_H^2(\bm{\mathcal{Y}}_{ij}-\bm{y}_t)) - \left(E K_H(\bm{\mathcal{Y}}_{ij}-\bm{y}_t)\right)^2 = |\bm{H}|^{-1/2}d(K_2) g(\bm{y}_t) + o(|\bm{H}|^{-1/2}),
    \end{align*}
    where $d(K_2)=\int K_2^2(\bm{u})d\bm{u}$. Analogously, we obtain
    \begin{align*}
      \sum_{1\le j \neq j' \le N_i} \text{cov} \left\{ K_H(\bm{\mathcal{Y}}_{ij}-\bm{y}_t), K_H(\bm{\mathcal{Y}}_{ij'}-\bm{y}_t)  \right\} &= N_i(N_i-1) \bigg( h_y^{-1}  g_3(t,y,t) +o(h_y^{-1})\bigg),
    \end{align*}
    where $g_3(\cdot)$ is the joint distribution of $T_{ij}$, $T_{ik}$ and $Y_i$ as mentioned in Conditions (I)(ii)(a). 
    Therefore, we can write the term $V_1$ in (\ref{eqn:p1-varsplit1}) as
    \begin{align*}
      V_1 &=   \frac{\sum_{i=1}^n w_i^2 N_i}{|\bm{H}|^{1/2}} d(K_2) g(\bm{y}_t) +  \frac{\sum_{i=1}^n N_i(N_i-1) w_i^2}{h_y}   g_3(t,y,t) + o(1).
    \end{align*}
  Consequently, it follows from Condition (IV)(i) and condition $\frac{\sum_{i=1}^n N_i(N_i-1) w_i^2}{h_y} \rightarrow 0$ as $n \rightarrow \infty$, that
  \begin{align*}
      ( V_1 +  V_2) \rightarrow 0.
  \end{align*}
  

  We now turn our attention to term (b) in (\ref{eqn:p2-b})
  and show that it goes to zero as $n \rightarrow \infty$.
  
 Set $L_n= |\bm{H}|^{-1/(4+\varrho)} $ for  $\varrho>0$ defined in Condition (II)(i). Define $\eta_{ij}=\eta_{ij1}+ \eta_{ij2}$ where $\eta_{ij1}:= \eta_{ij}I_{\{|\eta_{ij}| \le L_n\}}$ and $\eta_{ij2} := \eta_{ij}I_{\{|\eta_{ij}| > L_n\}}$. Let
  \begin{align*}
      \chi_{ijk}(\bm{y}_t) := \eta_{ijk}K_H(\bm{\mathcal{Y}}_{ij}-\bm{y}_t) \text{  and   } \varpi_{ijk}(\bm{y}_t) := \chi_{ijk}(\bm{y}_t) - E \chi_{ijk}(\bm{y}_t), \qquad k=1,2.  
  \end{align*}
Observe that $\varpi_{ij}(\bm{y}_t)=\varpi_{ij1}(\bm{y}_t)+\varpi_{ij2}(\bm{y}_t)$. Therefore,
\begin{align}
    E\varpi_{ij}(\bm{y}_t)\varpi_{i'j'}(\bm{y}_t) &= E\varpi_{ij1}(\bm{y}_t)\varpi_{i'j'1}(\bm{y}_t) + E\varpi_{ij1}(\bm{y}_t)\varpi_{i'j'2}(\bm{y}_t) \nonumber\\ & \qquad \qquad +E\varpi_{ij2}(\bm{y}_t)\varpi_{i'j'1}(\bm{y}_t)+ E\varpi_{ij2}(\bm{y}_t)\varpi_{i'j'2}(\bm{y}_t). \label{eqn:var-split-trun}
\end{align}
  By Cauchy-Schwartz inequality and Condition (I)(ii)(c), we first note that
  \begin{align*}
      |E\varpi_{ij1}(\bm{y}_t)&\varpi_{i'j'2}(\bm{y}_t)| \\ & \le  \{E \chi_{ij1}^2(\bm{y}_t) \}^{1/2} \{E \chi_{i'j'2}^2(\bm{y}_t) \}^{1/2} \\ & \le M_{21} |\bm{H}|^{-1/4}  \{ E(E(|\eta_{i'j'}|^{2} I_{\{|\eta_{i'j'}| > L_n\}} |\bm{\mathcal{Y}}_{i'j'}) K_H^2(\bm{\mathcal{Y}}_{i'j'}-\bm{y}_t)) \}^{1/2} \\
      & \le M_{21} |\bm{H}|^{-1/4}  \{ |\bm{H}|^{-1/2} L_n^{-\varrho} E(|\eta_{i'j'}|^{2+\varrho} |\bm{y}_t) \}^{1/2} \\
      & \le M_{22} |\bm{H}|^{-1/2} L_n^{-\varrho/2}= M_{22} |\bm{H}|^{-2/(4+\varrho)},
  \end{align*}
  where $M_{21}$ and $M_{22}$ are some constants. Analogous arguments yield
  \begin{align*}
      E\varpi_{ij2}(\bm{y}_t)\varpi_{i'j'1}(\bm{y}_t) &\le M_{22} |\bm{H}|^{-1/2} L_n^{-\varrho/2}= M_{22} |\bm{H}|^{-2/(4+\varrho)} \qquad \text{ and } \\
      E\varpi_{ij2}(\bm{y}_t)\varpi_{i'j'2}(\bm{y}_t) &\le M_{22} |\bm{H}|^{-1/2} L_n^{-\varrho} = M_{22} |\bm{H}|^{(\varrho-4)/(2(4+\varrho))}.
  \end{align*}
  Now it remains to show that
  \begin{align*}
    E\varpi_{ij1}(\bm{y}_t)\varpi_{i'j'1}(\bm{y}_t)  & =    \text{cov}(\eta_{ij1} K_{H}(\bm{\mathcal{Y}}_{ij}-\bm{y}_t),  \eta_{i'j'1}K_H(\bm{\mathcal{Y}}_{i'j'}-\bm{y}_t))  \\
    & = E(\eta_{ij1} K_{H}(\bm{\mathcal{Y}}_{ij}-\bm{y}_t)  \eta_{i'j'1}K_H(\bm{\mathcal{Y}}_{i'j'}-\bm{y}_t))\\ & \qquad \qquad-E
    (\eta_{ij1} K_{H}(\bm{\mathcal{Y}}_{ij}-\bm{y}_t)) E(\eta_{i'j'1}K_H(\bm{\mathcal{Y}}_{i'j'}-\bm{y}_t))\\
    & = \int  K_2(\bm{u})K_2(\bm{v}) \\
    & \qquad \qquad \times \{\Psi_{i,i',1}(\bm{w}_1,\bm{w}_2) g_{i,i'}(\bm{w}_1,\bm{w}_2)- \Psi_1(\bm{w}_1) g(\bm{w}_1)\Psi_1(\bm{w}_2) g(\bm{w}_2) \} d\bm{u}d\bm{v},
  \end{align*}
  where $\Psi_{i,i',1}(\bm{w}_1,\bm{w}_2) := E(\eta_{ij1}\eta_{i'j'1}|\bm{\mathcal{Y}}_{ij}=\bm{w}_1, \bm{\mathcal{Y}}_{i'j'}=\bm{w}_2)$, $\Psi_1(\bm{w}_1) := E(\eta_{ij1}|\bm{\mathcal{Y}}_{ij}=\bm{w}_1)$, with $\bm{w}_1=\bm{y}_t+ \bm{H}^{1/2}\bm{u}$ and $\bm{w}_2=\bm{y}_t + \bm{H}^{1/2}\bm{v}$. Since, by definition $|\eta_{ij1}| \le L_n$, we have $|\Psi_{i,i',1}(\bm{w}_1,\bm{w}_2)|\le L_n^2$
   and $|\Psi_1(\bm{w}_1) \times \Psi_1(\bm{w}_2)| \le L_n^2$. Thus
\begin{align}
  |\Psi_{i,i',1}&(\bm{w}_1,\bm{w}_2) g_{i,i'}(\bm{w}_1,\bm{w}_2)- \Psi_1(\bm{w}_1) g(\bm{w}_1)\Psi_1(\bm{w}_2) g(\bm{w}_2)| \nonumber\\
  & \le |\Psi_{i,i',1}(\bm{w}_1,\bm{w}_2) (g_{i,i'}(\bm{w}_1,\bm{w}_2)- g(\bm{w}_1)g(\bm{w}_2))| \nonumber \\
  & \qquad + |(\Psi_{i,i',1}(\bm{w}_1,\bm{w}_2)-\Psi_1(\bm{w}_1)\Psi_1(\bm{w}_2)) g(\bm{w}_1)g(\bm{w}_2)|\nonumber \\ &\le  L_n^2 |g_{i,i'}(\bm{w}_1,\bm{w}_2)- g(\bm{w}_1)g(\bm{w}_2)|+ 2L_n^2g(\bm{w}_1)g(\bm{w}_2)  \label{eq:den-cond}
\end{align}
Therefore, for some constant $M_{23}$,
\begin{align*}
    |E\varpi_{ij1}(\bm{y}_t)\varpi_{i'j'1}(\bm{y}_t)| \le M_{23}L_n^2.  
\end{align*}
Consequently, from (\ref{eqn:var-split-trun}), we obtain
\begin{align}
    |E\varpi_{ij}(\bm{y}_t)\varpi_{i'j'}(\bm{y}_t)| \le M_{22} |\bm{H}|^{-1/2} L_n^{-\varrho/2} + M_{23}L_n^2 = M_{24} |\bm{H}|^{-2/(4+\varrho)}. \label{eq:th1b-fcov}
\end{align}
Hence, it follows from (\ref{eq:th1b-fcov})  that
\begin{align*}
  \left\vert \text{cov}(\sum_{j=1}^{N_i}\eta_{ij} K_{H}(\bm{\mathcal{Y}}_{ij}-\bm{y}_t), \sum_{j'=1}^{N_{i'}} \eta_{i'j'}K_H(\bm{\mathcal{Y}}_{i'j'}-\bm{y}_t)) \right\vert &\le M_{24} N_i N_{i'}|\bm{H}|^{-2/(4+\varrho)}.  
\end{align*}
Further, applying Lemma \ref{lem:mixing} we obtain that, for $\varrho >0$, 
\begin{align*}
  \vert \text{cov}&(\eta_{ij} K_{H}(\bm{\mathcal{Y}}_{ij}-\bm{y}_t),  \eta_{i'j'}K_H(\bm{\mathcal{Y}}_{i'j'}-\bm{y}_t)) \vert\\
&\le C_{\alpha} \Vert \eta_{ij} K_{H}(\bm{\mathcal{Y}}_{ij}-\bm{y}_t)   \Vert_{2+\varrho} \Vert \eta_{i'j'}K_H(\bm{\mathcal{Y}}_{i'j'}-\bm{y}_t) \Vert_{2+\varrho} \left[ \alpha(d(\bm{s}_i,\bm{s}_{i'})) \right]^{\frac{\varrho}{2+\varrho}}.
\end{align*}
Calculations due to change of variable yield
\begin{align*}
  \Vert \eta_{ij}& K_{H}(\bm{\mathcal{Y}}_{ij}-\bm{y}_t)   \Vert_{2+\varrho} \\&=\left(E|\eta_{ij} K_{H}(\bm{\mathcal{Y}}_{ij}-\bm{y}_t)|^{2+\varrho} \right)^{\frac{1}{2+\varrho}}\\
  &= \left(\int E(|\eta_{ij}|^{2+\varrho}|\bm{\mathcal{Y}}_{ij}=\bm{u})  K_H(\bm{u}-\bm{y}_t)^{2+\varrho} g(\bm{u})d\bm{u}  \right)^{\frac{1}{2+\varrho}} \\
  &=|\bm{H}|^{\frac{-(1+\varrho)}{2(2+\varrho)}}\left(\int E(|\eta_{ij}|^{2+\varrho}|\bm{\mathcal{Y}}_{ij}=\bm{y}_t+ \bm{H}^{1/2}\bm{v})  K(\bm{v})^{2+\varrho} g(\bm{y}_t + \bm{H}^{1/2}\bm{v})d\bm{v}  \right)^{\frac{1}{2+\varrho}}\\
  &\le M_{25} |\bm{H}|^{\frac{-(1+\varrho)}{2(2+\varrho)}},
\end{align*}
where the last step follows from the Conditions (I)[(ii)(a), (ii)(c)] and (III) and $M_{25}$ is some positive constant. This implies
\begin{align*}
  \vert \text{cov}(\eta_{ij} K_{H}(\bm{\mathcal{Y}}_{ij}-\bm{y}_t),  \eta_{i'j'}K_H(\bm{\mathcal{Y}}_{i'j'}-\bm{y}_t)) \vert &\le M_{26} |\bm{H}|^{\frac{-(1+\varrho)}{(2+\varrho)}}\left[ \alpha(d(\bm{s}_i,\bm{s}_{i'})) \right]^{\frac{\varrho}{2+\varrho}},
\end{align*}
for some positive constant $M_{26}$.
Thus is follows from the definition of term (b) in (\ref{eq:var-split-ab})
\begin{align*}
   |(b)| &\le 2M_{27} \sum_{i=1}^{n-1}\sum_{i'=i+1}^n w_i w_{i'} \sum_{j=1}^{N_i} \sum_{j'=1}^{N_{i'}}  \left\vert \text{cov}(\eta_{ij} K_{H}(\bm{\mathcal{Y}}_{ij}-\bm{y}_t), \eta_{i'j'}K_H(\bm{\mathcal{Y}}_{i'j'}-\bm{y}_t))\right\vert\\
  &\le M_{27} \sum_{0 <d(\bm{s}_i,\bm{s}_{i'})<c_n} w_i w_{i'} \sum_{j=1}^{N_i} \sum_{j'=1}^{N_{i'}} \left\vert \text{cov}(\eta_{ij} K_{H}(\bm{\mathcal{Y}}_{ij}-\bm{y}_t), \eta_{i'j'}K_H(\bm{\mathcal{Y}}_{i'j'}-\bm{y}_t))\right\vert\\
  & \qquad + M_{27} \sum_{d(\bm{s}_i,\bm{s}_{i'})>c_n} w_i w_{i'} \sum_{j=1}^{N_i} \sum_{j'=1}^{N_{i'}} \left\vert \text{cov}(\eta_{ij} K_{H}(\bm{\mathcal{Y}}_{ij}-\bm{y}_t), \eta_{i'j'}K_H(\bm{\mathcal{Y}}_{i'j'}-\bm{y}_t))\right\vert\\
  &\le M_{27}\sum_{0 <d(\bm{s}_i,\bm{s}_{i'})<c_n} |\bm{H}|^{-2/(4+\varrho)} w_iN_iw_{i'}N_{i'} \\ & \qquad + M_{27} \sum_{d(\bm{s}_i,\bm{s}_{i'})>c_n} |\bm{H}|^{\frac{-(1+\varrho)}{2+\varrho}} w_i N_i w_{i'}N_{i'} \left[ \alpha(d(\bm{s}_i,\bm{s}_{i'})) \right]^{\frac{\varrho}{2+\varrho}},
\end{align*}
where $\sum_{0 <d(\bm{s}_i,\bm{s}_{i'})<c_n}$ stands for the summation over $\{(i,i'):1 \le i,i'\le n, 0 <d(\bm{s}_i,\bm{s}_{i'})<c_n\}$ the cardinality of which is controlled by $n(c_n/\delta_n)^2$ where $\delta_n$ is defined in Conditions (II) and 
\begin{align*}
\sum_{d(\bm{s}_i,\bm{s}_{i'})>c_n} &:= \{(i,i'):1\le i, i' \le n, d(\bm{s}_i,\bm{s}_{i'})>c_n\}\\
& \subseteq \cup_{m=c_n}^{\infty} \{(i,i'):1\le i, i' \le n, m < d(\bm{s}_i,\bm{s}_{i'}) \le m+1 \}.
\end{align*}
Therefore,
\begin{align}
  |(b)| & \le M_{27} \gamma_{n1} (c_n/\delta_n)^2 |\bm{H}|^{\frac{\varrho}{2(4+\varrho)}}  \nonumber \\ & \qquad \qquad \qquad + M_{27} \gamma_{n1} \sum_{m=c_n}^{\infty}  |\bm{H}|^{\frac{-\varrho}{2(2+\varrho)}} \left(\frac{m+1}{\delta_n}\right)^2 \left[ \alpha(m) \right]^{\frac{\varrho}{2+\varrho}} \nonumber\\
  & \le \gamma_{n1} |\bm{H}|^{\varrho/(2(4+\varrho))} (c_n/\delta_n)^2 \nonumber \\ & \qquad \qquad \qquad + \gamma_{n1} |\bm{H}|^{\frac{-\varrho}{2(2+\varrho)}} \delta_n^{-2} \sum_{m=c_n}^{\infty} m^2 \left[ \alpha(m) \right]^{\frac{\varrho}{2+\varrho}}.
\label{eq:b-last1}
\end{align}
Let $c_n$ be the integer part of $\left(\gamma_{n1}^{-1} |\bm{H}|^{\frac{\varrho}{2(2+\varrho)}} \delta_n^{2}\right)^{-1/\kappa}$ for $\kappa>0$, specified in Condition (II)(i), by which the second part of (\ref{eq:b-last1}) tends to zero as $n \rightarrow \infty$. Note that
\begin{align*}
  \gamma_{n1} |\bm{H}|^{\frac{\varrho}{2(4+\varrho)}} \left(\frac{c_n}{\delta_n} \right)^2 &= \gamma_{n1} |\bm{H}|^{\frac{\varrho}{2(4+\varrho)}} \left(\left( \gamma_{n1}^{-1} |H|^{\frac{\varrho}{2(2+\varrho)}} \delta_n^{2}\right)^{-1/\kappa}\delta_n^{-1}\right)^2\\ &= \gamma_{n1}^{1+2/\kappa} \delta_n^{-2(1+2/\kappa)} |\bm{H}|^{\frac{\varrho}{2(4+\varrho)}-\frac{\varrho}{\kappa(2+\varrho)}} \\ &= \gamma_{n1}^{1+2/\kappa} \delta_n^{-2(1+2/\kappa)} |\bm{H}|^{\frac{\varrho}{2(4+\varrho)}\left(1-\frac{2(4+\varrho)}{\kappa(2+\varrho)}\right)},
\end{align*}
which tends to zero following from the Condition (IV)(ii) for $\kappa > \frac{2(4+\varrho)}{2+\varrho}$.
  \end{proof}


\begin{proposition}\label{proposition:uniform:convergence:u00} (Convergence of $u_{00}-$type terms)
Suppose that conditions (I)[(i),(ii)], (III), (IV)(ii) and (V)(ii) hold.  Let $g(\cdot, \cdot)$ be a $B$-Lipschitz function for some $B>0$ and $h_t$ and $h_y$ are of the same order $h$.
Define $S_n(t,y)=\sum_{i=1}^n w_i\sum_{j=1}^{N_i} g(T_{ij}-t, Y_{ij}-y)K_H(T_{ij}-t, Y_{ij}-y)$. 
Then it follows that 
\begin{align*}
\sup_{t\in [0, 1],y\in \mathfrak{B}}|S_n(t,y)-\ev[S_n(t,y)]| &= o_P(1)
\end{align*}
where  $\mathfrak{B}$ is a compact set of $\mathbb{R}$.
\end{proposition}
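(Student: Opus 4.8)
\textbf{Proof proposal for Proposition \ref{proposition:uniform:convergence:u00}.}

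The plan is to follow the same four-step skeleton used in the proof of Proposition \ref{proposition:p00} (and, at a more granular level, Lemma \ref{lemma:exp:inequality:Qn} and Proposition \ref{proposition:uniform:rate}), but now for a single-sum kernel statistic with a two-dimensional bandwidth. Write $\mathscr{X}_{ij}(t,y)=g(T_{ij}-t,Y_{ij}-y)K_H(T_{ij}-t,Y_{ij}-y)$, so that $S_n(t,y)=\sum_{i=1}^n w_i\sum_{j=1}^{N_i}\mathscr{X}_{ij}(t,y)$. First I would record the elementary moment bounds: since $g$ is bounded by $B$ and $K_2$ is compactly supported with $\int|K_2|^{2+\varrho}<\infty$ by Condition (III), one has $|\mathscr{X}_{ij}|\le CB|\bm H|^{-1/2}=CBh^{-2}$, and $\ev[\mathscr{X}_{ij}^2]\le CB^2 h^{-2}$ after the change of variables $(u,v)=(\bm H^{-1/2})(T_{ij}-t,Y_{ij}-y)$ using that $g$ (the joint density of $(T,Y)$) is bounded. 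These are the analogues of the bounds feeding the Bennett/Bernstein step earlier.

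\textbf{Step 1 (spatial blocking and coupling).} As in Lemma \ref{lemma:exp:inequality:Qn}, I would partition the sampling sites, which by Condition (II) lie (up to a shift) in $[0,C\Delta_n]^2$, into blocks of side $p$, forming four interlaced families of rectangles $A_{ku}^{(r)}$, $r=1,\dots,4$, and set $V_{ku}^{(r)}=\sum_i I(\bm s_i\in A_{ku}^{(r)})w_i\sum_j\mathscr{X}_{ij}(t,y)$. Each $|V_{ku}^{(r)}|$ is controlled by $CB n^{-1}h^{-2}p^2\delta_n^{-2}$ using $\sup_i n w_iN_i<\infty$ (Condition (V)(ii)) and the count $Cp^2\delta_n^{-2}$ of sites in a block. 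Bradley's coupling lemma (Lemma 1.2 in \citealp{bosq2012nonparametric}, as cited in the paper) then produces independent copies $W_{ku}$ of $V_{ku}^{(1)}$ with $\pr(|V_{ku}^{(1)}-W_{ku}|>x)\le C\sqrt{Bn^{-1}h^{-2}p^2\delta_n^{-2}/x}\;\psi(Cp^2\delta_n^{-2},n)\phi(p)$, and by Condition (II)(i) $\psi(i,j)\le\min(i,j)$ and $\phi(j)\lesssim j^{-\rho}$.

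\textbf{Step 2 (exponential inequality) and Step 3 (discretization).} Summing the second-moment bounds over blocks, $\sum_{k,u}\ev[W_{ku}^2]\le C h^{-2}(\sum_i w_i^2N_i + \sum_i w_i^2 N_i(N_i-1)h)=:Q_n'^2$, which is the $u_{00}$-analogue of the quantity $Q_n^2$; here I would invoke exactly the $\sum_i N_iw_i^2/h\to0$, $\sum_i N_i(N_i-1)w_i^2\to0$ conditions from Condition (IV)(ii) plus $(V)(ii)$. Bernstein's inequality applied to the $W_{ku}$'s, combined with the coupling error via a union bound as in Step 3 of Lemma \ref{lemma:exp:inequality:Qn}, yields $\pr(|S_n(t,y)-\ev S_n(t,y)|>8\epsilon)\le 2\exp\{-\epsilon^2/(2Q_n'^2+\tfrac23 C B h^{-2}p^2\delta_n^{-2}\epsilon)\}+\frac{C\Delta_n^2}{p^2}\psi(Cp^2\delta_n^{-2},n)\phi(p)\sqrt{Bn^{-1}h^{-2}\Delta_n^2\delta_n^{-2}/\epsilon}$. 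Then I would take a grid $\{(t_k,y_k)\}_{k\le M}$ of $[0,1]\times\mathfrak{B}$ with mesh $a_n$, $M\lesssim a_n^{-2}$; the $B$-Lipschitz property of $g$ together with the Lipschitz property of $K_2$ gives $|\mathscr{X}_{ij}(t,y)-\mathscr{X}_{ij}(t',y')|\le Ch^{-3}\|(t,y)-(t',y')\|$, so the fluctuation between grid points and $\ev$ of that fluctuation are each $O_P(h^{-3}a_n\sum_i w_iN_i)=O_P(h^{-3}a_n)$.

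\textbf{Step 4 (choice of parameters).} Choosing $\epsilon=\epsilon_n\to0$ slowly (e.g.\ a large multiple of $Q_n'$), $p=p_n$, and $a_n$ so that: (i) $h^{-3}a_n\to0$; (ii) the Bernstein term $\to0$, which needs the first displayed condition of (\ref{eq:condition:V:eq:2}) type, namely $nh^2\delta_n^2\to\infty$ together with $h/\delta_n\to0$ (both in Condition (V)(ii)); and (iii) the coupling-error term $\to0$, which after substituting $\psi\le\min(\cdot)$ and $\phi(p)\lesssim p^{-\rho}$ reduces to the condition $n^{1+\rho}h^{14+2\rho}\delta_n^{5+2\rho}/(\Delta_n^6\log^4 n)\to\infty$ in (\ref{eq:condition:V:eq:2}) — the exponent bookkeeping is the same as in Step 4 of Lemma \ref{lemma:exp:inequality:Qn}, now with $Q_n'$ replaced by its explicit form and the extra power of $h$ coming from the two-dimensional kernel. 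Taking, say, $a_n=h^4$ kills (i) since $h\gtrsim n^{-1}$ under $nh^2\delta_n^2\to\infty$, and then $\log(a_n)=4\log h\ge 4\log C-\log n$ is dominated by the growth in (ii)–(iii). Combining the grid bound with the two fluctuation bounds gives $\sup_{t\in[0,1],y\in\mathfrak{B}}|S_n(t,y)-\ev S_n(t,y)|=o_P(1)$.

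\textbf{Main obstacle.} The delicate point is Step 4: balancing the three competing requirements on $(p_n,a_n,\epsilon_n)$ against the DEI parameters $\delta_n,\Delta_n$ and the mixing rate $\rho$, and checking that Condition (V)(ii) — in particular (\ref{eq:condition:V:eq:2}) — is exactly strong enough to make the coupling-error term vanish after the $h^{-3}$ Lipschitz blow-up and the $|\bm H|^{-1/2}=h^{-2}$ factors are accounted for. Everything else is a routine adaptation of the already-proven Lemma \ref{lemma:exp:inequality:Qn}/Proposition \ref{proposition:uniform:rate} machinery from a univariate-bandwidth single sum to a bivariate-bandwidth single sum; the moment computations are identical in spirit to those in the proof of Lemma \ref{lem:den1-var}.
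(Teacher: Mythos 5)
Your proposal is correct and follows essentially the same route as the paper's own proof: spatial blocking of the sites over $[0,C\Delta_n]^2$, Bradley's coupling from Lemma 1.2 of \citet{bosq2012nonparametric}, a Bernstein bound for the coupled independent blocks plus the coupling-error term, then a grid over $[0,1]\times\mathfrak{B}$ with the $Ch^{-3}$ Lipschitz control and the same rate bookkeeping reducing to $nh^2\delta_n^2\to\infty$ and $n^{1+\rho}h^{14+2\rho}\delta_n^{5+2\rho}/(\Delta_n^6\log^4 n)\to\infty$ from Condition (V)(ii). The only differences are cosmetic (your mesh choice $a_n=h^4$ versus the paper's $r=h^{-3}\log n$, and your variance bound keeping the within-subject term at order $h^{-1}$), neither of which changes the argument.
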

\begin{proof}
The proof is divided into two steps.

\noindent \textbf{Step 1:}
By definition of $\Delta_n$ in (\ref{eqn:sc-2}) and without loss of generality, we can assume $\bfs_1,\ldots, \bfs_n\in [0, C\Delta_n]^2:=\Omega_n$ for some $C>0$. If not, we can shift the points. For each pair $i, k$, we define rectangles
\begin{eqnarray*}
A_{ku}^{(1)}&=&\{\bfs=(s_1, s_2): 2(k-1)p\leq s_{1}<(2k-1)p, 2(u-1)p\leq s_{2}<(2u-1)p\},\\
A_{ku}^{(2)}&=&\{\bfs=(s_1, s_2): 2(k-1)p\leq s_{1}<(2k-1)p, (2u-1)p\leq s_{2}<2up\},\\
A_{ku}^{(3)}&=&\{\bfs=(s_1, s_2): (2k-1)p\leq s_{1}<2kp, 2(u-1)p\leq s_{2}<(2u-1)p\},\\
A_{ku}^{(4)}&=&\{\bfs=(s_1, s_2): (2k-1)p\leq s_{1}<2kp, (2u-1)p\leq s_{2}<2up\}.
\end{eqnarray*}
Correspondingly, let us define
\begin{eqnarray*}
V_{ku}^{(1)}&=&\sum_{i=1}^n I(\bfs_i \in A_{ku}^{(1)})w_i\sum_{j=1}^{N_i} g(T_{ij}-t, Y_{ij}-y)K_H(T_{ij}-t, Y_{ij}-y),\\
V_{ku}^{(2)}&=&\sum_{i=1}^n I(\bfs_i \in A_{ku}^{(2)})w_i\sum_{j=1}^{N_i} g(T_{ij}-t, Y_{ij}-y)K_H(T_{ij}-t, Y_{ij}-y),\\
V_{ku}^{(3)}&=&\sum_{i=1}^n I(\bfs_i \in A_{ku}^{(3)})w_i\sum_{j=1}^{N_i} g(T_{ij}-t, Y_{ij}-y)K_H(T_{ij}-t, Y_{ij}-y),\\
V_{ku}^{(4)}&=&\sum_{i=1}^n I(\bfs_i \in A_{ku}^{(4)})w_i\sum_{j=1}^{N_i} g(T_{ij}-t, Y_{ij}-y)K_H(T_{ij}-t, Y_{ij}-y).
\end{eqnarray*}
It is not difficult to verify that
\begin{eqnarray}
S_n(t,y)=\sum_{k=1}^M \sum_{u=1}^M(V_{ku}^{(1)}+V_{ku}^{(2)}+V_{ku}^{(3)}+V_{ku}^{(4)}),\label{eq:proposition:uniform:convergence:u00:step1:eq0}
\end{eqnarray}
where $M$ is an integer such that $M\leq C\Delta_n/p$. 

Notice that 
\begin{align*}
|{V}_{ku}^{(1)}|\leq B h_t^{-1}h_y^{-1}\sum_{i=1}^n I(\bfs_i\in A_{ku}^{(1)})w_iN_i\leq B h_t^{-1}h_y^{-1}Cp^2\delta_n^{-2} \sup_{i}w_iN_i \leq C n^{-1}h_t^{-1}h_y^{-1}p^2\delta_n^{-2}
\end{align*}
, Lemma 1.2 of \cite{bosq2012nonparametric} implies that there is a sequence of independent variables $\{W_{ku}\}$ such that ${V}_{ku}^{(1)}$ and $W_{ku}$ share the same distribution, and further
\begin{eqnarray}
\pr(|[{V}_{ku}^{(1)}-\ev({V}_{ku}^{(1)})]-[W_{ku}-\ev(W_{ku})]|>x)&\leq&C \sqrt{\frac{C n^{-1}h_t^{-1}h_y^{-1}p^2\delta_n^{-2}}{x}} \alpha(A_{ku}^{(1)},\Omega_n/A_{ku}^{(1)})\nonumber\\
&\leq& C \sqrt{\frac{C n^{-1}h_t^{-1}h_y^{-1}p^2\delta_n^{-2}}{x}} \psi(Cp^2\delta_n^{-2} ,n)\phi(p).\nonumber\\
\label{eq:proposition:uniform:convergence:u00:step1:eq1}
\end{eqnarray}

Similar to the proof in  Theorem \ref{th:1}, we can show that
\begin{eqnarray}
\sum_{k=1}^M \sum_{u=1}^M var(W_{ku})\leq C\left(\sum_{i=1}^n w_i^2 N_i h_t^{-1}h_y^{-1} + \sum_{i=1}^n N_i(N_i-1)w_i^2h \right):= C V_n^2 \label{eq:proposition:uniform:convergence:u00:step1:eq2}
\end{eqnarray}
and
\begin{eqnarray}
|W_{ku}|\leq B h_t^{-1}h_y^{-1}\sum_{i=1}^n I(\bfs_i\in A_{ku}^{(1)})w_iN_i\leq B h_t^{-1}h_y^{-1}Cp^2\delta_n^{-2} \sup_{i}w_iN_i \leq C n^{-1}h_t^{-1}h_y^{-1}p^2\delta_n^{-2}. \label{eq:proposition:uniform:convergence:u00:step1:eq3}
\end{eqnarray}
For any $\epsilon>0$, simple calculation leads to
\begin{eqnarray*}
&&\pr(|\sum_{k=1}^M \sum_{u=1}^M [V_{ku}^{(1)}-\ev(V_{ku}^{(1)})]|>2\epsilon)\\
&\leq&  \pr(|\sum_{k=1}^M \sum_{u=1}^M [V_{ku}^{(1)}-\ev(V_{ku}^{(1)})]|>2\epsilon, |[V_{ku}^{(1)}-\ev(V_{ku}^{(1)})]-[W_{ku}-\ev(W_{ku})]|\leq \epsilon/M^2, \textrm{ for all } k,u)\\
&&+\sum_{k=1}^M \sum_{u=1}^M\pr( |[V_{ku}^{(1)}-\ev(V_{ku}^{(1)})]-[W_{ku}-\ev(W_{ku})]|> \epsilon/M^2)\\
&\leq&  \pr(|\sum_{k=1}^M \sum_{u=1}^M [W_{ku}-\ev(W_{ku})]|>\epsilon)+\sum_{k=1}^M \sum_{u=1}^M\pr( |[V_{ku}^{(1)}-\ev(V_{ku}^{(1)})]-[W_{ku}-\ev(W_{ku})]|> \epsilon/M^2)\\
&:=&R_1+R_2.
\end{eqnarray*}
Using (\ref{eq:proposition:uniform:convergence:u00:step1:eq2}) and (\ref{eq:proposition:uniform:convergence:u00:step1:eq3}), Bernstein inequality implies that
\begin{eqnarray*}
R_1\leq 2\exp\left\{-\frac{\epsilon^2}{2V_n^2+\frac{2}{3}C n^{-1}h_t^{-1}h_y^{-1}p^2\delta_n^{-2}\epsilon}\right\}.
\end{eqnarray*}
The definition of $M$ and (\ref{eq:proposition:uniform:convergence:u00:step1:eq1}) together show that
\begin{eqnarray*}
R_2\leq CM^2 \sqrt{\frac{C M^2n^{-1}h_t^{-1}h_y^{-1}p^2\delta_n^{-2}}{\epsilon}}  \psi(Cp^2\delta_n^{-2} ,n)\phi(p)\leq \frac{C\Delta_n^2}{p^2} \psi(Cp^2\delta_n^{-2} ,n)\phi(p))\sqrt{\frac{C n^{-1}h_t^{-1}h_y^{-1}\Delta_n^2\delta_n^{-2}}{\epsilon}}.
\end{eqnarray*}
Combining the above three in equalities, we conclude that
\begin{eqnarray}
\pr(|\sum_{k=1}^M \sum_{u=1}^M [V_{ku}^{(1)}-\ev(V_{ku}^{(1)})]|>2\epsilon)&\leq& 2\exp\left\{-\frac{\epsilon^2}{2V_n^2+\frac{2}{3}C n^{-1}h_t^{-1}h_y^{-1}p^2\delta_n^{-2}\epsilon}\right\}\nonumber\\
&&+ \frac{C\Delta_n^2}{p^2} \psi(Cp^2\delta_n^{-2} ,n)\phi(p))\sqrt{\frac{C n^{-1}h_t^{-1}h_y^{-1}\Delta_n^2\delta_n^{-2}}{\epsilon}}.\nonumber\\ \label{eq:proposition:uniform:convergence:u00:step1:eq4}
\end{eqnarray}

Similar inequalities as (\ref{eq:proposition:uniform:convergence:u00:step1:eq4}) can be proved for $V_{ku}^{(2)}, V_{ku}^{(3)}, V_{ku}^{(4)}$ by the same argument. Hence,  using union bound and  (\ref{eq:proposition:uniform:convergence:u00:step1:eq0}), we have
\begin{eqnarray}
\pr(|S_n(t,y)-\ev[S_n(t,y)]|>8\epsilon)&\leq& C\exp\left\{-\frac{\epsilon^2}{2V_n^2+\frac{2}{3}C n^{-1}h_t^{-1}h_y^{-1}p^2\delta_n^{-2}\epsilon}\right\}\nonumber\\
&&+ \frac{C\Delta_n^2}{p^2} \psi(Cp^2\delta_n^{-2} ,n)\phi(p))\sqrt{\frac{C n^{-1}h_t^{-1}h_y^{-1}\Delta_n^2\delta_n^{-2}}{\epsilon}}.\nonumber\\\label{eq:proposition:uniform:convergence:u00:step1:eq5}
\end{eqnarray}

\noindent \textbf{Step 2:}
Let $t_1, \ldots, t_{r}$ and $y_1,\ldots, y_r$ be equidistance partitions of $[0, 1]$ and $\mathfrak{B}$. Hence, it follows that
\begin{eqnarray*}
\sup_{(t,y)\in [0, 1]\times \mathfrak{B}}|S_n(t, y)-\ev[S_n(t, y)]|&\leq& \sup_{1\leq i,j \leq r}|S_n(t_i, y_j)-\ev[S_n(t_i, y_j)]|\\
&&+\sup_{|t-\widetilde{t}|\leq C/r, |y-\widetilde{y}|\leq C/r}|S_n(t,y)-S_n(\widetilde{t}, \widetilde{y})|\\
&&+\sup_{|t-\widetilde{t}|\leq C/r, |y-\widetilde{y}|\leq C/r}|\ev[S_n(t,y)]-\ev[S_n(\widetilde{t}, \widetilde{y})]|\\
\end{eqnarray*}
The Lipschitz conditions of kernel and and the rate $h_t = h_y= O(h)$ imply that
\begin{eqnarray*}
\sup_{|t-\widetilde{t}|\leq C/r, |y-\widetilde{y}|\leq C/r}|S_n(t,y)-S_n(\widetilde{t}, \widetilde{y})|\leq  \sum_{i=1}^n w_i \sum_{j=1}^{N_i}\frac{C}{h_th_y} r^{-1}\sqrt{h_t^{-2}+h_y^{-2}}=O_P\left(h^{-3}r^{-1}\right).
\end{eqnarray*}
Similarly, we also have
\begin{eqnarray}
\sup_{|t-\widetilde{t}|\leq C/r, |y-\widetilde{y}|\leq C/r}|\ev[S_n(t,y)]-\ev[S_n(\widetilde{t}, \widetilde{y})]|=O_P\left(h^{-3}\right).\nonumber
\end{eqnarray}

Using (\ref{eq:proposition:uniform:convergence:u00:step1:eq5}), we have
\begin{eqnarray*}
&&\pr(\sup_{1\leq i,j \leq r}|S_n(t_i, y_j)-\ev[S_n(t_i, y_j)]|>8\epsilon)\\
&\leq& Cr^2\exp\left\{-\frac{\epsilon^2}{2V_n^2+\frac{2}{3}C n^{-1}h_t^{-1}h_y^{-1}p^2\delta_n^{-2}\epsilon}\right\}\nonumber\\
&&+ \frac{Cr^2\Delta_n^2}{p^2} \psi(Cp^2\delta_n^{-2} ,n)\phi(p))\sqrt{\frac{C n^{-1}h_t^{-1}h_y^{-1}\Delta_n^2\delta_n^{-2}}{\epsilon}}.
\end{eqnarray*}
Combining the above inequalities, we can show that 
\begin{eqnarray*}
\sup_{(t,y)\in [0, 1]\times \mathfrak{B}}|S_n(t, y)-\ev[S_n(t, y)]|=o_P(1),
\end{eqnarray*}
if the following rate conditions hold
\begin{eqnarray*}
h^3 r\to \infty,\quad V_n^2 \to  0,\quad \frac{p^2}{nh^2\delta_n^2}\to0,\quad \frac{\Delta_n^6 r^4}{nh^2\delta_n^2 p^4}  \psi^2(Cp^2\delta_n^{-2} ,n)\phi^2(p)\to 0.
\end{eqnarray*}
Taking $r=h^{-3}\log(n)$ and using the rate conditions given, the above conditions become
\begin{eqnarray*}
V_n^2\to 0,\quad \frac{p^2}{nh^2\delta_n^2}\to 0,\quad \frac{\Delta_n^6 \log^4(n) }{nh^{14} \delta_n^5 p^{2\rho}} \to 0.
\end{eqnarray*}
To guarantee the existence of such $p$, sufficient conditions are
\begin{eqnarray*}
V_n^2\to 0,\quad nh^2\delta_n^2&\to&\infty,\quad \frac{\Delta_n^6 \log^4(n) }{nh^{14}\delta_n^5 }<<n^\rho h^{2\rho} \delta_n^{2\rho},
\end{eqnarray*}
which are equivalent to 
\begin{eqnarray*}
V_n^2\to 0,\quad nh^2\delta_n^2\to\infty,\quad \frac{n^{1+\rho}h^{14+2\rho}\delta_n^{5+2\rho}}{\Delta_n^6 \log^4(n)}\to\infty,
\end{eqnarray*}
and they follow from Conditions (IV)(i) and (V)(ii).
\end{proof}

With the help of Proposition \ref{proposition:uniform:convergence:u00}, we now provide the $L_2$ convergence rate for the conditional covariance $\widehat{R}_e(0,t_1,t_2)$.

\begin{proof}[Proof for Lemma \ref{lm:2}]
For simplicity, assume $Y_i \in \mathfrak{B}$, $i=1,\ldots,n$. Note that
\begin{align*}
\widehat{R}_e &= \frac{1}{n}\sum_{i=1}^n \widehat{m}(t_1,Y_i)\widehat{m}(t_2,Y_i) -\frac{1}{n}\sum_{i=1}^n \widehat{m}(t_1,Y_i) \frac{1}{n}\sum_{j=1}^n \widehat{m}(t_2,Y_j).
\end{align*}
Define
\begin{align*}
    \widetilde{R}_e &= \frac{1}{n}\sum_{i=1}^n m(t_1,Y_i)m(t_2,Y_i) -\frac{1}{n}\sum_{i=1}^n m(t_1,Y_i) \frac{1}{n}\sum_{j=1}^n m(t_2,Y_j).
\end{align*}
We write $\widehat{R}_e - R_e = (\widehat{R}_e - \widetilde{R}_e) + (\widetilde{R}_e - R_e)$. First consider the term
\begin{align}
    \widehat{R}_e - \widetilde{R}_e &= \left( \frac{1}{n}\sum_{i=1}^n \widehat{m}(t_1,Y_i)\widehat{m}(t_2,Y_i) - \frac{1}{n}\sum_{i=1}^n m(t_1,Y_i)m(t_2,Y_i) \right) \nonumber\\
    & \qquad - \left( \frac{1}{n}\sum_{i=1}^n \widehat{m}(t_1,Y_i) \frac{1}{n}\sum_{j=1}^n \widehat{m}(t_2,Y_j) - \frac{1}{n}\sum_{i=1}^n m(t_1,Y_i) \frac{1}{n}\sum_{j=1}^n m(t_2,Y_j) \right) \nonumber \\
    &= (I) + (II). \label{eqn:re-hs-split}
\end{align}
Reorganizing (I), we obtain $(I)= S_1 +S_2 + S_3$, where
\begin{align}
\begin{split}
    S_1 &=  \frac{1}{n}\sum_{i=1}^n [\widehat{m}(t_1,Y_i)- m(t_1,Y_i)][\widehat{m}(t_2,Y_i)-m(t_2,Y_i)]  \\
    S_2 &= \frac{1}{n}\sum_{i=1}^n m(t_1,Y_i)[\widehat{m}(t_2,Y_i)-m(t_2,Y_i)] \\
    S_3 &= \frac{1}{n}\sum_{i=1}^n [\widehat{m}(t_1,Y_i)- m(t_1,Y_i)]m(t_2,Y_i). 
    \end{split}
    \label{eqn:lem-re-1s}
\end{align}
Now we evaluate the $L_2$ norm for each term in (\ref{eqn:lem-re-1s}). First consider the $L_2$ norm of $S_1$ which is
\begin{align*}
    \Vert S_1 \Vert_{L_2} &= \left[ \int \int  \left\{ \frac{1}{n}\sum_{i=1}^n [\widehat{m}(t_1,Y_i)- m(t_1,Y_i)][\widehat{m}(t_2,Y_i)-m(t_2,Y_i)] \right\}^2 dt_1 dt_2 \right]^{1/2} \\
    &\le \left[ \int \int   \frac{1}{n}\sum_{i=1}^n [\widehat{m}(t_1,Y_i)- m(t_1,Y_i)]^2\frac{1}{n}\sum_{i=1}^n [\widehat{m}(t_2,Y_i)-m(t_2,Y_i)]^2  dt_1 dt_2 \right]^{1/2} \\
    &= \frac{1}{n} \sum_{i=1}^n \Vert \widehat{m}(t,Y_i)- m(t,Y_i)  \Vert_{L_2}^2.
\end{align*}
From Proposition \ref{proposition:uniform:convergence:u00} and Theorem \ref{th:1} and using arguments analogous to Lemma 3 and Theorem 4.2 in \citet{zhang2016sparse}, we obtain
\begin{align}
    \Vert \widehat{m}(t,y)- m(t, y) \Vert_{HS} =& \int \int_{\mathfrak{B}}  [\widehat{m}(t,y)- m(t, y)]^2 dy dt \nonumber\\ 
    &= O_p\left(h^2 + \sqrt{\frac{\sum_{i=1}^n N_i w_i^2}{h^2} + \frac{\sum_{i=1}^nN_i(N_i-1)w_i^2}{h} }\right). \label{eqn:re-hs-s1-l2}
\end{align}
Therefore,
\begin{align*}
    \Vert S_1 \Vert_{L_2} &= O_p\left(h^2 + \sqrt{\frac{\sum_{i=1}^n N_i w_i^2}{h^2} + \frac{\sum_{i=1}^nN_i(N_i-1)w_i^2}{h} }\right).
\end{align*}
Similar calculations yields that both $\Vert S_2 \Vert_{L_2}$ and $\Vert S_3 \Vert_{L_2}$ are of the same order as in (\ref{eqn:re-hs-s1-l2}). Hence,  
\begin{align}
    \widehat{R}_e - \widetilde{R}_e &= O_p\left(h^2 + \sqrt{\frac{\sum_{i=1}^n N_i w_i^2}{h^2} + \frac{\sum_{i=1}^nN_i(N_i-1)w_i^2}{h} }\right).
\end{align}
Moreover,  for part (II) in (\ref{eqn:re-hs-split}) we obtain $\widetilde{R}_e - R_e=O_p(1/\sqrt{n})$. Thus the result is proved. 
\end{proof}

\section{Simulation study} \label{sec:numer}

We illustrate the numerical performance of the proposed methodology using simulations. Our simulation design is similar to \citet{zhang2022unified} in terms of generating spatially correlated functional data. The functional data are generated from (\ref{eqn:x-meas-err}) in the spatial domain $\mathcal{D}=[0,1]^2$ and time domain $\mathcal{I}=[0,1]$, as 
\begin{align*}
  X(\bm{s}; t) &= \mu(t) + \sum_{j=1}^3 A_j(\bm{s}) \pi_j(t),
\end{align*} 
where $\mu(t)=2t\sin(2\pi t)$, $\pi_1(t)=\cos(2\pi t)$, $\pi_2(t)=\sin(2\pi t)$, and $\pi_3(t)=\cos(4\pi t)$. The principal component scores $A_j(\bm{s})$, $j=1,2,3$, are Gaussian random fields generated using the \emph{RandomFields} package \citep{randomfields} in \texttt{R}, where their spatial covariance functions are members of the Matern family, 
\begin{align*}
    \mathcal{C}_j(u; v,r) &=  V_{A,j} \frac{2^{1-v}}{\Gamma(v)} (\sqrt{2v}u/r)^v K_v(\sqrt{2v}u/r),
\end{align*}
where the variances of $A_j(\cdot)$, $j=1,2,3$, are $(V_{A,1},V_{A,2},V_{A,3})=(1, 1.5, 1)$ and $K_v(\cdot)$ is the modified Bessel function of the second kind with degree $v$.  The shape parameter $v$  and the scale parameter $r$ are set to be $0.2, 0.1, 0.05$ and $1,0.5,1$, for the three principal components, respectively. The spatial coordinates $\{\bm{s}_i\}$ are drawn uniformly in the square $\{(0,0),(0,1),(1,0),(1,1) \}$ with a hole inside the coordinates $\{(0.6,0.2), (0.4,0.2)$,$(0.4,0.8),(0.6,0.8) \}$ as shown in Figure \ref{fig:sp-sample}. The measurement error $\{e_{ij}\}$ is generated as i.i.d. $N(0, 0.1^2)$. We also consider a functional nugget effect $U_i(t)=\sum_{j=1}^2A_{\text{nug},j}(\bm{s}_i)\pi_{\text{nug},j}$, with $\pi_{\text{nug},1}= \sqrt{3}t$ and $\pi_{\text{nug},2}=-2\sqrt{15/7}t^2+\sqrt{15/7}$. We consider $A_{\text{nug},j} \sim N(0, V_{\text{nug},j})$, $j=1,2$ with $(V_{\text{nug},1},V_{\text{nug},2})=(0.5,1)$. Then the response variable is generated as
\begin{align*}
     Y_i &= 3+ f(\langle\beta, X_i\rangle)+ \epsilon_i, \qquad i=1,\ldots,n,
\end{align*}
with $f(x)=x/(1+\exp(x))$, $\beta(t)=\sqrt{2}\sin(3\pi t/2)$, $Z_{ij}=X_{ij}+U_{ij}+e_{ij}$, and $\epsilon_i \sim N(0, 0.1^2)$.

We apply a 3-fold cross-validation procedure for bandwidth selection.  Primarily, our simulation design consists of the following four combinations: (nugget, no nugget) $\times$ (sparse, dense). The number of repeated measurements is drawn from a discrete uniform distribution $[3,7]$ for  sparse  and  from $[10, 15+ \lfloor n/10 \rfloor ]$ for dense functional data,  where  $n=\{30, 50, 100\}$.   We estimate e.d.r. directions for each of the four combinations using both $\widehat{R}(0,\cdot,\cdot)$ and $\widehat{\Gamma}(\cdot,\cdot)$ under both weighting schemes SUBJ and OBS. Hereafter, we denote the proposed estimation method using $\widehat{R}(0,\cdot,\cdot)$  as SFSIR, and the regular estimation approach using   $\widehat{\Gamma}(\cdot,\cdot)$ as FSIR \citep{jiang2014inverse}. The simulations are carried out on a linux server using \texttt{R} programming language \citep{Rsoftware}. For comparison, we also estimate $\beta$ using functional regression model (\texttt{FLM1} function in the \emph{fdapace} package \citep{fdapace} in \texttt{R}); we refer to it as FLM.

\begin{figure}
    \centering
    \includegraphics[scale=0.4]{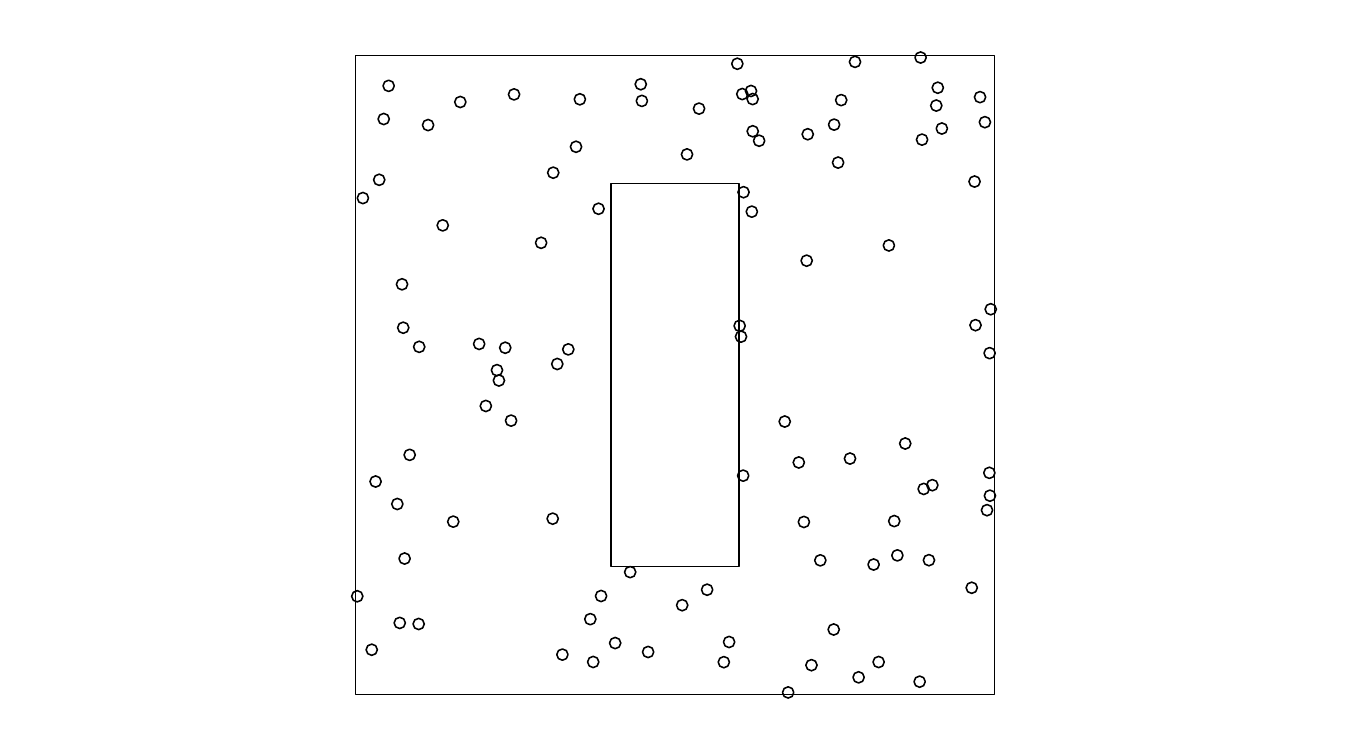}
    \caption{Locations of generated spatial sites on $[0,1]^2$. \texttt{runifpoint} function in \emph{RandomFields} package is utilized to generate the data.}
    \label{fig:sp-sample}
\end{figure}
To assess the performance of aforementioned methods, we compute the integrated squared bias (ISB), integrated variance (IVAR), and mean integrated  squared error (MISE) which are defined in \citet{jiang2014inverse}. To reduce the computational burden for the estimation of $\widehat{R}(\cdot; \cdot,\cdot)$, which includes the computation of $C_{ij,i'j'} :=[Z_{ij}-\widehat{\mu}(T_{ij})][Z_{i'j'}-\widehat{\mu}(T_{i'j'})]$, we consider the following strategies:
\begin{itemize}
    \item We compute the cross-products across functions, $(i,i')$, of $i$th function with top 20\% of the nearest functions.
    \item We use the \texttt{binning} function in the \textit{sm} package \citep{smpackage} in \texttt{R} to reduce the number of observations. 
\end{itemize}

\begin{table}[ht]
\centering
\begin{tabular}{lllrrr|rrr}
  \hline
   &  &  & \multicolumn{3}{c}{SUBJ} & \multicolumn{3}{c}{OBS}\\
   
   & & & ISB & IVAR & MISE & ISB & IVAR & MISE \\ 
  \hline
\multirow{6}{*}{Sparse} & \multirow{3}{*}{Nugget} & FSIR  & 0.39 & 0.78 & 1.18 & 0.37 & 0.78 & 1.15 \\ 
   &  & SFSIR & 0.22 & 0.60 & 0.82 & 0.25 & 0.61 & 0.87 \\ 
   &  & FLM & 0.71 & 0.55 & 1.27 & 0.71 & 0.55 & 1.27 \\ 
   & \multirow{3}{*}{No Nugget} & FSIR & 0.27 & 0.57 & 0.84 & 0.28 & 0.56 & 0.84 \\ 
   &  & SFSIR & 0.19 & 0.62 & 0.81 & 0.18 & 0.61 & 0.80 \\ 
   &  & FLM & 0.45 & 2.47 & 2.92 & 0.45 & 2.47 & 2.92 \\ 
   \hline \\
  \multirow{6}{*}{Dense} & \multirow{3}{*}{Nugget} & FSIR & 0.39 & 0.65 & 1.04 & 0.44 & 0.60 & 1.03 \\
   &  & SFSIR  & 0.24 & 0.53 & 0.77 & 0.25 & 0.59 & 0.83 \\ 
   &  & FLM & 0.79 & 0.24 & 1.02 & 0.79 & 0.24 & 1.02 \\ 
   & \multirow{3}{*}{No Nugget}  & FSIR & 0.20 & 0.56 & 0.76 & 0.20 & 0.50 & 0.71 \\ 
   &  & SFSIR& 0.18 & 0.57 & 0.75 & 0.20 & 0.57 & 0.77 \\ 
   &  & FLM & 0.49 & 0.32 & 0.81 & 0.49 & 0.32 & 0.81 \\ 
   
   \hline
\end{tabular}
\caption{Here $n=50$. FSIR: \citet{jiang2014inverse}. SFSIR: Proposed method. FLM: \texttt{FLM1} function in \emph{fdapace} package. }
\label{tab:fifty}
\end{table}

\begin{table}[ht]
\centering
\begin{tabular}{lllrrr|rrr}
  \hline
   &  &  & \multicolumn{3}{c}{SUBJ} & \multicolumn{3}{c}{OBS}\\
   
   & & & ISB & IVAR & MISE & ISB & IVAR & MISE \\ 
  \hline
\multirow{6}{*}{Sparse} & \multirow{3}{*}{Nugget} & FSIR  & 0.29 & 0.58 & 0.87 & 0.38 & 0.58 & 0.95 \\ 
   &  & SFSIR & 0.17 & 0.55 & 0.72 & 0.16 & 0.56 & 0.72 \\  
   &  & FLM & 0.78 & 0.41 & 1.18 & 0.77 & 0.40 & 1.18 \\
   & \multirow{3}{*}{No Nugget} & FSIR& 0.27 & 0.48 & 0.75 & 0.26 & 0.46 & 0.72 \\
   &  & SFSIR & 0.15 & 0.49 & 0.64 & 0.14 & 0.54 & 0.68 \\
   &  & FLM & 0.41 & 1.25 & 1.66 & 0.41 & 1.24 & 1.65 \\
   \hline \\
  \multirow{6}{*}{Dense} & \multirow{3}{*}{Nugget} & FSIR & 0.53 & 0.45 & 0.98 & 0.52 & 0.44 & 0.96 \\ 
   &  & SFSIR & 0.19 & 0.45 & 0.64 & 0.21 & 0.45 & 0.66 \\ 
   &  & FLM & 0.89 & 0.07 & 0.97 & 0.89 & 0.07 & 0.96 \\  
   & \multirow{3}{*}{No Nugget}  & FSIR & 0.12 & 0.35 & 0.47 & 0.15 & 0.34 & 0.49 \\
   &  & SFSIR & 0.20 & 0.46 & 0.67 & 0.17 & 0.44 & 0.61 \\
   &  & FLM & 0.50 & 0.02 & 0.52 & 0.50 & 0.02 & 0.52 \\ 
   
   \hline
\end{tabular}
\caption{Here $n=100$. FSIR: \citet{jiang2014inverse}. SFSIR: Proposed method. FLM: \texttt{FLM1} function in \emph{fdapace} package. }
\label{tab:hundred}
\end{table}

We evaluate the mean and covariance functions at 101 grid points between 0 and 1 and  use 100 replications for each combination of the simulation design. For each replication, for both SFSIR and  FSIR, the bandwidths are chosen based on 3-fold cross-validation and they are used to estimate the eigendirections. For FLM, the \emph{fdapace} package uses 10\% of the support by default for bandwidth selection. The simulation results for sample sizes (n = 30, 50, 100)  are provided in Tables S.1, \ref{tab:fifty} and \ref{tab:hundred}. The plus or minus 1 standard deviation bands are also provided in Figures S.1, \ref{fig:sub50} and S.3 for the SUBJ scheme and in Figures S.2, \ref{fig:obs50} and S.4 for the OBS weighting scheme in Supplementary Material \citep{suppl}. We observe that the proposed SFSIR yields the smallest MISE when the data includes nugget effect for both sparse and dense data and for both weighting schemes SUBJ and OBS. On the other hand, the proposed method does not lose much  efficiency, especially for sparse data when the nugget effect is not included.  Similarly to the results from \citet{zhang2016sparse}, the OBS scheme outperforms the SUBJ scheme for sparse data and the SUBJ scheme works better for dense functional data.   On the whole, as expected, the estimation improves with the sample size.

To sum up, the proposed method yields good performance in our simulation study. Although we adopted some strategies to reduce the computational burden of the proposed method, they are far from being optimal. A more systematic approach is needed to carefully device an efficient algorithm. 
\begin{figure}
    \centering
    \includegraphics[scale=0.4]{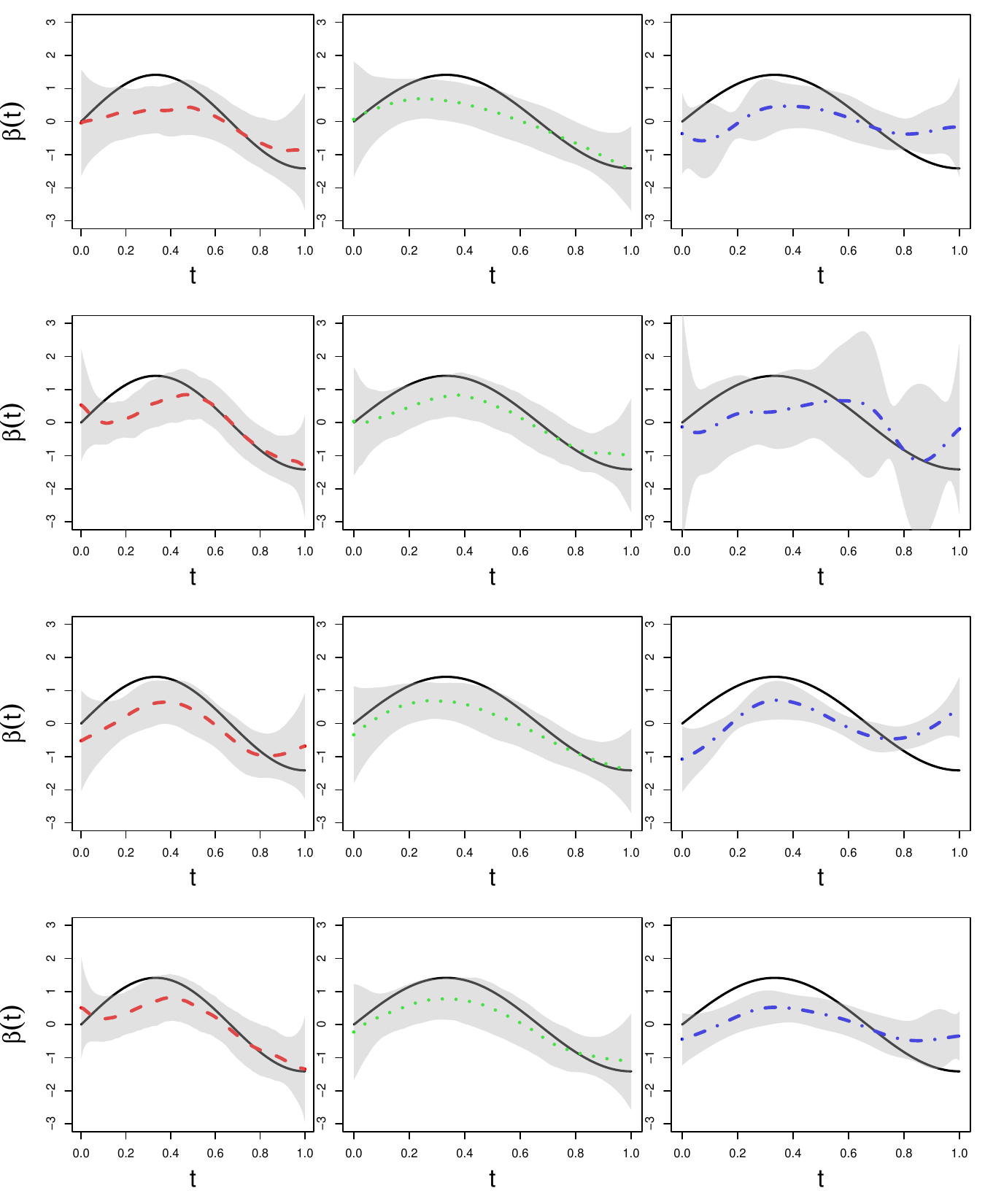}
    \caption{(n=50) Estimated mean curves with SUBJ weighting scheme along with plus or minus 1 standard deviation. Columns: (left, middle, right) = (FSIR, SFSIR, FLM). Rows: (1,2,3,4)=(sparse+nugget, sparse+no nugget, dense+nugget, dense+no nugget). Solid line denotes the true function}
    \label{fig:sub50}
\end{figure}
\begin{figure}
    \centering
    \includegraphics[scale=0.4]{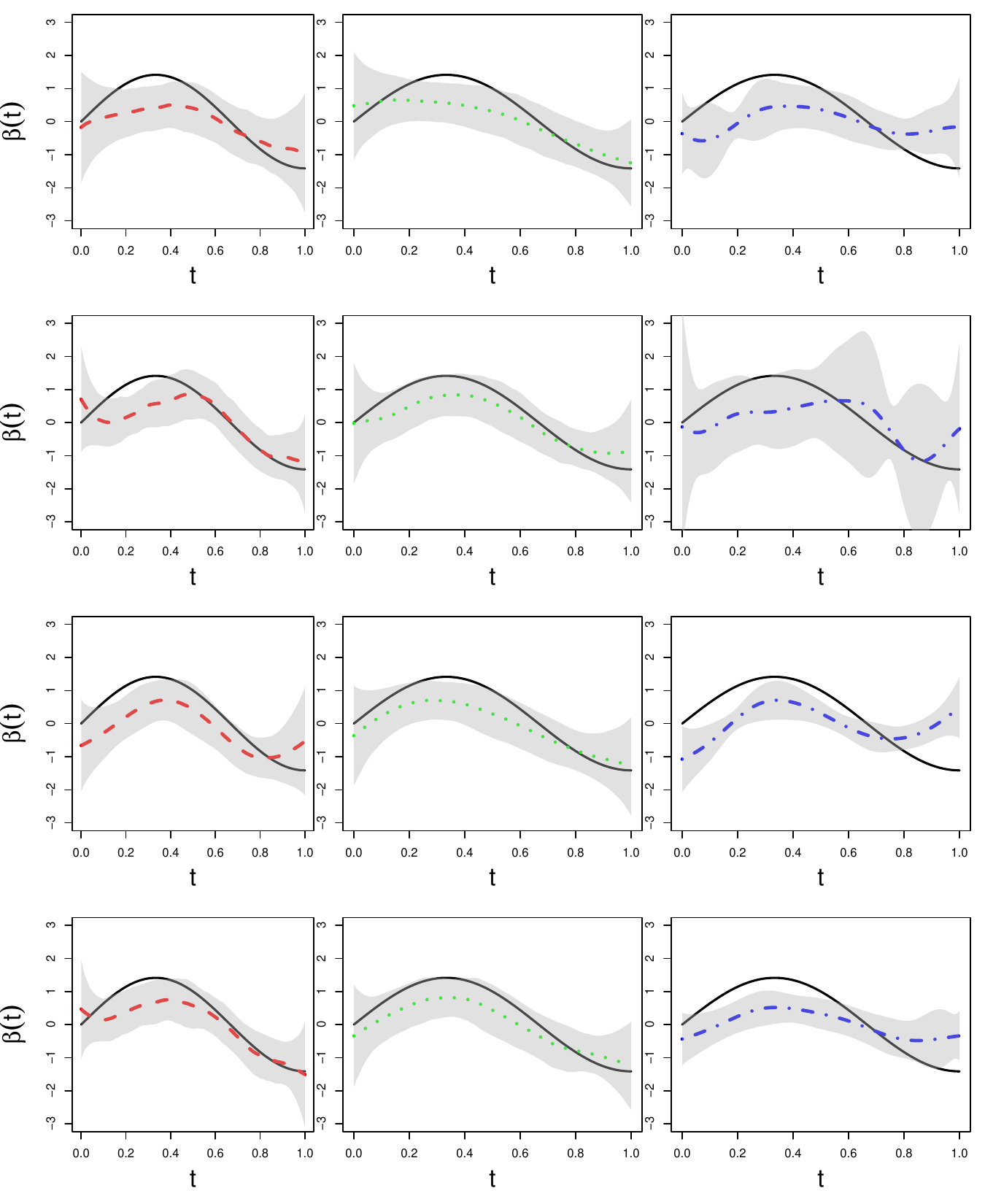}
    \caption{(n=50) Estimated mean curves with OBS weighting scheme along with plus or minus 1 standard deviation. Columns: (left, middle, right) = (FSIR, SFSIR, FLM). Rows: (1,2,3,4)=(sparse+nugget, sparse+no nugget, dense+nugget, dense+no nugget). Solid line denotes the true function}
    \label{fig:obs50}
\end{figure}

\begin{table}[ht]
\centering
\begin{tabular}{lllrrr|rrr}
  \hline
   &  &  & \multicolumn{3}{c}{SUBJ} & \multicolumn{3}{c}{OBS}\\
   
   & & & ISB & IVAR & MISE & ISB & IVAR & MISE \\ 
  \hline
\multirow{6}{*}{Sparse} & \multirow{3}{*}{Nugget} & FSIR  & 0.42 & 0.83 & 1.26 & 0.50 & 0.88 & 1.38 \\  
   &  & SFSIR & 0.38 & 0.72 & 1.11 & 0.34 & 0.70 & 1.05 \\  
   &  & FLM & 0.77 & 1.29 & 2.06 & 0.77 & 1.29 & 2.06 \\
   & \multirow{3}{*}{No Nugget} & FSIR & 0.26 & 0.70 & 0.96 & 0.24 & 0.66 & 0.90 \\ 
   &  & SFSIR & 0.22 & 0.69 & 0.91 & 0.17 & 0.63 & 0.80 \\
   &  & FLM & 0.47 & 2.71 & 3.18 & 0.47 & 2.71 & 3.18 \\ 
   \hline \\
  \multirow{6}{*}{Dense} & \multirow{3}{*}{Nugget} & FSIR & 0.33 & 0.77 & 1.10 & 0.45 & 0.80 & 1.25 \\ 
   &  & SFSIR & 0.26 & 0.64 & 0.91 & 0.30 & 0.70 & 1.00 \\ 
   &  & FLM & 0.77 & 0.40 & 1.17 & 0.77 & 0.40 & 1.17 \\  
   & \multirow{3}{*}{No Nugget}  & FSIR& 0.26 & 0.60 & 0.86 & 0.24 & 0.59 & 0.83 \\
   &  & SFSIR & 0.19 & 0.61 & 0.80 & 0.18 & 0.54 & 0.72 \\ 
   &  & FLM & 0.46 & 2.93 & 3.39 & 0.46 & 2.93 & 3.39 \\ 
   
   \hline
\end{tabular}
\caption{Here $n=30$. FSIR: \citet{jiang2014inverse}. SFSIR: Proposed method. FLM: \texttt{FLM1} function in \emph{fdapace} package. }
\label{tab:thirty}
\end{table}

\begin{figure}
    \centering
    \includegraphics[scale=0.4]{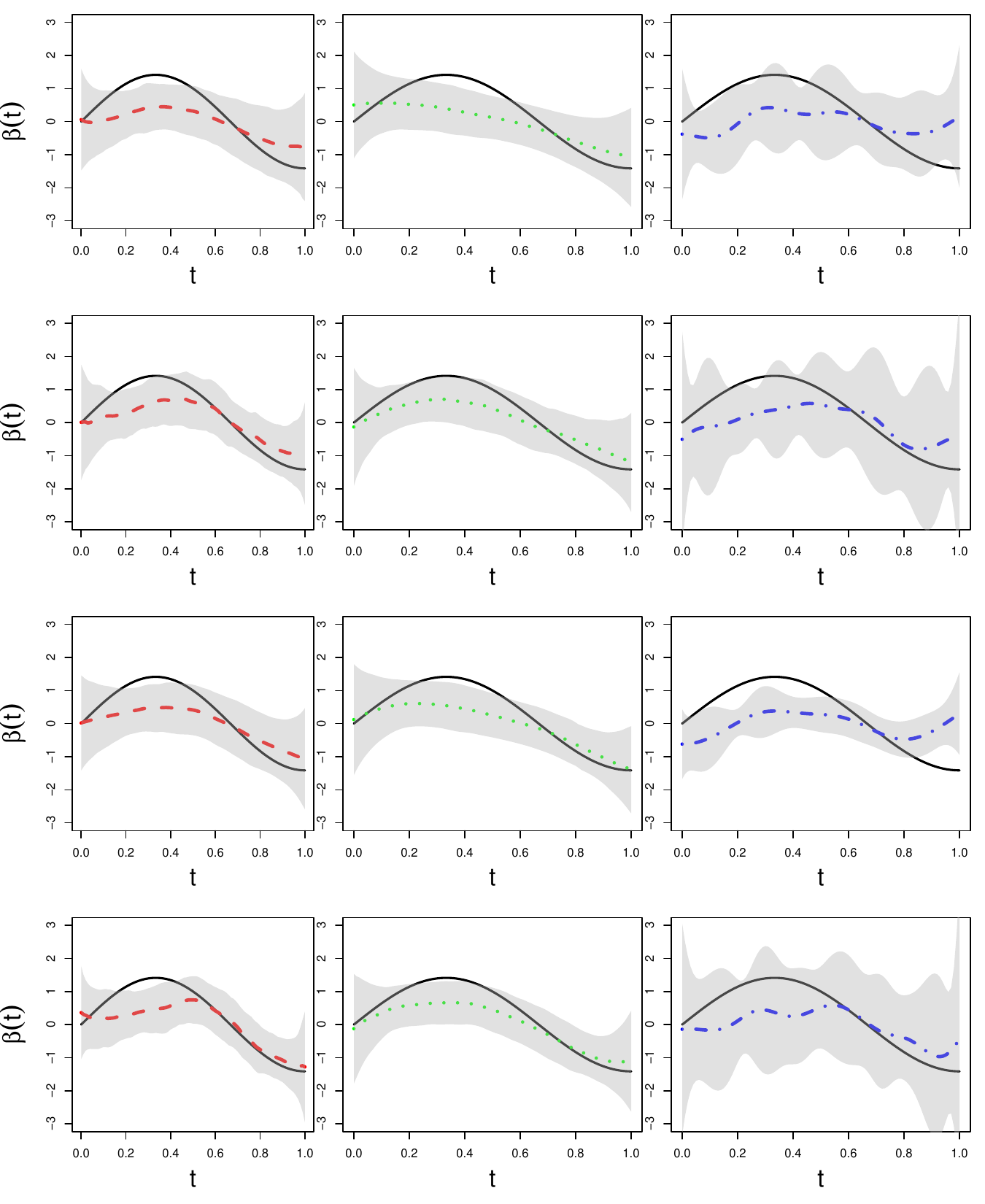}
    \caption{(n=30) Estimated mean curves with SUBJ weighting scheme along with plus or minus 1 standard deviation. Columns: (left, middle, right) = (FSIR, SFSIR, FLM). Rows: (1,2,3,4)=(sparse+nugget, sparse+no nugget, dense+nugget, dense+no nugget). Solid line denotes the true function}
    \label{fig:sub30}
\end{figure}
\begin{figure}
    \centering
    \includegraphics[scale=0.4]{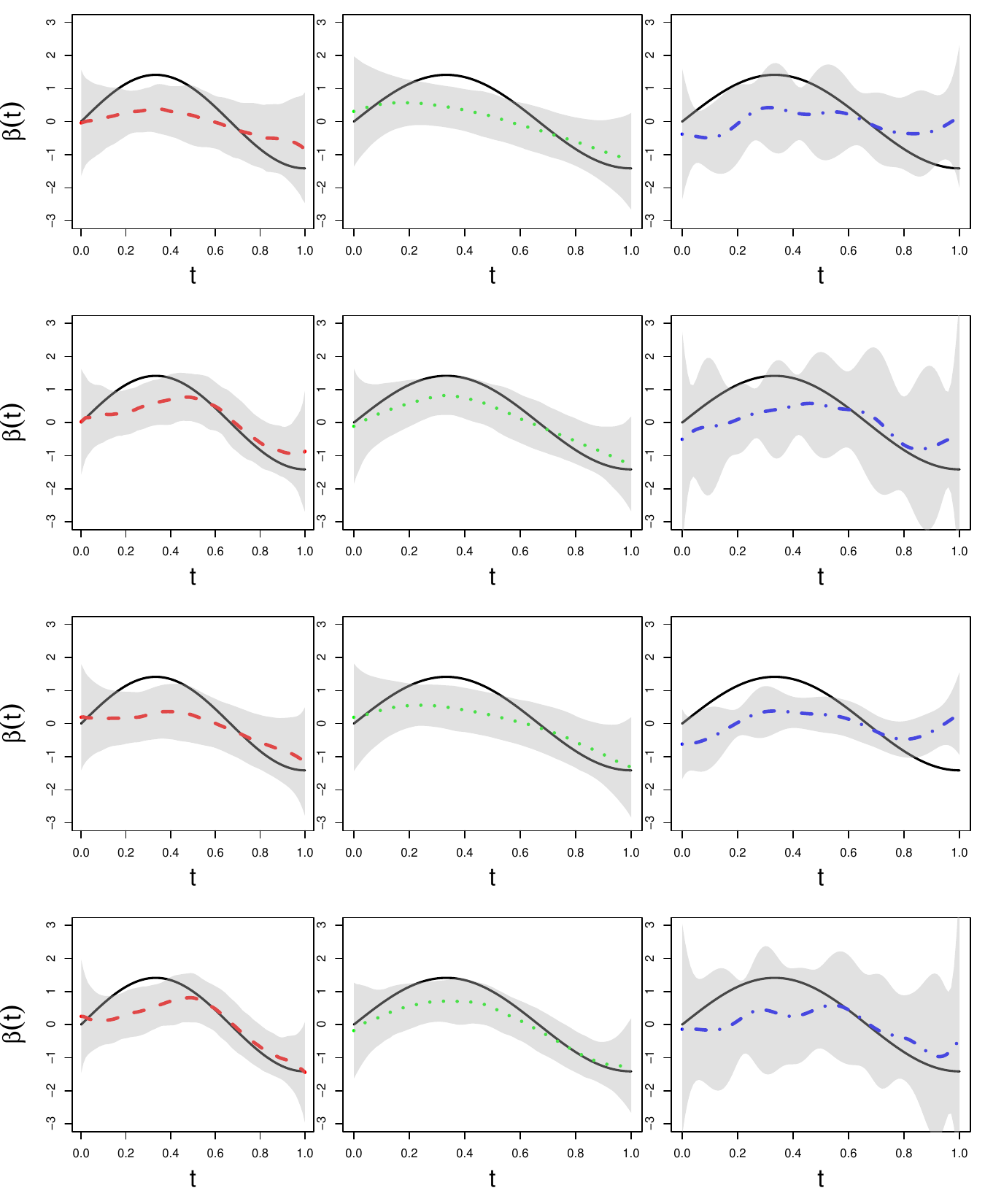}
    \caption{(n=30) Estimated mean curves with OBS weighting scheme along with plus or minus 1 standard deviation. Columns: (left, middle, right) = (FSIR, SFSIR, FLM). Rows: (1,2,3,4)=(sparse+nugget, sparse+no nugget, dense+nugget, dense+no nugget). Solid line denotes the true function}
    \label{fig:obs30}
\end{figure}

\begin{figure}
    \centering
    \includegraphics[scale=0.4]{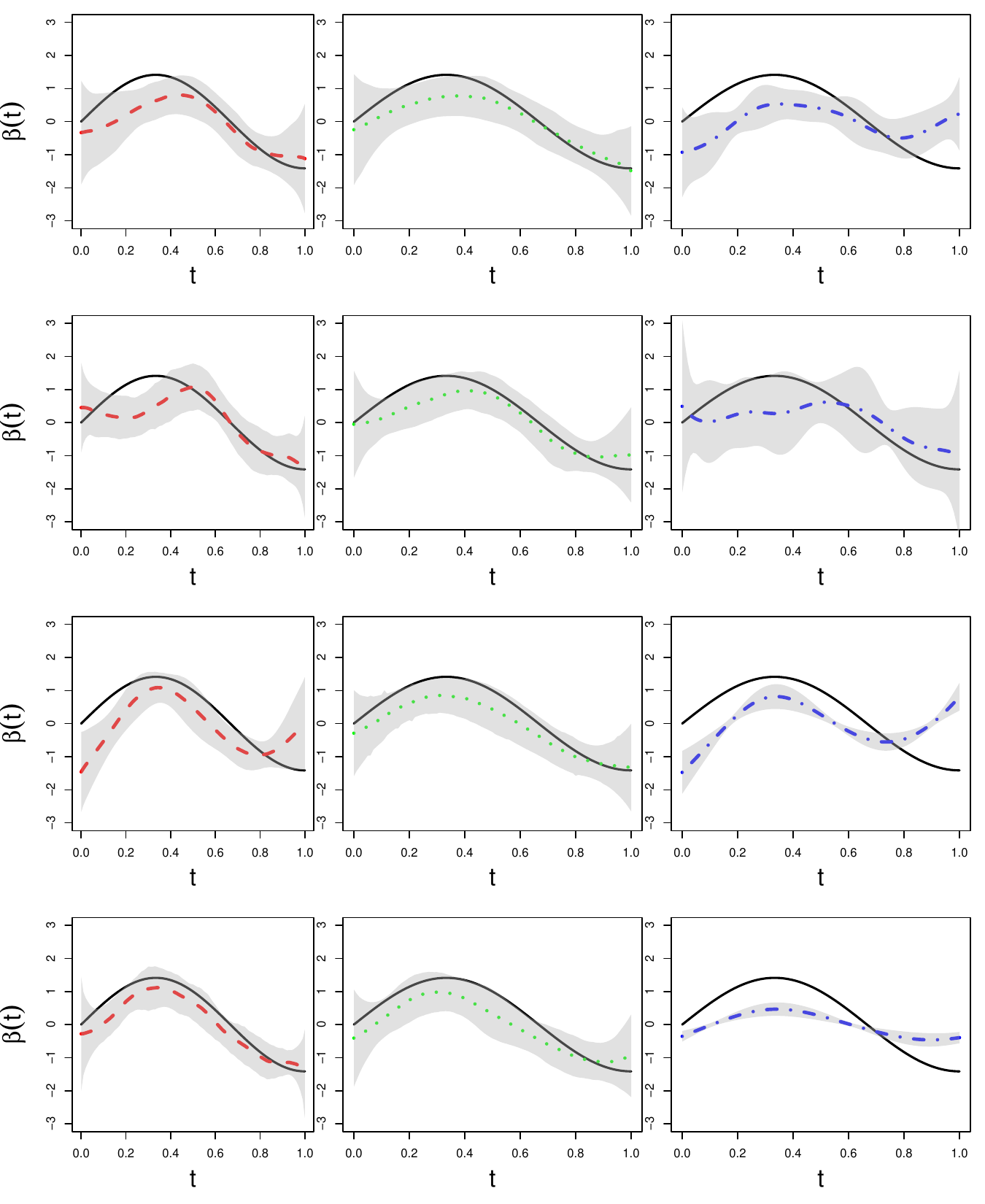}
    \caption{(n=100) Estimated mean curves with SUBJ weighting scheme along with plus or minus 1 standard deviation. Columns: (left, middle, right) = (FSIR, SFSIR, FLM). Rows: (1,2,3,4)=(sparse+nugget, sparse+no nugget, dense+nugget, dense+no nugget). Solid line denotes the true function}
    \label{fig:sub100}
\end{figure}
\begin{figure}
    \centering
    \includegraphics[scale=0.4]{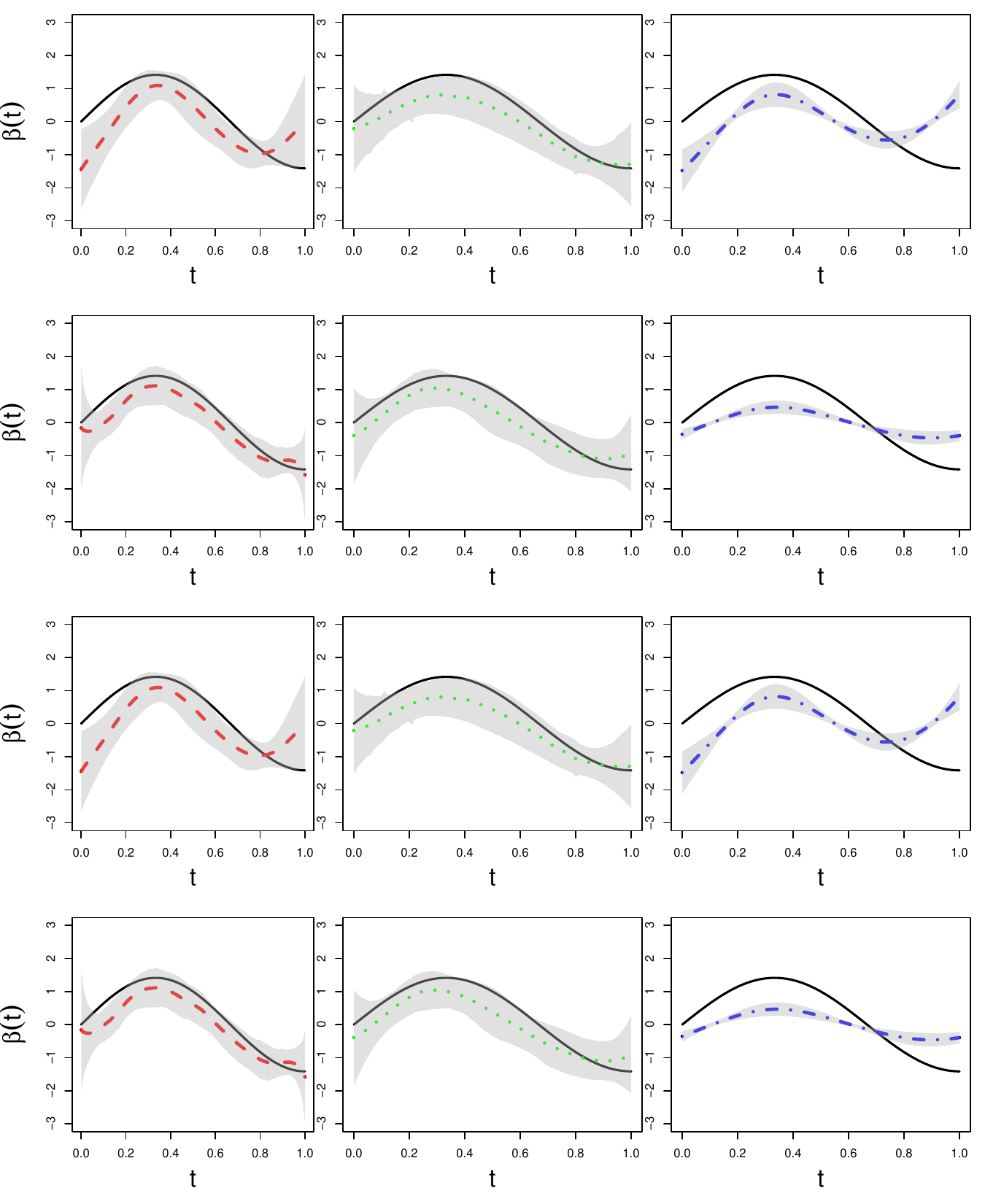}
    \caption{(n=100) Estimated mean curves with OBS weighting scheme along with plus or minus 1 standard deviation. Columns: (left, middle, right) = (FSIR, SFSIR, FLM). Rows: (1,2,3,4)=(sparse+nugget, sparse+no nugget, dense+nugget, dense+no nugget). Solid line denotes the true function}
    \label{fig:obs100}
\end{figure}

\section{Real data application}\label{sec:application}
To showcase the usefulness of the proposed sufficient dimension reduction method, we analyzed the weather data from British Columbia (BC) region. The \texttt{R} package \emph{rnoaa} \citep{chamberlain2022package} is used to extract the weather data for British Columbia region in Canada. This package helps accessing data from Global Historical Climatology Network  daily (GHCNd)  from the National Ocenic and Atmospheoric Administration Information (NOAA) server. The GHCNd is an integrated database of daily weather summaries from land surface stations all over the globe. 
\begin{figure}[h!]
  \centering
  \includegraphics[scale=0.4]{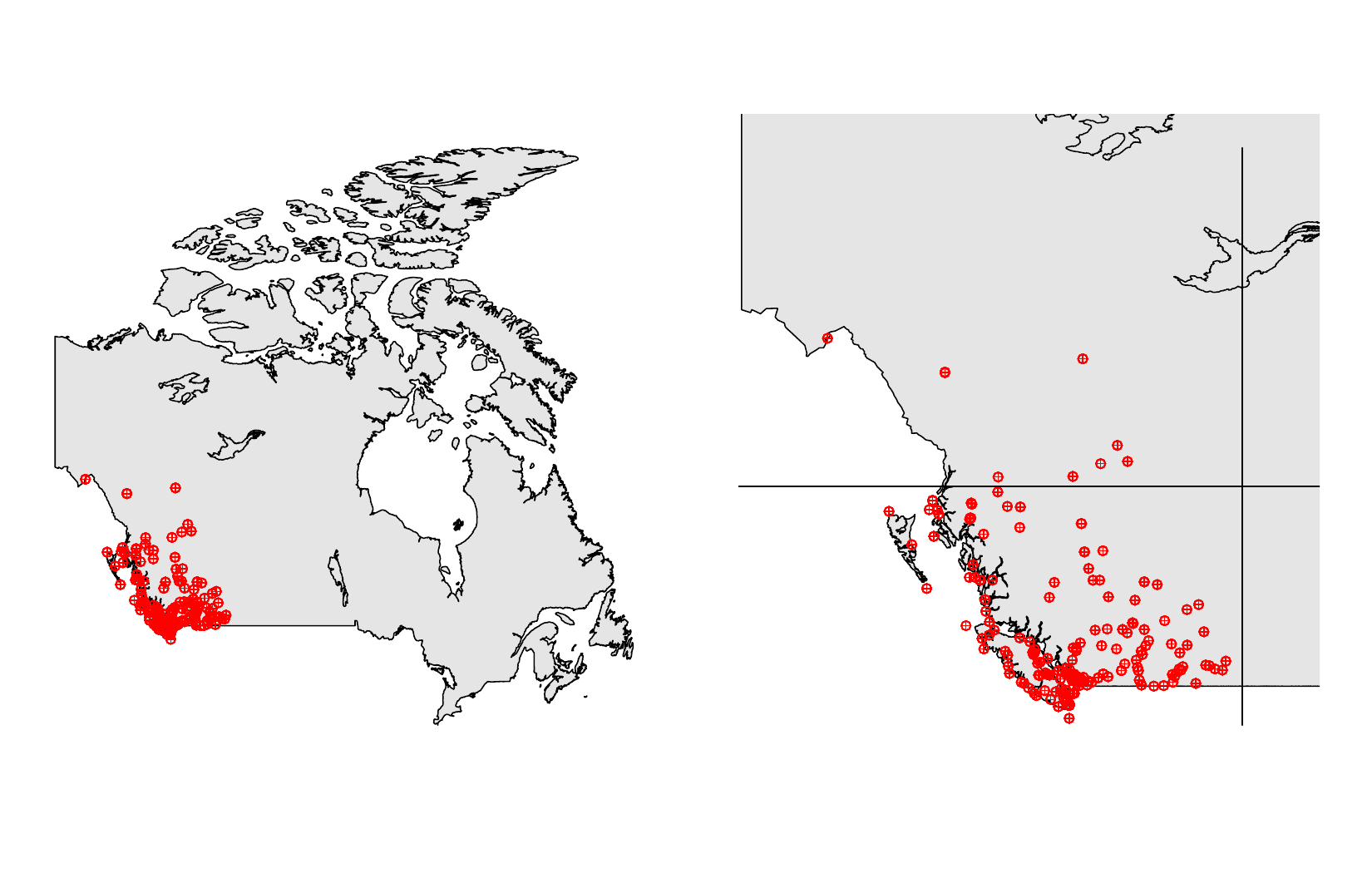}
  \caption{ (left) Locations of 184 weather stations considered from the BC region (Canada map) (right) Enlarged map focusing on British Columbia region with marked outliers (7 outlying stations).}
  \label{fig:bc-loc}
\end{figure}

The locations of 184 weather stations across BC region are shown in the right side plot in Figure \ref{fig:bc-loc}. We notice that the locations of the weather stations are irregularly positioned. For computational simplicity, in this study, we focus on the monthly data for the period starting from \texttt{"2020-06-01"} to \texttt{"2022-05-31"}, for a randomly sampled 100 locations of stations shown in \ref{fig:bc-loc}. The observed data includes information on \emph{precipitation} and \emph{temperature}. 
\begin{figure}[h!]
  \centering
  \includegraphics[scale=0.45]{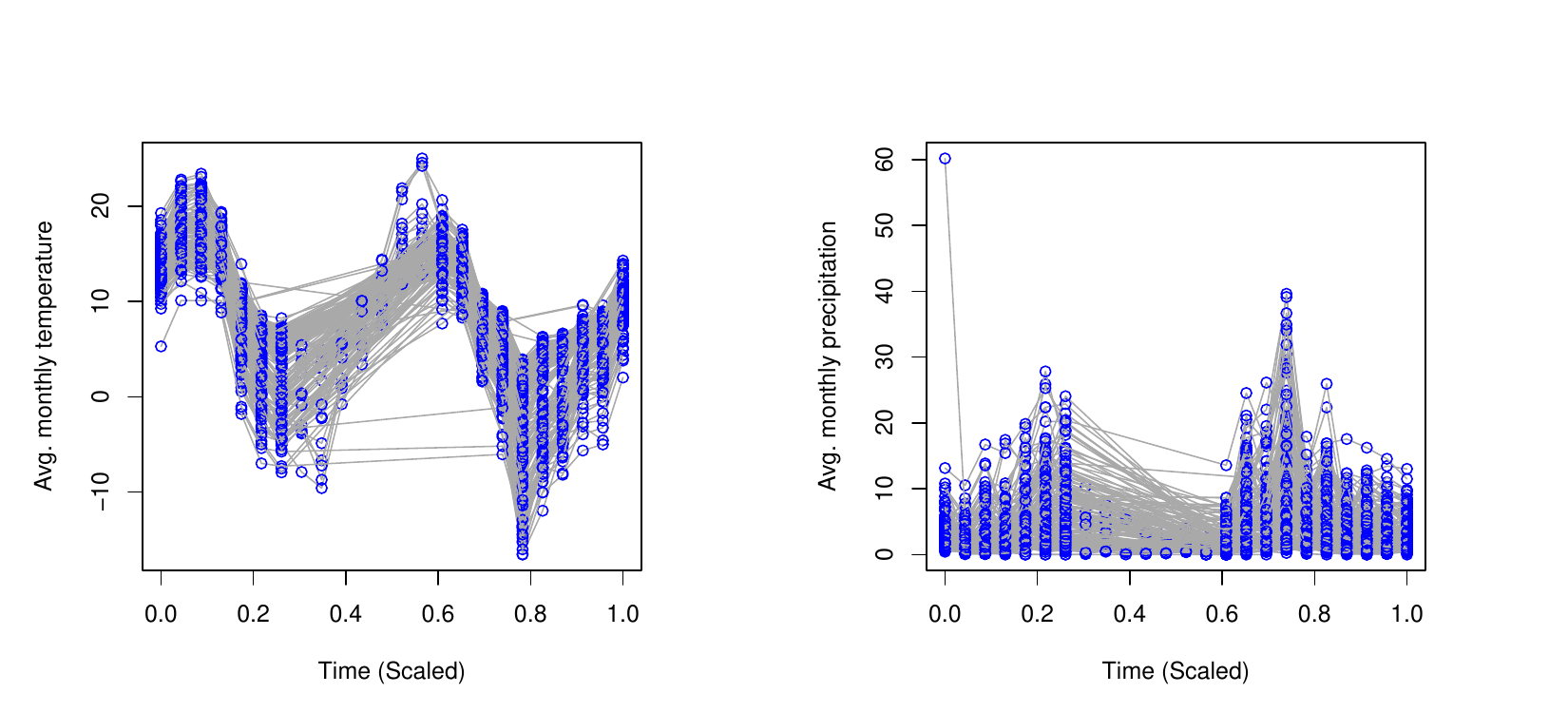}
  \caption{ Average (a) monthly temperatures and  (b) monthly precipitations  (divided by 10), for 24 months starting from \texttt{"2020-06-01"} to \texttt{"2022-05-31"}  across 177 weather stations in BC region.}
  \label{fig:fplot-pt}
\end{figure}
Figure \ref{fig:fplot-pt} shows the monthly temperatures and precipitations across  the 177 weather stations from Figure \ref{fig:bc-loc}.   From Figure \ref{fig:fplot-pt}, we observe that the  monthly temperature exhibits a cyclical pattern. Monthly precipitation has two peak months and but is relatively stable across all the remaining months. Although, we show the monthly precipitations, in the actual model (shown below) we consider the average precipitation across all the locations as our response variable. We also note that not all of the weather stations have data available for the entire 24 months. The exact counts for the number of time points are provided in Figure \ref{fig:ftab}.
\begin{figure}[h!]
  \centering
\includegraphics[scale=0.55]{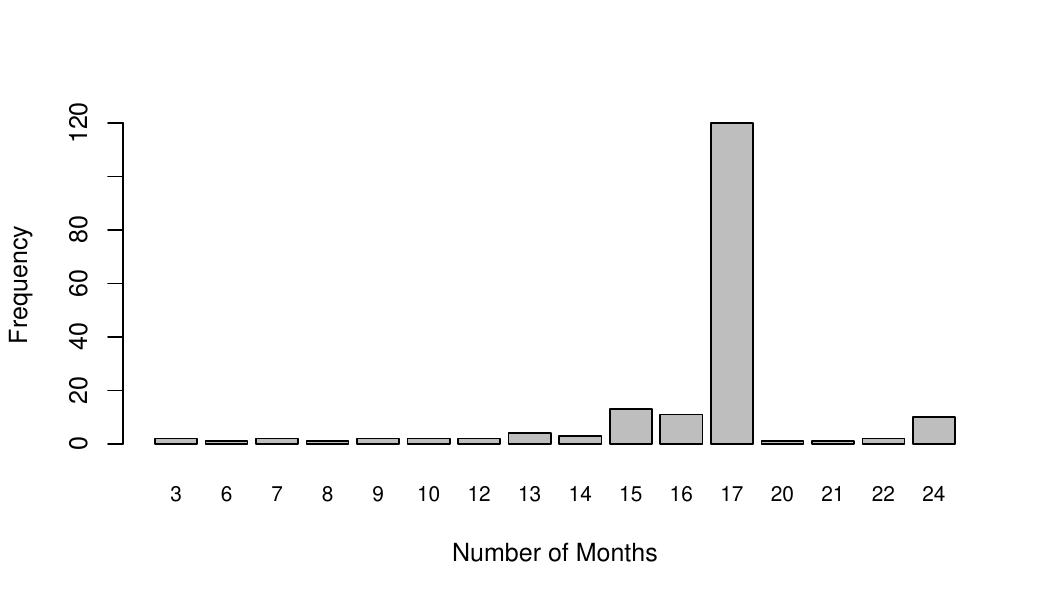}
\caption[]{Frequency table for the functions with total number of time points where the data is available.}
\label{fig:ftab}
\end{figure}
We consider average precipitation as the scalar response and the monthly temperature as the functional predictor in the inverse regression framework. Formally, we consider the following model 
\begin{align}
  AveragePrecipitation_i &= f(\langle\beta_1(t), Temperature_i(t) \rangle, \ldots, \langle\beta_K(t),Temperature_i(t)\rangle, \epsilon_i),
  \label{eqn:weather-model}
\end{align}
for $ i=1,\ldots,100$. Hereafter, we denote the proposed estimation method as SFSIR. For comparison, we also include the inverse regression approach for i.i.d. data \citep{jiang2014inverse} and functional linear model approach (\texttt{FLM1} function in \emph{fdapace} package \citep{fdapace}). We denote both these methods as FSIR and FLM, respectively.

An important concern that needs to be addressed before fitting the proposed method is the bandwidth selection. For both the proposed SFSIR and FSIR methods, we use 3-fold cross-validation separately for both SUBJ and OBS weighting schemes. 
For the method FLM, defaults from the \emph{fdapace} package which uses 10\% of the support are considered. Epanechnikov kernel is used throughout and the functions are evaluated on a grid of 51 points. We consider 50 random samples of size 100 and on each sample
\begin{itemize}
    \item the optimal bandwidths for SFSIR and FSIR are computed based on the 3-fold cross-validation, with 10\% of the support for FLM; and 
    \item the methods SFSIR, FSIR, and FLM are applied with the corresponding optimal bandwidths.
\end{itemize}

The average estimated e.d.r. directions/coefficient functions from the above considered three methods across 50 samples are shown in Figures \ref{fig:weather-subj-obs} for both SUBJ and OBS schemes. The plots include mean curves from each method along with plus or minus one standard deviation. While we can extract multiple e.d.r. directions from the SFSIR and FSIR methods, we can only obtain one coefficient function from the FLM method since it is a linear model. We notice from Figure \ref{fig:fplot-pt} that, since the monthly temperature exhibits a decreasing and cyclical pattern, we expect the e.d.r.  $\widehat{\beta}_1$ may shows an increasing and cyclical pattern that  complements the pattern of the temperature data. As expected, the estimated e.d.r. directions in Figure \ref{fig:weather-subj-obs} shows an increasing trend with a cyclical pattern. The estimated function from the FLM approache exhibit more fluctuations and look different from the estimated directions from the  SFSIR and FSIR approaches. The differences are more visible at the both the beginning  and ending parts of the times series  where more data available (more stations provided data for those time points). Similar to the findings observed from the simulation study, the proposed method SFSIR provides better performance when the underlying noise (nugget effect) is large.

\begin{figure}[h!]
  \centering
\includegraphics[scale=0.35]{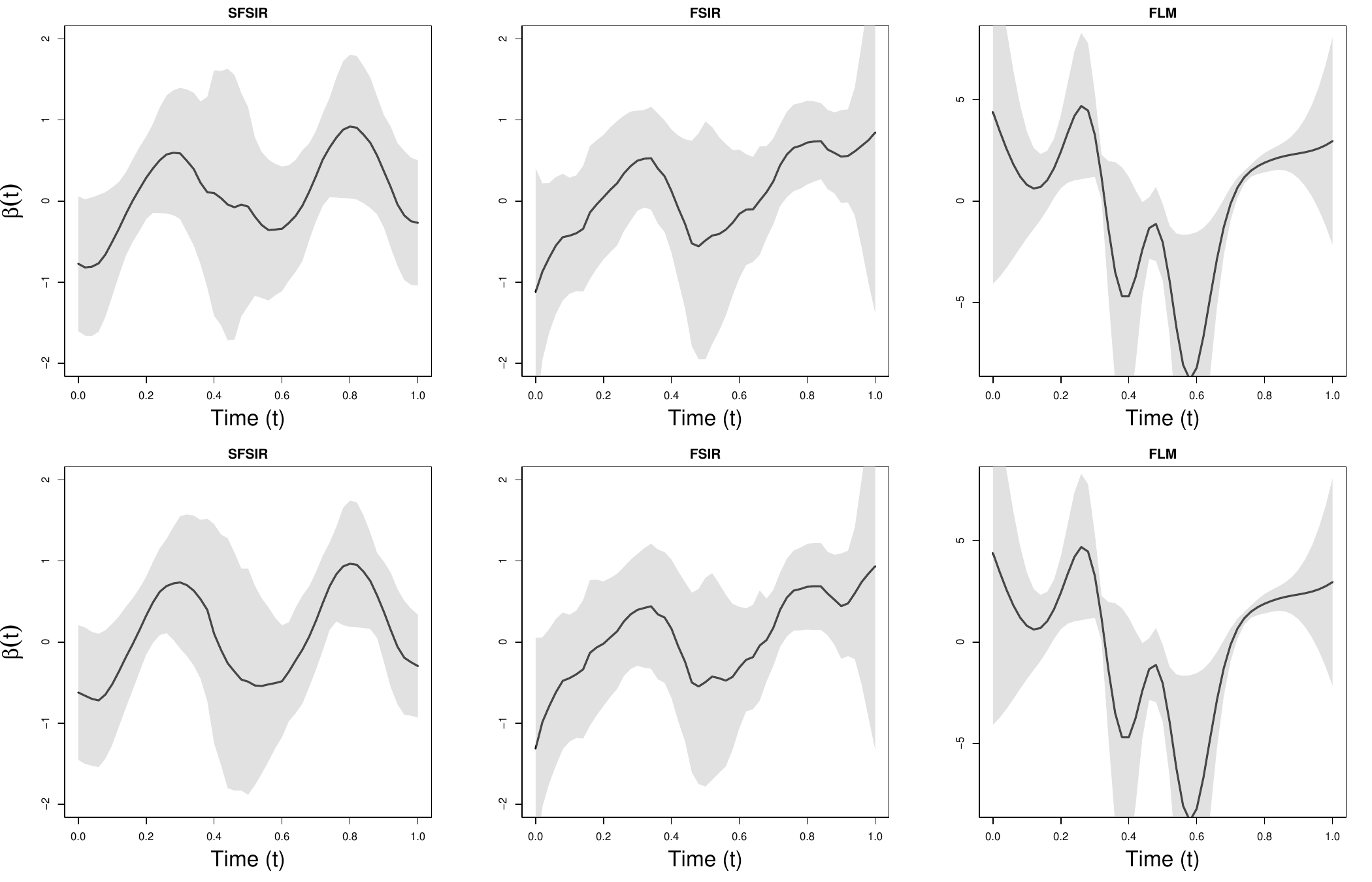}
\caption[]{Estimated e.d.r. directions using  the proposed SFSIR, FSIR , and FLM approaches. The solid lines represent the mean curves across 50 replications and the shaded area represents mean plus or minus one standard deviation. The top row plots use SUBJ weighting scheme while the bottom row plots use OBS weighting scheme.}
\label{fig:weather-subj-obs}
\end{figure}
%
%

\end{document}